\tikzset{
  optree/.style={scale=.5,thick,grow'=up,level distance=10mm,inner sep=1pt},
  comp/.style={draw=none,circle,fill,line width=0,inner sep=0pt},
  dot/.style={draw,circle,fill,inner sep=0pt,minimum width=3pt},
  circ/.style={draw,circle,inner sep=1pt,minimum width=4mm},
  emptycirc/.style={draw,circle,inner sep=1pt,minimum width=2mm},
  root/.style={level distance=10mm,inner sep=1pt},
  leaf/.style={draw=none,circle,fill,line width=0,inner sep=0pt},
  nodot/.style={draw,circle,inner sep=1pt},
}
\DeclareFontFamily{U}{shuffle}{} 
\DeclareFontShape{U}{shuffle}{m}{n}{<5-8>shuffle7  <8->shuffle10}{}
\DeclareSymbolFont{Shuffle}{U}{shuffle}{m}{n}
\DeclareMathSymbol\shuffle{\mathbin}{Shuffle}{"001} 
\DeclareMathSymbol\cshuffle{\mathbin}{Shuffle}{"002}
\newcommand{\cev}[1]{\reflectbox{\ensuremath{\vec{\reflectbox{\ensuremath{#1}}}}}}
\definecolor{Chocolat}{rgb}{0.36, 0.2, 0.09}
\definecolor{BleuTresFonce}{rgb}{0.215, 0.215, 0.36}
\newcommand{\pullback}{\mbox{\LARGE{$\lrcorner$}}}
\newcommand{\dif}{\mathrm{d}}
\newcommand{\tig}{\tilde{g}}
\newcommand{\tih}{\tilde{h}}
\newcommand{\GG}{\mathbb{G}}
\newcommand{\oGG}{\overline{\mathbb{G}}}
\newcommand{\KDO}{\calO^{\ac}}
\newcommand{\Tw}{\mathrm{Tw}}
\newcommand{\epi}{\twoheadrightarrow}
\newcommand{\mono}{\rightarrowtail}
\def\Ker{\mathrm{Ker}}
\newcommand{\NN}{\mathbb{N}}
\newcommand{\Sy}{\mathbb{S}}
\newcommand{\si}{\sigma}
\def\KK{\mathbb{K}}
\def\B{\mathrm{B}}
\newcommand{\ac}{{\scriptstyle \text{\rm !`}}}
\def\TTT{\mathcal{T}}
\def\qi{\xrightarrow{\sim}}
\def\C{\mathcal{C}}
\def\P{\mathcal{P}}
\def\oP{\overline{\mathcal{P}}}
\newcommand{\opd}[1]{\mathcal{#1}}
\newcommand{\nsOpd}{\mathcal{O}_\textrm{ns}}
\newcommand{\htensor}{\otimes_H}
\newcommand{\N}{\mathbb{N}}
\newcommand{\I}{\mathrm{I}}
\def\calC{\mathcal{C}}
\def\calE{\mathcal{E}}
\def\calH{\mathcal{H}}
\def\calO{\mathcal{O}}
\def\calP{\mathcal{P}}
\def\calQ{\mathcal{Q}}
\def\calS{\mathcal{S}}
\def\calT{\mathcal{T}}
\DeclareMathOperator{\id}{id}
\DeclareMathOperator{\End}{End}
\DeclareMathOperator{\Hom}{Hom}
\newcommand{\Y}{\vcenter{\xymatrix@M=0pt@R=4pt@C=4pt{
\ar@{-}[dr] &  &\ar@{-}[dl]  \\
 &*{} \ar@{-}[d] &  \\  & &}}}
\newcommand{\Yb}{\vcenter{\xymatrix@M=0pt@R=4pt@C=4pt{
\ar@{-}[dr] &  &\ar@{-}[dl]  \\
 &* {\scriptstyle (12)} \ar@{-}[d] &
    \\  & &}}}
\newcommand{\YYl}{\vcenter{\xymatrix@M=0pt@R=4pt@C=4pt{
\ar@{-}[dr] &  &\ar@{-}[dl]  & \\
 &*{} \ar@{-}[dr] &  &\ar@{-}[dl]\\  & &*{}\ar@{-}[d]&\\ &&& }}}
\newcommand{\YYr}{\vcenter{\xymatrix@M=0pt@R=4pt@C=4pt{
& \ar@{-}[dr] &  &\ar@{-}[dl]  \\
\ar@{-}[dr] &&*{} \ar@{-}[dl] &\\  & *{}\ar@{-}[d]&&\\ &&& }}}
\newcommand{\W}{\vcenter{\xymatrix@M=0pt@R=4pt@C=4pt{
\ar@{-}[dr] &*{} \ar@{-}[d] &\ar@{-}[dl]  \\
 &*{} \ar@{-}[d] &  \\  & &}}}
\theoremstyle{plain}
\newtheorem {theorem}{Theorem}
\newtheorem {lemma}{Lemma}
\newtheorem {corollary}{Corollary}
\newtheorem {proposition}{Proposition}
\newtheorem {definition-proposition}{Definition-Proposition}
\newtheorem{theoremintro}{Theorem}
\newtheorem*{theoremintro5}{Theorem\! 5}
\theoremstyle{definition}
\newtheorem {definition}{Definition}
\theoremstyle{remark}
\newtheorem{example}{\sc Example}
\newtheorem{remark}{\sc Remark}
\newtheorem*{further}{\sc Further study}
\newtheorem*{remark*}{\sc Remark}
\newtheorem*{remarks*}{\sc Remarks}
\subjclass[2010]{Primary 18D50; Secondary 18G55}
\keywords{Homotopical algebra, Operad, Koszul duality, $E_\infty$-operad}
\thanks{ M.D. and B.V. were supported  by the ANR grants HOGT and SAT}
\title[Symmetric Homotopy Theory for operads]{Symmetric Homotopy Theory for operads}
\author{Malte Dehling}
\address{Mathematisches Institut, 
Georg-August-Universit\"at G\"ottingen,
Bunsenstra\ss e 3-5, 
37073 G\"ottingen, Germany}
\email{mdehling@uni-math.gwdg.de}
\author{Bruno Vallette}
\address{Laboratoire J.A.Dieudonn\'e,
Universit\'e de Nice Sophia-Antipolis, Parc Valrose,
06108 Nice Cedex 02, France}
\email{brunov@unice.fr}
\date{\today}
\begin{document}

\begin{abstract}
The purpose of this foundational paper is to introduce various notions and constructions in order to develop the homotopy theory for differential graded operads over any ring. The main new idea is to consider the action of the symmetric groups as part of the defining structure of an operad and not as the underlying category. We introduce a  new dual category of higher cooperads, a new higher bar-cobar adjunction with the category of operads, and a new  higher notion of homotopy operads, for which  we establish the relevant homotopy properties. For instance, the higher bar-cobar construction provides us with a  cofibrant replacement functor for operads over any ring. All these constructions are produced conceptually by applying the curved Koszul duality for colored operads. This paper is a first step toward a new Koszul duality theory for operads, where the action of the symmetric groups is properly taken into account. 
\end{abstract}

\maketitle

\tableofcontents

\section*{Introduction}

An operad is an algebraic device which encodes  operations with multiple inputs acting on categories of algebras like associative algebras, Lie algebras, commutative algebras, etc. They are used to govern the relations and the symmetries of these operations.
The two main breakthroughs of operad theory are the theory of  $E_\infty$-operads and the Koszul duality theory. On the one hand, the notion of an $E_\infty$-operad emerged from the theory of iterated loop spaces \cite{May72, BoardmanVogt73}. 
An algebra over an $E_\infty$-operad, called an $E_\infty$-algebra, is a generalization of the notion of commutative algebra where \textit{everything}  is relaxed up to homotopy, that is both the associativity relation and the symmetry of the product. 
Such a rich algebraic structure is present on the level of the singular cochain complex of a topological space \cite{Mandell06} and this creates  enough algebraic invariants to faithfully detect its  homotopy type.
On the other hand, the Koszul duality theory \cite{GinzburgKapranov94, GetzlerJones94} provides us with many tools to work out the homological algebra of operads and hence the homotopy theory of algebras over  operads. 
Applied to the operad encoding associative algebras, it gives the notion of an $A_\infty$-algebra, related to loop spaces,  due to J. Stasheff \cite{Stasheff63} and the bar construction of Eilenberg--MacLane \cite{EilenbergMacLane53}. Applied to the operad encoding Lie algebras, it gives the notion of an $L_\infty$-algebra which plays a crucial role in deformation theory \cite{Kontsevich03, Getzler09} and the Chevalley--Eilenberg complex \cite{ChevalleyEilenberg48}. 
Applied to the operad encoding commutative algebras, it gives the notion of a $C_\infty$-algebra, present in rational homotopy theory \cite{Quillen69, Sullivan77}, and the Harrison complex \cite{Harrison62}. 
So it works well in general in characteristic $0$, but sometimes fails to apply outside that case. For instance, a $C_\infty$-algebra is an $E_\infty$-algebra only in characteristic $0$, precisely because the symmetry of te product is not relaxed up to homotopy.  

\smallskip

With the increasing number of new applications of operad theory found recently in algebraic topology \cite{Mandell06, Fresse08} and in algebraic geometry \cite{Kapranov98, MoerdijkToen10, GRSA12, Lurie12} for instance, there is a need to 
have at hand suitable tools to  manage the homotopy properties for differential graded operads over any ring. The main goal of the present paper is to develop such tools. For example, we would like to reconcile  $E_\infty$-operads  with the  Koszul duality theory as follows: produce $E_\infty$-operads over any ring by  applying a new kind of  Koszul duality theory. 
If one steps back and considers the classical approach of the Koszul duality theory of \cite{GinzburgKapranov94, GetzlerJones94}, one would see that it elaborates on the ground category of representations of the symmetric groups, called $\Sy$-modules. So, in order the get appropriate homotopy properties, one needs to be given  projective $\Sy$-modules, see \cite{Fresse04}. This is, of course, not always the case, as the example of the operad encoding commutative algebras shows: its underlying $\Sy$-module is never projective in positive characteristic.  
\smallskip

There are actually two ways to apprehend the notion of an operad.  One can start from an $\Sy$-module and consider composition maps acting on it. In this case, the symmetry of the operations are given, only their compositions are modeled. 
Or one can also start from  an $\NN$-module, that is a module graded by the arity number, and consider both the composition maps \textit{and} the symmetric groups action as the defining structure. This is the point of view taken in the present paper. We work over the ground category of differential graded $\NN$-modules, which is better behaved than that the category of differential graded $\Sy$-modules which respect to homological properties. 
For instance, the projective property is easier to satisfy there. We will then develop a theory similar to that of Ginzburg--Kapranov and Getzler--Jones but in this second context. 

\smallskip

To produce such a theory, we do not work by hand but instead we rather use the conceptual approach which consists in considering the colored operad $\calO$ whose category of algebras over $\NN$-modules is the category of non-unital, or equivalently, augmented operads. It turns out that this colored operad admits a natural inhomogenous presentation. Therefore, it fits into the framework of the recent  curved Koszul duality theory for (colored) operads settled by Hirsh--Mill\`es in 
in \cite{HirshMilles12}. The first result of the paper says that it is actually a Koszul colored operad. 
\begin{theoremintro}\label{thmintro:OKoszul}
The colored operad $\calO$ is an inhomogeneous curved Koszul operad. 
\end{theoremintro}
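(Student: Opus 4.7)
The plan is to apply the curved Koszul duality for colored operads of Hirsh--Mill\`es to an explicit quadratic-linear presentation of $\calO$. First, I would describe $\calO$ as a colored operad whose colors are the positive integers, with generators consisting of (i) the partial composition operations $\circ_i$ of biarity $(m,n) \to m+n-1$, for each $m, n \geq 1$ and $1 \leq i \leq m$, and (ii) every symmetric group element $\sigma \in \Sy_n$ viewed as a unary operation in color $n$. The defining relations are sequential and parallel associativity of the partial compositions, the equivariance law which expresses $(\mu \circ_i \nu) \cdot \sigma$ as a partial composition of twisted operations, and the group law $\sigma \cdot \tau = \sigma\tau$. The first two families are homogeneous quadratic, while the third is quadratic-linear, since its right-hand side is itself a generator.

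Next, I would verify the two minimality conditions of Hirsh--Mill\`es on the space of relations $R$ inside the weight-one and weight-two parts of the free colored operad on these generators. The condition $R \cap (\text{weight one}) = 0$ holds because no non-trivial combination of the generating relations yields a purely linear element: the linear parts of the quadratic-linear relations are exactly the group products, which are linearly independent from the coefficients of the quadratic parts. The compatibility condition ensuring that the linear part descends to a coderivation on the Koszul dual cooperad reduces, after unwinding the definitions, to the associativity of the group law and to the compatibility of group multiplication with the equivariance action, both of which hold in any operad.

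The crux of the argument is to show that the quadratic analog $q\calO$, obtained by replacing $\sigma \cdot \tau = \sigma\tau$ with $\sigma \cdot \tau = 0$, is a Koszul colored operad. I would establish this by exhibiting a distributive law, in Markl's sense, between two sub-operads of $q\calO$: the operad $\calA$ governing partial composition of operations, i.e.\ the colored operad encoding non-symmetric, non-unital operads, which is known to be Koszul, and the operad $\calS$ generated by the symmetric group elements with trivial multiplication, which is manifestly Koszul by a direct computation in each color. The equivariance relations provide a candidate distributive law $\lambda \colon \calS \circ \calA \to \calA \circ \calS$, and once its compatibility is checked, Markl's theorem identifies $q\calO \iso \calA \circ \calS$ as a Koszul colored operad.

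The main obstacle is verifying the compatibility of this distributive law. Concretely, one must show that the rewriting system which moves every symmetric group action past every partial composition, using the equivariance rule, is confluent on trees of height three, i.e.\ that the two ways of pushing three group elements through a tree of two partial compositions give the same result. This is a combinatorial statement about the action of block permutations on the wiring of a planar tree; while expected on general principles, it requires careful book-keeping of colors and of the order in which group elements are pushed through. Once this is done, the Hirsh--Mill\`es machinery produces the Koszul dual curved cooperad $\calO^{\ac}$ and completes the proof.
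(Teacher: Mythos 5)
Your overall strategy is the one the paper follows: encode the symmetric-group actions as unary generators and the partial compositions as binary generators, check the Hirsh--Mill\`es conditions on the inhomogeneous presentation, and prove Koszulness of the quadratic analogue $q\calO$ by a distributive law between the colored operad $\calO_{ns}$ encoding nonsymmetric non-unital operads (Koszul by Van der Laan) and the nilpotent colored algebra $\KK\{\Sy\}$ on the non-identity permutations, with the decisive verification being confluence in weight three. Two points in your write-up, however, are genuinely off. First, the presentation is not quadratic-linear but quadratic-linear-\emph{constant}: the identities $\id_n$ must be excluded from the generators (otherwise condition (I) fails), so for $\tau=\sigma^{-1}$ the relation $\sigma\cdot\sigma^{-1}=\id_n$ has the colored operad \emph{unit}, not a generator, on its right-hand side. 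This constant term is precisely what produces the curvature $\theta$ of $\calO^{\ac}$, so describing the group-law relations as having ``a generator'' on the right misses the very feature that makes the Koszul dual cooperad curved. Relatedly, the second Hirsh--Mill\`es condition is the maximality of relations $(R)\cap\{\I\oplus E\oplus \TTT(E)^{(2)}\}=R$, which requires its own verification (the paper does a case analysis in arities $1$, $2$, $3$ using that $\calO$ is set-theoretical); it does not reduce to associativity of the group law.

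Second, your distributive law points the wrong way. Reading the equivariance relations as rewriting rules pushes the permutations \emph{down} past the compositions, toward the root, so the law is $\calO_{ns}\circ\KK\{\Sy\}\to\KK\{\Sy\}\circ\calO_{ns}$ and the normal form is $q\calO\cong\KK\{\Sy\}\circ\calO_{ns}$: a planar tree followed by at most one permutation acting on its leaves. The identification $q\calO\cong\calO_{ns}\circ\KK\{\Sy\}$ that you state is false as a module isomorphism --- for instance, in biarity $(3;2,2)$ the component with one binary generator and one permutation has rank $3!-1=5$ per binary generator on the correct side, versus $(2!-1)+(2!-1)=2$ on yours --- and the rewriting cannot be run in your direction: a generic permutation sitting below a composition is not of the block form $\sigma'$ or $\sigma''$ and so cannot be moved above, so the confluence argument would not close. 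With the direction flipped, your weight-three confluence check is exactly the paper's Diamond Lemma verification. One further point worth adding for the statement as claimed (over any ring): Van der Laan's Koszulity and the Hirsh--Mill\`es theory are stated in characteristic zero, and the paper extends them following Fresse because all components of $\calO$ are free $\KK$-modules and free $\KK[\Sy_n]$-modules.
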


This is the key starting point of the following theory; this Koszul property assures that all the consequent constructions will be well behaved with respect to homological properties. 
We describe all the general constructions associated to Koszul colored operads in the case of the operad $\calO$, see \cite[Chapter~$11$]{LodayVallette12} for more details. This will produce the following notions. First, 
coalgebras over the Koszul dual colored cooperad $\calO^{\ac}$ give rise to a new category of \textit{higher cooperads}. Then, there is a new  adjunction, called the \textit{higher bar-cobar adjunction},
$$\xymatrix@C=30pt{{\widetilde\B \ : \ \mathsf{nu} \ \mathsf{Op}  \ }  \ar@_{->}@<-1ex>[r]  \ar@^{}[r]|(0.39){\perp}   & \ar@_{->}@<-1ex>[l]  {\ \mathsf{conil}\ \mathsf{higher}\ \mathsf{Coop}  \ : \ \widetilde{\Omega}}}\ .   $$
between (non-unital) operads and (conilpotent) higher cooperads. As expected, the counit of this adjunction induces a  cofibrant resolution functor. 
\begin{theoremintro}\label{thmintro:BarCobarRes}
The  higher bar-cobar counit 
$$\widetilde{\Omega} \widetilde{\B}\,  \P\ \qi\  \P$$ 
is a functorial resolution of non-unital dg operads, which is cofibrant when the underlying chain complex of $\P$ is non-negatively graded and made up of 
 projective $\KK$-modules.
\end{theoremintro}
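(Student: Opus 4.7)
The plan is to deduce both assertions from the general curved inhomogeneous Koszul duality framework of \cite{HirshMilles12}, applied to the Koszul colored operad $\calO$ of Theorem~\ref{thmintro:OKoszul}. Under the identifications announced in the introduction --- $\calO$-algebras are non-unital dg operads and conilpotent $\calO^{\ac}$-coalgebras are conilpotent higher cooperads --- the higher bar-cobar adjunction $(\widetilde{\B},\widetilde{\Omega})$ is nothing but the bar-cobar adjunction associated with the canonical Koszul twisting morphism $\kappa : \calO^{\ac} \to \calO$. Functoriality of the counit is then automatic from this description.

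For the quasi-isomorphism part, I would adapt the classical strategy of \cite[Ch.~11]{LodayVallette12} to the present colored, inhomogeneous setting. The underlying graded object of $\widetilde{\Omega}\widetilde{\B}\P$ carries a natural syzygy filtration coming from the weight grading on $\widetilde{\B}\P$. The associated graded identifies, after the standard rewriting, with the Koszul complex of $\calO$ applied to $\P$; its acyclicity is equivalent to the Koszulness of $\calO$ established in Theorem~\ref{thmintro:OKoszul}. A spectral-sequence argument --- convergent because the filtration is bounded below in each arity and each homological degree --- then yields the desired quasi-isomorphism $\widetilde{\Omega}\widetilde{\B}\,\P \qi \P$ for every non-unital dg operad $\P$.

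For cofibrancy, observe that, as an underlying graded non-unital operad (forgetting the differential), $\widetilde{\Omega}\widetilde{\B}\P$ is free on the $\NN$-module underlying $\widetilde{\B}\P$. In the projective model structure on non-unital dg operads over $\KK$ (weak equivalences being arity-wise quasi-isomorphisms and fibrations arity-wise surjections), it suffices to exhibit $\widetilde{\Omega}\widetilde{\B}\P$ as a sequential colimit of free extensions along maps of projective $\NN$-modules of generators. I would use the weight filtration on $\widetilde{\B}\P$ for this: the successive subquotients are built from tensor products of components of $\P$ via the tree combinatorics underlying the bar construction, so they remain termwise projective over $\KK$ as soon as $\P$ is, and the non-negativity hypothesis guarantees that the filtration is exhaustive and bounded below in each bidegree so that the standard cofibrancy criterion applies.

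The main technical obstacle is the careful bookkeeping in this last step: one must check that the curved/inhomogeneous perturbations introduced by the Koszul duality of $\calO$ leave the syzygy filtration stable enough that successive stages are honest free extensions on projective $\NN$-modules, and that no pathology in positive degree obstructs either the convergence of the spectral sequence in Step~2 or the cell-extension presentation in Step~3. Both points reduce ultimately to a combinatorial analysis of the trees decorating $\widetilde{\B}\P$, combined with the hypotheses of non-negativity and $\KK$-projectivity on the components of $\P$.
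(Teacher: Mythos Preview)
Your overall architecture is reasonable, but both halves diverge from the paper's argument in ways worth flagging, and the cofibrancy half runs into an obstacle the authors explicitly work around.

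\medskip

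\textbf{Quasi-isomorphism.} You propose to filter $\widetilde{\Omega}\widetilde{\B}\,\P$ by the syzygy/weight grading and identify the associated graded with the Koszul complex of $\calO$ applied to $\P$. The paper does \emph{not} do this. Instead it uses a different filtration: $\mathcal{F}_r$ consists of elements whose homological degree plus number of partitions is at most $r+1$. On the first page only the ``splitting'' part $\dif_2^{\mathrm{split}}$ of the cobar differential survives; this decomposes as a direct sum over underlying planar trees, and each summand with at least one internal edge is a tensor power of the acyclic complex $0\to\KK\to\KK\to 0$. The second page then reduces to $\overline\P(n)\otimes T(s\bar\Sy_n)$ with the normalized bar differential of the symmetric group, which is where projectivity of $\P$ over $\KK$ is used. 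So the paper's argument is really a hands-on computation that ultimately rests on the bar resolution of $\Sy_n$, not on Koszulness of $\calO$ per se. Your abstract route may well work, but you would have to check carefully that the curved Hirsh--Mill\`es machinery actually delivers the bar-cobar counit as a quasi-isomorphism for \emph{algebras} (not just the operad itself), and that the relevant spectral sequence converges despite the curvature on $\widetilde{\B}\,\P$.

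\medskip

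\textbf{Cofibrancy.} Here there is a genuine gap. You want to present $\widetilde{\Omega}\widetilde{\B}\,\P$ as a cell complex via the weight filtration on generators, invoking the standard criterion that quasi-free operads with a good filtration on a projective generating module are cofibrant. But the paper's Remark following Proposition~\ref{prop:CobarCofibrant} addresses exactly this point: the generating $\Sy$-module of $\widetilde{\Omega}\,\calC$ is the free $\Sy$-module on the projective $\NN$-module $\calC$, which is projective but \emph{not} free over $\KK$, so one ``cannot apply (easily) the usual results, like \cite[Proposition~12.2.3]{Fresse09}''. The authors therefore abandon the cell-extension route entirely and instead prove the lifting property against acyclic fibrations \emph{directly}, by an induction on homological degree using the Maurer--Cartan description of maps out of $\widetilde{\Omega}\,\calC$ (Proposition~\ref{prop:BarCobarTwKappa}). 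The induction step is a pullback-and-lift argument in each degree, where projectivity of $\calC_d$ is precisely what allows the lift; the non-negativity hypothesis starts the induction. Your sketch does not engage with this difficulty, and the ``standard cofibrancy criterion'' you invoke is the one the paper says does not apply without further work.
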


Let us compare this higher bar-cobar construction with the classical bar-cobar adjunction. 
First, the intermediate category of the higher bar-cobar resolution is made up of  higher cooperads, which is 
more involved than the category of cooperads. The conceptual reason is quite natural: the higher bar construction $\widetilde\B\, \P$ needs to resolve up to homotopy and in a functorial way the symmetric groups action of the operad $\P$. 
As a result, while the  classical bar-cobar construction was cofibrant in characteristic $0$, or when the underlying $\Sy$-module of $\P$ is projective, the higher bar-cobar construction is always cofibrant over any ring providing that the underlying $\NN$-module is projective, which is a much weaker assumption.
\smallskip

Let us recall the existence of a particularly well behaved $E_\infty$-operad whose components are defined by the bar resolutions of the symmetric groups: the Barratt--Eccles operad $\calE$, see  \cite{BergerFresse04}. This operad allows us to relate the bar-cobar construction and the higher bar-cobar construction in the following precise way: there exists a natural isomorphism 
$$\I \oplus \widetilde{\Omega} \widetilde{\B} \, \P \cong \Omega \B \left( \P\otimes_H \calE\right) \ , $$
where $\P\otimes_H \calE$ stands for the aritywise tensor product (Proposition~\ref{prop:BarCobarBE}) and where $\I$ is the identity operad. 
It  was already known that the latter construction was giving a cofibrant replacement functor. On the bright side, one can interpret this result as a new universal property satisfied by  the Barratt--Eccles operad. 
\smallskip

Applied to the operad encoding commutative algebras, the higher bar-cobar resolution produces automatically a cofibrant $E_\infty$-operad over any ring, which is also a Hopf operad resolution. 
Under the above isomorphism, this result is not that new since it gives an operad isomorphic to the bar-cobar construction of the Barratt--Eccles operad. 
Notice that the existence of a cofibrant Hopf $E_\infty$-operad plays a crucial role in B. Fresse's works \cite{Fresse08, Fresse10bis}. 
However the actual description of the associated category of algebras does not seem to be  present in the literature, for instance in loc. cit.. 
Since we believe that it could  be useful to have  an explicit description of an $E_\infty$-algebra by means of generating operations and relations, to perform 
computations of homology groups in algebraic topology along the lines of \cite{Fresse08} for instance, we include it in the present paper. 
The advantage of our  model  is that it relies on simple combinatorics of planar trees and permutations. Therefore, this provides us with a ``simple'' explicit notion of an $E_\infty$-algebra (Proposition~\ref{prop:EinftyAlg}). 

\smallskip

The Koszul property of Theorem~\ref{thmintro:OKoszul} gives a cofibrant resolution of the colored operad $\calO$. So algebras over this resolution provide us with a new notion of \textit{higher homotopy operads} where both the relations for the partial composition products \textit{and} for the symmetric group actions are relaxed coherently up to homotopy. 
This very general notion  includes many of the existing homotopy algebraic notions 
as particular cases: $A_\infty$-algebras, $A_\infty$-modules,  homotopy nonsymmetric operads, for example.
It comes naturally equipped with a more general notion of \textit{$\infty$-morphism}, which is proved to be an efficient  tool to describe the homotopy properties of differential graded operads. For instance, the following homotopy transfer theorem shows how to transfer operadic structures through homotopy equivalences of dg $\NN$-modules (and not necessarily homotopy equivalences of dg $\Sy$-modules). 
\begin{theoremintro5}[Homotopy Transfer Theorem]
Let $\{\calH(n), \dif_{\calH(n)}\}_{n\in \NN}$
be a homotopy retract of 
$\{\P(n), \allowbreak \dif_{\P(n)}\}_{n\in \NN}$ in the category of dg $\NN$-modules:
\begin{align*}
&\xymatrix{     *{ \quad \ \  \quad (\P, \dif_\P)\ } \ar@(dl,ul)[]^{h}\ \ar@<0.5ex>[r]^{p} & *{\
(\calH, \dif_{\calH})\quad \ \  \ \quad }  \ar@<0.5ex>[l]^{i} \ , }\\
&i p- \id_\P =\dif_\P  h+ h  \dif_\P, \ i\  \text{quasi-isomorphism}\ .
\end{align*}
Any (higher homotopy) operad structure on $\P$ can be transferred into a higher homotopy operad structure on $\calH$ such that the quasi-isomorphism $i$ extends to an $\infty$-quasi-isomorphism.
\end{theoremintro5}

The methods developed and used in this paper to solve the above-mentioned operadic issues also allows us to define a suitable notion of group representation up to homotopy. This is treated separately in an appendix. 

 \smallskip
 
 A few remarks are in order to conclude this introduction. We tried to provide the reader with as many figures of trees as possible to render the exposition more readable. A particular care has been applied to the treatment of the signs; on the opposite to some articles in this domain (including some written by the second author), \textit{all} the signs are made explicit.  We hope that, in this way, the reader will be able to perform actual computations with the various notions introduced here.  
To go further, one can apply the very same method to the categories of cyclic operads, modular operads and \textit{unital} operads. In this last case, this would help developing the homotopy properties of the partial compositions, symmetric groups action \textit{and} the unit at the same time. 
The colored operad which encodes unital operads still admits a natural inhomogeneous presentation, of course more involved than that of $\calO$. It is very likely to be curved Koszul too. The present paper should represent about two-thirds of the relevant computations to treat this case. 
Since we were only interested in applying our theory to augmented operads, or equivalently to non-unital operads, we only focused to this later case. 

\smallskip

We chose to developed the present theory because we had in mind the example of the operad encoding Lie algebras. Our ultimate goal is to be able to produce a suitable version of \textit{weak $L_\infty$-algebras} where both the Jacobi identity \textit{and} the skew-symmetry of the Lie bracket 
will be relaxed up to higher coherent homotopies. 
There already exists a notion of \emph{weak Lie $2$-algebras} due to  D. Roytenberg \cite{Roytenberg07, Noohi13},   which   is made up of  just a first stratum of  homotopies. If not the most general relaxed notion, it however already admits applications in symplectic geometry and in mathematical physics.
Producing a definition including an infinite sequence of coherent higher homotopies by hands is out of reach (at least for us), but the present conceptual theory is the right tool to solve this issue. 
First computations already show that we can recover Roytenberg's notion
of weak Lie $2$-algebras and that we can
produce as a next step a definition of \emph{weak Lie $3$-algebras}.
The full story is work in progress and will be the subject of another
paper.

\smallskip
Finally, the present paper, with these new higher bar and cobar constructions, is the first step toward a new higher Koszul duality theory for operads taking faithfully into account the symmetric group actions. 
The general pattern for the Koszul duality theory of algebras over a colored operad  detailed
 by J. Mill\`es in \cite{Milles12} can be applied to algebras over the operad $\calO$, that is to operads.
In this way, one should be able to simplify the cofibrant resolutions produced by the higher bar-cobar resolution of Theorem~\ref{thmintro:BarCobarRes}. 
The case of the operad encoding commutative algebras is of particular interest since it  should give an operad related the cofibrant $E_\infty$-operad given in \cite{Fresse09bis} and to the chain complex of the Fox--Neuwirth cells of the Fulton--Mac\-Pher\-son operad of \cite{GetzlerJones94}. 
This  will also be the subject of a sequel to this paper. 

\subsection*{Layout}
The paper is organised as follows. 
In the first section, we introduce the colored operad which encodes non-unital operads and we prove that its natural  inhomogeneous quadratic  presentation is curved Koszul. In Section~$2$, we introduce a new category of higher cooperads and we define a higher bar-cobar adjunction with the category of non-unital differential graded operads. The resulting higher bar-cobar adjunction is shown to provide a  cofibrant replacement functor, which, applied to the operad encoding commutative algebras gives a cofibrant $E_\infty$-operad. In the third section, we introduce a new notion of higher homotopy operad where the action of the symmetric groups is also relaxed up to homotopy. This gives rise to new tools to study the homotopy properties of operads over any ring. The appendix~A deals with the notion of group representation up to homotopy. 

\subsection*{Acknowledgements} We express our appreciation to the J.A. Dieudonn\'e department of the University of Nice Sophia Antipolis and the Newton Institute for the invitations and the excellent working conditions. We would like to thank Clemens Berger, Gabriel C. Drummond-Cole, Benoit Fresse, Joan Bellier-Mill\`es, and Henrik Strohmayer  for useful discussions.  

\subsection*{Conventions}
We work over a commutative ground ring  $\KK$, for instance the ring of integers $\mathbb{Z}$. Since all the $\KK$-modules considered in this paper are free, they are all projective. 
We denote by $s$ the suspension  operator
of degree $1$:  $(sC)_{\bullet+1}:=C_\bullet$.
We often denote a permutation $\sigma \in \Sy_n$ of the  set $\{1,\dots,n\}$ by its values
$[\sigma(1),\dots,\sigma(n)]$. A collection of vector spaces indexed by the integers is called an $\NN$-module. 

\section{The colored operad which encodes operads}\label{sec:one}
In this section, we begin by recalling the definitions of the various types of operads used throughout the text. For further
details, we refer the reader to the book \cite{LodayVallette12}. We introduce the colored operad which encodes the
category of operads \emph{with} their symmetric groups actions viewed as structure maps. We show that it admits an inhomogeneous quadratic presentation, which is Koszul in the sense of the curved Koszul duality theory of Hirsh--Mill\`es \cite{HirshMilles12}. 

\subsection{Non-unital operads and colored operads}

\begin{definition}[Non-unital operad]\label{def:Operad}
A \emph{non-unital  operad} $\P$ is a collection $\left\{  \P(n)  \right\}_{n\in \NN}$ of vector spaces  
equipped with actions of the symmetric groups $\Sy_n$ and with partial composition maps:
$$\left\{\begin{array}{clll}
   (-)^\sigma&\colon & \P(n) \longrightarrow  \P(n)\ ,
      & \text{for}\ \sigma \in \Sy_n \ , \\
    \circ_i&\colon& \P(n) \otimes
      \P(k)
      \longrightarrow \P(n+k-1) \ ,
      & \text{for}\ 1 \leq i \leq n \ .
  \end{array}
  \right.
  $$
These are required to satisfy the axioms of a right group action 
\begin{itemize}
\item[(i)] $(\mu^{\sigma})^{\tau}=\mu^{\sigma\tau}\ $ ,
\item[(ii)] $\mu^{\id_n}=\mu\ , \quad $ for all $\mu \in \P(n)$ and $\sigma, \tau\in \Sy_n\ $, 
\end{itemize} 
the parallel composition and the sequential composition axioms 
\begin{itemize}
\item[(iii)] $(\lambda \circ_j \mu) \circ_i \nu  {=} (\lambda \circ_i \nu) \circ_{j+k-1} \mu, \quad$ when $i<j$\ ,
\item[(iv)] $\lambda \circ_i (\mu \circ_j \nu)
          = (\lambda \circ_i \mu) \circ_{j+i-1} \nu, \quad $ for all $\lambda \in \P(m)$,  $\mu \in \P(n)$, $\nu \in \P(k)$\ , 
\end{itemize} 
and the compatibility axioms between them
\begin{itemize}
\item[(v)]$\mu \circ_i \nu^\sigma
          = (\mu \circ_i \nu)^{ \sigma'}, \quad$ for all $\mu \in \P(n)$, $\nu \in \P(k)$ and $\sigma\in \Sy_k$\ ,

\noindent          
where $\sigma'$ is the permutation of $\Sy_{n+k-1}$          which acts like $\sigma$ on the block 
$\{i, \ldots, i+k-1\}$ and identically outside, 
          
\item[(vi)] 
$\mu^\sigma \circ_i \nu
          = (\mu \circ_{\sigma(i)} \nu)^{\sigma''}, \quad$ for all $\mu \in \P(n)$, $\nu \in \P(k)$ and $\sigma\in \Sy_n$\ , 

\noindent          
where $\sigma''$ is the permutation of $\Sy_{n+k-1}$ which acts like $\sigma$ on 
$\{ 1, \ldots, i-1\}\cup \{i+k, \ldots, n+k-1\}$          with values in 
$\{ 1, \ldots, \sigma(i)-1\}\cup \{\sigma(i)+k, \ldots, n+k-1\}$ and identically on the block 
$\{i, \ldots, i+k-1\}$ with values in 
$\{\sigma(i), \ldots, \sigma(i)+k-1\}$\ .
\end{itemize} 
\end{definition}

Since this definition is set-theoretical, it applies to any symmetric (closed) 
monoidal category, for instance to topological spaces or  
differential graded vector spaces. In the present paper, we will only work with differential graded non-unital operads. 
A morphism of dg non-unital operads is a morphism of dg $\NN$-modules which respects the structures maps; the associated category is denoted by $\mathsf{nu\ Op}$. \\

Let us consider the identity operad  $\I$ made up of only the identity operation: $\I(1)=\KK \id$ and $\I(n)=0$, for $n\neq 1$.
The usual notion of an operad  is defined with a unit $u : \I \to \P(1)$ for the partial composition products: 
$\id \circ_1\,  \mu=\mu$ and $\mu\circ _i \id = \mu$, where $\id:=u(1) \in \P(1)$. However, many operads come equipped with an augmentation, that is a morphism of operads $\varepsilon : \P \to \I$. Such an operad splits into $\P= \I \oplus \oP$, where the augmentation ideal $\oP:=\ker \varepsilon$ is a non-unital operad. In the other way round, one can freely add  a unit to a non-unital operad $\P$: $\P^+:=\I \oplus \P$. These two functors provide us with an equivalence of categories between non-unital operads and augmented operads:
$$ \mathsf{nu\ Op} \cong \mathsf{aug\ Op}\ .$$

\begin{example}
  Given a vector space $V$, we consider the collection of all  multilinear maps from $V$ to $V$: 
  \[ \End_V(n) := \Hom(V^{\otimes n},V) . \]
  The permutation of the inputs, respecting the Koszul sign rule in
  the graded case, and the classical composition of functions endow
  it with a non-unital operad structure.
\end{example}

\begin{definition}[Algebra over an operad]
  A  \emph{$\P$-algebra} structure on a vector space $V$ is a morphism of operads 
  $\P \to \End_V$. 
\end{definition}

An operad can act on a single vector space. To model the operations acting on various vector spaces at the same time, we need to consider the following colored version of operads. 

\begin{definition}[Colored  operad] 
  Let $X$ be a set; its elements are referred to as colors. An 
  \emph{$X$-colored  operad} $\opd{O}$, or \emph{colored  operad} for short,  consists of an underlying 
  collection of  vector spaces 
  \[ \{ \opd{O}(x_0; x_1,\ldots,x_n)\ |\ x_0,\ldots,x_n \in X \}, \]
  equipped with symmetric group actions, partial composition maps and a unit map
  $$\left\{\begin{array}{cll}
    (-)^\sigma & \colon & \opd{O}(x_0; x_1,\ldots,x_n)
      \longrightarrow \opd{O}(x_0; x_{\sigma(1)},\ldots,x_{\sigma(n)})\ ,
      \quad \text{for}\ \sigma \in \Sy_n \ , \\
    \circ_i & \colon & \calO(x_0; x_1,\ldots,x_n) \otimes \calO(x_i; y_1,\ldots, y_k)
      \longrightarrow \calO(x_0; x_1,\ldots,x_{i-1},y_1,\ldots,y_k,x_{i+1},\ldots,x_n) \ , \\
      && \hfill
        \text{for}\ 1 \leq i \leq n\quad \text{and for}\quad x_0, x_1, \ldots, x_n, y_1,\ldots, y_k\in X\ , \ \\
    u_x & \colon & \KK \longrightarrow \calO(x;x)\ ,
      \quad \text{for any }x\in X\ .
  \end{array}
  \right.
  $$
They  should satisfy  the aforementioned axioms (i)-(vi) color-wise.
 The data of the unit maps  is equivalent to unit elements $\id_x\in 
 \calO(x;x)$. They are  required to satisfy
  $$\id_x \circ_1\, \mu =\mu \quad \text{and}\quad 
 \mu \circ_i \id_{x_i}  =\mu,\quad \text{for}\ \mu\in \calO(x; x_1, \ldots, x_n)\ \text{and for}\ 1\leq i\leq n\ .$$  
\end{definition}

\begin{example}
  Given a  vector space $V_x$, for each color $x \in X$, we consider the collection of all  multilinear maps from the $V_x$ to themselves: 
  \[ \End_V(x_0; {x_1,\ldots,x_n})
     := \Hom(V_{x_1}\otimes\cdots\otimes V_{x_n},V_{x_0}) . \]
  The permutation of the inputs, the classical composition of 
  functions and the identity functions endow it with a colored
  operad structure.
\end{example}

\begin{definition}[Algebra over a colored operad]
  An \emph{$\calO$-algebra} structure on a family of vector spaces
  $\{V_x\}_{x\in X}$ is a morphism of colored operads
  $\calO \to \End_V$.
\end{definition}

One can extend the aforementioned definitions to the differential
graded setting. In the next section, we will introduce a particular
colored operad whose algebras are non-unital operads.

\subsection{Non-unital operads as algebras over a colored  operad}
\label{subsec:Operads=OAlgebras}

\begin{definition}[Colored operad $\calO$]\label{def:QLCPresentation}
We define the $\NN$-colored operad $\mathcal{O}=\TTT(E)/(R)$ by generators
\begin{align*}
  &
  E(n;n) := \left\{
   \begin{aligned}\begin{tikzpicture}[optree]
    \node{}
      child { node[dot,label=right:$\sigma$]{}
        child { edge from parent node[right,near end]{\tiny$n$} }
        edge from parent node[right,near start]{\tiny$n$} } ;
   \end{tikzpicture}\end{aligned} ,
   \sigma \in \Sy_n\setminus\{\id_n\}
  \right\}
  , \text{for} \ n\ge 1,\  \text{and}& \\
&  E(n+k-1; n,k) := \left\{
   \begin{aligned}\begin{tikzpicture}[optree]
    \node{}
      child { node[circ]{$i$}
        child { node[label=above:$1$]{} edge from parent node[left,near end]{\tiny$n$} }
        child { node[label=above:$2$]{} edge from parent node[right,near end]{\tiny$k$} }
        edge from parent node[right,near start]{\tiny$n+k-1$} } ;
   \end{tikzpicture}\end{aligned} , 
   1 \leq i \leq n;
          \begin{aligned}
      \begin{tikzpicture}[optree]
    \node{}
      child { node[circ]{$j$}
        child { node[label=above:$2$]{} edge from parent node[left,near end]{\tiny$n$} }
        child { node[label=above:$1$]{} edge from parent node[right,near end]{\tiny$k$} }
        edge from parent node[right,near start]{\tiny$n+k-1$} } ;
   \end{tikzpicture}\end{aligned}, 
   1 \leq j \leq k
  \right\} , \text{for} \ n,k\ge 0,&
\end{align*}
with regular $\Sy_2$-action,  that is the action of the transposition $[21]$ sends 
$\begin{aligned}\begin{tikzpicture}[optree]
    \node{}
      child { node[circ]{$i$}
        child { node[label=above:$1$]{} edge from parent node[left,near end]{\tiny$n$} }
        child { node[label=above:$2$]{} edge from parent node[right,near end]{\tiny$k$} }
        edge from parent node[right,near start]{\tiny$n+k-1$} } ;
   \end{tikzpicture}\end{aligned}$
    to 
$ \begin{aligned}
      \begin{tikzpicture}[optree]
    \node{}
      child { node[circ]{$i$}
        child { node[label=above:$2$]{} edge from parent node[left,near end]{\tiny$k$} }
        child { node[label=above:$1$]{} edge from parent node[right,near end]{\tiny$n$} }
        edge from parent node[right,near start]{\tiny$n+k-1$} } ;
   \end{tikzpicture}\end{aligned}$, 
   and relations
\begin{multicols}{2}
\begin{enumerate}[(i)]
\item \label{Orel1a}
$ \begin{aligned}
  \begin{tikzpicture}[optree]
    \node{}
      child { node[dot,label=right:$\tau$]{}
        child { node[dot,label=right:$\sigma$]{}
          child { edge from parent node[right,near end]{\tiny$n$} } } } ;
  \end{tikzpicture}\end{aligned}
  \stackrel{(\tau\neq\sigma^{-1})}{=}
  \begin{aligned}\begin{tikzpicture}[optree]
    \node{}
      child { node[dot,label=right:$\sigma\tau$]{}
        child { edge from parent node[right,near end]{\tiny$n$} } } ;
  \end{tikzpicture}\end{aligned}
  , $
\setcounter{enumi}{2}
\item \label{Orel3a}
$ \begin{aligned}\begin{tikzpicture}[optree,scale=1.2]
    \node{}
      child { node[circ]{$i$}
        child { node[circ]{$j$}
          child { node[label=above:$1$]{}
            edge from parent node[left,near end]{\tiny$m$} }
          child { node[label=above:$2$]{}
            edge from parent node[right,near end]{\tiny$n$} } }
        child { node[label=above:$3$]{}
          edge from parent node[right,near end]{\tiny$k$} } } ;
  \end{tikzpicture}\end{aligned}
  \stackrel{(i<j)}{=}
  \begin{aligned}\begin{tikzpicture}[optree,scale=1.2]
    \node{}
      child { node[circ,ellipse]{$j+k-1$}
        child { node[circ]{$i$}
          child { node[label=above:$1$]{}
            edge from parent node[left,near end]{\tiny$m$} }
          child { node[label=above:$3$]{}
            edge from parent node[right,near end]{\tiny$k$} } }
        child { node[label=above:$2$]{}
          edge from parent node[right,near end]{\tiny$n$} } } ;
  \end{tikzpicture}\end{aligned}
  , $
\setcounter{enumi}{4}
\item \label{Orel2a}
$ \begin{aligned}\begin{tikzpicture}[optree]
    \node[leaf]{}
      child { node[circ]{$i$}
        child { node[leaf,label=above:$1$]{}
          edge from parent node[left,near end]{\tiny$n$} }
        child { node[dot,label=right:$\sigma$]{}
          child { node[leaf,label=above:$2$]{}
            edge from parent node[right,near end]{\tiny$k$} } } } ;
  \end{tikzpicture}\end{aligned}
  =
  \begin{aligned}\begin{tikzpicture}[optree]
    \node[leaf]{}
      child { node[dot,label=right:$\sigma'$]{}
        child { node[circ]{$i$}
          child { node[leaf,label=above:$1$]{}
            edge from parent node[left,near end]{\tiny$n$} }
          child { node[leaf,label=above:$2$]{}
            edge from parent node[right,near end]{\tiny$k$} } } } ;
  \end{tikzpicture}\end{aligned}
  , $
\setcounter{enumi}{1}
\item \label{Orel1b}
$ \begin{aligned}\begin{tikzpicture}[optree]
    \node{}
      child { node[dot,label=right:$\sigma^{-1}$]{}
        child { node[dot,label=right:$\sigma$]{}
          child { edge from parent node[right,near end]{\tiny$n$} } } } ;
  \end{tikzpicture}\end{aligned}
  =
  \begin{aligned}\begin{tikzpicture}[optree]
    \node{}
      child { node[comp,label=right:$\id_n$]{}
        child { edge from parent node[right,near end]{\tiny$n$} } } ;
  \end{tikzpicture}\end{aligned}
  , $
\setcounter{enumi}{3}
\item \label{Orel3b}
$ \begin{aligned}\begin{tikzpicture}[optree,scale=1.2]
    \node{}
      child { node[circ]{$i$}
        child { node[label=above:$1$]{}
          edge from parent node[left,near end]{\tiny$m$} }
        child { node[circ]{$j$}
          child { node[label=above:$2$]{}
            edge from parent node[left,near end]{\tiny$n$} }
          child { node[label=above:$3$]{}
            edge from parent node[right,near end]{\tiny$k$} } } } ;
  \end{tikzpicture}\end{aligned}
  =
  \begin{aligned}\begin{tikzpicture}[optree,scale=1.2]
    \node{}
      child { node[circ,ellipse]{$j+i-1$}
        child { node[circ]{$i$}
          child { node[label=above:$1$]{}
            edge from parent node[left,near end]{\tiny$m$} }
          child { node[label=above:$2$]{}
            edge from parent node[right,near end]{\tiny$n$} } }
        child { node[label=above:$3$]{}
          edge from parent node[right,near end]{\tiny$k$} }
      } ;
  \end{tikzpicture}\end{aligned}
  , $
\setcounter{enumi}{5}
\item \label{Orel2b}
$ \begin{aligned}\begin{tikzpicture}[optree]
    \node{}
      child { node[circ]{$i$}
        child { node[dot,label=left:$\sigma$]{}
          child { node[label=above:$1$]{}
            edge from parent node[left,near end]{\tiny$n$} } }
        child { node[label=above:$2$]{}
          edge from parent node[right,near end]{\tiny$k$} } } ;
  \end{tikzpicture}\end{aligned}
  =
  \begin{aligned}\begin{tikzpicture}[optree]
    \node{}
      child { node[dot,label=right:$\sigma''$]{}
        child { node[circ,ellipse,inner sep=0pt]{$\sigma(i)$}
          child { node[label=above:$1$]{}
            edge from parent node[left,near end]{\tiny$n$} }
          child { node[label=above:$2$]{}
            edge from parent node[right,near end]{\tiny$k$} } } } ;
  \end{tikzpicture}\end{aligned}
  . $
\end{enumerate}
\end{multicols}
\end{definition}

\begin{lemma}
  The algebras over the colored operad $\mathcal{O}$ are the non-unital
   operads.
\end{lemma}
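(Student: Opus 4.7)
The plan is straightforward: unfold what it means to give a morphism of colored operads $\calO \to \End_V$, and observe that the data and axioms match Definition~\ref{def:Operad} line by line.

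First, I would fix the color set to be $\NN$ and work with a family $V = \{V_n\}_{n \in \NN}$ of $\KK$-modules. Since $\calO = \TTT(E)/(R)$ is presented by generators and relations, a morphism $\calO \to \End_V$ is equivalent, by the universal property of the free colored operad, to a color-preserving map of generators $E \to \End_V$ whose canonical extension vanishes on $(R)$. The unary generator in color $n$ labeled by $\sigma \in \Sy_n\setminus\{\id_n\}$ produces a linear endomorphism $V_n \to V_n$, which we rename $\mu \mapsto \mu^\sigma$; extending by $\mu^{\id_n} := \mu$ gives maps for every permutation. The binary generators of colors $(n{+}k{-}1; n, k)$ and $(n{+}k{-}1; k, n)$ form a single regular $\Sy_2$-orbit, hence are determined by each other up to the swap of inputs, and together they yield one partial composition operation $\circ_i \colon V_n \otimes V_k \to V_{n+k-1}$ for each admissible $1 \le i \le n$.

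Second, I would check term by term that the six families of relations defining $\calO$ translate exactly into the six axioms of a non-unital operad. Relations~(i) and~(ii) state that composing the permutation generators $\sigma, \tau$ yields $\sigma\tau$, and the identity when $\tau = \sigma^{-1}$; under the translation these become the right action axioms $(\mu^\sigma)^\tau = \mu^{\sigma\tau}$ and $\mu^{\id_n} = \mu$. Relations~(iii) and~(iv), between nested and non-nested occurrences of the composition generators, reproduce the parallel and sequential composition axioms for the $\circ_i$. Relations~(v) and~(vi) encode the compatibility of the partial compositions with the symmetric action, with the block permutations $\sigma'$ and $\sigma''$ specified in precisely the same way on both sides of the translation. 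Conversely, a non-unital operad structure on $V$ provides the required images of all the generators, and the six operad axioms imply that the six relations are satisfied, so the construction is inverse to the previous one.

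Finally, the morphism side is automatic: a morphism of $\calO$-algebras is a color-preserving family of $\KK$-linear maps $V \to V'$ commuting with the action of every element of $\calO$, which, since $\calO$ is generated by the $(-)^\sigma$ and the $\circ_i$, reduces to commuting with these two structures---exactly the definition of a morphism of non-unital operads. The only point that warrants careful bookkeeping is the comparison of the permutations $\sigma'$ and $\sigma''$ arising pictorially from the leaf-labelling of the corollas in relations~(v) and~(vi) with the block permutations specified in Definition~\ref{def:Operad}; this is a direct combinatorial verification, and not a serious obstacle. With this match in hand, the equivalence of categories follows.
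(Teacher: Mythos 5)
Your proposal is correct, and it is exactly the argument the paper has in mind: the paper's proof is simply ``Straightforward from Definition~\ref{def:Operad}'', i.e.\ the generators-and-relations presentation of $\calO$ is read off term by term against the data and axioms (i)--(vi) of a non-unital operad, which is precisely the unpacking you carry out. Your extra care about extending the unary generators by $\mu^{\id_n}:=\mu$, the regular $\Sy_2$-orbit of binary generators, and the identification of $\sigma'$, $\sigma''$ is the right bookkeeping, but it introduces nothing beyond what the paper treats as immediate.
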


\begin{proof}Straightforward from Definition~\ref{def:Operad}.
\end{proof}

We consider the following two suboperads of $\calO$. The operad
$\calO_{ns}$ is generated by the binary generators $E(n+k-1; n, k)$
with relations \eqref{Orel3a} and \eqref{Orel3b}. Algebras over the operad $\calO_{ns}$ are   
non-unital nonsymmetric operads. It was shown 
by P. Van der Laan in \cite[Proposition~$4.2$]{VanderLaan03} that 
one  basis   is made up of planar rooted
trees with vertices indexed bijectively by $1, \ldots, k$. The partial compositions then correspond  to the substitution of trees.

The colored operad $\KK[\Sy]$ is generated by the unary generators
$E(n;n)$ with relations \eqref{Orel1a} and \eqref{Orel1b}. This is the
trivial presentation of the group algebras $\KK[\Sy_n]$. So the
colored operad $\KK[\Sy]$ is a colored operad concentrated in arity
1 equal to the  colored group algebra of symmetric groups
$\{\KK[\Sy_n]\}_{n\in \NN}$. 

The full colored operad $\calO$ is made up of the two suboperads $\calO_{ns}$ and $\KK[\Sy]$ by mean of distributive
law, see \cite[\S $8.6$]{LodayVallette12}, as follows. 

\begin{proposition}\label{prop:DistLawO}
The rewriting rule
defined by the relations (v) and (vi), read from left to right,
induces a distributive law
$\calO_{ns} \circ \KK[\Sy] \to \KK[\Sy] \circ \calO_{ns}$\ .
Therefore, the underlying colored $\Sy$-module of $\mathcal O$ is isomorphic to $\KK[\Sy] \circ \calO_{ns}$.
\end{proposition}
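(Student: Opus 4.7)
The plan is to invoke the theory of distributive laws between quadratic (colored) operads developed in \cite[Section~8.6]{LodayVallette12}. The presentation of $\calO$ in Definition~\ref{def:QLCPresentation} naturally splits its generating $\Sy$-module as $E = E_{\Sy} \oplus E_{ns}$, where $E_{\Sy}$ collects the unary generators indexed by non-trivial permutations and $E_{ns}$ the binary generators indexed by partial compositions, and similarly splits its relations into three pieces: the relations (i) and (ii), which present $\KK[\Sy]$ as a colored operad; the relations (iii) and (iv), which present $\calO_{ns}$; and the mixed relations (v) and (vi), which, read from left to right, define an $\Sy$-equivariant rewriting rule
\[ \lambda : E_{ns} \circ_{(1)} E_{\Sy} \longrightarrow E_{\Sy} \circ_{(1)} E_{ns}. \]
By free extension this yields a morphism of colored $\Sy$-modules $\widetilde{\lambda} : \calO_{ns} \circ \KK[\Sy] \to \KK[\Sy] \circ \calO_{ns}$ together with a canonical morphism of colored operads $\KK[\Sy] \circ \calO_{ns} \to \calO$. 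The proposition is equivalent to the statement that this latter morphism is an isomorphism.

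By the diamond lemma for distributive laws \cite[Chapter~8]{LodayVallette12}, whose statement and proof carry over verbatim to the colored setting, it suffices to check confluence of the rewriting system on all weight-three monomials of the free colored operad $\TTT(E)$ admitting more than one reduction. The critical peaks fall into three families. First, monomials of the form $(\mu^{\sigma})^{\tau} \circ_i \nu$ and $\mu \circ_i (\nu^{\sigma})^{\tau}$, where one may first apply (i) or first apply (v)/(vi); confluence here amounts to the fact that the assignments $\sigma \mapsto \sigma'$ and $\sigma \mapsto \sigma''$ into $\Sy_{n+k-1}$ are group homomorphisms. Second, monomials $\mu^{\sigma} \circ_i \nu^{\tau}$, in which (v) and (vi) may each be applied first; confluence reduces to the commutation, in $\Sy_{n+k-1}$, of the block permutation $\tau'$ acting on the inputs of $\nu$ with the reorganisation permutation $\sigma''$ that repositions the inputs of $\mu$. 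Third, monomials built on the three-vertex configurations of (iii) or (iv) with a permutation attached at one of the vertices; confluence here follows from a direct combinatorial comparison of the block permutations produced by the two routes through the diamond.

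Once confluence is established, $\widetilde{\lambda}$ is a distributive law in the sense of \cite[Section~8.6]{LodayVallette12}, and the canonical map $\KK[\Sy] \circ \calO_{ns} \to \calO$ is an isomorphism of colored $\Sy$-modules. The main obstacle lies in the third family of critical peaks: when a permutation is pushed outward through two nested partial compositions whose nesting has been interchanged by an associativity relation, one must verify that the two routes produce exactly the same element of a symmetric group of the form $\Sy_{m+n+k-2}$. This is an elementary but delicate bookkeeping, as it requires tracking how the index shifts occurring in relations (iii) and (iv) interact with the block and reorganisation structures encoded in $\sigma'$ and $\sigma''$.
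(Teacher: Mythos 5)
Your route is genuinely different from the paper's. The paper does not run a confluence argument for this proposition at all: it uses the fact (established in the preceding lemma) that $\calO$ is the colored operad encoding non-unital operads, recalls that the free non-unital operad on an $\Sy$-module is spanned by trees with vertices labeled by the module and leaves labeled by a permutation, and combines this with Van der Laan's planar-tree basis of $\calO_{ns}$ to see directly that the canonical surjection $\KK[\Sy]\circ\calO_{ns}\epi\calO$ is injective. The rewriting/diamond-lemma strategy you propose is essentially what the paper does later, but only for the \emph{quadratic analogue} $q\calO$ in Proposition~\ref{prop:DistLawqO}, where the unary relations have been homogenized to $\sigma\circ\tau=0$.

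This is also where your argument has a genuine gap. The diamond lemma for distributive laws \cite[Theorem~$8.6.5$]{LodayVallette12}, which you invoke to reduce everything to weight-three monomials, is a statement about \emph{homogeneous quadratic} presentations of both factors: its weight-$3$ injectivity criterion uses that the relations preserve the weight grading of the free operad. In the presentation of Definition~\ref{def:QLCPresentation}, relations (i)--(ii) are quadratic--linear--constant (a product of two permutations rewrites to one permutation or to the identity/unit), so the weight grading of $\TTT(E)$ descends only to a filtration on $\calO$, your ``weight-three monomials'' are not stable under the rewriting, and the theorem does not carry over ``verbatim''. Your list of critical peaks is the right one, and the individual confluences you describe do hold (they are exactly the compatibilities $(\sigma\tau)'=\sigma'\tau'$, $(\sigma\tau)''=\sigma''\tau''$, $\sigma''\tau'=\tau'\sigma''$ and the checks displayed in the proof of Proposition~\ref{prop:DistLawqO}), but local confluence of these peaks does not by itself yield the conclusion for $\calO$: you would need either a full Bergman-style operadic rewriting argument, including a termination order and treatment of all inclusion/overlap ambiguities created by the inhomogeneous unary reductions, or to first establish the result for $q\calO$ (as in Proposition~\ref{prop:DistLawqO}) and then transfer it to $\calO$ by a filtration/PBW-type comparison of associated graded objects. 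Alternatively, the paper's shortcut --- identifying $\calO$ with the known operad encoding operads and exhibiting the basis of Proposition~\ref{prop:OperadicTreesBasis} --- avoids the issue entirely.
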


\begin{proof}
One has  to check that the surjection $\KK[\Sy] \circ \calO_{ns} \epi \calO$ is injective. It is well known that 
the free non-unital operad on an $\Sy$-module $M$ is given by the module of nontrivial trees with vertices labeled by the elements of $M$ and with the leaves labeled by a permutation of $\Sy_n$, see \cite[Section~$5.6$]{LodayVallette12}. Together with \cite[Proposition~$4.2$]{VanderLaan03}, this proves that the colored operad $\mathcal O$ is isomorphic to $\KK[\Sy] \circ \calO_{ns}$.
\end{proof}

\begin{remarks*}\leavevmode
\begin{itemize}
  \item[$\diamond$] This proposition proves that the colored operad $\calO$ defined here by generators and relations
    is the reduced version, i.e. without anything in arity $0$, of the colored operad $S$ of Berger--Moerdijk 
    \cite[\S $1.5.6$]{BergerMoerdijk07} defined by labeled trees and encoding operads.
    
  \item[$\diamond$]  The colored operad $\mathcal O$ is made up of regular representations of the symmetric groups, however  it is
    not a regular operad. This means that the colored operad $\calO$ is not obtained from a nonsymmetric operad by tensoring it with the regular representation of the symmetric goups since 
    the relation (iii) contains a nontrivial permutation of the inputs. 

  \item[$\diamond$]  This proposition is actually equivalent to the commutativity of the following diagram of forgetful functors 
 $$\xymatrix@R=20pt@C=40pt@M=5pt{
{\mathsf{nu}\ \mathsf{Op}} \ar[d]\ar[r]& {\mathsf{nu}\ \mathsf{ns}\ \mathsf{Op}\ar[d] } \\
\Sy\textsf{-}\mathsf{Mod}\ar[r]& \NN\textsf{-}\mathsf{Mod}\ , }$$
where $\mathsf{ns}\ \mathsf{Op}$ stands for the category of ns operads. The induced commutative diagram of left adjoint functors shows that the free $\calO$-algebra on an $\NN$-module $\P$ is isomorphic to 
$$\calO(\P)\cong \overline{\calT}(\P\otimes_H \KK[\Sy])\ ,$$
where $\overline{\calT}$ stands for the non-unital free operad functor of an $\Sy$-module, see \cite[\S~$5.5$]{LodayVallette12}
and where
 $\otimes_H$ is the aritywise tensor product. 
\end{itemize}
\end{remarks*}

\subsection{Trees, bases, orders and signs}\label{subsec:TOS}
We begin by considering planar trees, without any restriction on the valence of the vertices, which can be equal to $0$; we call them \emph{operadic trees}. 

\begin{proposition}\label{prop:OperadicTreesBasis}
  The set of  planar trees  labeled with one permutation  on the number of leaves and one permutation on the number of vertices 
  forms a $\KK$-linear basis of the colored operad $\calO$.
  \begin{align*}
    \begin{aligned}\begin{tikzpicture}[optree,
        level distance=12mm,
        level 2/.style={sibling distance=16mm},
        level 3/.style={sibling distance=8mm}]
    \node{}
      child { node[comp,label=-3:$4$] {} 
        child {node[comp,label=183:$2$] {} 
          child {  node[label=above:$1$]{} }
          child { node[label=above:$4$]{} }
          child { node[label=above:$8$]{} } }
          child { node[label=above:$2$]{} }
        child { node[comp,label=-3:$3$] {} 
          child { node[label=above:$3$]{} }
          child { node[label=above:$5$]{} }
          child { node[comp,label=right:$1$] {} 
            child { node[label=above:$6$]{} }
            child { node[label=above:$7$]{} } } } };
    \end{tikzpicture}\end{aligned}
  \end{align*}
\end{proposition}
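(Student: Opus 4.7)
The plan is to derive the proposition directly from the distributive law of Proposition~\ref{prop:DistLawO}, combined with the known bases of the two suboperads $\calO_{\mathrm{ns}}$ and $\KK[\Sy]$. Since Proposition~\ref{prop:DistLawO} already gives an isomorphism of colored $\Sy$-modules
\[
\calO \;\cong\; \KK[\Sy] \circ \calO_{\mathrm{ns}},
\]
it is enough to exhibit a basis of the right-hand side and then read it as a set of labeled planar trees.

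First, I would invoke Van der Laan's result \cite[Proposition~4.2]{VanderLaan03} quoted just before Proposition~\ref{prop:DistLawO}: a basis of the colored operad $\calO_{\mathrm{ns}}$ in colored arity $(n;n_1,\dots,n_k)$ is given by planar rooted trees with $k$ vertices, labeled bijectively by the set $\{1,\dots,k\}$, whose $i$-th vertex in planar order has arity $n_i$ and whose $n$ leaves are numbered $1,\dots,n$ from left to right in planar order. The vertex labeling is nothing but the datum of a permutation in $\Sy_k$, and encodes the order in which the iterated partial compositions are performed when one unfolds the tree from the relations \eqref{Orel3a}--\eqref{Orel3b}.

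Next, the colored operad $\KK[\Sy]$ is concentrated in unary colored arities $(n;n)$, where it coincides with the group algebra $\KK[\Sy_n]$; the set of permutations $\Sy_n$ is a $\KK$-basis there. The composition product $\KK[\Sy] \circ \calO_{\mathrm{ns}}$ therefore consists of a unary element of $\KK[\Sy]$ plugged at the root of a planar tree from $\calO_{\mathrm{ns}}$. Because such a unary element is a permutation $\sigma \in \Sy_n$ acting (via the axiom of Definition~\ref{def:Operad}~(i)--(ii)) on the list of $n$ inputs, its effect on the planar tree is precisely to replace the canonical labeling $1,\dots,n$ of the leaves by the sequence $\sigma(1),\dots,\sigma(n)$. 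Combining the two bases, one obtains exactly planar trees carrying both a permutation label on their vertices and a permutation label on their leaves, as in the pictured example.

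The only substantive point to verify is the interpretation of the composition product described above, i.e.\ that composing at the root with $\sigma$ really does nothing more than permute the leaf labeling, so that the bijection with labeled planar trees is clean. This is immediate from the definition of the symmetric group action on a colored operad and from the fact that, after using the rewriting rules \eqref{Orel2a}--\eqref{Orel2b} of the distributive law, every permutation is pushed above every partial composition and thus collected at the root. No further combinatorial identity is needed, so there is no serious obstacle beyond this bookkeeping.
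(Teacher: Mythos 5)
Your argument is correct and follows essentially the same route as the paper: the paper's proof is precisely that the statement is a direct corollary of Proposition~\ref{prop:DistLawO}, with the planar-tree basis of $\calO_{\mathrm{ns}}$ from Van der Laan supplying the vertex-labeled trees and the $\KK[\Sy]$ factor supplying the permutation labeling the leaves. Your extra bookkeeping (that the permutation collected at the root via relations \eqref{Orel2a}--\eqref{Orel2b} just permutes the leaf labels) is the same identification the paper makes explicit later, in the proof of Proposition~\ref{prop:CompositeTreesBasis}.
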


\begin{proof} It is a direct corollary of Proposition~\ref{prop:DistLawO}, since planar trees form a basis of the colored operad $\mathcal{O}_{ns}$; the permutation labeling the leaves comes from the suboperad $\KK[\Sy]$.
\end{proof}

In the rest of the paper, we will consider the following  total order on the set of vertices of operadic trees. 

\begin{description}
\item[left-recursive] Starting at the root, recursively order child
vertices from left to right.
\begin{align*}
  \begin{tikzpicture}[optree,
      level distance=12mm,
      level 2/.style={sibling distance=16mm},
      level 3/.style={sibling distance=8mm}]
    \node{}
      child { node[comp,label=-3:$1$] {}
        child { node[comp,label=183:$2$] {}
          child { node[comp,label=182:$3$] {}
            child
            child }
          child
          child { node[comp,label=right:$4$] {}
            child
            child
            child } }
        child
        child
        child { node[comp,label=-3:$5$] {}
          child
          child { node[comp,label=2:$6$] {}
            child
            child }
          child
          child } } ;
  \end{tikzpicture}
\end{align*}
\end{description}

We now consider \emph{composite trees}, which are  planar trees with only unary and binary vertices labeled respectively by elements of the symmetric groups and by positive integers. 
 Composite trees which are left combs with nondecreasing binary vertices from top to bottom, are called \emph{standard composite trees}. They provide us with another basis for the colored operad $\mathcal O$ as follows. \\
 
  The key point lies  in the correspondence between the binary generators of the colored operad $\calO$ and trees with two vertices:
\begin{align*}  
\begin{aligned}\begin{tikzpicture}[optree,
      level distance=12mm,
      level 2/.style={sibling distance=12mm},
      level 3/.style={sibling distance=12mm}]
    \node{}
      child { node[comp,label={[label distance=0.5mm]185:$1$}]{}       
        child { node{\tiny $1$} }
        child { edge from parent[draw=none] node{$\,\cdots$} }
        child { node[comp,label=right:\tiny$\,i$]{} node[comp,label=182:$2 \ $]{} 
          child { node{\tiny $1$} }
          child { edge from parent[draw=none] node{$\,\cdots$} }
          child { node{\tiny $k$} } }
        child { edge from parent[draw=none] node{$\,\cdots$} }
        child { node{\tiny $n$} } } ;
  \end{tikzpicture}\end{aligned}
  \quad\longleftrightarrow\quad
  \begin{aligned}\begin{tikzpicture}[optree]
    \node{}
      child { node[circ]{$i$}
        child { node[label=above:$1$]{} edge from parent node[left,near end]{\tiny$n$} }
        child { node[label=above:$2$]{} edge from parent node[right,near end]{\tiny$k$} }
        edge from parent node[right,near start]{\tiny$n+k-1$}
      } ;
  \end{tikzpicture}\end{aligned}
  .
\end{align*}
The labels of the vertices of any operadic tree given by the left-recursive order coincide with the labels of the
leaves of the associated standard composite tree: 
if we traverse the 
operadic tree in a left-recursive manner, the inner edges found in 
this order are encoded as binary vertices in the composite tree.
In the other way round,  any standard composite tree corresponds to the canonical way of creating an operadic
tree by grafting the corollas in the left-recursive way, i.e. from bottom to top and from left to right. In this way, we get a bijection between standard composite trees and operadic trees. 
\begin{align*}
  \begin{aligned}\begin{tikzpicture}[optree,
      level distance=12mm,
      level 2/.style={sibling distance=16mm},
      level 3/.style={sibling distance=8mm}]
    \node{}
      child { node[comp,label=-3:$1$]{}
        child { node[comp,label=183:$2$]{}
          child { node[label=above:$1$]{} }
          child { node[label=above:$4$]{} }
          child { node[label=above:$8$]{} } }
          child { node[label=above:$2$]{} }
        child { node[comp,label=-3:$3$]{}
          child { node[label=above:$3$]{} }
          child { node[label=above:$5$]{} }
          child { node[comp,label=-2:$4$]{}
            child { node[label=above:$6$]{} }
            child { node[label=above:$7$]{} } } } };
  \end{tikzpicture}\end{aligned}
\longleftrightarrow
  \begin{aligned}\begin{tikzpicture}[optree]
    \node{}
      child { node[dot,label={right:$[14823567]$}]{}
        child { node[circ]{$7$}
          child { node[circ]{$5$}
            child { node[circ]{$1$}
              child { node[label=above:$1$]{} edge from parent node[right,near end]{\tiny$3$} }
              child { node[label=above:$2$]{} edge from parent node[right,near end]{\tiny$3$} }
            }
            child {
              child { edge from parent[draw=none] }
              child { node[label=above:$3$]{} edge from parent node[right,near end]{\tiny$3$} }
            }
          }
          child {
            child { edge from parent[draw=none] }
            child {
              child { edge from parent[draw=none] }
              child { node[label=above:$4$]{} edge from parent node[right,near end]{\tiny$2$} }
            }
          }
        }
      };
  \end{tikzpicture}\end{aligned} 
\end{align*}

\begin{proposition}\label{prop:CompositeTreesBasis}
A $\KK$-linear basis for the colored operad $\calO$ is given by the left combs with 
nondecreasing binary vertices, seen top to bottom,  at most one 
unary vertex labeled by a permutation at the bottom and a  labeling of the leaves
with a permutation  on the number of them.
\begin{align*}
  \begin{aligned}\begin{tikzpicture}[optree]
    \node{}
      child { node[dot,label=right:$\sigma$]{}
        child { node[circ]{$8$}
          child { node[circ]{$5$}
            child { node[circ]{$4$}
              child { node[label=above:$2$]{} edge from parent node[right,near end]{\tiny$6$} }
              child {node[label=above:$4$]{}  edge from parent node[right,near end]{\tiny$3$} }
            }            
            child {
              child { edge from parent[draw=none] }
              child { node[label=above:$3$]{} edge from parent node[right,near end]{\tiny$2$} }
            }
          }
          child {
            child { edge from parent[draw=none] }
            child {
              child { edge from parent[draw=none] }
              child { node[label=above:$1$]{} edge from parent node[right,near end]{\tiny$4$} } 
            }
          }
        }
      };
  \end{tikzpicture}\end{aligned} 
\end{align*}
\end{proposition}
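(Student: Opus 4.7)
The plan is to establish an explicit bijection between the claimed set of standard composite trees and the decorated operadic tree basis of Proposition~\ref{prop:OperadicTreesBasis}, then verify that this bijection is realised by equalities in $\calO$ obtained from iterated applications of the defining relations. The correspondence was sketched just before the statement: given a standard composite tree, one grafts corollas one by one in the left-recursive order encoded by the binary vertex labels read from top to bottom; the permutation labeling the composite tree's leaves becomes the vertex-ordering permutation of the operadic tree, and the unary permutation $\sigma$ at the bottom becomes the leaf permutation of the operadic tree.

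To prove spanning, I would start from an arbitrary composite tree built from the generators and rewrite it into a linear combination of standard composite trees using the defining relations. Relations~\eqref{Orel3a} and~\eqref{Orel3b}, read from left to right, bring any binary subtree into left-comb shape with nondecreasing labels: relation~\eqref{Orel3b} turns right-branching into left-branching, and successive applications of~\eqref{Orel3a} order the labels from top to bottom. The distributive law of Proposition~\ref{prop:DistLawO}, namely relations~\eqref{Orel2a} and~\eqref{Orel2b} read from left to right, then moves every unary vertex below all binary vertices. Finally, relations~\eqref{Orel1a} and~\eqref{Orel1b} fuse the accumulated unary vertices at the bottom into a single one, which is removed if it reduces to the identity.

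For linear independence, I would invoke Proposition~\ref{prop:OperadicTreesBasis}: since the set of planar operadic trees decorated with a leaf permutation and a vertex permutation forms a $\KK$-linear basis of $\calO$, and the standard composite trees are in set-theoretic bijection with this data (as visible from the pictorial description preceding the statement), the equalities established in the spanning step identify each standard composite tree with a unique basis element of Proposition~\ref{prop:OperadicTreesBasis}. Hence the standard composite trees themselves form a basis.

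The main obstacle will be the careful bookkeeping required for the bijection: in particular, I will have to verify that the conjugating permutations $\sigma'$ and $\sigma''$ appearing in relations~\eqref{Orel2a} and~\eqref{Orel2b} combine, after all rewriting steps, into precisely the permutation that labels the operadic tree's leaves in Proposition~\ref{prop:OperadicTreesBasis}, and that the nondecreasing order produced by~\eqref{Orel3a} is compatible with the left-recursive enumeration of inner edges. Once these compatibilities are in place, the proposition follows immediately from the bijection and Proposition~\ref{prop:OperadicTreesBasis}.
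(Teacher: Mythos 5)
Your proposal is correct and follows essentially the same route as the paper: the paper's proof simply invokes the bijection between standard composite trees and operadic trees described before the statement (with the bottom unary permutation of the composite tree corresponding to the leaf permutation of the operadic tree) together with Proposition~\ref{prop:OperadicTreesBasis}. Your additional spanning-by-rewriting discussion makes explicit what the paper delegates to the distributive law of Proposition~\ref{prop:DistLawO}, but it is the same underlying argument.
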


\begin{proof}
The proof is given by the above mentioned bijection and by Proposition~\ref{prop:OperadicTreesBasis}. 
 The permutation labeling the root of the composite tree corresponds to the permutation labeling the leaves of the operadic tree.
\end{proof}

On composite trees, we will consider the following  total order on the set of vertices. 

\begin{description}
  \item[left-levelwise] Starting at the root, order child vertices left 
  to right before proceeding to the next level/generation.
  \begin{align*}
    \begin{tikzpicture}[optree,
        level distance=12mm,
        level 2/.style={sibling distance=32mm},
        level 3/.style={sibling distance=16mm},
        level 4/.style={sibling distance=8mm}]
      \node{}
        child { node[emptycirc,label=-3:$1$]{}
          child { node[emptycirc,label=183:$2$]{}
            child { node[emptycirc,label=left:$4$]{}
              child
              child }
            child { node[emptycirc,label=left:$5$]{}
              child
              child } }
          child { node[emptycirc,label=-3:$3$]{}
            child { node[emptycirc,label=right:$6$]{}
              child
              child }
            child } } ;
    \end{tikzpicture}
  \end{align*}
\end{description}

We follow the Koszul sign rule for trees, see \cite[Chapter~$6$]{LodayVallette12}. This means that we  
equip  composite trees  with the left-levelwise ordering on the vertices, i.e. we 
have vertices $\nu_1,\ldots,\nu_n$. We then identify the space of 
labelings of a given tree by elements of graded vector spaces $V_i$ at 
$\nu_i$ with the tensor product $V_1\otimes\cdots\otimes V_n$.
A relabeling of the vertices corresponds to a permutation of the 
factors $V_i$ and the isomorphism honors the classical Koszul sign 
rule: $x\otimes y \mapsto (-1)^{xy} y\otimes x$.

\subsection{Quadratic analogue}\label{subsec:KDofqO}
We consider the projection $q\colon \TTT(E) \epi \TTT(E)^{(2)}$  
onto the weight $2$ part of the free operad. The quadratic analogue 
of the colored operad $\calO$ is the quadratic colored operad 
$$q\calO := \TTT(E)/(qR)\ .$$ In the case of the colored 
operad $\calO$, the space of relations $qR$ differs from $R$ only in 
its unary part: the right-hand sides of the relations
\eqref{Orel1a}, \eqref{Orel1b} project to $0$, i.e.
\begin{align*}
& \begin{aligned}
  \begin{tikzpicture}[optree]
    \node{}
      child { node[dot,label=right:$\tau$]{}
        child { node[dot,label=right:$\sigma$]{}
          child { edge from parent node[right,near end]{\tiny$n$} } } } ;
  \end{tikzpicture}\end{aligned}
  = 0 .
\end{align*}

The first step in the curved Koszul duality   theory is to 
show that the homogenous quadratic colored operad $q\calO$ is Koszul. To this extend, we consider the nilpotent quadratic  unital
colored algebra $\KK\{\Sy\}$ generated by all the elements of $\Sy$ except for the identities and with the maximal set of relations, i.e. any product is equal to zero. Its underlying module is the same as the colored algebra $\KK[\Sy]$.

\begin{proposition}\label{prop:DistLawqO}\leavevmode 
\begin{enumerate}
\item The rewriting rule
defined by the relations (v) and (vi), read from left to right,
induces a distributive law
$\calO_{ns} \circ \KK\{\Sy\} \to \KK\{\Sy\} \circ \calO_{ns}$\ .
Therefore, the underlying colored $\Sy$-module of $q\calO$ is isomorphic to $\KK\{\Sy\} \circ \calO_{ns}$.

\item The homogenous quadratic colored operad $q\calO$ is Koszul. 
\end{enumerate}
\end{proposition}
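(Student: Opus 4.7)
My plan is to mirror the structure of Proposition~\ref{prop:DistLawO}: first establish the distributive law for the quadratic data, then invoke the standard result that a distributive law between two Koszul operads produces a Koszul composite (see \cite[Proposition~8.6.4]{LodayVallette12}).

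For part (1), the relations (v) and (vi) read from left to right define a rewriting rule which moves every unary generator of $\KK\{\Sy\}$ below every binary generator of $\calO_{ns}$ inside a composite tree. Iterating this rewriting produces a surjection $\KK\{\Sy\} \circ \calO_{ns} \twoheadrightarrow q\calO$, and to conclude that it is an isomorphism I would apply the Diamond Lemma for colored operads. The critical monomials to be analyzed are the weight-$3$ elements consisting of a single unary vertex stacked above a two-vertex binary subtree; these are precisely the ambiguities already resolved in the proof of Proposition~\ref{prop:DistLawO}. The only relations that differ between $\calO$ and $q\calO$ are the unary ones (i), (ii), and these never appear in the above confluence diagrams, so the same checks go through verbatim and identify $\KK\{\Sy\} \circ \calO_{ns}$ with the underlying colored $\Sy$-module of $q\calO$.

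For part (2), granted the distributive law, it suffices to show that $\calO_{ns}$ and $\KK\{\Sy\}$ are themselves Koszul. The colored algebra $\KK\{\Sy\}$ is a quadratic unital algebra whose space of relations is the full tensor square of its augmentation ideal; any such algebra is Koszul, its Koszul dual being the free tensor algebra on the linear dual of the generators. The ns colored operad $\calO_{ns}$ admits the $\KK$-linear basis of planar rooted trees of Van der Laan \cite[Proposition~4.2]{VanderLaan03}; endowed with the left-recursive ordering introduced in Section~\ref{subsec:TOS}, the parallel and sequential composition relations (iii), (iv) form a quadratic Gröbner basis, so $\calO_{ns}$ is Koszul by the PBW-type criterion of \cite[Chapter~8]{LodayVallette12}. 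Combining these two facts via the distributive-law theorem yields the Koszulness of $q\calO$.

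The main obstacle, as is typical when handling distributive laws, is the confluence verification, which consists of a finite but somewhat involved case analysis over the admissible three-vertex composite trees. Fortunately, the computations required here are essentially those carried out implicitly in proving Proposition~\ref{prop:DistLawO}, so nothing genuinely new needs to be checked; the only bookkeeping concerns ensuring that no intermediate step reintroduces a product of two non-identity unary generators, which is precisely guaranteed by the fact that the critical ambiguities involve only one such element at a time.
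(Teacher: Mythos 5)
Your overall strategy coincides with the paper's: both points are deduced at once from the Diamond Lemma for distributive laws \cite[Theorem~8.6.5]{LodayVallette12}, whose hypotheses are exactly the Koszulness of the two factors $\KK\{\Sy\}$ and $\calO_{ns}$ together with the injectivity of $(\KK\{\Sy\}\circ\calO_{ns})^{(3)}\to q\calO$ in weight $3$; the critical monomials you single out (a single unary generator sitting above a two-vertex binary subtree) are exactly the ambiguities the paper resolves, and the reduction to these uses, as you gesture at, that $\overline{\KK\{\Sy\}}$ is concentrated in weight $1$, so no component with two nontrivial unary generators occurs.

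Two concrete shortfalls, though. First, you defer the confluence verification to ``the ambiguities already resolved in the proof of Proposition~\ref{prop:DistLawO}'', but that proof contains no rewriting argument at all: it identifies $\calO$ with $\KK[\Sy]\circ\calO_{ns}$ by comparing with the known tree description of the free operad and Van der Laan's basis of $\calO_{ns}$. The explicit confluence diagrams are precisely the new technical content of the present proposition, so they still have to be carried out; your identification of the critical monomials is correct, and since relations (iii)--(vi) are unchanged between $\calO$ and $q\calO$ the computations are the ones the paper displays here (e.g.\ the hexagon built from relations (iv), (v), (vi)), but they cannot be borrowed from elsewhere. Second, the ground ring: the paper works over an arbitrary commutative ring $\KK$, whereas Van der Laan's Koszulness of $\calO_{ns}$ (the paper invokes his Theorem~4.3, not merely the basis of Proposition~4.2) and the distributive-law and PBW machinery of \cite{LodayVallette12} are stated over a field of characteristic $0$; the paper explicitly extends them by the methods of Fresse \cite{Fresse04}, using that all components are free $\KK$-modules and free $\KK[\Sy_n]$-modules. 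Your Gr\"obner-basis route for $\calO_{ns}$ could in principle handle this, but as written it leans on the characteristic-$0$ criterion and leaves the extension unaddressed.
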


\begin{proof}
These two points are actually proved at once by the Diamond Lemma for distributive laws
\cite[Theorem~$8.6.5$]{LodayVallette12}. Both quadratic colored operads $\KK\{\Sy\}$ and $\calO_{ns}$ are Koszul: the
first one is nilpotent, with maximal set of relations, and the second one was proved to be Koszul in \cite[Theorem~$4.3$]{VanderLaan03}.
Strictly speaking, Van der Laan's Koszul duality theory for colored operads was proved over a field of characteristic zero. One can extend it over a commutative ring using the methods of Fresse \cite{Fresse04}. This applies to the colored operad $q\calO$ since it is made up of free finitely generated $\KK$-modules and free $\KK[\Sy_n]$-modules. 

 It remains to
prove that the natural map $(\KK\{\Sy\}\circ \calO_{ns})^{(3)} \to q\calO$ is injective in weight 3. Since the
left-hand term is concentrated in weight $1$ (except for its unit), it is enough to prove that
$\overline{\KK\{\Sy\}}\circ \calO_{ns}^{(2)}$ imbeds into $q\calO$. We use the relations of $q\calO$, read from left
to right, as rewriting rules. In this way, one can always pull down the symmetric group elements. So if a tree
presents at least two such elements, it vanishes in the colored operad $q\calO$. Let us now restrict to the summand
generated by trees with one element coming from the symmetric group; it is a set-theoretical summand, that is linearly
spanned by trees subject to the relations $(iii)$, $(iv)$, $(v)$, and $(vi)$. For our purpose, it is now enough to
prove that in each equivalence class of trees with two binary vertices and one unary vertex, there exists a unique
standard composite tree with the unary vertex at the bottom. This is shown by checking that the ambiguities are all
confluent; these ambiguities are given by the left-hand terms of relations $(iii)$ and $(iv)$ with a symmetric group
element labeling one leaf. Let us give one such verification below; the other ones follow the same pattern.
\begin{center}
  \begin{tikzpicture}[optree]
    \matrix (m) [column sep={5cm,between origins}, row sep={4.5cm,between origins}] {
      \node[name=m-1-1] {\tikz
      \node {}
        child[scale=0.6] { node[circ]{$i$}
          child { node[label=above:$1$]{}
            edge from parent node[left,near end]{\tiny$m$} }
          child { node[circ]{$j$}
            child { node[dot,label=right:$\sigma$]{}
              child { node[label=above:$2$]{}
                edge from parent node[left,near end]{\tiny$n$} } }
            child { node[label=above:$3$]{}
              edge from parent node[right,near end]{\tiny$k$} } } } ;
      } ;
      &
      \node[name=m-1-2] {\tikz
      \node {}
        child[scale=0.6] { node[circ,ellipse]{$j+i-1$}
          child { node[circ]{$i$}
            child { node[label=above:$1$]{}
              edge from parent node[left,near end]{\tiny$m$} }
            child { node[dot,label=right:$\sigma$]{}
              child { node[label=above:$2$]{}
                edge from parent node[right,near end]{\tiny$n$} } } }
          child { node[label=above:$3$]{}
            edge from parent node[right,near end]{\tiny$k$} } } ;
      } ;
      &
      \node[name=m-1-3] {\tikz
      \node {}
        child[scale=0.6] { node[circ,ellipse]{$j+i-1$}
          child { node[dot,label=3:$\sigma'$]{}
            child { node[circ]{$i$}
              child { node[label=above:$1$]{}
                edge from parent node[left,near end]{\tiny$m$} }
              child { node[label=above:$2$]{}
                edge from parent node[right,near end]{\tiny$n$} } } }
          child { node[label=above:$3$]{}
            edge from parent node[right,near end]{\tiny$k$} } } ;
      } ;
      \\
      \node[name=m-2-1] {\tikz
      \node {}
        child[scale=0.6] { node[circ]{$i$}
          child { node[label=above:$1$]{}
            edge from parent node[left,near end]{\tiny$m$} }
          child { node[dot,label=right:$\sigma''$]{}
            child { node[circ,ellipse,inner sep=0pt]{$\sigma(j)$}
              child { node[label=above:$2$]{}
                edge from parent node[left,near end]{\tiny$n$} }
              child { node[label=above:$3$]{}
                edge from parent node[right,near end]{\tiny$k$} } } } } ;
      } ;
      &
      \node[name=m-2-2] {\tikz
      \node {}
        child[scale=0.6] { node[dot,label=right:$(\sigma'')'$]{}
          child { node[circ]{$i$}
            child { node[label=above:$1$]{}
              edge from parent node[left,near end]{\tiny$m$} }
            child { node[circ,ellipse,inner sep=0pt]{$\sigma(j)$}
              child { node[label=above:$2$]{}
                edge from parent node[left,near end]{\tiny$n$} }
              child { node[label=above:$3$]{}
                edge from parent node[right,near end]{\tiny$k$} } } } } ;
      } ;
      &
      \node[name=m-2-3] {\tikz
      \node {}
        child[scale=0.6] { node[dot,label=right:${(\sigma')''=(\sigma'')'}$]{}
          child { node[circ,ellipse,inner sep=0pt]{$\sigma'(j+i-1)$}
            child { node[circ]{$i$}
              child { node[label=above:$1$]{}
                edge from parent node[left,near end]{\tiny$m$} }
              child { node[label=above:$2$]{}
                edge from parent node[right,near end]{\tiny$n$} } }
            child { node[label=above:$3$]{}
              edge from parent node[right,near end]{\tiny$k$} } } } ;
      } ;
      \\
    };
    \draw[->] (m-1-1) edge node[above]{\eqref{Orel3b}} (m-1-2)
              (m-1-2) edge node[above]{\eqref{Orel2a}} (m-1-3)
              (m-1-3) edge node[right]{\eqref{Orel2b}} (m-2-3)
              (m-1-1) edge node[left] {\eqref{Orel2b}} (m-2-1)
              (m-2-1) edge node[above]{\eqref{Orel2a}} (m-2-2)
              (m-2-2) edge node[above]{\eqref{Orel3b}} (m-2-3) ;
  \end{tikzpicture}
\end{center}

\end{proof}

The second step in the curved Koszul duality theory is to calculate the Koszul dual operad and cooperad of the
quadratic analogue $q\calO$.

\begin{proposition}\label{prop:KoszulDualOp}
The Koszul dual operad $q\calO^!$ of the quadratic analogue $q\calO$ is generated by 
\begin{align*}
  &
  s^{-1}E(n;n)^* = \left\{
   \begin{aligned}\begin{tikzpicture}[optree]
    \node{}
      child { node[dot,label=right:$s^{-1}\sigma^*$]{}
        child { edge from parent node[right,near end]{\tiny$n$} }
        edge from parent node[right,near start]{\tiny$n$} } ;
   \end{tikzpicture}\end{aligned} ,
   \sigma \in \Sy_n\setminus\{\id_n\}
  \right\}
  ,&\\
 & E(n+k-1; n,k)^*\otimes \mathrm{sgn}_{\Sy_2} = \left\{
   \begin{aligned}\begin{tikzpicture}[optree]
    \node{}
      child { node[circ]{$i^*$}
        child { node[label=above:$1$]{} edge from parent node[left,near end]{\tiny$n$} }
        child {node[label=above:$2$]{}  edge from parent node[right,near end]{\tiny$k$} }
        edge from parent node[right,near start]{\tiny$n+k-1$} } ;
   \end{tikzpicture}\end{aligned} ,
      1 \leq i \leq n; 
   \begin{aligned}\begin{tikzpicture}[optree]
    \node{}
      child { node[circ]{$j^*$}
        child { node[label=above:$2$]{} edge from parent node[left,near end]{\tiny$n$} }
        child {node[label=above:$1$]{}  edge from parent node[right,near end]{\tiny$k$} }
        edge from parent node[right,near start]{\tiny$n+k-1$} } ;
   \end{tikzpicture}\end{aligned} ,
   1 \leq j \leq k
  \right\} ,&
\end{align*}
with the following quadratic relations\\
\begin{subequations}
  \begin{minipage}{.5\linewidth}
  \begin{align}\label{Align:Relation1a}
    &
    \begin{aligned}\begin{tikzpicture}[optree,scale=1.2]
      \node{}
        child { node[circ]{$i^*$}
          child { node[circ]{$j^*$}
            child { node[above,label=$1$]{} edge from parent node[left,near end]{\tiny$m$} }
            child { node[above,label=$2$]{} edge from parent node[right,near end]{\tiny$n$} } }
          child { node[above,label=$3$]{} edge from parent node[right,near end]{\tiny$k$} } } ;
    \end{tikzpicture}\end{aligned}
    \stackrel{(i<j)}{=} 
    \begin{aligned}\begin{tikzpicture}[optree,scale=1.2]
      \node{}
        child { node[circ,ellipse]{$(j+k-1)^*$}
          child { node[circ]{$i^*$}
            child { node[above,label=$1$]{} edge from parent node[left,near end]{\tiny$m$} }
            child { node[above,label=$3$]{} edge from parent node[right,near end]{\tiny$k$} } }
          child { node[above,label=$2$]{} edge from parent node[right,near end]{\tiny$n$} } } ;
    \end{tikzpicture}\end{aligned} ,
  \end{align}
  \end{minipage}
  \begin{minipage}{.5\linewidth}
  \begin{align}\label{Align:Relation1b}
    &
    \begin{aligned}\begin{tikzpicture}[optree,scale=1.2]
      \node{}
        child { node[circ]{$i^*$}
          child { node[above,label=$1$]{} edge from parent node[left,near end]{\tiny$m$} }
          child { node[circ]{$j^*$}
            child { node[above,label=$2$]{} edge from parent node[left,near end]{\tiny$n$} }
            child { node[above,label=$3$]{} edge from parent node[right,near end]{\tiny$k$} } } } ;
    \end{tikzpicture}\end{aligned}
    =
    \begin{aligned}\begin{tikzpicture}[optree,scale=1.2]
      \node{}
        child { node[circ,ellipse]{$(j+i-1)^*$}
          child { node[circ]{$i^*$}
            child { node[above,label=$1$]{} edge from parent node[left,near end]{\tiny$m$} }
            child { node[above,label=$2$]{} edge from parent node[right,near end]{\tiny$n$} } }
          child { node[above,label=$3$]{} edge from parent node[right,near end]{\tiny$k$} } } ;
    \end{tikzpicture}\end{aligned} ,
  \end{align}
  \end{minipage}
\end{subequations}
\begin{align}
  &
  \begin{aligned}\begin{tikzpicture}[optree]
    \node{}
      child { node[dot,label=right:$\tau^*$]{}
        child { node[circ]{$i^*$}
          child { node[above,label=$1$]{} edge from parent node[left,near end]{\tiny$n$} }
          child { node[above,label=$2$]{} edge from parent node[right,near end]{\tiny$k$} } } } ;
  \end{tikzpicture}\end{aligned}
  =
  \begin{cases}
    \begin{aligned}\begin{tikzpicture}[optree]
      \node{}
        child { node[circ]{$i^*$}
          child { node[above,label=$1$]{} edge from parent node[left,near end]{\tiny$n$} }
          child { node[dot,label=right:$\sigma^*$]{}
            child { node[above,label=$2$]{} edge from parent node[right,near end]{\tiny$k$} } } } ;
    \end{tikzpicture}\end{aligned}
    , & \textrm{if } \tau = \sigma' \\
        \begin{aligned}\begin{tikzpicture}[optree,scale=1.2]
      \node{}
        child { node[circ,ellipse]{$\sigma^{-1}(i)^*$}
          child { node[dot,label=left:$\sigma^*$]{}
            child { node[above,label=$1$]{} edge from parent node[right,near end]{\tiny$n$} } }
          child { node[above,label=$2$]{} edge from parent node[right,near end]{\tiny$k$} } } ;
    \end{tikzpicture}\end{aligned}
    , & \textrm{if } \tau = \sigma'' \\
    \ \quad 0, & \textrm{otherwise}.
  \end{cases}
\end{align}
\end{proposition}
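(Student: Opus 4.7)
The plan is to apply the standard formula for the Koszul dual of a quadratic colored operad to $q\calO$, and to identify the resulting presentation sector by sector. For a quadratic presentation $\calT(E)/(R)$, the Koszul dual operad is $\calT(E^\vee)/(R^\perp)$, where $E^\vee$ denotes the arity-wise suspended and sign-twisted dual of $E$ (with $s^{-1}$ on the unary part and $\mathrm{sgn}_{\Sy_2}$ on the binary part, as displayed in the statement), and $R^\perp$ is the annihilator of $R$ under the canonical bilinear pairing on the weight-$2$ piece of the free colored operad. This formula extends to colored operads over the ground ring $\KK$ by the arguments already invoked in the proof of Proposition~\ref{prop:DistLawqO}, since $q\calO$ is built from free finitely generated $\KK[\Sy_n]$-modules.

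First I would make the dual generators explicit. The unary part of $E$ is the regular representation of $\Sy_n$, so its dualisation $s^{-1}E(n;n)^*$ is again a free $\KK[\Sy_n]$-module, indexed by the basis $\{s^{-1}\sigma^*\}_{\sigma\neq \id_n}$. The binary part carries the regular $\Sy_2$-action by swapping the two inputs, and its sign twist is spanned by the indexed symbols $i^*$ and $j^*$ of the statement, on which the transposition acts by a sign.

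Next I would compute $R^\perp$ in the three sectors of the weight-$2$ free colored operad, using the weight-$2$ truncation of the composite-tree basis of Proposition~\ref{prop:CompositeTreesBasis}. In the purely unary sector, the relation space $qR$ spans the entire weight-$2$ component, since every composite of two unary generators has been set to zero in $q\calO$; consequently the annihilator $R^\perp$ vanishes there, which accounts for the absence of any unary-unary relation in the statement. In the binary-binary sector, the annihilators of relations (iii) and (iv) produce exactly the reassociation relations \eqref{Align:Relation1a} and \eqref{Align:Relation1b}; this essentially recovers the self-Koszul-duality of the nonsymmetric suboperad $\calO_{ns}$ from \cite[Theorem~4.3]{VanderLaan03}, with the sign twist on the binary generator absorbed into the indexing convention for the two types $i^*$ and $j^*$. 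In the mixed sector, relations (v) and (vi) each equate a ``$\sigma'$ (resp.\ $\sigma''$) sitting above a binary vertex'' with a binary vertex whose input (resp.\ root, with relabelled index $\sigma(i)$) has been permuted by $\sigma$; dualising these equalities, and observing that the annihilator is zero on all other monomials, produces the three-case relation of the statement.

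The main obstacle will be the sign and indexing bookkeeping in the mixed sector. One needs to verify that for each permutation $\tau \in \Sy_{n+k-1}$ there is at most one $\sigma \in \Sy_k$ with $\tau = \sigma'$ and at most one $\sigma \in \Sy_n$ with $\tau = \sigma''$, which is what makes the three-case formula well-posed, and that the pairing of $\tau^*$ with the appropriate binary generator, under the sign twist, yields the stated equalities without extraneous signs. Once these three sectors are computed and reassembled, one obtains the presentation of $q\calO^!$ displayed in the proposition.
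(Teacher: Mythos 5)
Your proposal is correct and follows essentially the same route as the paper: dualize the generators (operadic suspension and sign twist as in \cite[Chapter~7]{LodayVallette12}, extended to the colored setting), then compute the orthogonal of $qR$ under the canonical pairing sector by sector, with the binary--binary part recovering Van der Laan's self-duality of $\calO_{ns}$ and the mixed unary--binary part yielding the three-case relation. The only content the paper supplies beyond your outline is the explicit signed extension of the scalar product to the mixed sector, namely $\langle \alpha^*\circ_1\sigma^*,\mu\circ_1\tau\rangle=+\alpha^*(\mu)\sigma^*(\tau)$, $\langle \alpha^*\circ_2\sigma^*,\mu\circ_2\tau\rangle=+\alpha^*(\mu)\sigma^*(\tau)$, $\langle \sigma^*\circ_1\alpha^*,\tau\circ_1\mu\rangle=-\sigma^*(\tau)\alpha^*(\mu)$, which resolves exactly the sign and indexing bookkeeping you flagged as the remaining obstacle.
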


\begin{proof} The method recalled  in \cite[Chapter~7]{LodayVallette12} to compute the Koszul dual  of an operad still holds mutatis mutandis for colored operads, see also \cite{VanderLaan03}. The only new point here is to  manage the non-binary parts. Notice that the operad $q\calO$ is  color-wise finitely generated.

The Koszul dual cooperad $q\calO^{\ac}$ is the quadratic (colored) cooperad with suspended presentation 
$$q\calO^{\ac}:=\calC(sE, s^2qR)\ . $$ 
The piecewise and colorwise linear dual $\left(q\calO^{\ac} \right)^*$ is a quadratic colored operad. 
By the same arguments as for \cite[Proposition~$7.2.1$]{LodayVallette12}, its operadic suspension 
$$q\mathcal{O}^! :=  (q\mathcal{O}^{\ac})^* \htensor \mathcal{S}^{-1}$$
is still a quadratic operad, 
where $\calS^{-1}$, in the colored setting, is defined by the endomorphism operad of the space 
${\{ \KK\,  s^{-1}_n   \}_{n\in \NN}}$. Its quadratic presentation 
is given by 
\begin{align*}
  q\mathcal{O}^! = \TTT\left(s^{-1}E(1)^* \oplus E(2)^*\otimes  \mathrm{sgn}_{\Sy_2}\right) / (qR^\perp)\ ,
\end{align*}
where the orthogonal space is obtained by the scalar product $\langle -; - \rangle$ defined on the arity $3$ part in \cite[Section~$7.6.3$]{LodayVallette12}. Using this, one proves the two first relations. 
By the same  arguments as for \cite[Theorem~$7.6.2$]{LodayVallette12}, the extension of the scalar product $\langle -; - \rangle$ to $\TTT\left(s^{-1}E(1)^* \oplus E(2)^*\otimes  \mathrm{sgn}_{\Sy_2}\right)(2)^{(2)}\otimes \TTT(E)(2)^{(2)}$ given by 
\begin{eqnarray*}
\langle \alpha^* \circ_1 \sigma^*, \mu \circ_{1} \tau \rangle:=  +\alpha^*(\mu)\sigma^*(\tau)\ \ \ \\
\langle \alpha^* \circ_2 \sigma^*, \mu \circ_{2} \tau \rangle:=  +\alpha^*(\mu)\sigma^*(\tau)\ \ \ \\
\langle \sigma^* \circ_1 \alpha^*, \tau \circ_1 \mu  \rangle:=  -\sigma^*(\tau)\alpha^*(\mu)\ , \\
\end{eqnarray*}
for $\alpha^*\in E(2)^*$, $\mu \in E(2)$, $\sigma^*\in E(1)^*$ and $\tau \in E(1)$,
allows one to compute the Koszul dual operad. This gives the last 3 relations and concludes the proof. 
\end{proof}

\begin{lemma}\label{lem:qO!}
The underlying colored $\Sy$-module of the operad $q\calO^!$ is isomorphic to 
$$q\calO^!\cong \calO_{ns} \circ T(s^{-1}\bar\Sy^*)\ ,  $$
where $T(s^{-1}\bar\Sy^*)$ is the free colored algebra generated by
$\left\{s^{-1}\sigma^*\ | \ \sigma \in \Sy_n\setminus\{\id_n\}, n\in \NN\right\}$.
\end{lemma}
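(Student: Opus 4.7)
The plan is to derive the lemma from the distributive law decomposition of $q\calO$ established in Proposition~\ref{prop:DistLawqO} by dualizing it. The general principle is that if a Koszul quadratic colored operad $\calP$ admits a presentation $\calP \cong \calA \vee_\lambda \calB$ through a distributive law $\lambda \colon \calB \circ \calA \to \calA \circ \calB$, so that $\calP \cong \calA \circ \calB$ as colored $\Sy$-modules, then its Koszul dual satisfies $\calP^! \cong \calB^! \vee_{\lambda^!} \calA^!$, and in particular $\calP^! \cong \calB^! \circ \calA^!$ as colored $\Sy$-modules, with the roles of the two factors swapped. The dual rewriting rule $\lambda^! \colon \calA^! \circ \calB^! \to \calB^! \circ \calA^!$ is computed from $\lambda$ via the standard orthogonality pairing used throughout Proposition~\ref{prop:KoszulDualOp}.

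Applying this to $q\calO \cong \KK\{\Sy\} \circ \calO_{ns}$, the Koszul dual of the nilpotent quadratic colored algebra $\KK\{\Sy\}$ (whose quadratic relations impose the vanishing of every product) is the free colored algebra $T(s^{-1}\bar\Sy^*)$ on the suspended duals of the non-identity group elements; and $\calO_{ns}^!$ shares the underlying colored $\Sy$-module of $\calO_{ns}$, since the passage from $\calO_{ns}^{\ac}$ to $\calO_{ns}^!$ introduces only a sign twist via the operadic suspension, to which the planar-tree basis of $\calO_{ns}$ is insensitive (the $\Sy_k$-action permutes the distinct vertex labels of a planar tree freely, and free $\Sy_k$-modules are stable under tensoring with the sign representation, since the map $g \mapsto \mathrm{sgn}(g)\, g$ is $\Sy_k$-equivariant). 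Combining these two identifications yields the desired isomorphism
$$q\calO^! \cong \calO_{ns}^! \circ T(s^{-1}\bar\Sy^*) \cong \calO_{ns} \circ T(s^{-1}\bar\Sy^*)\ .$$

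To match with the presentation of Proposition~\ref{prop:KoszulDualOp}, one reads its third family of relations from left to right as the expected rewriting rule $T(s^{-1}\bar\Sy^*) \circ \calO_{ns} \to \calO_{ns} \circ T(s^{-1}\bar\Sy^*)$, moving a unary vertex at the root through a binary vertex up to one of its leaves, or annihilating it when $\tau$ is of neither of the forms $\sigma'$, $\sigma''$ appearing in relations (v), (vi). The main technical point, should one bypass the abstract duality argument and verify everything by hand, is to re-run the Diamond Lemma for distributive laws \cite[Theorem~8.6.5]{LodayVallette12}: the critical pairs in weight $3$ are indexed by trees with two binary and one unary generator, and each corresponding hexagonal confluence diagram is dual to the one exhibited in the proof of Proposition~\ref{prop:DistLawqO}, obtained by reversing all arrows and transporting each relation along the orthogonality pairing.
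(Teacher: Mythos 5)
Your proposal is correct and follows essentially the same route as the paper's proof: the Diamond Lemma for distributive laws applied to the decomposition of Proposition~\ref{prop:DistLawqO}, together with the identifications $\KK\{\Sy\}^! \cong T(s^{-1}\bar\Sy^*)$ and $\calO_{ns}^!\cong\calO_{ns}$, with the factors swapped on the dual side. The only caveat is your parenthetical justification of $\calO_{ns}^!\cong\calO_{ns}$: the ``sign twist from the operadic suspension is harmless on free modules'' remark presupposes that $\calO_{ns}^{\ac}$ already has the planar-tree underlying module, which is precisely the Koszul self-duality of $\calO_{ns}$ — this is \cite[Theorem~$4.3$]{VanderLaan03}, which the paper cites at this point, or can be read off the dual relations \eqref{Align:Relation1a}--\eqref{Align:Relation1b} computed in Proposition~\ref{prop:KoszulDualOp}; it does not follow from the sign-twist observation alone.
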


\begin{proof}
This is a straightforward consequence of the Diamond Lemma for distributive laws \cite[Theorem~$8.6.5$]{LodayVallette12} and Proposition~\ref{prop:DistLawqO} since the colored operad $\calO_{ns}$ is Koszul auto-dual \cite[Theorem~$4.3$]{VanderLaan03}, that is $\calO_{ns}^!\cong \calO_{ns}$, and since the Koszul dual of the quadratic nilpotent algebra 
$\KK\{\Sy\}$
is the free algebra
$T(s^{-1}\bar\Sy^*)$.
\end{proof}

\begin{proposition}\label{pro:CompositeTreeBasisII}
A $\KK[\Sy]$-linear basis for the colored operad $q\calO^!$ is given by 
the left combs with nondecreasing  binary vertices,
seen top to bottom, and strings of symmetric groups elements labeling
the leaves.

\begin{align*}
  \begin{aligned}\begin{tikzpicture}[optree]
    \node{}
      child { node[circ]{$4$}
        child { node[circ]{$2$}
          child { node[circ]{$2$}
            child {
              child[level distance=5mm] { node[dot]{}
                child {
                  child {
                  edge from parent node[right,near end]{\tiny$3$} } } } }
            child { node[dot]{}
              child[level distance=5mm] { node[dot]{}
                child { node[dot]{}
                  child {
                  edge from parent node[right,near end]{\tiny$2$} } } } } }
          child {
            child { edge from parent[draw=none] }
            child {
              child[level distance=5mm] { node[dot]{}
                child {
                  child {
                  edge from parent node[right,near end]{\tiny$2$} } } } } } }
        child {
          child { edge from parent[draw=none] }
          child {
            child { edge from parent[draw=none] }
            child {
              child[level distance=5mm] { node[dot]{}
                child {
                  child {
                  edge from parent node[right,near end]{\tiny$3$} } } } } } } } ;
  \end{tikzpicture}\end{aligned}
\end{align*}
\end{proposition}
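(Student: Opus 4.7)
The plan is to derive this basis directly from Lemma~\ref{lem:qO!}, which already gives the colored $\Sy$-module isomorphism
$$q\calO^! \cong \calO_{ns} \circ T(s^{-1}\bar\Sy^*) \ .$$
Since the composition product $\P \circ Q$ of colored $\Sy$-modules is, by definition, built from trees whose shapes come from $\P$ and whose leaves are decorated by elements of $Q$, it suffices to exhibit an appropriate $\KK$-linear basis of each factor and combine them.

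For the factor $\calO_{ns}$, I would invoke Van der Laan's result \cite[Proposition~$4.2$]{VanderLaan03}, which identifies $\calO_{ns}$ with the module spanned by planar rooted trees with vertices indexed bijectively by $\{1,\dots,k\}$. Applying the bijection between planar operadic trees and standard composite trees established in Section~\ref{subsec:TOS} (and already used to deduce the non-symmetric part of Proposition~\ref{prop:CompositeTreesBasis} from Proposition~\ref{prop:OperadicTreesBasis}), this basis can be rewritten as the set of left combs with nondecreasing binary vertices read top to bottom, which is exactly the ns part of Proposition~\ref{prop:CompositeTreesBasis}.

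For the factor $T(s^{-1}\bar\Sy^*)$, since the free colored associative algebra on a graded set admits as $\KK$-linear basis the tensor monomials (strings) of elements of that set, we have a basis of $T(s^{-1}\bar\Sy^*)$ given by finite strings
$s^{-1}\sigma_1^* \otimes \cdots \otimes s^{-1}\sigma_\ell^*$, with each $\sigma_j\in\Sy_{n_j}\setminus\{\id_{n_j}\}$, plus the unit (empty string). Pictorially, each such string is encoded as a vertical chain of unary vertices labeled by the $\sigma_j^*$. Grafting one such string at each leaf of a basis element of $\calO_{ns}$ produces precisely the pictures appearing in the statement, and the $\KK[\Sy]$-action is implemented by a further free permutation of the leaves.

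Combining these two bases under the composition product $\calO_{ns} \circ T(s^{-1}\bar\Sy^*)$ and using the $\Sy$-module isomorphism of Lemma~\ref{lem:qO!} yields the claimed $\KK[\Sy]$-linear basis. The only mild difficulty is bookkeeping: one needs to be careful to pass from the $\Sy$-module description (where the leaves are not ordered and the $\Sy$-action is on the outside) to the planar pictorial description (where leaves carry an explicit permutation). This is handled exactly as in the proof of Proposition~\ref{prop:CompositeTreesBasis}, so no new ideas are required beyond what the proposition already rests upon.
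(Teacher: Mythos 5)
Your argument is correct and is essentially the paper's own proof: Proposition~\ref{pro:CompositeTreeBasisII} is deduced there as a direct corollary of Lemma~\ref{lem:qO!}, using that standard composite trees (left combs with nondecreasing binary vertices) form a basis of $\calO_{ns}$ and that strings of nontrivial permutations form a basis of the free algebra $T(s^{-1}\bar\Sy^*)$. Your extra steps (citing Van der Laan and the bijection of Section~\ref{subsec:TOS}) merely spell out what the paper takes as already established.
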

\noindent
(Since the context is obvious, we do not use the linear dual star notation anymore. 
To lighten the figure, we often omit to represent the labeled permutations.)

\begin{proof}
This is a direct corollary of Lemma~\ref{lem:qO!} since standard composite trees form a basis of the colored operad $\calO_{ns}$. 
\end{proof}

On this basis, the partial composition of the colored operad $q\calO^!$ is given by the grafting of trees as usual. 
Then one has to  rewrite the result in terms of 
standard composite trees
keeping track of the order on generators and sticking to the Koszul 
sign rule as in the examples below. In the sequel, we order the vertices of composite trees with leaves labeled by strings of permutations 
by, first, ordering all the binary vertices leftlevel-wise and then ordering the strings of permutations leftlevel-wise.

\begin{example}[Partial composition in $q\calO^!$]\label{Exam:PartialCompoCompoTrees}
\label{SqO!:EXcomp}
\begin{align*}
  \begin{aligned}\begin{tikzpicture}[optree]
    \node{}
      child { node[circ]{$2$}
        child { node[circ]{$2$}
          child {
            child[level distance=5mm] { node[dot,label=left:\tiny$1$]{}
              child[level distance=10mm] { node{$1$}
              edge from parent node[right,near end]{\tiny$3$} } } }
          child { node[dot,label=left:\tiny$2$]{}
            child[level distance=5mm] { node[dot,label=left:\tiny$3$]{}
              child[level distance=10mm] { node{$2$}
              edge from parent node[right,near end]{\tiny$4$} } } } }
        child {
          child { edge from parent[draw=none] }
          child {
            child[level distance=5mm] { node[dot,label=left:\tiny$4$]{}
              child[level distance=10mm] { node{$3$}
              edge from parent node[right,near end]{\tiny$2$} } } } } } ;
  \end{tikzpicture}\end{aligned}
  \circ_2
  \begin{aligned}\begin{tikzpicture}[optree]
    \node{}
      child { node[circ]{$1$}
        child { node[dot,label=left:\tiny$1$]{}
          child[level distance=5mm] { node[dot,label=left:\tiny$2$]{}
            child[level distance=10mm] { node{$1$}
            edge from parent node[right,near end]{\tiny$2$} } } }
        child {
          child[level distance=5mm] {
            child[level distance=10mm] { node{$2$}
            edge from parent node[right,near end]{\tiny$3$} } } } } ;
  \end{tikzpicture}\end{aligned}
  =
  \begin{aligned}\begin{tikzpicture}[optree]
    \node{}
      child { node[circ]{$2$}
        child { node[circ]{$2$}
          child {
            child[level distance=5mm] { node[dot,label=left:\tiny$1$]{}
              child[level distance=10mm] { node{$1$}
              edge from parent node[right,near end]{\tiny$3$} } } }
          child { node[dot,label=left:\tiny$2$]{}
            child[level distance=5mm] { node[dot,label=left:\tiny$3$]{}
              child[level distance=10mm] { node[circ]{$1$}
                child { node[dot,label=left:\tiny$5$]{}
                  child[level distance=5mm] { node[dot,label=left:\tiny$6$]{}
                    child[level distance=10mm] { node{$2$}
                    edge from parent node[right,near end]{\tiny$2$} } } }
                child {
                  child[level distance=5mm] {
                    child[level distance=10mm] { node{$3$}
                    edge from parent node[right,near end]{\tiny$3$} } } } } } } }
        child {
          child { edge from parent[draw=none] }
          child {
            child[level distance=5mm] { node[dot,label=left:\tiny$4$]{}
              child[level distance=10mm] { node{$4$}
              edge from parent node[right,near end]{\tiny$2$} } } } } } ;
  \end{tikzpicture}\end{aligned}
   =-
  \begin{aligned}\begin{tikzpicture}[optree]
    \node{}
      child { node[circ]{$2$}
        child { node[circ]{$2$}
          child {
            child {
              child[level distance=5mm] { node[dot,label=left:\tiny$1$]{}
                child {
                  child[level distance=10mm] { node{$1$}
                  edge from parent node[right,near end]{\tiny$3$} } } } }
            child { edge from parent[draw=none] } }
          child { node[circ]{$2$}
            child { node[dot,label=left:\tiny$3$]{}
              child[level distance=5mm] { node[dot,label=left:\tiny$5$]{}
                child { node[dot,label=left:\tiny$6$]{}
                  child[level distance=10mm] { node{$2$}
                  edge from parent node[right,near end]{\tiny$2$} } } } }
            child {
              child[level distance=5mm] { node[dot,label=left:\tiny$2$]{}
                child {
                  child[level distance=10mm] { node{$3$}
                  edge from parent node[right,near end]{\tiny$3$} } } } } } }
        child {
          child { edge from parent[draw=none] }
          child {
            child { edge from parent[draw=none] }
            child {
              child[level distance=5mm] { node[dot,label=left:\tiny$4$]{}
                child {
                  child[level distance=10mm] { node{$4$}
                  edge from parent node[right,near end]{\tiny$2$} } } } } } } } ;
  \end{tikzpicture}\end{aligned}
  &=-
  \begin{aligned}\begin{tikzpicture}[optree]
    \node{}
      child { node[circ]{$2$}
        child { node[circ]{$3$}
          child { node[circ]{$2$}
            child {
              child[level distance=5mm] { node[dot,label=left:\tiny$1$]{}
                child {
                  child[level distance=10mm] { node{$1$}
                  edge from parent node[right,near end]{\tiny$3$} } } } }
            child { node[dot,label=left:\tiny$3$]{}
              child[level distance=5mm] { node[dot,label=left:\tiny$5$]{}
                child { node[dot,label=left:\tiny$6$]{}
                  child[level distance=10mm] { node{$2$}
                  edge from parent node[right,near end]{\tiny$2$} } } } } }
          child {
            child { edge from parent[draw=none] }
            child {
              child[level distance=5mm] { node[dot,label=left:\tiny$2$]{}
                child {
                  child[level distance=10mm] { node{$3$}
                  edge from parent node[right,near end]{\tiny$3$} } } } } } }
        child {
          child { edge from parent[draw=none] }
          child {
            child { edge from parent[draw=none] }
            child {
              child[level distance=5mm] { node[dot,label=left:\tiny$4$]{}
                child {
                  child[level distance=10mm] { node{$4$}
                  edge from parent node[right,near end]{\tiny$2$} } } } } } } } ;
  \end{tikzpicture}\end{aligned}
  \\
  & =
  \begin{aligned}\begin{tikzpicture}[optree]
    \node{}
      child { node[circ]{$4$}
        child { node[circ]{$2$}
          child { node[circ]{$2$}
            child {
              child[level distance=5mm] { node[dot,label=left:\tiny$1$]{}
                child {
                  child[level distance=10mm] { node{$1$}
                  edge from parent node[right,near end]{\tiny$3$} } } } }
            child { node[dot,label=left:\tiny$3$]{}
              child[level distance=5mm] { node[dot,label=left:\tiny$5$]{}
                child { node[dot,label=left:\tiny$6$]{}
                  child[level distance=10mm] { node{$2$}
                  edge from parent node[right,near end]{\tiny$2$} } } } } }
          child {
            child { edge from parent[draw=none] }
            child {
              child[level distance=5mm] { node[dot,label=left:\tiny$4$]{}
                child {
                  child[level distance=10mm] { node{$4$}
                  edge from parent node[right,near end]{\tiny$2$} } } } } } }
        child {
          child { edge from parent[draw=none] }
          child {
            child { edge from parent[draw=none] }
            child {
              child[level distance=5mm] { node[dot,label=left:\tiny$2$]{}
                child {
                  child[level distance=10mm] { node{$3$}
                  edge from parent node[right,near end]{\tiny$3$} } } } } } } } ;
  \end{tikzpicture}\end{aligned}
\end{align*}
(Assuming the permutation at $2$ is $[1 3 4 2]$ and at $3$ it is $[4 1 2 3]$.) In this example, we  reorder the
labeling permutations bottom to top
in leaf order. Following the Koszul sign rule, the sign is that of the permutation
$[1 3 5 6 4 2]$, i.e. $+1$.
\end{example}

\begin{proposition}\label{prop:OperadicTreesBasisII}
The set of planar trees  with vertices labeled by (possibly empty) strings of permutations on the number of leaves of the vertex
forms another $\KK[\Sy]$-linear basis of the colored operad $q\calO^!$.
\begin{align*}
  \begin{aligned}\begin{tikzpicture}[optree,
      level distance=12mm,
      level 2/.style={sibling distance=12mm},
      level 3/.style={sibling distance=12mm}]
    \node{}
      child { node[comp,label={-3:{\tiny $\sigma^1_1$}}]{}
        child
        child { node[comp,label={[label distance=2mm]1:{\tiny $\sigma^1_2,\sigma^2_2$}}]{}
          child {
            child
            child }
          child
          child
          child }
        child { edge from parent[draw=none] }
        child { edge from parent[draw=none] }
        child } ;
  \end{tikzpicture}\end{aligned}
\end{align*}
\end{proposition}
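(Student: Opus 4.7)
The plan is to derive this statement as a direct corollary of Proposition~\ref{pro:CompositeTreeBasisII}, via the bijection between standard composite trees and planar operadic trees described in Section~\ref{subsec:TOS}. Recall that this bijection sends a standard composite tree $T$ with $k$ leaves to the planar operadic tree $T'$ with $k$ vertices obtained by grafting corollas from bottom to top and left to right in the left-recursive order; under it, the $i$-th leaf of $T$ corresponds to the $i$-th vertex of $T'$, and the arity associated to that leaf (the integer labeling the edge beneath it) coincides with the arity of that vertex in $T'$.

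The first step is to extend this bijection to the labeled setting: given a standard composite tree whose $i$-th leaf carries a string of permutations in $\Sy_{n_i}\setminus\{\id_{n_i}\}$, I would associate the planar operadic tree in which the $i$-th vertex (in left-recursive order) is decorated by that very same string of permutations. Because the arity at the $i$-th leaf of $T$ matches the number of inputs at the $i$-th vertex of $T'$, this assignment is well defined and manifestly a bijection between the two sets of decorated trees. Since Proposition~\ref{pro:CompositeTreeBasisII} tells us that labeled standard composite trees form a $\KK[\Sy]$-linear basis of $q\calO^!$, the labeled planar operadic trees will automatically constitute a $\KK[\Sy]$-linear basis as well.

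The remaining — and essentially only — point to verify is that the bijection is compatible with the outer $\KK[\Sy]$-module structure, i.e. with the $\Sy_n$-action permuting the $n$ global inputs of an arity-$n$ element. Since the leaves of the composite tree and the global inputs of the associated operadic tree are canonically identified (as already used in the proof of Proposition~\ref{prop:CompositeTreesBasis}, where the permutation labeling the root of the composite tree is transferred to the labeling of the leaves of the operadic tree), this compatibility is built into the bijection. I expect no serious obstacle here: the argument is essentially a bookkeeping check analogous to the one performed for Proposition~\ref{prop:CompositeTreesBasis}, and the Koszul sign rule on both sides is governed by the same left-levelwise/left-recursive ordering convention fixed in Section~\ref{subsec:TOS}, so no extra signs are introduced under the bijection.
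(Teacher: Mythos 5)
Your proposal is essentially the paper's own argument: the authors likewise obtain this basis by transporting the basis of Proposition~\ref{pro:CompositeTreeBasisII} through the bijection between standard composite trees and planar operadic trees set up in Section~\ref{subsec:TOS}, matching leaves of the composite tree with vertices of the operadic tree and moving the strings of permutations accordingly. The one inaccurate point is your final claim that no signs arise: in the paper's conventions the identification does introduce signs, namely a factor $(-1)^{l(l-1)/2}$ from reversing each vertical string of $l$ degree $-1$ generators at a leaf, plus further Koszul signs from reordering the strings globally (cf.\ the displayed correspondence with a minus sign just after the proposition). This does not affect the statement itself, since rescaling basis elements by units $\pm 1$ preserves the property of being a $\KK[\Sy]$-linear basis, but the signs do matter for the subsequent explicit descriptions of the partial compositions and of $\Delta_{(1)}$ in this basis.
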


\begin{proof}
The proof is a consequence of  Proposition~\ref{prop:OperadicTreesBasis} for the "tree" part and
Proposition~\ref{pro:CompositeTreeBasisII} for the "permutations" part: strings of symmetric group elements on top of
composite trees correspond to strings of symmetric group elements labeling the vertices of operadic trees as follows
\begin{align*}
  \begin{aligned}\begin{tikzpicture}[optree]
    \node{}
    child[level distance=7mm] { node[dot,label=right:$\sigma^l$]{}
      child { node[dot,label={[xshift=.5mm,yshift=.5mm]right:$\vdots$}]{}
        child { node[dot,label=right:$\sigma^1$]{}
          child { edge from parent node[left,near end]{\tiny$n$} }
        }
      }
    } ;
  \end{tikzpicture}\end{aligned}
  \quad\longrightarrow\quad
  (-1)^{\frac{l(l-1)}{2}}
  \begin{aligned}\begin{tikzpicture}[optree,
      level distance=12mm,
      level 2/.style={sibling distance=12mm},
      level 3/.style={sibling distance=6mm}]
    \node{}
      child { node[comp,label={[label distance=1mm]1:{\tiny$\sigma^1,\ldots,\sigma^l$}}]{}
        child { node{\tiny $1$} }
        child { edge from parent[draw=none] node{$\,\cdots$} }
        child { node{\tiny $n$} }
      } ;
  \end{tikzpicture}\end{aligned}
\end{align*}
where the sign comes from the reordering of $s$ degree $-1$ terms.
\end{proof}

The vertices of the operadic trees are ordered by the left-recursive order. Hence we denote such a labeled tree by 
$t(\bar\sigma_1, \ldots, \bar \sigma_n)$, where $t$ stands for the underlying planar tree and where each 
$\bar \sigma_j=(\sigma^1_j, \ldots, \sigma_j^{i_j})$ is a (possibly empty) string of permutations.  
The above example is  $t((\sigma_1^1), (\sigma_2^1, \sigma_2^2), \emptyset)$.
\\
 
The bijection, mentioned in Section~\ref{subsec:TOS}, between standard composite trees and operadic trees induces  a bijection between the two above bases of the colored operad $\calO^!$:
 we add to the leaves of the standard composite tree the strings of 
symmetric group elements found at the corresponding vertices of the 
operadic tree. Notice that the change of order for the strings permutations induces a sign. 
\begin{align*}
  \begin{aligned}\begin{tikzpicture}[optree]
    \node{}
      child { node[circ]{$2$}
        child { node[circ]{$2$}
          child {
            child[level distance=8mm] { node[dot,label=right:$\sigma_1^1$]{}
              child { edge from parent node[right,near end]{\tiny$3$} } } }
          child { node[dot,label={[yshift=0mm]right:$\sigma_2^2$}]{}
            child[level distance=8mm] { node[dot,label={[yshift=1mm]right:$\sigma_2^1$}]{}
              child { edge from parent node[right,near end]{\tiny$4$} } } } }
        child {
          child { edge from parent[draw=none] }
          child {
            child[level distance=8mm] { node[dot,label=right:$\sigma_3^1$]{}
              child { edge from parent node[right,near end]{\tiny$2$} } } } } } ;
  \end{tikzpicture}\end{aligned}
  \quad\longleftrightarrow\quad -
\begin{aligned}\begin{tikzpicture}[optree,
      level distance=12mm,
      level 2/.style={sibling distance=12mm},
      level 3/.style={sibling distance=12mm}]
    \node{}
      child { node[comp,label={-3:{\tiny$\sigma^1_1$}}]{}
        child
        child { edge from parent[draw=none] }
        child { node[comp,label={[label distance=2mm]1:{\tiny $\sigma^1_2,\sigma^2_2$}}]{}
          child { node[comp,label={[label distance=1mm]3:{\tiny $\sigma_3^1$}}]{}
            child
            child }
        child { edge from parent[draw=none] }
          child
          child
          child }
        child { edge from parent[draw=none] }
        child { edge from parent[draw=none] }
        child } ;
  \end{tikzpicture}\end{aligned}  \ .
\end{align*}

Under this bijection, the partial composition of the colored operad $q\calO^!$ can also be described on the operadic trees basis. 
Since the leaves of composite trees correspond to the vertices of operadic trees, and since the binary generators correspond to the internal edges, the partial composition amounts to the substitution of operadic trees. 

\begin{example}\label{ex:PartiCompoOpTree}
\begin{align*}
  \begin{aligned}\begin{tikzpicture}[optree,
      level distance=12mm,
      level 2/.style={sibling distance=15mm},
      level 3/.style={sibling distance=6mm}]
    \tiny
    \node{}
      child { node[comp,label={-3:$[3 1 2]$}]{}
        child
        child { edge from parent[draw=none] }        
        child { node[comp,label={[label distance=1mm]right:$[1 3 4 2], [4 1 2 3]$}]{}
          child { node[comp,label={[label distance=0.5mm]2:$[2 1]$}]{}
            child
            child }
          child { edge from parent[draw=none] }
          child
          child
          child }
        child { edge from parent[draw=none] }
        child { edge from parent[draw=none] }
        child } ;
  \end{tikzpicture}\end{aligned}
  \circ_2
  \begin{aligned}\begin{tikzpicture}[optree,
      level distance=12mm,
      level 2/.style={sibling distance=12mm},
      level 3/.style={sibling distance=6mm}]
    \tiny
    \node{}
      child { node[comp,label={right:$[2 1], [2 1]$}]{}
        child { node[comp]{}
          child
          child
          child }
        child } ;
  \end{tikzpicture}\end{aligned}
  &=
  \begin{aligned}\begin{tikzpicture}[optree,
      level distance=12mm,
      level 2/.style={sibling distance=15mm},
      level 3/.style={sibling distance=15mm},
      level 4/.style={sibling distance=6mm}]
    \tiny
    \node{}
      child { node[comp,label={-3:$[3 1 2]$}]{}
        child
        child { edge from parent[draw=none] }
        child { node[comp] (a) {}
          child { node[comp] (b) {}
            child { node[comp,label={right:$[2 1]$}]{}
              child
              child }
            child { edge from parent[draw=none] }
            child
            child }
          child }
        child { edge from parent[draw=none] }
        child { edge from parent[draw=none] }
        child } ;
    \node[rectangle,right=.5mm of a] (c) {$[2 1], [2 1]$};
    \node[draw,dotted,ellipse,rotate=-20,inner sep=-5mm,xshift=-2mm,xscale=0.85,yshift=-0.5mm,fit=(a) (b) (c),label={25:$[1 3 4 2], [4 1 2 3]$}] {};
  \end{tikzpicture}\end{aligned}\\
& = 
  \begin{aligned}\begin{tikzpicture}[optree,
      level distance=12mm,
      level 2/.style={sibling distance=15mm},
      level 3/.style={sibling distance=18mm},
      level 4/.style={sibling distance=6mm}]
    \tiny
    \node{}
      child { node[comp,label={-3:$[3 1 2]$}]{}
        child
        child { edge from parent[draw=none] }        
        child { node[comp,label={[label distance=1.5mm]right:$[2 1], [2 1], [2 1]$}] {}
          child { node[comp,label={[label distance=0.5mm]right:$[2 1]$}]{}
            child
            child }
          child { node[comp,label={right:$[2 3 1]$}] {}
            child
            child 
            child } }
        child { edge from parent[draw=none] }
        child { edge from parent[draw=none] }
        child } ;
  \end{tikzpicture}\end{aligned}
\end{align*}
This example depicts  the same partial composition as in Example~\ref{Exam:PartialCompoCompoTrees} but in terms of operadic trees this time. 
\end{example}
The main advantage here of operadic trees over composite trees lies in the fact that in the colored operad $q\calO^!$ the
 former ones have no relations, on the opposite to the latter ones: they are related by
 Relations~\eqref{Align:Relation1a} and \eqref{Align:Relation1b}.

\subsection{Koszul dual cooperad of the quadratic analogue}\label{subsec:KDCoopQ}
The Koszul dual cooperad $q\calO^\ac$ is the linear dual of
the operad $q\calO^!\htensor \calS$, as explained above in the proof of Proposition~\ref{prop:KoszulDualOp}.
This latter operad admits the same presentation as the operad $q\calO^!$ except that all the generators are now in degree $-1$ and that all the relations hold up to a minus sign. So the degree of the underlying planar tree $t$  of an operadic tree depicting a basis element of the cooperad $q\calO^\ac$ is equal to  the number of its internal edges; we denote it by $|t|$. 

Dualizing the aforementioned partial composition of the operad $q\calO^!$, we get the infinitesimal decomposition map $\Delta_{(1)}$ of the cooperad  $q\calO^\ac$ as follows. Let $t(\bar\sigma_1, \ldots, \bar \sigma_n)\in q\calO^\ac$ be an operadic tree basis element. Let $s\subset t$ be any subtree, which can be just a corolla. It is completely characterized by the associated subset $\{ s_1, \ldots, s_k\}\subset \{1, \ldots, n\}$ of vertices of $t$. The infinitesimal coproduct of $q\calO^{\ac}$ extracts any such subtree $s$ of $t$ and produces the tree $t/s$ obtained by contracting $s$ inside $t$, i.e. replacing the subset $s$ by a corolla of same arity. 

The strings of permutations $\{ \bar\sigma_{s_1}, \ldots, \bar\sigma_{s_k}\}$ labeling the vertices  of $s$ split into
two parts, in all possible ways, even empty. For this, we use the notation
$$\bar\sigma=(\cev{\sigma},\vec{\sigma})\ , $$
where the left-hand parts $\cev{\sigma}$ remain in $s$. (It can be empty only when the subtree $s$ is not a corolla). The right-hand parts $\vec{\sigma}$ leave $s$ one by one in all
possible orders, change its shape according to Relation~$(2)$ and label the new corolla in the contracted tree
$t/s$.  We denote the new string of permutations by $\shuffle(\vec{\sigma}_{s_1}, \ldots, \vec{\sigma}_{s_k})$ because
all the shuffles of the $k$ strings appear in the formula. (We should use here notation "prime" and "second" for these
induced permutations.) We denote by $\tilde{s}$, the new subtree $s$.  The other vertices of $t$ remain labeled by the
same strings of permutations. The overall sign is denoted by $\varepsilon_{s,t, \shuffle}$.

\begin{proposition}\label{prop:InfDecomp}
The   infinitesimal decomposition map of the cooperad $q\calO^\ac$ is equal to 
  \begin{align*}
    \Delta_{(1)}\big(t(\bar\sigma_1, \ldots, \bar \sigma_n)\big) =
      \sum_{s\subseteq t,  \shuffle} 
      \varepsilon_{s,t, \shuffle}\, 
      \ t/s(\bar{\sigma}_1,\ldots,
      \shuffle(\vec{\sigma}_{s_1}, \ldots, \vec{\sigma}_{s_k}),
      \ldots, \bar{\sigma}_n) 
      \otimes
      \tilde{s}(\cev{\sigma}_{s_1},\ldots,\cev{\sigma}_{s_k}) \ .
  \end{align*}
\end{proposition}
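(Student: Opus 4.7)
The plan is to compute $\Delta_{(1)}$ by dualising the infinitesimal partial composition $\gamma_{(1)}$ of the Koszul dual operad $q\calO^!$, since $q\calO^\ac$ is, up to the operadic suspension $\calS$, the linear dual of $q\calO^!$. Concretely, I will work in the operadic-tree basis of Proposition~\ref{prop:OperadicTreesBasisII}, where $\gamma_{(1)}$ admits a transparent description: the dual formula then follows by collecting summands, and the only subtle point is the sign.

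First I describe $\gamma_{(1)}$ explicitly on the operadic-tree basis. Given $t_1(\bar\tau_1,\dots,\bar\tau_n)$ and $t_2(\bar\rho_1,\dots,\bar\rho_k)$, the composition $t_1\circ_i t_2$ grafts $t_2$ at the $i$-th vertex $v_i$ of $t_1$, after which the string $\bar\tau_i$ at $v_i$ must be pushed upward through the grafted $t_2$ in order to express the outcome back in the basis. By Relation~(3) of Proposition~\ref{prop:KoszulDualOp}, each permutation $\tau\in\bar\tau_i$ pushed past one vertex of $t_2$ is distributed, with a twisted primed/double-primed permutation, onto that vertex; iterating, the whole string $\bar\tau_i$ gets shuffled into the strings of permutations already labelling the vertices of $t_2$, each shuffle contributing a summand with a sign given by the Koszul sign rule applied to the degree $-1$ generators $s^{-1}\sigma^*$. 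This is exactly what Example~\ref{ex:PartiCompoOpTree} illustrates.

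Dualising, a basis element $t(\bar\sigma_1,\dots,\bar\sigma_n)\in q\calO^\ac$ appears in $\gamma_{(1)}(t_1\otimes t_2)^*$ precisely when $t_2$ can be identified with a subtree $s\subseteq t$ (possibly a corolla), $t_1$ with the contracted tree $t/s$, and the labels $\bar\sigma_{s_1},\dots,\bar\sigma_{s_k}$ of the vertices of $s$ inside $t$ split as $\bar\sigma_{s_j}=(\cev\sigma_{s_j},\vec\sigma_{s_j})$, where $\cev\sigma_{s_j}$ stays as the label of vertex $s_j$ in $\tilde s$, while the right-hand parts are shuffled to produce the string $\shuffle(\vec\sigma_{s_1},\dots,\vec\sigma_{s_k})$ labelling the vertex of $t/s$ obtained by contraction. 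Summing over all pairs $(s,\shuffle)$ yields precisely the indexing set of the stated formula.

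Finally, the sign $\varepsilon_{s,t,\shuffle}$ is tracked by keeping book of the Koszul signs arising from (a) the Koszul sign of the shuffle permutation on the concatenated string $\vec\sigma_{s_1}\sqcup\cdots\sqcup\vec\sigma_{s_k}$ relative to the left-recursive order on vertices of $t$, and (b) reordering the labels to separate $\tilde s$'s part in the second tensor factor. The main technical obstacle will be this sign bookkeeping: since Proposition~\ref{prop:OperadicTreesBasisII} identifies operadic trees with standard composite trees only up to a sign $(-1)^{l(l-1)/2}$ per permutation string, one has to verify that all the Koszul signs produced by rewriting through Relations~\eqref{Align:Relation1a}--\eqref{Align:Relation1b} combine, together with the shuffle sign, into the single compact sign $\varepsilon_{s,t,\shuffle}$. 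Once this is done, dualising the composition formula described above gives the claimed expression for $\Delta_{(1)}$.
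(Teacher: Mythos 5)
Your overall strategy---reading off $\Delta_{(1)}$ as the linear dual of the partial composition of $q\calO^!$ (up to the operadic suspension $\calS$) in the operadic-tree basis---is exactly the route the paper takes, where the proof is one line once the composition has been described. However, your description of that composition contains a genuine error: you assert that when the string $\bar\tau_i$ at the substituted vertex is pushed through the grafted tree, ``each shuffle contribut[es] a summand,'' i.e.\ that the composition of two basis elements of $q\calO^!$ is a sum over shuffles. It is not: such a composition is a \emph{single} signed basis element (or zero). This determinism is forced by the third relation of Proposition~\ref{prop:KoszulDualOp}: for a fixed binary vertex, a non-identity permutation $\tau$ is of the form $\sigma'$ for at most one $\sigma$, or of the form $\sigma''$ for at most one $\sigma$, and never both, so each permutation of $\bar\tau_i$ pushed past the binary vertices of the grafted comb lands at a uniquely determined vertex (or kills the term). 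The very examples you cite (Example~\ref{Exam:PartialCompoCompoTrees} and Example~\ref{ex:PartiCompoOpTree}) display exactly one term on the right-hand side.

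The shuffle sum in the proposition arises only upon dualization: distinct shuffles $\shuffle(\vec{\sigma}_{s_1},\ldots,\vec{\sigma}_{s_k})$ give distinct labels of the contracted vertex of $t/s$, hence distinct basis elements in the first tensor factor, each of which composes deterministically onto $\pm\, t(\bar\sigma_1,\ldots,\bar\sigma_n)$; the sum over pairs $(s,\shuffle)$ enumerates these preimages. Your third paragraph states this characterization correctly, but it does not follow from your second paragraph as written: if $\gamma_{(1)}$ really were a shuffle-sum, the dual structure constants would acquire extra multiplicities (a fixed pair $t/s\otimes\tilde{s}$ would pick up a sum of signs rather than a single $\varepsilon_{s,t,\shuffle}$), and the claimed formula would not result. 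So the argument needs the determinism of the composition established first; after that, the sign $\varepsilon_{s,t,\shuffle}$ is, as you say and as the paper spells out in Example~\ref{Exam:InfDecCoop}, a matter of Koszul-sign bookkeeping through the composite-tree rewriting via Relations~\eqref{Align:Relation1a}--\eqref{Align:Relation1b} and the signed third relation.
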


\begin{proof}
This formula is the linear dual of the partial composition map of the operad $q\calO^!$.
\end{proof}

Let us explain this procedure on an example and give the computation of the associated signs at the same time. 
Recall that the signs all come from the left-recursive order on composite trees. 

\begin{example}[Infinitesimal decomposition map of $q\calO^{\ac}$]\label{Exam:InfDecCoop}
We consider the operadic trees 
\begin{align*}
  t &=
  \begin{aligned}\begin{tikzpicture}[optree,
      level distance=12mm,
      level 2/.style={sibling distance=36mm},
      level 3/.style={sibling distance=12mm},
      level 4/.style={sibling distance=12mm},
      level 5/.style={sibling distance=6mm}]
    \node{}
      child { node[comp]{}
        child {
          child
          child {
            child
            child
          }
          child
        }
        child { node[comp] (a) {}
          child { node[comp] (b) {}
            child {
              child
              child
            }
            child { node[comp] (c) {}
              child
              child
            }
          }
          child
          child { node[comp] (d) {}
            child
            child
          }
        }
        child
      };
    \node[draw,dotted,circle,inner sep=-6mm,yshift=1mm,fit=(a) (b) (c) (d),label={70:$s$}] {};
  \end{tikzpicture}\end{aligned} ,&
  s &=
  \begin{aligned}\begin{tikzpicture}[optree,
      level distance=12mm,
      level 2/.style={sibling distance=12mm},
      level 3/.style={sibling distance=12mm},
      level 4/.style={sibling distance=6mm}]
    \tiny
    \node{}
      child {
        child {
          child
          child {
            child
            child
          }
        }
        child
        child {
          child
          child
        }
      } ;
  \end{tikzpicture}\end{aligned} ,&
  t/s &=
  \begin{aligned}\begin{tikzpicture}[optree,
      level distance=12mm,
      level 2/.style={sibling distance=24mm},
      level 3/.style={sibling distance=12mm},
      level 4/.style={sibling distance=12mm},
      level 5/.style={sibling distance=6mm}]
    \node{}
      child { node[comp]{}
        child {
          child
          child {
            child
            child
          }
          child
        }
        child { edge from parent[draw=none] }
        child {
          child {
            child
            child
          }
          child
          child
          child
          child
          child
        }
        child
      };
  \end{tikzpicture}\end{aligned} .
\end{align*}
We ommited the symmetric group elements in the above pictures. See the composite tree pictures below.

\begin{asparaenum}[\it{Step} 1.]
\item We associate to the operadic tree $t(\bar\sigma_1, \ldots, \bar \sigma_n)$ its standard composite tree. The
  reversal of order in the strings of permutations induces the sign 
  $$(-1)^{\frac{|\bar\sigma_1|(|\bar\sigma_1|-1)}{2}+\cdots +\frac{|\bar\sigma_n|(|\bar\sigma_n|-1)}{2} } \ .$$

\item  We isolate the subtree $s$ inside $t$ as follows. The label $s_1$ is the smallest index of a vertex of $s$
  viewed inside $t$. We relabel the vertices of $t$ by, first, keeping the labels $1, \ldots, s_1-1$, then, labeling
  only the vertices of $s$ by $s_1, s_1+1, \ldots, s_1+k-1$, and, finally, by labeling the rest of the vertices of
  $t$. This implies a permutation of vertices, denoted by $\nu$, and an induced permutation of strings of symmetric
  group elements, which all have degree $1$. The induced sign is the signature of $\nu$ times the sign produces by the
  Koszul sign rule. We denote this sign by $\mathrm{sgn}(\nu(|\bar\sigma_1|, \ldots, |\bar \sigma_n|))$.

  On the level of the equivalent standard composite tree, we rewrite it as a left-comb but isolating the subtree $s$.
  The signature of the permutation $\nu$ comes from the use of the (signed) Relation~\eqref{Align:Relation1a}.

\item  We extract the subtree $s$ out of the tree $t$. On the level of the left-comb composite tree, this amounts to
  using Relations~\eqref{Align:Relation1b} and \eqref{Align:Relation1a} to write the tree $t$ with a first left-comb
  giving $t/s$ and an attached parallel left-comb giving the subtree $s$. Then, we pull the left-comb representing $s$ above the one representing $t/s$ with its strings of permutations. The sign for the underlying trees is $(-1)^{|s|(s_1-1)}$; it should be multiplied with the sign coming from the change of order of the strings of permutations. 

\begin{align*}
-  \begin{aligned}\begin{tikzpicture}[optree]
    \node{}
      child { node[circ]{$10$}
        child { node[circ]{$7$}
          child { node[circ]{$5$}
            child { node[circ]{$5$}
              child { node[circ]{$5$}
                child { node[circ]{$2$}
                  child { node[circ]{$1$}
                    child { node[dot]{}
                      child[level distance=5mm] { edge from parent node[right,near end]{\tiny$3$} }
                    }
                    child { edge from parent node[right,near end]{\tiny$3$} }
                  }
                  child { edge from parent node[right,near end]{\tiny$2$} }
                }
                child { node[dot]{}
                  child[level distance=5mm] { node[dot] {}
                    child[level distance=10mm] { edge from parent node[right,near end] (a) {\tiny$3$} node (a2) {} }
                  }
                }
              }
              child { node[dot]{}
                child[level distance=5mm] { node[dot]{}
                  child[level distance=10mm] { edge from parent node[right,near end] (b) {\tiny$2$} node (b2) {} }
                }
              }
            }
            child { node[dot]{}
              child[level distance=5mm] { edge from parent node[right,near end]{\tiny$2$} }
            }
          }
          child { node[dot]{}
            child[level distance=5mm] { node[dot]{}
              child[level distance=10mm] { edge from parent node[right,near end] (c) {\tiny$2$} }
            }
          }
        }
        child { node[dot]{}
          child[level distance=15mm] { edge from parent node[right,near end] (d) {\tiny$2$} }
        }
      } ;
    \node[draw,dotted,ellipse,rotate=35,inner sep=-3.5mm,yshift=.5mm,xscale=.5,yscale=1.05,fit=(a) (b),label={0:$s$}] {};
    \node[draw,dotted,ellipse,rotate=35,inner sep=-3.5mm,yshift=.5mm,xscale=.5,fit=(c) (d),label={0:$s$}] {};
  \end{tikzpicture}\end{aligned}
  && \xrightarrow{\text{Step 2}} &&
 + \begin{aligned}\begin{tikzpicture}[optree]
    \node{}
      child { node[circ]{$5$}
        child { node[circ]{$9$}
          child { node[circ]{$6$}
            child { node[circ]{$5$}
              child { node[circ]{$5$}
                child { node[circ]{$2$}
                  child { node[circ]{$1$}
                    child { node[dot]{}
                      child[level distance=5mm] { edge from parent node[right,near end]{\tiny$3$} }
                    }
                    child { edge from parent node[right,near end]{\tiny$3$} }
                  }
                  child { edge from parent node[right,near end]{\tiny$2$} }
                }
                child { node[dot]{}
                  child[level distance=5mm] { node[dot] {}
                    child[level distance=10mm] { edge from parent node[right,near end] (a) {\tiny$3$} node (a2) {} }
                  }
                }
              }
              child { node[dot]{}
                child[level distance=5mm] { node[dot]{}
                  child[level distance=10mm] { edge from parent node[right,near end] (b) {\tiny$2$} node (b2) {} }
                }
              }
            }
            child { node[dot]{}
              child[level distance=5mm] { node[dot]{}
                child[level distance=10mm] { edge from parent node[right,near end] (c) {\tiny$2$} }
              }
            }
          }
          child { node[dot]{}
            child[level distance=15mm] { edge from parent node[right,near end] (d) {\tiny$2$} node (s1) {} }
          }
        }
        child { node[dot]{}
          child[level distance=5mm] { edge from parent node[right,near end]{\tiny$2$} }
        }
      } ;
    \node[draw,dotted,ellipse,rotate=35,inner sep=-8mm,xscale=.25,yscale=.95,fit=(a) (b) (c) (d),label={0:$s$}] {};
  \end{tikzpicture}\end{aligned}
  && \xrightarrow{\text{Step 3}} &&
+  \begin{aligned}\begin{tikzpicture}[optree]
    \node{}
      child { node[circ]{$5$}
        child { node[circ]{$5$}
          child { node[circ]{$2$}
            child { node[circ]{$1$}
              child { node[dot]{}
                child[level distance=5mm] { edge from parent node[right,near end]{\tiny$3$} }
              }
              child { edge from parent node[right,near end]{\tiny$3$} }
            }
            child { edge from parent node[right,near end]{\tiny$2$} }
          }
          child {
            child { edge from parent[draw=none] }
            child { node[circ]{$5$}
              child { node[circ]{$2$}
                child { node[circ]{$1$}
                  child { node[dot]{}
                    child[level distance=5mm] { node[dot] (s0) {}
                      child { edge from parent node[right,near end] (a) {\tiny$3$} node (a2) {} }
                    }
                  }
                  child { node[dot]{}
                    child[level distance=5mm] { node[dot]{}
                      child { edge from parent node[right,near end] (b) {\tiny$2$} node (b2) {} }
                    }
                  }
                }
                child { node[dot]{}
                  child[level distance=5mm] { node[dot]{}
                    child { edge from parent node[right,near end] (c) {\tiny$2$} }
                  }
                }
              }
              child { node[dot]{}
                child[level distance=5mm] {
                  child { edge from parent node[right,near end] (d) {\tiny$2$} node (s1) {} }
                }
              }
            }
          }
        }
        child { node[dot]{}
          child[level distance=5mm] { edge from parent node[right,near end]{\tiny$2$} }
        }
      } ;
  \end{tikzpicture}\end{aligned}
\end{align*}

\item  For each choice of sub-strings labeling the vertices of $s$ and for each shuffle of them, we pull them down and
  we change them into $\sigma'$ or $\sigma''$ at each step. And we push down these elements inside the bottom tree representing $t/s$. The sign is two-fold: each use of (signed) Relation~$(2)$ creates a
  sign and the permutations of the order of the symmetric group elements produce a sign.

\item  By pulling down the symmetric group elements using Relation~$(2)$, the labels of the binary vertices
  of the standard left-comb giving $s$ might be changed (second case of Relation~$(2)$). So we might have to rewrite
  this left-comb into a standard form, i.e. with nondecreasing binary vertices from top to bottom, using
  Relation~\eqref{Align:Relation1a}; this induces a sign similar to the sign in Step 1.

  In our example, assume that $\rho_1 = [321]$, $\rho_2 = \rho_3 = [21]$. In the pictures we then use the
  $(1,0,2,0)$-shuffle $[213]$ to pull down the symmetric group elements. This induces permutations (in this order)
  $\tilde\rho_2 = [132456]$, $\tilde\rho_1 = [456312]$ and $\tilde\rho_3 = [123465]$.

\begin{align*}
+  \begin{aligned}\begin{tikzpicture}[optree]
    \node{}
      child { node[circ]{$5$}
        child { node[circ]{$5$}
          child { node[circ]{$2$}
            child { node[circ]{$1$}
              child { node[dot]{}
                child[level distance=5mm] { edge from parent node[right,near end]{\tiny$3$} }
              }
              child { edge from parent node[right,near end]{\tiny$3$} }
            }
            child { edge from parent node[right,near end]{\tiny$2$} }
          } 
          child {
            child { edge from parent[draw=none] }
            child { node[circ]{$5$}
              child { node[circ]{$2$}
                child { node[circ]{$1$}
                  child { node[dot,label=left:$\rho_1$]{}
                    child[level distance=10mm] { node[dot] (s0) {}
                      child[level distance=5mm] {
                        child { edge from parent node[right,near end] (a) {\tiny$3$} node (a2) {} }
                      }
                    }
                  }
                  child {
                    child[level distance=10mm] { node[dot]{}
                      child[level distance=5mm] { node[dot]{}
                        child { edge from parent node[right,near end] (b) {\tiny$2$} node (b2) {} }
                      }
                    }
                  }
                }
                child { node[dot,label=left:$\rho_2$]{}
                  child[level distance=5mm] { node[dot,label=left:$\rho_3$]{}
                    child[level distance=20mm] {
                      child[level distance=5mm] { edge from parent node[right,near end] (c) {\tiny$2$} }
                    }
                  }
                }
              }
              child {
                child[level distance=30mm] { node[dot]{}
                  child[level distance=5mm] {
                    child { edge from parent node[right,near end] (d) {\tiny$2$} node (s1) {} }
                  }
                }
              }
            }
          }
        }
        child { node[dot]{}
          child[level distance=5mm] { edge from parent node[right,near end]{\tiny$2$} }
        }
      } ;
    \node[draw,dotted,rectangle,fit=(s0) (s1),xshift=.5mm,yshift=.5mm,xscale=.6,yscale=.75,label={90:$s$}] {};
  \end{tikzpicture}\end{aligned}
  && \xrightarrow{\text{Step 4}} &&
-  \begin{aligned}\begin{tikzpicture}[optree]
    \node{}
      child { node[circ]{$5$}
        child { node[circ]{$5$}
          child { node[circ]{$2$}
            child { node[circ]{$1$}
              child { node[dot]{}
                child[level distance=5mm] { edge from parent node[right,near end]{\tiny$3$} }
              }
              child { edge from parent node[right,near end]{\tiny$3$} }
            }
            child { edge from parent node[right,near end]{\tiny$2$} }
          }
          child { node[dot,label={[yshift=-1mm]right:$\tilde\rho_2$}]{}
            child[level distance=5mm] { node[dot,label=right:$\tilde\rho_1$]{}
              child { node[dot,label={[yshift=1mm]right:$\tilde\rho_3$}]{}
                child[level distance=10mm] { node[circ]{$1$}
                  child { node[circ]{$4$}
                    child { node[circ]{$3$}
                      child { node[dot]{}
                        child[level distance=5mm] { edge from parent node[right,near end] {\tiny$3$} }
                      }
                      child { node[dot]{}
                        child[level distance=5mm] { node[dot]{}
                          child { edge from parent node[right,near end] {\tiny$2$} {} }
                        }
                      }
                    }
                    child { edge from parent node[right,near end] {\tiny$2$} }
                  }
                  child { node[dot]{}
                    child[level distance=5mm] { edge from parent node[right,near end] {\tiny$2$} }
                  }
                }
              }
            }
          }
        }
        child { node[dot]{}
          child[level distance=5mm] { edge from parent node[right,near end]{\tiny$2$} }
        }
      } ;
  \end{tikzpicture}\end{aligned}
  && \xrightarrow{\text{Step 5}} &&
-  \begin{aligned}\begin{tikzpicture}[optree]
    \node{}
      child { node[circ]{$5$}
        child { node[circ]{$5$}
          child { node[circ]{$2$}
            child { node[circ]{$1$}
              child { node[dot]{}
                child[level distance=5mm] { edge from parent node[right,near end]{\tiny$3$} }
              }
              child { edge from parent node[right,near end]{\tiny$3$} }
            }
            child { edge from parent node[right,near end]{\tiny$2$} }
          }
          child { node[dot,label={[yshift=-1mm]right:$\tilde\rho_2$}]{}
            child[level distance=5mm] { node[dot,label=right:$\tilde\rho_1$]{}
              child { node[dot,label={[yshift=1mm]right:$\tilde\rho_3$}]{}
                child[level distance=10mm] { node[circ]{$5$}
                  child { node[circ]{$4$}
                    child { node[circ]{$1$}
                      child { node[dot]{}
                        child[level distance=5mm] { edge from parent node[right,near end] {\tiny$3$} }
                      }
                      child { node[dot]{}
                        child[level distance=5mm] { edge from parent node[right,near end] {\tiny$2$} {} }
                      }
                    }
                    child { node[dot]{}
                      child[level distance=5mm] { node[dot]{}
                        child { edge from parent node[right,near end] {\tiny$2$} }
                      }
                    }
                  }
                  child { edge from parent node[right,near end] {\tiny$2$} }
                }
              }
            }
          }
        }
        child { node[dot]{}
          child[level distance=5mm] { edge from parent node[right,near end]{\tiny$2$} }
        }
      } ;
  \end{tikzpicture}\end{aligned}
\end{align*}

\item  We write the operadic tree   $\tilde{s}(\cev{\sigma}_{s_1},\ldots,\cev{\sigma}_{s_k})$ corresponding to the top standard composite tree 
 on the right-hand side of the 
 operadic tree 
   $t/s(\bar{\sigma}_1,\ldots, \shuffle(\vec{\sigma}_{s_1}, \ldots, \vec{\sigma}_{s_k}), \ldots, \bar{\sigma}_n)$
 corresponding to the bottom
 standard composite tree. There is a sign coming from the bijection  operadic trees and  standard composite trees with strings of permutations. 
 \begin{align*}
  && \xrightarrow{\text{Step 6}} && 
-\   t/s\big((\bullet), \emptyset, \emptyset, (\tilde\rho_3, \tilde\rho_1, \tilde\rho_2), (\bullet)\big) \otimes  
  \tilde{s}\big((\bullet), (\bullet), (\bullet, \bullet), \emptyset \big)\ .
\end{align*}
\end{asparaenum}

\end{example}

\subsection{Curved Koszul dual cooperad}

To apply the curved Koszul duality theory to the colored operad $\calO$, one  has to check first the following conditions on the presentation of this operad. 

\begin{lemma}\label{lem:Conditions12}
The following conditions hold
\begin{enumerate}
  \item[{\rm (I)}] Minimality of generators: $R \cap \{I \oplus E\} = \{0\}$\ ,
  \item[{\rm (II)}] Maximality of relations: 
    $(R) \cap \{I \oplus E \oplus \TTT(E)^{(2)}\} = R$\ .
\end{enumerate}
\end{lemma}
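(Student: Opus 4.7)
The plan is to verify the two conditions separately, using direct inspection for (I) and a dimension-counting argument built on the distributive laws already established for (II).

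For condition (I), I would argue from the presentation directly. Each of the six families of relations carries a distinguished non-zero weight-$2$ leading term: the stack $\tau\cdot\sigma$ of two unary generators for (i) and (ii); a composition of two binary generators for (iii) and (iv); and a mixed composition of one unary with one binary for (v) and (vi). Enumerating over all parameter choices $\sigma,\tau,i,j,m,n,k$, these leading terms are distinct basis elements of $\TTT(E)^{(2)}$ (these basis elements are just labeled planar trees with two internal vertices, and different relations give different labeled trees), so they are linearly independent. Consequently the projection $R\to\TTT(E)^{(2)}$ onto the weight-$2$ component is injective, and its kernel $R\cap(I\oplus E)$ is trivial.

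For condition (II), my plan is to compare the colored-arity-wise $\KK$-ranks of $\calO$ and $q\calO$ using the distributive laws. By Proposition~\ref{prop:DistLawO} the underlying $\Sy$-module of $\calO$ is $\KK[\Sy]\circ\calO_{ns}$, and by Proposition~\ref{prop:DistLawqO}(1) that of $q\calO$ is $\KK\{\Sy\}\circ\calO_{ns}$. Since $\KK[\Sy]$ and $\KK\{\Sy\}$ have the same underlying $\Sy$-module by construction, we obtain $\mathrm{rank}\,\calO(x_0;x_1,\dots,x_n)=\mathrm{rank}\,q\calO(x_0;x_1,\dots,x_n)$ for every colored arity.

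I would then endow $\calO=\TTT(E)/(R)$ with the filtration $F_\bullet\calO$ induced from the weight grading on $\TTT(E)$. The inclusion $qR\subseteq\mathrm{gr}(R)$ provides a canonical surjection of graded colored operads $q\calO\twoheadrightarrow\mathrm{gr}\,\calO$; by the rank equality of the previous paragraph, this surjection of free $\KK$-modules of equal finite rank is necessarily an isomorphism. Restricting to weight $\leq 2$ and using that $(qR)\cap F_2\TTT(E)=qR$ because $(qR)$ is a homogeneous ideal concentrated in weights $\geq 2$, one obtains $\mathrm{rank}\bigl((R)\cap F_2\TTT(E)\bigr)=\mathrm{rank}\,qR$. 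Condition~(I) gives $\mathrm{rank}\,R=\mathrm{rank}\,qR$, and since $R\subseteq(R)\cap F_2\TTT(E)$ trivially, this rank equality forces the inclusion to be an equality, which is exactly condition~(II).

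The main obstacle I anticipate is the bookkeeping needed to justify that the canonical map $q\calO\twoheadrightarrow\mathrm{gr}\,\calO$ is well defined and surjective, and that the rank comparison is valid over the ground ring $\KK$; but since all the modules in sight are free and the distributive law structure has already been set up, these are formal verifications rather than genuine difficulties.
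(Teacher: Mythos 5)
Your treatment of condition (I) is acceptable (the paper dismisses it as obvious), although the phrase ``distinct leading terms, hence the projection is injective'' hides the actual reason: relations (iii)--(vi) are \emph{homogeneous} of weight $2$, so they cannot contribute to the kernel of the projection onto $\TTT(E)^{(2)}$, while the weight-$2$ parts of (i)--(ii) live in the one-input components, where they are distinct basis trees. The genuine problem is with condition (II). Your argument is a rank/PBW-type count, and it would indeed work over a field; but the lemma is needed over an arbitrary commutative ring $\KK$ (e.g.\ $\ZZ$ -- this is the whole point of the paper), and two of your steps fail there. First, calling $q\calO \twoheadrightarrow \mathrm{gr}\,\calO$ a ``surjection of free $\KK$-modules of equal finite rank'' presupposes that $\mathrm{gr}\,\calO$ is free of the same rank as $\calO$. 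Over a general ring the associated graded of a filtered free module need be neither free nor of the same rank (filter $\ZZ$ by $2\ZZ\subset\ZZ$: the graded object is $2\ZZ\oplus\ZZ/2$). Knowing that the weight filtration on $\calO$ is split -- i.e.\ that $F_w\calO$ is spanned by those basis elements of Proposition~\ref{prop:OperadicTreesBasis} expressible with at most $w$ generators -- is essentially the PBW-type statement you are in the middle of proving, not a ``formal verification''. Second, your closing step ``$\mathrm{rank}\,R=\mathrm{rank}\bigl((R)\cap F_2\TTT(E)\bigr)$ and $R\subseteq (R)\cap F_2\TTT(E)$ force equality'' is simply false over a ring: $2\ZZ\subset\ZZ$ is an inclusion of free modules of equal finite rank which is not an equality (and $(R)\cap F_2\TTT(E)$ is not known to be free anyway). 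Even granting $q\calO\cong\mathrm{gr}\,\calO$, the correct way to extract (II) is not by ranks but by injectivity of the canonical surjection in each weight: the weight $\le 1$ part gives $(R)\cap(\I\oplus E)=0$ and the weight-$2$ part gives $q\bigl((R)\cap F_2\TTT(E)\bigr)=qR$, whence (II); this deduction, and the needed control of $(R)$ in weights $0$ and $1$, are left implicit in your write-up.

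The ingredient that actually makes (II) work over any $\KK$ is the one the paper uses and your proposal never invokes: the colored operad $\calO$ is the $\KK$-linearization of a set-theoretic operad, so every element of $(R)\cap\{\I\oplus E\oplus\TTT(E)^{(2)}\}$ is a linear combination of differences $x_1-x_2$ of set-theoretic elements of $\I\oplus E\oplus\TTT(E)^{(2)}$ having the same image in $\calO$, and a short case analysis by number of inputs (two unary generators; one unary and one binary; two binary) shows each such difference already lies in $R$ by Relations (i)--(vi). If you wish to keep your filtration picture, you must replace the rank count by an argument of this rewriting/adapted-basis type (compatible with the Diamond Lemma computations of Proposition~\ref{prop:DistLawqO}); rank equalities alone do not suffice over $\ZZ$.
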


\begin{proof}
The first condition is obviously satisfied. 
Regarding the second condition, since the operad $\mathcal O$ is the free $\KK$-linear version of a set-theoretical, any element of 
$(R) \cap \{I \oplus E \oplus \TTT(E)^{(2)}\}$ is a linear combinaison of elements of the form $x_1-x_2$, where 
$x_1, x_2$ are set-theoretical elements of  $\in I \oplus E \oplus \TTT(E)^{(2)}$ which give the same element in $\mathcal{O}$. 
If they are of arity $1$, then one is made up of two elements $\sigma$ and $\tau$ of a symmetric group $\Sy_n$. So, the other one is either the identity if $\tau=\sigma^{-1}$ or made up by the element $\sigma \tau$. In both cases, $x_1-x_2$ lives in $R$ by Relations~(\ref{Orel1a})-(\ref{Orel1b}). 
If they are of arity $2$, they are both made up of one binary generator of $E$ and one element of the symmetric group. The three possible cases are covered by Relations~(\ref{Orel2a})-(\ref{Orel2b}).
If they are of arity $3$,  they are both made up of two binary generators of $E$ and Relations~(\ref{Orel3a})-(\ref{Orel3b}) express all the possible ways to write such elements. Finally, we have shown that 
$(R) \cap \{I \oplus E \oplus \TTT(E)^{(2)}\}\subset R$, which concludes the proof. 
\end{proof}

Under Conditions (I) and (II), one can endow  the quadratic Koszul dual cooperad $q\calO^\ac$ with a curved cooperad structure
$$\calO^\ac := (q\calO^\ac, \dif_{\calO^\ac},\theta)\ , $$
defining the  \emph{Koszul 
dual curved cooperad}. Such a structure amonts to a \emph{curvature} $\theta : q\calO^\ac \to I$, which is a degree $-2$ map satisfying $\theta \circ \dif_{\calO^\ac}=0$ and controlling the default for the degree $-1$ coderivation $\dif_{\calO^\ac}$ to square to zero: 
$$(\dif_{\calO^\ac})^2=(\id \otimes \theta - \theta\otimes \id)\circ \Delta_{(1)}\ .$$

Condition (I) ensures that there exists a map $\varphi : qR \to E \oplus I$ such that the space of relations $R$ is the graph of $\varphi$, i.e. $R=\{ X - \varphi(X), X \in qR\}$. Let us denote the two summands of this map by $\varphi=\varphi_1+\varphi_0$, where $\varphi_1 : qR\to E$ and $\varphi_0 : qR\to I$ are equal to 
$$\varphi_1 \ : \ \begin{aligned}
  \begin{tikzpicture}[optree]
    \node{}
      child { node[dot,label=right:$\tau$]{}
        child { node[dot,label=right:$\sigma$]{}
child } } ;
  \end{tikzpicture}\end{aligned}
\   \mapsto\ 
  \begin{aligned}\begin{tikzpicture}[optree]
    \node{}
      child { node[dot,label=right:$\sigma\tau$]{}
        child  } ;
  \end{tikzpicture}\end{aligned}
  , \mathrm{for}\ \sigma \tau\neq \id,  \ \mathrm{and} \ 
\varphi_0 \ : \   \begin{aligned}\begin{tikzpicture}[optree]
    \node{}
      child { node[dot,label=right:$\sigma^{-1}$]{}
        child { node[dot,label=right:$\sigma$]{}
         child } } ;
  \end{tikzpicture}\end{aligned}
\ \mapsto \ 
  \begin{aligned}\begin{tikzpicture}[optree]
    \node{}
      child { node[comp,label=right:$\id$]{}
        child } ;
  \end{tikzpicture}\end{aligned}\ .$$

There exists a
 unique coderivation $q\calO^{\ac}\to \TTT^c(sE)$ which extends  the map
\[ q\calO^\ac \xtwoheadrightarrow{} s^2qR \xrightarrow{s^{-1}\varphi_1} sE \ .\]
Conditions (II) implies that its image lies in 
the subcooperad
$q\calO^\ac \subset \TTT^c(sE)$, see \cite[Lemma~$4.1.1$]{HirshMilles12}. This defines the coderivation $\dif_{\calO^\ac} : q\calO^\ac \to q\calO^\ac$.

\begin{proposition}\label{pro:Coder}
The coderivation $\dif_{\calO^\ac} : q\calO^\ac \to q\calO^\ac$ is equal to 
$$\dif_{\calO^{\ac}}\big( t(\bar\sigma_1, \ldots, \bar\sigma_k)\big)=
\sum_{j=1}^k \sum_{l=1}^{i_j-1}
\pm\   t\big(\bar \sigma_1, \ldots, \bar \sigma_{j-1},
(\sigma_j^1, \ldots, \underbrace{\sigma_j^{l}\sigma_j^{l+1}}_{\neq \id}, \ldots, \sigma_j^{i_j})
\bar \sigma_{j+1}, \ldots, \bar \sigma_{k}\big)\ , 
$$
where the sign is equal to 
$$(-1)^{|t|+|\bar \sigma_1|+\cdots+|\bar \sigma_{j-1}|+l}\ . $$
\end{proposition}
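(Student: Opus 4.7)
The plan is to unfold the general definition of the coderivation $\dif_{\calO^\ac}$ and match it against the explicit description of the infinitesimal decomposition map $\Delta_{(1)}$ given in Proposition~\ref{prop:InfDecomp}. Recall that $\dif_{\calO^\ac}$ is the unique coderivation on the quadratic cooperad $q\calO^\ac$ whose projection onto the cogenerators $sE$ agrees with $s^{-1}\varphi_1$ composed with the projection $q\calO^\ac \twoheadrightarrow s^2qR$. By the standard argument for quadratic cooperads, such a coderivation is computed on any element by applying the infinitesimal decomposition map $\Delta_{(1)}$ and then evaluating $s^{-1}\varphi_1$ on the second factor (and symmetrically, but the first factor projects to cogenerators only when $t$ itself is a corolla of weight one, giving no contribution beyond what the symmetric term provides).

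Next, I identify which subtrees $s \subseteq t$ in Proposition~\ref{prop:InfDecomp} contribute. The map $\varphi_1$ vanishes on everything except pairs of stacked unary generators $\tau\sigma$ with $\sigma\tau \neq \id$. In the operadic-tree basis, the unary generators sit inside the strings $\bar\sigma_j$ at the vertices, whereas the binary generators are encoded by the planar tree $t$. Consequently, the only subtrees $s$ whose image under $s^{-1}\varphi_1$ is nonzero are those consisting of exactly two consecutive entries $(\sigma_j^l, \sigma_j^{l+1})$ of a single string $\bar\sigma_j$, with $\sigma_j^l\sigma_j^{l+1} \neq \id$. For such a subtree, $t/s = t$ with this pair replaced by the single element $\sigma_j^l\sigma_j^{l+1}$, and the shuffle $\shuffle$ is forced to be trivial since only one string is being extracted. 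Any other choice of $s$ either involves a binary vertex (which $\varphi_1$ kills) or yields $\sigma_j^l\sigma_j^{l+1} = \id$ (which belongs to the kernel of $\varphi_1$ and contributes instead to the curvature $\theta$, to be treated in a subsequent result).

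The main obstacle is the sign bookkeeping: one has to verify that the composite sign $\varepsilon_{s,t,\shuffle}$ obtained by unwinding Steps 1--6 of Example~\ref{Exam:InfDecCoop}, combined with the desuspension $s^{-1}\varphi_1$, reduces to $(-1)^{|t|+|\bar\sigma_1|+\cdots+|\bar\sigma_{j-1}|+l}$. For this particularly simple class of subtrees, several steps simplify drastically: the extraction sign $(-1)^{|s|(s_1-1)}$ of Step~3 is trivial because $|s|=2$, the shuffle sign of Step~4 is trivial because only one string is involved, and Step~5 introduces no sign since the labels of binary vertices of $\tilde s$ are unchanged. The remaining contributions come from (a) the reversal of strings into leftlevel-wise order in Step~1, (b) the relabeling $\nu$ of vertices in Step~2 which brings the active vertex~$j$ into the correct position, and (c) the Koszul sign rule from commuting the degree-one pair past the earlier degree-one entries of $\bar\sigma_1,\ldots,\bar\sigma_{j-1}$ and the entries $\sigma_j^1,\ldots,\sigma_j^{l-1}$. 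A direct tally of these contributions, combined with the global degree shift $(-1)^{|t|}$ coming from the cooperad suspension applied to the underlying planar tree, yields exactly the claimed sign. I would verify the tally on two small cases---namely a corolla with one string of length three, and a tree with one internal edge whose two vertices each carry a string of length two---to fix conventions and confirm the formula.
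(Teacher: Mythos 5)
Your overall strategy---determine $\dif_{\calO^\ac}$ from its corestriction $s^{-1}\varphi_1$, note that $\varphi_1$ kills everything except two stacked unary generators with non-identity product, conclude that the coderivation multiplies consecutive permutations inside the strings, and get the sign from the Koszul sign rule and the ordering conventions---is the same as the paper's, which simply invokes \cite[Proposition~6.3.7]{LodayVallette12} (``apply $\varphi_1$ everywhere one can''). The problem is the mechanism you use to justify the key step: a coderivation is \emph{not} recovered from its corestriction by the formula $(\id\otimes s^{-1}\varphi_1)\circ\Delta_{(1)}$ (plus a symmetric term). Already in the purely unary part of this situation, the tensor coalgebra $T^c(V)$ with deconcatenation, the unique coderivation extending $\varphi\colon T^c(V)\to V$ is $d_\varphi(v_1\cdots v_n)=\sum_{i\le j}\pm\, v_1\cdots v_{i-1}\,\varphi(v_i\cdots v_j)\,v_{j+1}\cdots v_n$: the map $\varphi$ must also be inserted in the \emph{middle}, which no composite of the form $(\id\otimes\varphi)\circ\Delta$ produces. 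The correct general statement, and the content of \cite[Proposition~6.3.7]{LodayVallette12}, is that the coderivation applies $s^{-1}\varphi_1$ to every sub-composite of generators (every two-vertex subtree of the composite tree), i.e.\ to every adjacent pair of unary generators, hence to every consecutive pair $(\sigma_j^l,\sigma_j^{l+1})$ within a string.

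Concretely, your recipe applied to the $\Delta_{(1)}$ of Proposition~\ref{prop:InfDecomp} would fail to produce most of the terms of the claimed formula. There, a subtree $s\subseteq t$ is a set of \emph{vertices} of the operadic tree, and the string at each vertex of $s$ splits into an initial segment (which stays in the extracted factor $\tilde s$) and a final segment (which migrates to the corolla of $t/s$). Consequently the only terms of $\Delta_{(1)}$ whose extracted factor is a weight-two unary element are those with $s=\{j\}$ a single corolla and kept segment $(\sigma_j^1,\sigma_j^2)$; evaluating $s^{-1}\varphi_1$ on the second factor therefore yields only the $l=1$ summands $\sigma_j^1\sigma_j^2$. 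The pairs $(\sigma_j^l,\sigma_j^{l+1})$ with $l\ge 2$, which you list as ``contributing subtrees,'' are not extracted factors of $\Delta_{(1)}$ at all---a pair of entries in the middle of a string is not a subtree $s\subseteq t$ in the sense of Proposition~\ref{prop:InfDecomp}---so your identification of the contributing terms does not follow from your stated mechanism, even though it happens to coincide with the correct answer. To repair the argument you should work in the composite-tree (cofree cooperad $\TTT^c(sE)$) picture and use the coderivation formula just mentioned; the sign analysis then only involves the Koszul sign from moving the operation past the earlier degree-one generators in the chosen order (binary vertices, then string entries), which is essentially the tally you sketch, rather than the six-step sign procedure of Example~\ref{Exam:InfDecCoop}, most of which is irrelevant here.
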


\begin{proof}
This a direct consequence of \cite[Proposition~$6.3.7$]{LodayVallette12}, i.e. one applies the map $\varphi_1$
everywhere one can. The sign is given by the Koszul sign rule with the left-recursive order on composite trees.
\end{proof}

\begin{example} Assume $\rho\sigma\neq \id, \sigma\tau  \neq \id$.
  \begin{align*}
&    \dif_{\calO^\ac}\left(
      \begin{aligned}\begin{tikzpicture}[optree]
        \node{}
          child { node[emptycirc]{}
            child { node[dot,label=left:$\tau$]{}
              child[level distance=5mm] { node[dot,label=left:$\sigma$]{}
                child { node[dot,label=left:$\rho$]{} child }
              }
            }
            child {
              child[level distance=5mm] { node[dot,label=right:$\chi$]{}
                child { child {} }
              }
            }
          } ;
      \end{tikzpicture}\end{aligned}
    \right)
     = -
    \begin{aligned}\begin{tikzpicture}[optree]
      \node{}
        child { node[emptycirc]{}
          child { node[dot,label=left:$\sigma\tau$]{}
            child[level distance=5mm] { node[dot,label=left:$\rho$]{} child }
          }
          child {
            child[level distance=5mm] { node[dot,label=right:$\chi$]{} child }
          }
        } ;
    \end{tikzpicture}\end{aligned}
    +
    \begin{aligned}\begin{tikzpicture}[optree]
      \node{}
        child { node[emptycirc]{}
          child { node[dot,label=left:$\tau$]{}
            child[level distance=5mm] { node[dot,label=left:$\rho\sigma$]{} child }
          }
          child {
            child[level distance=5mm] { node[dot,label=right:$\chi$]{} child }
          }
        } ;
    \end{tikzpicture}\end{aligned}    \ , \textrm{i.e.} \\
    &
        \dif_{\calO^\ac}\big(   
        t((\rho,\sigma,\tau), \chi)        \big) =
        t((\rho,\sigma\tau), \chi)
                -t((\rho\sigma,\tau), \chi)\ .
  \end{align*}
\end{example}

The curvature $\theta$ is the map defined by the following composite 
\[ q\calO^\ac \xtwoheadrightarrow{} s^2qR \xrightarrow{s^{-2}\otimes \varphi_0} I \ . \]

\begin{proposition}\label{pro:Curvature}
The curvature $\theta : q\calO^\ac \to I$ is equal to   zero, except for 
\begin{align*}
  \theta \left(\ 
    \begin{aligned}\begin{tikzpicture}[optree]
      \node{}
        child[level distance=5mm] { node[dot,label=right:$\sigma^{-1}$]{}
          child { node[dot,label=right:$\sigma$]{} child }
        } ;
    \end{tikzpicture}\end{aligned}
  \right)
  & =
  \begin{aligned}\begin{tikzpicture}[optree]
    \node{}
      child[level distance=15mm] ;
  \end{tikzpicture}\end{aligned}\  .
\end{align*}
\end{proposition}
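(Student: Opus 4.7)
The proof will be essentially a direct unwinding of the definition of $\theta$ and of the map $\varphi_0$, so the plan is short. Since the curvature factors as
\[ \theta : q\calO^\ac \twoheadrightarrow s^2qR \xrightarrow{s^{-2}\otimes \varphi_0} I , \]
the first projection kills everything but the weight-2 component, which inside $q\calO^\ac$ is exactly $s^2 qR$. So I would begin by explicitly listing this weight-2 part according to the three possible shapes of operadic trees of total generator-weight $2$:

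\begin{enumerate}
\item[(a)] two stacked unary vertices (shape coming from relation (i));
\item[(b)] one binary vertex with a unary vertex attached (shapes coming from relations (v), (vi));
\item[(c)] two binary vertices (shapes coming from relations (iii), (iv)).
\end{enumerate}

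Next, I would compute $\varphi_0$ case by case, using the defining decomposition $R = \{X - \varphi(X)\mid X\in qR\}$ and the presentation of $R$ recalled in Definition~\ref{def:QLCPresentation}. The key observation is that the inhomogeneous (non-weight-$2$) right-hand sides of relations \eqref{Orel1a}, \eqref{Orel3a}, \eqref{Orel3b}, \eqref{Orel2a}, \eqref{Orel2b} all lie in $E$ (either a unary generator $\sigma\tau$, a binary generator, or a binary vertex decorated by a permutation), so their $I$-components are all zero; hence $\varphi_0$ vanishes on the summands of $qR$ coming from cases (b) and (c), and on the summand of case (a) where $\tau \neq \sigma^{-1}$.

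The only remaining case is relation \eqref{Orel1b}: the inhomogeneous right-hand side of $\sigma^{-1}\circ\sigma - \id_n = 0$ lies in $I$ and equals precisely the identity corolla of arity $n$. Therefore $\varphi_0$ is zero everywhere except on the element $\tau\circ\sigma$ with $\tau=\sigma^{-1}$, where it returns the arity-$n$ identity of $I$. Composing with the suspension factor $s^{-2}$ (which in this weight-2 setting is invisible after desuspension) gives the claimed formula for $\theta$. The only small subtlety I anticipate is making sure the signs from the suspension and the left-recursive Koszul convention do not alter the value, but in the unary-unary case the underlying tree has zero internal edges and no other permutations to compete with, so the sign is trivial, and the formula reduces to exactly the one stated.
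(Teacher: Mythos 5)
Your proposal is correct and takes essentially the same route as the paper, whose proof is simply ``Straightforward from the definition'': you unwind $\theta$ as the composite through $s^2qR$ followed by $s^{-2}\otimes\varphi_0$ and observe that $\varphi_0$ is nonzero only on the relation $\sigma^{-1}\circ\sigma=\id_n$, which is exactly the intended argument. One minor imprecision that does not affect the conclusion: relations (iii)--(vi) are homogeneous quadratic (both sides lie in $\TTT(E)^{(2)}$, not in $E$), so $\varphi$ vanishes on them outright rather than because of an $E$-valued inhomogeneous term; either way their $I$-component is zero, as you claim.
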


\begin{proof}
Straightforward from the definition.
\end{proof}

\begin{theorem}\label{thm:OKoszul}
The colored operad $\calO$, with its presentation given in Definition~\ref{def:QLCPresentation}, is an inhomogeneous Koszul operad. The  cobar construction of the curved Koszul dual cooperad $\calO^{\ac}$ is a cofibrant resolution of the operad $\calO$: 
$$\Omega \calO^{\ac} \stackrel{\sim}{\longrightarrow} \calO\ . $$
\end{theorem}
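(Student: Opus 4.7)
The plan is to apply the inhomogeneous curved Koszul duality criterion of Hirsh--Millès~\cite{HirshMilles12}, transported to the colored setting and over an arbitrary commutative ring. Most of the ingredients have already been assembled: the presentation $\calO = \TTT(E)/(R)$ is inhomogeneous quadratic by Definition~\ref{def:QLCPresentation}; Conditions~(I) and (II) are verified in Lemma~\ref{lem:Conditions12}, so the relations can be encoded by a graph $R = \{ X - \varphi_1(X) - \varphi_0(X),\ X \in qR\}$; the quadratic analogue $q\calO$ is Koszul by Proposition~\ref{prop:DistLawqO}; and the candidate curved cooperad structure $(q\calO^{\ac}, \dif_{\calO^{\ac}}, \theta)$ is made explicit in Propositions~\ref{pro:Coder} and~\ref{pro:Curvature}.

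The first step of the proof is to verify that $(q\calO^{\ac}, \dif_{\calO^{\ac}}, \theta)$ is indeed a curved cooperad, that is
\[
\theta \circ \dif_{\calO^{\ac}} = 0 \qquad \text{and} \qquad (\dif_{\calO^{\ac}})^2 = (\id \otimes \theta - \theta \otimes \id) \circ \Delta_{(1)}.
\]
Both identities reduce to elementary checks on the weight-three part of $q\calO^{\ac}$, where one inserts at most two factors $\sigma\sigma^{-1}$ detected by $\theta$ into a three-vertex string of unary generators. Using the explicit coderivation formula of Proposition~\ref{pro:Coder} and the bookkeeping of Example~\ref{Exam:InfDecCoop}, the terms produced by $(\dif_{\calO^{\ac}})^2$ combine in pairs to give exactly the right-hand side, with all other contributions cancelling by associativity in the symmetric groups. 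The identity $\theta \circ \dif_{\calO^{\ac}} = 0$ is immediate from the fact that $\theta$ vanishes except on $\sigma \otimes \sigma^{-1}$, and the coderivation lands in such a pattern only in a way that is annihilated.

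Second, I invoke the curved Koszul duality theorem of Hirsh--Millès, which states that for an inhomogeneous quadratic (colored) operad satisfying Conditions~(I)--(II) and whose quadratic analogue is Koszul, the cobar construction of the curved Koszul dual cooperad is a resolution of the operad. Concretely, the weight filtration on $\calO^{\ac}$ induces a filtration on $\Omega\calO^{\ac}$ whose associated graded complex identifies with the Koszul complex of $q\calO$; the latter is acyclic by Proposition~\ref{prop:DistLawqO}, so a spectral sequence argument forces $\Omega\calO^{\ac} \xrightarrow{\sim} \calO$ to be a quasi-isomorphism. Cofibrancy is automatic, since the cobar construction is quasi-free as a colored operad and the differential is induced by a decomposition map.

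The main obstacle will be twofold. First, the curved Koszul duality framework of \cite{HirshMilles12} is stated for one-colored operads over a field; one has to check that each step transports to colored operads over $\KK$. For the colored aspect, the formalism of~\cite{VanderLaan03} suffices, and for the passage to a general commutative ring, the free $\KK$-module structure of the presentation (together with the methods of~\cite{Fresse04}, already invoked in Proposition~\ref{prop:DistLawqO}) removes all obstructions. Second, although each verification in the first step is combinatorially elementary, keeping track of signs coming from the Koszul rule and the left-levelwise order on composite trees is delicate; the explicit sign formulas established in Propositions~\ref{pro:Coder} and~\ref{prop:InfDecomp} are designed to make this bookkeeping manageable.
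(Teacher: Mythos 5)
Your proposal is correct and follows essentially the same route as the paper: Conditions~(I)--(II) via Lemma~\ref{lem:Conditions12}, Koszulity of $q\calO$ via the distributive-law argument of Proposition~\ref{prop:DistLawqO}, and then the curved Koszul duality theorem of Hirsh--Mill\`es \cite[Theorem~4.3.1]{HirshMilles12}, transported to the colored setting and to an arbitrary ground ring by Fresse's methods using that the components of $\calO$ are free finitely generated $\KK$-modules and free $\KK[\Sy_n]$-modules. Two minor remarks: the curved-cooperad identities you propose to check by hand are already supplied by the general theory once (I)--(II) hold, and cofibrancy is not ``automatic'' from quasi-freeness alone over a general ring --- it holds here precisely because the generating $\NN$-module consists of free $\KK$- and $\KK[\Sy_n]$-modules, which is the point the paper makes explicit.
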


\begin{proof}
Conditions~(I) and (II) of \cite{HirshMilles12} are satisfied by Lemma~\ref{lem:Conditions12}.
The homogeneous quadratic colored operad $q\calO$ is Koszul by the distributive law method, see Proposition~\ref{prop:DistLawqO}. Hence the colored operad $\calO$ is Koszul. 
The second statement is a consequence of the first one by \cite[Theorem~$4.3.1$]{HirshMilles12}.
Again, the proofs of \cite{HirshMilles12} are only given over a field of characteristic $0$. 
One can extend them over a commutative ring along the lines of \cite{Fresse04}, provided that the 
components of the colored operad $\calO$ are finitely generated projective $\KK$-modules and projective $\KK[\Sy_n]$-modules. This is the case here since the components of the colored operad $\calO$ are 
free finitely generated  $\KK$-modules and free $\KK[\Sy_n]$-modules.
\end{proof}

\section{Homotopy theory of symmetric operads}\label{sec:HoforOpinfty}

We now apply this new Koszul model  to the study of the homotopy properties of dg operads with the required properties for  the symmetric group action over any ring. 
This gives rise to a new category of cooperads, dual to the category of nu operads,  to a
new bar construction, and to a new cobar construction, which properly take  into account the symmetric group actions. 
These notions turn out to be enriched generalisations of  the classical notions. Again, this is produced automatically and conceptually by the Koszul duality theory applied to the colored operad $\calO$. 

In the end, the iteration of these new bar and cobar constructions provides us with a functorial cofibrant resolution for augmented dg operads over any ring. We relate it to the classical bar-cobar construction of the Barratt--Eccels operad. These results, applied to the operad encoding commutative algebras, give rise to a cofibrant $E_\infty$-operad and to an explicit notion of $E_\infty$-algebra by means of generators and relations. 

\subsection{Higher cooperads}\label{subsec:KDOcoalg}

\begin{definition}[Higher cooperad]
A \emph{higher cooperad} is a  coalgebra over the curved colored cooperad 
$\calO^\ac$. 
\end{definition}

Let us now make this definition explicit. 

\begin{proposition}
A higher cooperad is  a graded $\NN$-module 
$\{
\calC(n)
\}_{n\in \NN}$ equipped with  symmetric groups coactions and partial decomposition maps:
$$\left\{\begin{array}{clll}
   \delta_\sigma&\colon & \calC(n) \longrightarrow  \calC(n)\ ,
      & \text{for}\ \sigma \in \Sy_n\backslash \{\id_n\} \ , \\
    \Delta_i&\colon& \calC(n+k-1) 
      \longrightarrow \calC(n)\otimes \calC(k) \ ,
      & \text{for}\ 1 \leq i \leq n \ ,
  \end{array}
  \right.
  $$
both of degree $|\delta_\sigma|=|\Delta_i|=-1$.   
  These are required to satisfy 
the signed parallel and the signed sequential  decomposition axioms 
\begin{itemize}
\item[\emph{(1a)}] $(\Delta_j\otimes \id)\Delta_i=-(23)(\Delta_i \otimes \id)\Delta_{j+k-1}\ ,  \quad$ when $i<j$\ ,
\item[\emph{(1b)}] $(\id \otimes \Delta_j)\Delta_i
          =-(\Delta_i\otimes \id)\Delta_{j+i-1}\ ,$
\end{itemize} 
and the compatibility axiom 
\begin{itemize}
\item[\emph{(2)}]\begin{align*}
  &
  \Delta_i \delta_\tau
  =
  \begin{cases}
-(\id\otimes \delta_\sigma)\Delta_i    \ , & \textrm{if } \tau = \sigma' \\
-(\delta_\sigma \otimes \id) \Delta_{\sigma^{-1}(i)}   \  , & \textrm{if } \tau = \sigma'' \\
    \ \quad 0\ , & \textrm{otherwise}.
  \end{cases}
\end{align*}
\end{itemize} 
It is also endowed with 
a degree $-1$ map of $\NN$-modules $\dif_\calC : \calC \to \calC$ satisfying the coderivation property
\begin{align}
\begin{cases}\label{eqn:Relder}
& \Delta_i \, \dif_\calC= -(\dif_\calC\otimes \id + \id \otimes \dif_\calC)\, \Delta_i\ ,  \\
& \delta_\sigma \, \dif_\calC = - \dif_\calC \, \delta_\sigma +
\displaystyle\sum_{\tau, \omega \in \Sy_n\backslash \{\id\}\atop \tau\omega=\sigma} \delta_\tau \, \delta_\omega \ , 
\end{cases}
\end{align}
and the curved differential property
\begin{align}\label{eqn:Reld2}
{\dif_\calC}^2=-\sum_{\sigma \in \Sy_n\backslash \{\id_n\}} \delta_\sigma \, \delta_{\sigma^{-1}}\ ,  \quad \text{on} \ \ \calC(n)\ .
\end{align}
\end{proposition}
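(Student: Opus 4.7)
The plan is to unpack the definition of a coalgebra over the curved colored cooperad $\calO^\ac$ and to read off the structure maps and axioms from the explicit description of $\calO^\ac$ established in Section~1. Recall that a coalgebra over a conilpotent (curved) colored cooperad is determined by its \emph{infinitesimal coaction} $\Delta_\calC^{(1)}\colon \calC \to \calO^\ac \circ_{(1)} \calC$, and that this infinitesimal coaction is entirely determined by its projection onto the generators $sE$ of $\calO^\ac = \calC(sE, s^2qR)$. Under this projection, the unary generators $sE(n;n) = \{s\sigma\,:\,\sigma \in \Sy_n\setminus\{\id_n\}\}$ give rise to the maps $\delta_\sigma$, and the binary generators $sE(n+k-1;n,k)$ give rise to the maps $\Delta_i$. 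Both families have degree $-1$, the shift coming directly from the single suspension $s$ in the definition of $\calO^\ac$.

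Next I would derive the three axioms (1a), (1b), (2). These are precisely the conditions for $\Delta_\calC^{(1)}$ to be coassociative at weight $2$, i.e.\ to factor through the quadratic relations $s^2qR \subset \calC(sE)^{(2)}$. Dualising Proposition~\ref{prop:KoszulDualOp}, the relations (iii)--(vi) of the presentation of $q\calO$ translate verbatim into the signed parallel, sequential, and compatibility axioms on $\calC$. The minus signs (including the permutation $(23)$ appearing in (1a)) arise from the Koszul sign rule applied to the two suspensions $s^2$ present in $s^2qR$: swapping two degree-$1$ generators produces a factor $-1$, exactly as in the dualisation computation of Proposition~\ref{prop:KoszulDualOp}.

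For the coderivation property~\eqref{eqn:Relder}, I use that the coaction of a dg coalgebra over a dg cooperad must intertwine the differentials in the sense
\[
  \Delta_\calC^{(1)} \circ \dif_\calC \;=\; (\dif_{\calO^\ac}\circ\id_\calC + \id_{\calO^\ac}\circ' \dif_\calC)\circ \Delta_\calC^{(1)} \ .
\]
Projecting onto the binary generators yields the first equation in \eqref{eqn:Relder}. Projecting onto the unary generators and using the explicit formula of Proposition~\ref{pro:Coder} with a single unary vertex $s\tau$ (where $\dif_{\calO^\ac}(s\tau) = \sum_{\omega\sigma = \tau} \pm\, s\omega\otimes s\sigma$) yields the second equation, with the quadratic sum $\sum_{\tau\omega=\sigma}\delta_\tau\delta_\omega$ coming from the sum over factorisations in $\bar\Sy_n$.

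Finally, the curved relation~\eqref{eqn:Reld2} follows from the defining identity $\dif_\calC^2 = (\theta\cdot\id_\calC)\circ\Delta_\calC^{(1)}$ of a coalgebra over a curved cooperad, applied to the curvature $\theta$ computed in Proposition~\ref{pro:Curvature}: only the elements $s\sigma^{-1}\cdot s\sigma \in s^2qR$ contribute, giving precisely $-\sum_{\sigma\neq\id_n} \delta_\sigma \delta_{\sigma^{-1}}$ on $\calC(n)$. The overall sign comes from the $s^{-2}$ appearing in the definition of $\theta$ combined with the suspensions on the two factors. Conversely, any $\NN$-module equipped with maps $(\delta_\sigma, \Delta_i, \dif_\calC)$ satisfying the listed axioms assembles by freeness of $\TTT^c(sE)$ into a well-defined coaction $\calC\to \calO^\ac(\calC)$ that is automatically curved-codifferential, since the axioms encode exactly the weight-$2$ obstruction. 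The main difficulty is purely bookkeeping: keeping all suspension and Koszul signs consistent with the left-recursive and left-levelwise conventions fixed in Section~\ref{subsec:TOS}; all conceptual content is already present in Propositions~\ref{prop:InfDecomp}, \ref{pro:Coder}, and \ref{pro:Curvature}.
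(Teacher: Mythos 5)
Your proposal follows essentially the same route as the paper's proof: unpack the definition of a coalgebra over the curved colored cooperad $\calO^{\ac}=(q\calO^{\ac},\dif_{\calO^{\ac}},\theta)$, read off $\delta_\sigma$ and $\Delta_i$ from the projection of the coaction onto the cogenerators $sE$, obtain (1a), (1b), (2) from the corelations $s^2qR$, derive the coderivation identities by intertwining with $\dif_{\calO^{\ac}}\circ\id+\id\circ'\dif_\calC$ and projecting onto the binary and unary cogenerators, and get the curved identity from $\dif_\calC^{\,2}=(-\theta\circ\id_\calC)\Delta_\calC$ with the curvature of Proposition~\ref{pro:Curvature}. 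One minor slip worth fixing: $\dif_{\calO^{\ac}}$ lowers weight (it multiplies consecutive permutations in a string), so the parenthetical formula $\dif_{\calO^{\ac}}(s\tau)=\sum_{\omega\sigma=\tau}\pm\, s\omega\otimes s\sigma$ is written in the wrong direction, although the factorisation sum $\sum_{\tau\omega=\sigma}\delta_\tau\,\delta_\omega$ you extract from the projected relation is exactly the correct contribution.
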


\begin{proof}
By definition, a coalgebra over the curved colored cooperad 
$\calO^\ac=(q\calO^{\ac}, \dif_{\calO^\ac},\theta)$ is a triple 
$\left( 
\calC, \Delta_\calC, \dif_\calC
\right)$ 
made up of a $q\calO^{\ac}$-coalgebra and a degree $-1$ coderivation satisfying 
$${\dif_\calC}^2=(-\theta \circ \id_\calC)\Delta_\calC \ .$$

The results of Section~\ref{subsec:KDofqO} shows that a $q\calO^{\ac}$-coalgebra is a graded $\NN$-module 
$\{
\calC(n)
\}_{n\in \NN}$
equipped with a structure map 
$$\Delta_\calC\ : \ \calC \to  q\calO^{\ac}(\C)=\prod_{n} (q\calO^{\ac}(n)\otimes \C^{\otimes n})^{\Sy_n} \ $$ 
compatible with the decomposition map $\Delta_{q\calO^{\ac}}$ of the cooperad $q\calO^{\ac}$. 
Therefore the data of this structure map $\Delta_\calC$ is completely determined by its image on the space of cogenerators $sE$ of $q\calO^{\ac}$. 
The part made up of  the unary generators $s\sigma$ in $sE(\calC)$ produces the maps $\delta_\sigma$ and 
the part made up of on the binary generators corresponding to the $\circ_i$-products in $sE(\calC)$ produces the maps $\Delta_i$. 
The corelations $s^2qR$ of $q\calO^{\ac}$ give the relations (1a), (1b) and (2).  (Notice that there is no relation among the coactions  of the symmetric groups $\Sy_n$.) 

The coderivation property of the map $\dif_\calC$ amounts to the commutativity of the following diagram
$$ 
\xymatrix@C=40pt{
\calC     \ar[r]^(0.45){\Delta_\calC}  \ar[d]^{\dif_\calC} &    q\calO^{\ac}(\C)  \ \ \ar[d]^{\dif_{\calO^{\ac}}\circ \id+\id \circ'\dif_\calC}\\
\calC    \ar[r]^(0.45){\Delta_\calC} &    q\calO^{\ac}(\C)\ .}
$$
Since the structure map $\Delta_\calC$  of the higher cooperad is compatible with the decomposition map $\Delta_{q\calO^{\ac}}$ of the cooperad $q\calO^{\ac}$, it is enough to check the commutativity of the above diagram 
on the part of the image which lands on the space of cogenerators $sE$ of $q\calO^{\ac}$. The part which gives the binary generator corresponding to the $\circ_i$-product in $sE(\calC)$ produces the first coderivation property of Relation~(\ref{eqn:Relder}). And 
the part which gives the unary generators $s\sigma$ in $sE(\calC)$ produces the second coderivation property of Relation~(\ref{eqn:Relder}). Notice that in the first case, the coderivation $\dif_{\calO^{\ac}}$ plays no role
and that, in the second case, it is responsible for the term 
$\sum_{\tau, \omega \in \Sy_n\backslash \{\id\}\atop \tau\omega=\sigma} \delta_\tau \, \delta_\omega $.

Finally, the curvature relation 
${\dif_\calC}^2=(-\theta \circ \id_\calC)\Delta_\calC$ gives here Relation~(\ref{eqn:Reld2}). 
\end{proof}

\begin{example} For any graded $\NN$-module $\P$, we consider the graded $\NN$-module
$$q\calO^{\ac}(\calP)(n)=\bigoplus_{  \text{\tiny Operadic trees}Ê\atop \text{\tiny with}\ \scriptstyle  { n}  \  \text{\tiny leaves}}
\begin{aligned}\begin{tikzpicture}[optree,
      level distance=12mm,
      level 2/.style={sibling distance=12mm},
      level 3/.style={sibling distance=12mm}]
    \node{}
      child { node[comp,label={-3:{\tiny$\sigma^1_1$}}]{}
      node[comp,label={[label distance=1mm]183:{\Small $\P(3)$}}]{}
        child 
        child { edge from parent[draw=none] }
        child { edge from parent[draw=none] }
        child { node[comp,label={[label distance=2mm]1:{\tiny $\sigma^1_2,\sigma^2_2$}}]{}
        node[comp,label={[label distance=0.5mm]183:{\Small $\P(4)$}}]{}
          child { node[comp,label={[label distance=1mm]3:{\tiny $\sigma_3^1$}}]{}
          node[comp,label={[label distance=1mm]left:{\Small $\P(2)$}}]{}
            child
            child }
        child { edge from parent[draw=none] }
          child
          child
          child }
        child { edge from parent[draw=none] }
        child { edge from parent[draw=none] }
        child } ;
  \end{tikzpicture}\end{aligned}\ .
 $$
If an operadic tree can be obtained by grafting a subtree with $k$ leaves on a subtree with $n$ leaves at the $i^\text{th}$ leaf, then its  image under the map  $\Delta_i : q\calO^{\ac}(\P)(n+k-1) 
\to q\calO^{\ac}(\P)(n) \otimes q\calO^{\ac}(\P)(k)$ is the pair of these two subtrees; otherwise, it is $0$. 
The image of an operadic tree under   $\delta_\tau$ is the sum of several contributions. For each vertex $j$, one considers the last permutation $\sigma=\sigma_j^{i_j}$ of the string and pulls it out of the operadic tree step by step changing it into $\sigma'$ or $\sigma''$. (This process is similar to Step 4 of Section~\ref{subsec:KDCoopQ}). If the final permutation is equal to $\tau$, the remaining operadic tree contributes to $\delta_\tau$. 
This defines a  $q\calO^{\ac}$-coalgebra, which is cofree among conilpotent $q\calO^{\ac}$-coalgebras. 
\end{example}

\begin{remark}
As usual, the underlying suspension $s\calC$ of a $q\calO^{\ac}$-coalgebra $\calC$ gives a notion with less signs, see \cite[\S $11.1.3$]{LodayVallette12} for instance. In the present case, one obtains a cooperad without counit equipped with compatible free coactions of the symmetric groups, i.e. Axioms (1a)-(1b)-(2) without sign. 
\end{remark}

By definition, a coalgebra over a cooperad is a coalgebra over the associated comonad, which is  
$$\C \to q\calO^{\ac}(\C)=\prod_{n} (q\calO^{\ac}(n)\otimes \C^{\otimes n})^{\Sy_n}$$
here. Since the cooperad $q\calO^{\ac}$ is weight graded, so is the induced comonad, that is 
$\C \to \prod_r {q\calO^{\ac}}^{(r)}(\C)$.
One can introduce the following \emph{coradical filtration} to detect  which part of a coalgebra is decomposed in each weight component. We consider 
$$F_1:=\lbrace
c\in\C \ | \ \Delta_i(c)=0\ \text{and}\ \delta_{\sigma}(c)=0, \ \text{for any}\ i \ \text{and any}\ \sigma
\rbrace \ . $$
More generally, the space $F_r$ is made up of elements $c\in C$ which vanish under the composite of  $r$ maps among $\Delta_i$ or $\delta_\sigma$. 
 
\begin{definition}[Conilpotent higher cooperad]
A \emph{conilpotent higher cooperad} is a higher cooperad whose coradical filtration is exhaustive. 
\end{definition}

Equivalently, this means that its structure map factors through the direct sum of the weight components, that is 
$\C \to \bigoplus_r {q\calO^{\ac}}^{(r)}(\C)$. From now, on the notation ${q\calO^{\ac}}(-)$ will always mean $\bigoplus_r {q\calO^{\ac}}^{(r)}(-)$

This subcategory of higher cooperads provides us with the correct target (resp. source) category for the forthcoming new bar (resp. cobar) construction. 

\subsection{Higher bar construction}\label{subset:HighBarConst}
Recall from the general theory \cite[Chapter 11]{LodayVallette12}  and \cite[\S $5.2$]{HirshMilles12} that any curved  twisting morphism 
$\kappa : \calO^{\ac}  \to \calO$ gives rise to a pair of adjoint functors 
$$\xymatrix@C=30pt{{\B_\kappa \ : \ \calO\text{-}\mathsf{alg}  \ }  \ar@_{->}@<-1ex>[r]  \ar@^{}[r]|(0.41){\perp}   & \ar@_{->}@<-1ex>[l]  {\ \mathsf{conil}\ \calO^{\ac}\text{-}\mathsf{coalg}  \ : \ \Omega_\kappa}}\ .   $$
We consider here the canonical Koszul morphism $\kappa : \calO^{\ac}  \to \calO$, which desuspends the generators 
$$\begin{cases}
\begin{array}{clc}
\begin{aligned}\begin{tikzpicture}[optree,
      level distance=12mm,
      level 2/.style={sibling distance=12mm},
      level 3/.style={sibling distance=6mm}]
    \node{}
      child { node[comp,label={[label distance=1mm]1:{\tiny$\sigma$}}]{}
        child { node{\tiny $1$} }
        child { edge from parent[draw=none] node{$\,\cdots$} }
        child { node{\tiny $n$} }
      } ;
  \end{tikzpicture}\end{aligned}
   & \mapsto & 
\begin{aligned}\begin{tikzpicture}[optree]
    \node{}
      child { node[dot,label=right:$\sigma$]{}
        child { edge from parent node[right,near end]{\tiny$n$} }
        edge from parent node[right,near start]{\tiny$n$} } ;
   \end{tikzpicture}\end{aligned}  \ , \\
  \begin{aligned}\begin{tikzpicture}[optree,
      level distance=12mm,
      level 2/.style={sibling distance=12mm},
      level 3/.style={sibling distance=12mm}]
    \node{}
      child { 
        child { node{\tiny $1$} }
        child { edge from parent[draw=none] node{$\,\cdots$} }
        child { node[comp,label=right:$\,i$]{} 
          child { node{\tiny $1$} }
          child { edge from parent[draw=none] node{$\,\cdots$} }
          child { node{\tiny $k$} } }
        child { edge from parent[draw=none] node{$\,\cdots$} }
        child { node{\tiny $n$} } } ;
  \end{tikzpicture}\end{aligned} 
& \mapsto & 
 \begin{aligned}\begin{tikzpicture}[optree]
    \node{}
      child { node[circ]{$i$}
        child {  edge from parent node[left,near end]{\tiny$n$} }
        child {  edge from parent node[right,near end]{\tiny$k$} }
        edge from parent node[right,near start]{\tiny$n+k-1$} } ;
   \end{tikzpicture}\end{aligned} \ , 
\end{array}
\end{cases} $$
and which is equal to zero otherwise. This gives rise to the following higher bar-cobar 
pair of adjoint functors: 
$$\xymatrix@C=30pt{{\widetilde\B \ : \ \mathsf{nu} \ \mathsf{Op}  \ }  \ar@_{->}@<-1ex>[r]  \ar@^{}[r]|(0.39){\perp}   & \ar@_{->}@<-1ex>[l]  {\ \mathsf{conil}\ \mathsf{higher}\ \mathsf{Coop}  \ : \ \widetilde{\Omega}}}\ .   $$

\begin{proposition}
The higher bar construction 
$$\widetilde{\B} \P:=\B_\kappa \P=\left(  
q\KDO(\P), \dif_1+\dif_2
\right)$$ of a non-unital operad $\P$ is the 
$\calO^{\ac}$-coalgebra defined on the underlying cofree $q\calO^{\ac}$-coalgebra $q\calO^{\ac}(\P)$ by the curved codifferential $\dif_1+\dif_2$, where $\dif_1$ is the coderivation
$$\dif_1 \ : \ q\KDO(\P) \xrightarrow{\dif_{\KDO} \circ \id_\P + \id_{q\KDO} \circ' \dif_\P} q\KDO(\P) $$
and where $d_2$ is the unique coderivation extending the action of $\kappa$
$$\dif_2 \ : \ q\KDO(\P)  \xrightarrow{\Delta_{(1)}\circ \id_\P} (q\KDO \circ_{(1)} q\KDO)(\P) 
\xrightarrow{\left(\id_{q\KDO} \otimes \kappa\right)\circ \id_\P}
(q\KDO \circ_{(1)} \calO)(\P)  \mono q\KDO \circ \calO(\P) 
\xrightarrow{\id_{q\KDO} \circ \gamma_\P}
q\KDO(\P)
 \ .$$
\end{proposition}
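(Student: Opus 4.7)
The plan is to derive the formula directly by unpacking the general curved bar construction $\B_\kappa$ of Hirsh--Millès~\cite{HirshMilles12} applied to the canonical Koszul morphism $\kappa\colon \calO^{\ac} \to \calO$, together with the Koszulity statement Theorem~\ref{thm:OKoszul}. So the proof is essentially a specialisation of the general categorical construction to our explicit setting; most of the work has already been done in Sections~2.1 and~2.2.

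First, I would check that the proposed map $\kappa\colon \calO^{\ac} \to \calO$ is indeed a curved twisting morphism, i.e.\ that it satisfies the curved Maurer--Cartan equation
\[ \partial(\kappa) + \kappa \star \kappa = \theta, \]
where $\partial(\kappa) = \dif_\calO \kappa - (-1)^{|\kappa|} \kappa \dif_{\calO^{\ac}}$. This amounts to checking that on the generators $sE$ of $q\calO^{\ac}$, the composite $\kappa \star \kappa$ yields precisely the images of the relations $qR$ under the maps $\varphi_1$ (contributing to $\dif_{\calO^{\ac}}$ via Proposition~\ref{pro:Coder}) and $\varphi_0$ (contributing to the curvature $\theta$ via Proposition~\ref{pro:Curvature}). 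This is a direct verification using the explicit presentation of $\calO$ and the description of $\kappa$ as desuspension on generators.

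Next, I would identify the underlying object. Since $q\calO^{\ac}$ is the Koszul dual conilpotent cooperad of $q\calO$, the cofree conilpotent $q\calO^{\ac}$-coalgebra on the graded $\NN$-module $\P$ is by construction $q\calO^{\ac}(\P) = \bigoplus_r {q\calO^{\ac}}^{(r)}(\P)$, as recalled in Section~\ref{subsec:KDOcoalg}. The bar construction $\B_\kappa \P$ has this as its underlying cofree coalgebra by the general recipe.

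The heart of the argument is then unpacking the codifferential. General theory gives a codifferential of the form $\dif_1 + \dif_2$, where $\dif_1$ is the unique coderivation extending the internal differentials of $\calO^{\ac}$ and $\P$ (yielding precisely the stated formula), and $\dif_2$ is the unique coderivation extending the twisting map $\kappa$ in the way made explicit in the statement. Uniqueness of extensions of maps to the cogenerators into coderivations on a cofree coalgebra \cite[\S~$6.3$]{LodayVallette12} makes $\dif_2$ well-defined. The final verification is that $(\dif_1 + \dif_2)^2$ equals the curvature term $(-\theta \otimes \id_\P)\circ \Delta_{q\calO^{\ac}(\P)}$ required by the definition of $\calO^{\ac}$-coalgebra in Section~\ref{subsec:KDOcoalg}, which follows from the curved Maurer--Cartan equation on $\kappa$ verified at the first step. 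The main technical subtlety, and likely the main obstacle, is sign bookkeeping under the left-recursive/left-levelwise orderings of Section~\ref{subsec:TOS}; with those conventions fixed the verification becomes mechanical.
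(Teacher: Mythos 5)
Your proposal is correct and follows essentially the same route as the paper, which gives no separate argument for this proposition but presents it as the direct specialisation of the general bar construction $\B_\kappa$ associated to a curved twisting morphism (Hirsh--Mill\`es and \cite[Chapter~11]{LodayVallette12}) to the canonical Koszul morphism $\kappa\colon \calO^{\ac}\to\calO$, whose twisting property is guaranteed by the Koszulity of $\calO$. Your additional explicit check of the curved Maurer--Cartan equation for $\kappa$ and of the curvature identity for $(\dif_1+\dif_2)^2$ is exactly the content the general theory packages, so nothing is missing.
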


\begin{remark}\leavevmode
Since the category of non-unital operads is equivalent to the category of augmented operads, one can define the higher bar construction of an augmented operad by $\widetilde{\B}\,  \overline\P:=
\left(q\calO^{\ac}(\overline\P), \dif_1+\dif_2
\right)$. 
\end{remark}

The higher bar construction $\widetilde\B  \P$ of a dg nu operad $\P$ is given by the sum of operadic trees with vertices labeled by the elements of $\P$ and by strings of permutations, that we denote by 
$$t(\bar\sigma_1, \ldots, \bar\sigma_k; \mu_1, \ldots, \mu_k):=
\begin{aligned}\begin{tikzpicture}[optree,
      level distance=12mm,
      level 2/.style={sibling distance=12mm},
      level 3/.style={sibling distance=12mm}]
    \node{}
      child { node[comp,label={-3:{\tiny$\sigma^1_1$}}]{}
      node[comp,label={[label distance=1mm]183:$\mu_1$}]{}
        child 
        child { edge from parent[draw=none] }
        child { node[comp,label={[label distance=2mm]1:{\tiny $\sigma^1_2,\sigma^2_2$}}]{}
        node[comp,label={[label distance=1mm]180:$\mu_2$}]{}
          child { node[comp,label={[label distance=1mm]3:{\tiny $\sigma_3^1$}}]{}
          node[comp,label={[label distance=1mm]left:$\mu_3$}]{}
            child
            child }
        child { edge from parent[draw=none] }
          child
          child
          child }
        child { edge from parent[draw=none] }
        child { edge from parent[draw=none] }
        child } ;
  \end{tikzpicture}\end{aligned}
 \ .$$
 The coderivation $\dif_1$ is equal to the sum 
\begin{align*}
&\sum_{j=1}^k \sum_{l=1}^{i_j-1}
(-1)^{|t|+|\bar \sigma_1|+\cdots+|\bar \sigma_{j-1}|+l}\   t\big(\bar \sigma_1, \ldots, \bar \sigma_{j-1},
(\sigma_j^1, \ldots, \underbrace{\sigma_j^{l}\sigma_j^{l+1}}_{\neq \id}, \ldots, \sigma_j^{i_j})
\bar \sigma_{j+1}, \ldots, \bar \sigma_{k}; \mu_1, \ldots, \mu_k\big)
\\
+& \sum_{j=1}^k (-1)^{|t(\sigma)|+|\mu_1|+\cdots+|\mu_{j-1}|}\ 
t(\bar\sigma_1, \ldots, \bar\sigma_k; \mu_1, \ldots, d_\P(\mu_j), \ldots , \mu_k)\ .
\end{align*}
The coderivation $\dif_2$ is equal to 
\begin{align*}
&\sum_{j=1}^k 
(-1)^{|t|+|\bar \sigma_1|+\cdots+|\bar \sigma_{j-1}|}\   
t\big(\bar \sigma_1, \ldots, 
(\sigma_j^2, \ldots,  \sigma_j^{i_j}), 
\ldots, \bar \sigma_{k}; 
\mu_1, \ldots, 
\mu_k^{\sigma_j^1}
, \ldots, \mu_k\big)
\\
+& \sum_{\text{internal edges}\atop j - e - l}
\pm \ 
t/e(\bar\sigma_1, \ldots, 
\shuffle(\bar \sigma_j, \bar \sigma_l), \ldots, \bar \sigma_{l-1}, \bar \sigma_{l+1},\ldots
\bar\sigma_k; \mu_1, \ldots, \mu_j\circ_m\mu_l, \ldots ,\mu_{l-1}, \mu_{l+1}, \ldots, \mu_k)\ , 
\end{align*}
where the internal edge $e$ joins the vertex $j$ to the vertex $l$ at the $i^{\text{th}}$-place, where $m=\sigma_j^1 \cdots \sigma_j^{i_j} (i)$ and where the sign $\pm$ is given by the same procedure as in 
Section~\ref{subsec:KDCoopQ}.

\subsection{Higher cobar construction}\label{subset:HighCoBarConst}

\begin{proposition}
The higher cobar construction 
$$\widetilde{\Omega} \C:=\Omega_\kappa \C=\left(  
\calO(\C), \dif_1-\dif_2
\right)$$ of a higher cooperad  $\left( 
\calC, \Delta_\calC, \dif_\calC
\right)$ 
 is the 
dg non-unital operad defined on   $\calO(\C)$ by the differential $\dif_1-\dif_2$, where $\dif_1$ is the underlying derivation
$$\dif_1 \ : \ \calO(\C) \xrightarrow{ \id_{\calO} \circ' \dif_\C} \calO(\C) $$
and where $d_2$ is the unique derivation extending the action of $\kappa$
$$\dif_2 \ : \ \calO(\C)  
\xrightarrow{\id_\calO\circ'\Delta_\C} 
(\calO \circ_{(1)} q\KDO)(\C) 
\xrightarrow{\left(\id_{\calO} \otimes \kappa\right)\circ \id_\C}
(\calO \circ_{(1)} \calO)(\C)  
\mono 
(\calO \circ \calO)(\C) 
\xrightarrow{\gamma_\calO \circ \id_\C}
\calO(\C)
 \ .$$
\end{proposition}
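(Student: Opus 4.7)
The plan is to deduce this proposition from the general curved cobar construction of \cite{HirshMilles12} specialized to the canonical Koszul twisting morphism $\kappa : \calO^\ac \to \calO$ introduced above. Since $\calO$ is a curved Koszul colored operad by Theorem~\ref{thm:OKoszul}, the morphism $\kappa$ is a curved Koszul twisting morphism, so its associated cobar construction $\Omega_\kappa \C$ is automatically a dg $\calO$-algebra. By the identification made in Section~\ref{subsec:Operads=OAlgebras}, a dg $\calO$-algebra is exactly a dg non-unital operad, and the free $\calO$-algebra on the underlying graded $\NN$-module of $\C$ is $\calO(\C)$, the free non-unital operad on $\C$. This gives the underlying non-unital operad of $\widetilde{\Omega}\,\C$, and unravelling the abstract formula for the cobar differential reproduces the two explicit summands $\dif_1$ and $\dif_2$ in the statement: $\dif_1$ is the canonical extension of $\dif_\C$ as an operadic derivation, while $\dif_2$ is the unique operadic derivation extending the composite $\gamma_\calO \circ (\id_\calO \otimes \kappa) \circ \Delta_\C$ on generators.

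There remains to show that $\dif_1 - \dif_2$ squares to zero. Both summands are derivations by construction, hence so are $\dif_1^2$, $\dif_1\dif_2 + \dif_2\dif_1$ and $\dif_2^2$, and it suffices to check $(\dif_1 - \dif_2)^2 = 0$ on the generating $\NN$-module $\C \subset \calO(\C)$. The verification splits into three contributions which I would analyze in turn: the term $\dif_1^2$ restricted to $\C$ equals $\dif_\C^2$ and is rewritten using the curvature relation~(\ref{eqn:Reld2}); the anticommutator $\dif_1\dif_2 + \dif_2\dif_1$ is expanded using the coderivation properties~(\ref{eqn:Relder}) to exchange $\dif_\C$ past $\Delta_\C$ and $\delta_\sigma$; and $\dif_2^2$ is expanded using the higher co-associativity axioms (1a), (1b), and (2) together with the defining relations of $\calO$. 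The three contributions cancel exactly, and this cancellation is the explicit avatar of the curved Maurer--Cartan equation satisfied by $\kappa$, namely $\partial(\kappa) + \kappa \star \kappa + \theta = 0$ in the convolution operad $\Hom_\NN(\calO^\ac, \calO)$.

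The main obstacle I foresee is not conceptual but combinatorial: matching signs consistently across the Koszul sign rule on composite trees (Section~\ref{subsec:TOS}), the sign convention of $\dif_1 - \dif_2$, the sign from Proposition~\ref{pro:Coder} governing $\dif_{\calO^\ac}$, and the shuffle signs $\varepsilon_{s,t,\shuffle}$ from Proposition~\ref{prop:InfDecomp} governing the infinitesimal decomposition $\Delta_{(1)}$. Once these are aligned with the Koszul convention, the cancellation becomes manifest on generators, and no further conceptual input is needed beyond the Koszul property of $\calO$ already established in Theorem~\ref{thm:OKoszul}.
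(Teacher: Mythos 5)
Your proposal matches the paper's (implicit) treatment: the paper gives no separate proof of this proposition, presenting it as the specialization to the canonical twisting morphism $\kappa$ of the general curved bar--cobar machinery of Hirsh--Mill\`es and Loday--Vallette recalled at the beginning of Section~2.2, which is exactly your first paragraph. Your additional direct verification that $\dif_1-\dif_2$ squares to zero on the generators $\calC$, using the curvature relation~(\ref{eqn:Reld2}), the coderivation properties~(\ref{eqn:Relder}) and the axioms (1a), (1b), (2), is a correct (if redundant) unpacking of the fact that $\kappa$ satisfies the curved Maurer--Cartan equation, so nothing is missing.
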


The cobar construction $\widetilde{\Omega}  \C $ of a higher cooperad $\calC$ is given by the sum of operadic trees with the vertices labeled by the elements of $\C$ and with the leaves globally labeled 
by a permutation, that we denote by 
$$t(\nu_1, \ldots, \nu_k; \sigma):=
  \begin{aligned}\begin{tikzpicture}[optree,
      level distance=12mm,
      level 2/.style={sibling distance=16mm},
      level 3/.style={sibling distance=8mm}]
    \node{}
      child { node[comp,label=183:$\nu_1$]{}
        child { node[comp,label=183:$\nu_2$]{}
          child { node[label=above:$1$]{} }
          child { node[label=above:$4$]{} }
          child { node[label=above:$8$]{} } }
          child { node[label=above:$2$]{} }
        child { node[comp,label=-3:$\nu_3$]{}
          child { node[label=above:$3$]{} }
          child { node[label=above:$5$]{} }
          child { node[comp,label=-3:$\nu_4$]{}
            child { node[label=above:$6$]{} }
            child { node[label=above:$7$]{} } } } };
  \end{tikzpicture}\end{aligned} \ .$$  
  The derivation $\dif_1$ is equal to 
  $$\sum_{j=1}^k  (-1)^{|\nu_1|+\cdots+ |\nu_{j-1}|}\ t(\nu_1, \ldots, \dif_\calC (\nu_j), \ldots, \nu_k; \sigma)      \ .$$
The derivation $\dif_2$ is equal to 
\begin{align*}
&\sum_{j=1}^k  \sum_{i=1}^{n_j}(-1)^{|\nu_1|+\cdots+ |\nu_{j-1}|}\ t\sqcup e^i_j(\nu_1, \ldots, \Delta'_i (\nu_j), \Delta''_i (\nu_j),  \ldots, \nu_k; \sigma) \\
+&\sum_{j=1}^k \, \sum_{\tau \in \Sy_{n_j}\backslash\{\id_{n_j}\}} (-1)^{|\nu_1|+\cdots+ |\nu_{j-1}|}\ 
\tilde t(\nu_1, \ldots,  \delta_\tau(\nu_j),  \ldots, \nu_k; \tilde \tau \sigma) \ ,
\end{align*}
where $n_j$ is the arity of vertex $j$, where $t\sqcup e^i_j$ represents the tree $t$ with an extra edge at vertex $j$ and where $\Delta_i(\nu_j)=\Delta'_i(\nu_j)\otimes \Delta''_i(\nu_j)$.

\subsection{Higher bar-cobar adjunction}

Let $\calC$ be a higher cooperad and let $\P$ be a dg nu operad. We consider the mapping space 
$$\Hom(\calC, \P):=\prod_{n\in \NN} \Hom\big(\calC(n), \P(n)\big)\ ,  $$
endowed with the following two degree $-1$ operations:
\begin{align*}
f\ast g := \sum_i  \left( \calC \xrightarrow{\Delta_i} \calC \otimes \calC \xrightarrow{f \otimes g} \P \otimes \P 
\xrightarrow{\circ_i} \P \right)\ ,
\end{align*}
and 
\begin{align*}
\Delta (f) := \sum_{\sigma \, \in\, \Sy_n \backslash \{Ê\id_n \}}  \left( 
\calC(n) \xrightarrow{\delta_\sigma} \calC(n) \xrightarrow{f(n)} \P(n)  
\xrightarrow{(-)^\sigma} \P(n) 
\right)\ .
\end{align*}

\begin{proposition}\label{prop:ConvRel}
These operations satisfy the following relations: 
\begin{eqnarray}\label{eqn:RelPreLie}
(-1)^{|\gamma|} (\alpha \ast \beta) \ast \gamma +\alpha \ast (\beta \ast \gamma)
+(-1)^{|\beta||\gamma|}
\big(
(-1)^{|\beta|} (\alpha \ast \gamma) \ast \beta +\alpha \ast (\gamma \ast \beta)
\big)=0
\ ,
\end{eqnarray}
\begin{eqnarray}\label{eqn:RelDer}
\Delta(\alpha\ast \beta)+(-1)^{|\beta|}(\Delta\alpha)\ast\beta + \alpha \ast (\Delta\beta)=0\ .
\end{eqnarray}

\end{proposition}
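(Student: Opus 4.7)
The plan is to verify both relations by direct expansion, using the axioms of $\calC$ and $\P$. The two operations $\ast$ and $\Delta$ are instances of standard convolution constructions: $\ast$ uses the infinitesimal decompositions $\Delta_i$ of $\calC$ against the partial compositions $\circ_i$ of $\P$, while $\Delta$ uses the coactions $\delta_\sigma$ of $\calC$ against the symmetric group actions $(-)^\sigma$ of $\P$. Consequently, relation~(\ref{eqn:RelPreLie}) should be a shadow of the coassociativity axioms (1a), (1b) of $\calC$ combined with the parallel and sequential composition axioms (iii), (iv) of $\P$, while relation~(\ref{eqn:RelDer}) should follow from the compatibility axiom (2) of $\calC$ combined with the compatibility axioms (v), (vi) of $\P$.

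For relation~(\ref{eqn:RelPreLie}), I would expand $(\alpha\ast\beta)\ast\gamma$ and $\alpha\ast(\beta\ast\gamma)$ as double sums over pairs $(i,j)$ indexing two successive infinitesimal decompositions of $\calC$. These fall into three types: \emph{parallel} pairs, where the second decomposition occurs on a branch disjoint from the first, and two kinds of \emph{nested} pairs, where the second decomposition is applied either on the factor containing $\alpha$ or on the factor containing $\beta$. Axiom (1b) of $\calC$ together with the sequential composition axiom (iv) of $\P$ identify one type of nested contribution in $(\alpha\ast\beta)\ast\gamma$ with $\alpha\ast(\beta\ast\gamma)$ so that they cancel in the combination $(-1)^{|\gamma|}(\alpha\ast\beta)\ast\gamma+\alpha\ast(\beta\ast\gamma)$ (the sign $(-1)^{|\gamma|}$ arising from moving the degree $-1$ map past $\gamma$). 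Axiom (1a) of $\calC$ together with the parallel composition axiom (iii) of $\P$ match the remaining parallel contributions with the corresponding terms from $(-1)^{|\beta||\gamma|}\bigl((-1)^{|\beta|}(\alpha\ast\gamma)\ast\beta+\alpha\ast(\gamma\ast\beta)\bigr)$, where the global sign $(-1)^{|\beta||\gamma|}$ is the Koszul sign from swapping $\beta$ and $\gamma$.

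For relation~(\ref{eqn:RelDer}), I would expand
\[ \Delta(\alpha\ast\beta)=\sum_{\sigma,i}(-)^\sigma\circ\,\circ_i\circ(\alpha\otimes\beta)\circ\Delta_i\circ\delta_\sigma \]
and use axiom (2) of $\calC$, which shows that $\Delta_i\circ\delta_\tau$ is nonzero only when $\tau=\sigma'$ or $\tau=\sigma''$, and with a minus sign in both cases. For the $\tau=\sigma'$ contribution, applying axiom (v) of $\P$ moves $(-)^\sigma$ onto the $\beta$-slot, and summing over $\sigma\neq\id$ reconstitutes $\alpha\ast(\Delta\beta)$. For the $\tau=\sigma''$ contribution, reindexing $j=\sigma^{-1}(i)$ and applying axiom (vi) of $\P$ moves $(-)^\sigma$ onto the $\alpha$-slot and produces $(\Delta\alpha)\ast\beta$, with a Koszul sign $(-1)^{|\beta|}$ coming from moving the degree $-1$ map $\delta_\sigma$ past $\beta$. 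Collecting the two pieces with the explicit minus signs from axiom (2) yields relation~(\ref{eqn:RelDer}).

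The main obstacle will be the careful bookkeeping of signs: both $\Delta_i$ and $\delta_\sigma$ carry degree $-1$, and each time one is moved past a graded homomorphism $\alpha$, $\beta$, or $\gamma$, a Koszul sign is produced. The explicit signs $(-1)^{|\gamma|}$, $(-1)^{|\beta|}$, and $(-1)^{|\beta||\gamma|}$ in the statement must be matched against the minus signs already present in axioms (1a), (1b), (2) of $\calC$, together with the signs from axioms (v) and (vi) (encoded in the auxiliary permutations $\sigma'$ and $\sigma''$). Once this accounting is done, both identities reduce to straightforward applications of the structural axioms recalled in the preceding sections.
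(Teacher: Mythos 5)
Your proposal is correct and follows essentially the same route as the paper: the paper's proof is precisely a direct verification citing Axioms (iii)--(iv) of non-unital operads with Axioms (1a)--(1b) of higher cooperads for the shifted pre-Lie relation, and Axioms (v)--(vi) with Axiom (2) for the shifted derivation relation, which is exactly the cancellation pattern (nested versus parallel terms, and the $\sigma'$/$\sigma''$ cases) you describe. Your expansion merely makes explicit the sign bookkeeping that the paper leaves as "straightforward."
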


\begin{proof}
These relations are checked in a straightforward way using the axioms defining the notions of an nu operad and a higher cooperad. 
The ``shifted'' pre-Lie relation~(\ref{eqn:RelPreLie}) is proved using Axioms (iii) and (iv) in the definition of an nu operad and Axioms (1a) and (1b) in the definition of a higher cooperad. 
The ``shifted'' derivation relation~(\ref{eqn:RelDer}) is proved using Axioms (v) and (vi) in the definition of an nu operad and Axiom (2)  in the definition of a higher cooperad. 

\end{proof}

\begin{remark}
In the classical case, the convolution algebra made up of maps from a cooperad to an operad forms a pre-Lie algebra, see \cite[\S 6.4.2]{LodayVallette12}. 
Because of the slightly different sign convention chosen here, the $\ast$-product satisfies the relation of a ``shifted'' pre-Lie algebra. Since $\calC$ is a higher cooperad, this enrichment gives rise to the operator $\Delta$ which a ``shifted'' derivation with respect to the $\ast$-product.  
\end{remark}

We call any morphism of $\NN$-modules $\alpha : \calC \to \P$ (that is of degree $0$) satisfying the Maurer--Cartan equation 
\begin{align}\label{align:MCeqKappa}
\partial \alpha + \alpha \ast\alpha +\Delta \alpha =0\ 
\end{align}
a \emph{twisting morphism with respect to $\kappa$}, or simply \emph{twisting morphism} since the context is obvious.
The associated set of solutions is denoted by $\Tw(\calC, \P)$.

\begin{proposition}\label{prop:BarCobarTwKappa}
The higher bar and cobar constructions $\widetilde{\B}$ and $\widetilde{\Omega}$ form a pair of adjoint functors 
$$\xymatrix@C=30pt{{\widetilde\B \ : \ \mathsf{nu} \ \mathsf{Op}  \ }  \ar@_{->}@<-1ex>[r]  \ar@^{}[r]|(0.41){\perp}   & \ar@_{->}@<-1ex>[l]  {\ \mathsf{conil}\ \mathsf{higher}\ \mathsf{Coop}  \ : \ \widetilde{\Omega}}}   $$ represented by the set of twisting morphisms:
$$\Hom_{\mathsf{dg\ nu\ Op}}(\widetilde{\Omega} \calC, \P) \cong \Tw(\calC, \P) \cong 
\Hom_{\mathsf{conil}\ \mathsf{higher}\ \mathsf{Coop}}(\calC, \widetilde{\B} \P)\ . $$
\end{proposition}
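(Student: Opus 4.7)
The plan is to prove the two bijections separately, each one exploiting the fact that one of the two constructions is built on top of a free (respectively cofree) object. Concretely, the underlying graded operad of $\widetilde\Omega\calC$ is the free operad $\calO(\calC)$, and the underlying graded higher cooperad of $\widetilde\B\P$ is the cofree conilpotent higher cooperad $q\calO^{\ac}(\P)$. This is the standard pattern for establishing a twisting-morphism adjunction, as in \cite[\S 11.3]{LodayVallette12} and \cite[\S 5.2]{HirshMilles12}; the novelty here lies in unpacking the data correctly for our notion of higher cooperad with its symmetric group coactions.

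First I would establish $\Hom_{\mathsf{dg\ nu\ Op}}(\widetilde\Omega\calC, \P) \cong \Tw(\calC, \P)$. By the universal property of the free operad, a morphism of graded operads $f\colon \calO(\calC)\to \P$ is determined by a degree zero map $\alpha\colon \calC\to\P$ of $\NN$-modules, and conversely any such $\alpha$ extends uniquely. It remains to check that $f$ commutes with the differentials on both sides precisely when $\alpha$ satisfies the Maurer--Cartan equation~\eqref{align:MCeqKappa}. Since $f$ is a morphism of graded operads and both differentials are derivations, the compatibility $f\,\dif_{\widetilde\Omega\calC}=\dif_\P\, f$ only needs to be checked on generators $\calC\subset\calO(\calC)$. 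On $\calC$, the part coming from $\dif_1$ produces $\partial \alpha$, the part of $-\dif_2$ coming from the partial decomposition maps $\Delta_i$ produces exactly $\alpha\ast\alpha$ once $f$ is applied and one composes in $\P$ via the $\circ_i$, and the part of $-\dif_2$ coming from the coactions $\delta_\sigma$ produces $\Delta\alpha$ since the generators $\sigma$ map under $\kappa$ to the symmetric action. The sign conventions chosen for $\ast$ and $\Delta$ in Proposition~\ref{prop:ConvRel} are precisely those that make this identification hold.

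Dually, I would establish $\Hom_{\mathsf{conil\ higher\ Coop}}(\calC, \widetilde\B\P) \cong \Tw(\calC, \P)$. By the universal property of the cofree conilpotent higher cooperad, a morphism of graded higher cooperads $g\colon \calC\to q\calO^{\ac}(\P)$ is determined by its projection $\alpha\colon \calC\to \P$ onto cogenerators, and any degree zero $\alpha$ extends uniquely to a morphism of graded higher cooperads. The compatibility $g\,\dif_\calC=\dif_{\widetilde\B\P}\, g$, thanks to $g$ being a morphism of graded higher cooperads and to $\dif_{\widetilde\B\P}$ being a coderivation, reduces to an identity after projection onto $\P$. Projecting $\dif_1$ yields $\partial\alpha$, projecting the part of $\dif_2$ coming from the $\Delta_i$ yields $\alpha\ast\alpha$, and projecting the part coming from the $\delta_\sigma$ yields $\Delta\alpha$; this identifies the condition with the same Maurer--Cartan equation.

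The technical heart of the argument, and the main place where care is required, is the sign bookkeeping. One has to verify that with the $+\dif_2$ sign in $\widetilde\B$ and the $-\dif_2$ sign in $\widetilde\Omega$, the signs induced by the left-recursive/left-levelwise orderings on composite trees (as detailed in Sections~\ref{subset:HighBarConst} and \ref{subset:HighCoBarConst}) are exactly the ones built into $\ast$ and $\Delta$ in Proposition~\ref{prop:ConvRel}. Once this is verified, the two bijections above combine to yield the displayed natural isomorphism, which by general abstract nonsense proves the adjunction.
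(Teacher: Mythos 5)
Your proposal is correct and follows essentially the same route as the paper, which simply invokes the classical twisting-morphism argument of \cite[\S 11.1]{LodayVallette12} (adapted as in \cite{HirshMilles12}): the free-operad universal property on the cobar side, the cofree conilpotent coalgebra universal property on the bar side, and the identification of compatibility with the (co)differentials on (co)generators with the Maurer--Cartan equation~(\ref{align:MCeqKappa}). You merely spell out in detail what the paper cites, so there is nothing further to compare.
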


\begin{proof}
The arguments and computations are similar to the ones of \cite[\S $11.1$]{LodayVallette12}.
\end{proof}

\subsection{Higher bar-cobar resolution} In this section, we show that the composition of the higher bar construction with the higher cobar construction provides us with a resolution for augmented operads with nice homotopy properties.

\begin{proposition}\label{prop:CobarCofibrant}
Let  $\calC$ be a non-negatively graded conilpotent higher cooperad $\calC$ made up of projective $\KK$-modules.
The augmentation $\I\oplus \widetilde{\Omega}\, \calC$ of its cobar construction is a cofibrant dg operad. 
\end{proposition}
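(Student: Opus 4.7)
My plan is to realize $\I\oplus \widetilde\Omega\,\calC$ as a cell complex over the unit operad in the model category of augmented dg operads, adapting the classical argument of \cite[\S~11]{LodayVallette12} to the higher cooperadic setting. The key is to filter the generating $\NN$-module $\calC$ of the quasi-free operad $\widetilde\Omega\,\calC$ in such a way that both pieces of the differential $\dif_1-\dif_2$ strictly lower the filtration.

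First, I would introduce a combined filtration $V_0 \subseteq V_1 \subseteq V_2 \subseteq \cdots$ of $\calC$ mixing coradical and homological degrees: $V_R$ is the sub-$\NN$-module spanned by elements $c \in \calC$ lying in the coradical piece $F_r\calC$ of Section~\ref{subsec:KDOcoalg} with homological degree $n$ satisfying $r+n\le R$. Exhaustivity follows from conilpotency (bounding $r$) together with the non-negative grading (bounding $n$ from below). A direct inspection of the formulas from Section~\ref{subset:HighCoBarConst} shows that both summands of the differential strictly lower this combined index: the piece $\dif_1=\dif_\calC$ preserves the coradical filtration by the coderivation property~(\ref{eqn:Relder}) and drops the homological degree by one, while the piece $\dif_2$ uses the decomposition maps $\Delta_i$ and $\delta_\sigma$, which are of degree $-1$ and strictly lower the coradical degree, so they drop the combined index by at least two. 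Consequently $(\dif_1-\dif_2)(V_R)\subseteq \calO(V_{R-1})$, and the sub-operads $\mathcal{G}_R := \I\oplus \widetilde\Omega(V_R)$ assemble into an exhaustive filtration of $\I\oplus \widetilde\Omega\,\calC$ by sub-dg-operads.

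Second, I would show that each inclusion $\mathcal{G}_{R-1}\hookrightarrow\mathcal{G}_R$ is a cofibration. Set $W_R:=V_R/V_{R-1}$; the filtration property says that $(\dif_1-\dif_2)$ sends $W_R$ into $\mathcal{G}_{R-1}$, so that $\mathcal{G}_R$ is obtained from $\mathcal{G}_{R-1}$ by the pushout of augmented dg operads
\begin{equation*}
\mathcal{G}_R \;\cong\; \mathcal{G}_{R-1}\,\sqcup_{\calO(W_R)}\,\calO\bigl(C(W_R)\bigr),
\end{equation*}
where $C(W_R)$ denotes the mapping cone and the attaching map $\calO(W_R)\to\mathcal{G}_{R-1}$ sends a generator $w$ to the image under $\dif_1-\dif_2$ of a lift of $w$ to $V_R$. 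Such a pushout is a generating cofibration of augmented dg operads whenever $W_R$ is a cofibrant dg $\NN$-module. By exhaustivity, the composition $\I = \mathcal{G}_0 \hookrightarrow \mathop{\mathrm{colim}}\nolimits_R\,\mathcal{G}_R = \I\oplus\widetilde\Omega\,\calC$ is then a transfinite composition of cofibrations, hence $\I\oplus\widetilde\Omega\,\calC$ is cofibrant.

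The main obstacle is to verify that the successive quotients $W_R = V_R/V_{R-1}$ are degreewise projective as $\KK$-modules and that the lift of $W_R$ back to $V_R$ exists: projectivity of each $\calC(n)$ does not automatically transfer to quotients of the coradical filtration, which need not split for a general conilpotent higher cooperad. To overcome this, I would exploit the non-negative grading, which makes the combined filtration $V_\bullet$ finite in every fixed homological degree, so that the splitting question reduces to a finite tower of short exact sequences of projective $\KK$-modules that can be split one homological degree at a time. Once cofibrancy of $W_R$ as a dg $\NN$-module is established, the cellular argument above assembles to give the claimed result.
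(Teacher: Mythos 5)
Your strategy is genuinely different from the paper's, and the paper explicitly explains (in the remark following its proof) why it avoids exactly this route: since the generating $\NN$-module $\calC$ is only projective, not free, over $\KK$, one cannot (easily) invoke the usual characterization of cofibrant dg operads as retracts of quasi-free operads on free $\Sy$-modules with a filtration lowered by the differential. The paper instead verifies the left lifting property against acyclic fibrations directly: by the adjunction of Proposition~\ref{prop:BarCobarTwKappa}, a morphism $\I\oplus\widetilde{\Omega}\,\calC\to\calQ$ is the same as a twisting morphism $\beta\colon\calC\to\calQ$, and a lift $\alpha\colon\calC\to\P$ solving the Maurer--Cartan equation is built by induction on the homological degree of $\calC$ (the base case uses non-negativity), using the shifted pre-Lie and derivation relations of Proposition~\ref{prop:ConvRel} together with the curvature relation~(\ref{eqn:Reld2}) to show that the candidate lands in the pullback $\calQ_d\times_{Z\calQ_{d-1}}Z\P_{d-1}$, the surjectivity of $(f_d,\dif_\P)$ for an acyclic fibration, and projectivity of the $\calC_d(n)$ only to choose a set-theoretic lift through this surjection.

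The concrete gap in your proposal is the projectivity of the subquotients $W_R=V_R/V_{R-1}$, which you yourself flag but do not resolve correctly. With your mixed coradical/homological filtration, $W_R$ in homological degree $d$ is the subquotient $F_{R-d}/F_{R-d-1}$ of the coradical filtration, and a quotient of projective $\KK$-modules need not be projective (already over $\KK=\ZZ$ one has $\ZZ\epi\ZZ/2$); your fix of ``splitting one homological degree at a time'' does not address this, because the obstruction is purely $\KK$-module-theoretic and independent of the homological grading. Without such a splitting you cannot even form the pushout $\mathcal{G}_R\cong\mathcal{G}_{R-1}\sqcup_{\calO(W_R)}\calO(C(W_R))$, since the attaching map needs a $\KK$-linear section $W_R\to V_R$. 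Note that the difficulty is created by your choice of filtration: because $\calC$ is non-negatively graded and $\Delta_i$, $\delta_\sigma$ have degree $-1$, filtering by homological degree alone already gives $(\dif_1-\dif_2)(\calC_{\le d})\subseteq\calO(\calC_{\le d-1})$, and then the subquotients are the $\calC_d$ themselves, projective by hypothesis; combined with a retract-of-a-free-module argument for each cell attachment (and some care with the semi-model structure of \cite{Spitzweck01}) this could plausibly be repaired, but as written the key step of your argument fails.
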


\begin{proof}
Let us first mention that we are working with the semi-model category structure defined on dg operads in \cite{Spitzweck01}, where weak equivalences are arity-wise quasi-isomorphisms and where fibrations are arity-wise epimorphisms. 
Let $f$ and $b$ be two morphisms of dg operads
$$\xymatrix@R=30pt@C=30pt{  
& \P \ar@{->>}[d]_(0.46)\sim^(0.46)f\\
\I\oplus \widetilde{\Omega}\, \calC \ar[r]^(0.6)b 
\ar@{..>}[ur]^a&\ \calQ  \ ,} $$
where $f$ is an acyclic fibration. And let us look for a morphism $a$ of dg operads which factors $b$ through $f$. 
The augmentation $\I\oplus  \widetilde{\Omega} \, \calC$ of a dg nu operad gives an augmented operad. 
And the data of a morphism of dg operads $\I\oplus \widetilde{\Omega} \, \calC \allowbreak \to \calQ$ is equivalent to the data of a morphism $\widetilde{\Omega} \, \calC \to \calQ$ of dg nu operads. 
So, according to Proposition~\ref{prop:BarCobarTwKappa}, the data of a morphism of dg operads  $b : \I\oplus \widetilde{\Omega} \, \calC \allowbreak \to \calQ$ is equivalent to the data of a twisting morphism with respect to $\kappa$, that is a degree $0$ 
map $\beta : \calC \to \calQ$ satisfying the Maurer--Cartan equation~(\ref{align:MCeqKappa}).

We first set $a(\id):=\id_\P$ and the rest of the morphism $a$ is completely determined by its restriction $\alpha:=a_{|\calC}$ on $\calC$, which has to satisfy the Maurer--Cartan equation~(\ref{align:MCeqKappa}). We construct $\alpha$ by induction on the homological degree; we use the notation $\alpha_d$ for the restriction of $\alpha$ on $\calC_d$. 
The Maurer--Cartan equation~(\ref{align:MCeqKappa}) applied to an element $c\in\calC_d$ of degree $d$ reads 
$$\dif_\P (\alpha (c)) =
\alpha(\dif_\calC(c)) - (\alpha\ast \alpha) (c) - \Delta(\alpha)(c)
\ $$
where $|\dif_\calC(c)|=d-1$, $|\Delta_i(c)|=d-1$ and $|\delta_\sigma(c)|=d-1$. 
Therefore the right-hand term amounts to applying the map $\alpha$ to elements of degree at most $d-1$. 
Since $\calC$ is non-negatively graded, we define $\alpha_d$ to be trivial on $\calC_d$, for $d<0$. 
We suppose now that $\alpha$ is known up to degree $d-1$ and that is satisfies 
$$\dif_\P \alpha_e =
\alpha_{e-1}\dif_\calC - \sum_{k+l=e-1\atop k,l \leq e-1}\alpha_k\ast \alpha_l - \Delta(\alpha_{e-1})
\,  $$
for $e\leq d-1$.
Let us now construct $\alpha_d$; 
we consider the following diagram 
$$\xymatrix@R=30pt@C=90pt{  
 & \P_d \ar@{->>}[d]^{(f_d, \dif_\P)} & \\
\calC_d \ar[r]_(0.38){(\beta_d, 
\alpha\dif_\calC -\alpha\ast \alpha - \Delta(\alpha)
)} 
\ar@{..>}[ur]^{\alpha_d}& 
 \calQ_d \times_{Z\calQ_{d-1}}Z\P_{d-1} 
\ar[r]\ar[d]\ar@{}[rd] | (0.3)\pullback
 &  Z\P_{d-1}\ar[d]^{f_{d-1}} \ \\
&Q_d\ar[r]^{\dif_\calQ} &Z\calQ_{d-1}\ ,} $$
where $Z\P_{d-1}$ and $Z\calQ_{d-1}$ stand for the set of cycles in $\P_{d-1}$ and $\calQ_{d-1}$. 
 As explained above, the map $\alpha\dif_\calC -\alpha\ast \alpha - \Delta(\alpha)$ has  already been defined on $\calC_d$. 
We still have to prove that the  image of 
$(\beta_d, 
\alpha\dif_\calC -\alpha\ast \alpha - \Delta(\alpha)
)$
 lands in 
 $ \calQ_d \times_{Z\calQ_{d-1}}Z\P_{d-1}$.
 We first compute the composite with $\dif_\P$ of the right-hand term. 
By the induction hypothesis, since $\dif_\P$ is a derivation of the operad $\P$ and by the definition of $\calC$ being a higher cooperad, we have 
\begin{eqnarray*}
&&\dif_\P\big(
\alpha_{d-1}\dif_\calC -\sum_{k+l=d-1\atop k, l \leq d-1}\alpha_{k}\ast \alpha_{l} - \Delta(\alpha_{d-1})
\big)\\
&=&
\dif_\P\alpha_{d-1}\dif_\calC
-
\sum_i\sum_{k+l=d-1\atop k, l \leq d-1}
\dif_\P \circ_i (\alpha_k\otimes \alpha_l)\Delta_i
-
\sum_{\sigma \, \in\, \bar\Sy_n } \dif_\P(-)^\sigma\alpha_{d-1}\delta_\sigma\\
&=&
\dif_\P\alpha_{d-1}\dif_\calC
-
\sum_i\sum_{k+l=d-1\atop k, l \leq d-1}
 \circ_i (\dif_\P\alpha_k\otimes \alpha_l)\Delta_i
-
\sum_i\sum_{k+l=d-1\atop k, l \leq d-1}
 \circ_i (\alpha_k\otimes \dif_\P\alpha_l)\Delta_i\\
&&-
\sum_{\sigma \, \in\, \bar\Sy_n} (-)^\sigma\dif_\P\alpha_{d-1}\delta_\sigma\\
&=&
\dif_\P\alpha_{d-1}\dif_\calC
-
\sum_i\sum_{k+l=d-1\atop k, l \leq d-1}
 \circ_i \big(
(\alpha_{k-1}\dif_\calC-
\sum_{j+m=k-1\atop j,m \leq k-1}
\alpha_{j} \ast \alpha_{m}
-\Delta(\alpha_{k-1}))
 \otimes \alpha_l\big)\Delta_i\\
 &&
-
\sum_i\sum_{k+l=d-1\atop k, l \leq d-1}
 \circ_i \big(\alpha_k\otimes 
 (\alpha_{l-1}\dif_\calC-
 \sum_{j+m=l-1\atop j,m \leq l-1}
\alpha_{j} \ast \alpha_{m}
 -\Delta(\alpha_{l-1}))
 \big)\Delta_i\\
&&-
\sum_{\sigma \, \in\, \bar\Sy_n } (-)^\sigma
(\alpha_{d-2}\dif_\calC-
\sum_{k+l=d-2\atop k,l \leq d-2}
\alpha_{k} \ast \alpha_{l}
-\Delta(\alpha_{d-2}))
\delta_\sigma\\
&=&
\dif_\P\alpha_{d-1}\dif_\calC
+
\sum_{k+l=d-2\atop k, l \leq d-1} (\alpha_{k}\ast \alpha_{l}) \dif_\calC
+ 
\Delta(\alpha_{d-2})\dif_\calC
+ \sum_{j+k+l=d-2\atop j, k, l \leq d-2} 
\big(
(\alpha_j \ast \alpha_k) \ast \alpha_l + 
\alpha_j \ast (\alpha_k \ast \alpha_l)
\big)\\
&&+ 
\sum_i\sum_{k+l=d-2\atop k, l \leq d-2} 
\big(\Delta(\alpha_k) \ast \alpha_l +\alpha_k \ast \Delta(\alpha_l) + \Delta(\alpha_k\ast\alpha_l)
\big)+
\Delta^2(\alpha_{d-2})\\
&&-
\sum_{\sigma \, \in\, \bar\Sy_n} 
\sum_{\tau, \omega \, \in\, \bar\Sy_n  \atop \tau\omega=\sigma }(-)^\omega(-)^\tau\alpha_{d-2}\delta_\tau\delta_\omega
\ .
\end{eqnarray*} 
Since $\alpha$ has degree $0$, the relation~(\ref{eqn:RelPreLie}) of Proposition~\ref{prop:ConvRel} implies 
$$(\alpha\ast \alpha)\ast \alpha+\alpha\ast (\alpha\ast \alpha)=0 $$
and the relation~(\ref{eqn:RelDer}) of Proposition~\ref{prop:ConvRel} implies
$$\Delta(\alpha\ast \alpha)+(\Delta\alpha)\ast\alpha+\alpha\ast(\Delta \alpha)=0$$
The only remaining terms are the first three ones and the last two  ones, whose sum is equal to 
\begin{eqnarray*}
&&\left(\dif_\P\alpha_{d-1}+
\sum_{k+l=d-2\atop k, l \leq d-1} (\alpha_{k}\ast \alpha_{l})+
\Delta(\alpha_{d-2})
\right)\dif_\calC+\Delta^2(\alpha_{d-2})
-
\sum_{\sigma,\tau, \omega \, \in\, \bar\Sy_n\atop \tau\omega=\sigma } 
(-)^\omega(-)^\tau\alpha_{d-2}\delta_\tau\delta_\omega
\\
&=&\alpha_{d-2}\dif_\calC\dif_\calC+\Delta^2(\alpha_{d-2})
-
\sum_{\sigma,\tau, \omega \, \in\, \bar\Sy_n \atop \tau\omega=\sigma } 
(-)^\omega(-)^\tau\alpha_{d-2}\delta_\tau\delta_\omega\\
&=&
-\sum_{\tau, \omega \, \in\, \bar\Sy_n\atop \tau\omega=\id_n } 
\alpha_{d-2}\delta_\tau\delta_\omega
+
\sum_{\tau, \omega \, \in\, \bar\Sy_n} 
(-)^\omega(-)^\tau\alpha_{d-2}\delta_\tau\delta_\omega
-
\sum_{\sigma,\tau, \omega \, \in\, \bar\Sy_n \atop \tau\omega=\sigma } 
(-)^\omega(-)^\tau\alpha_{d-2}\delta_\tau\delta_\omega\\
&=&0\ ,
\end{eqnarray*}
 by the induction hypothesis and the defining relation~(\ref{eqn:Reld2}) of a higher cooperad. 
 Thus we conclude that 
 $$\dif_\P\left(
\alpha_{d-1}\dif_\calC -\sum_{k+l=d-1\atop k, l \leq d-1}\alpha_{k}\ast \alpha_{l} - \Delta(\alpha_{d-1})
\right)=0\ .$$

  \vskip1.5cm
 We now prove  that 
 $$\dif_\calQ \beta_d= f_{d-1}\big(
\alpha_{d-1}\dif_\calC -\sum_{k+l=d-1\atop k, l \leq d-1}\alpha_{k}\ast \alpha_{l} - \Delta(\alpha_{d-1})
\big)\ . $$
Since the map $\beta$ satisfies the Maurer--Cartan equation~(\ref{align:MCeqKappa}), the left-hand term is equal to 
$$\dif_\calQ \beta_d= 
\beta_{d-1}\dif_\calC -\sum_{k+l=d-1\atop k, l \leq d-1}\beta_{k}\ast \beta_{l} - \Delta(\beta_{d-1})
\ . $$
Since $f$ is a morphism of operads and by the induction hypothesis, the right-hand term is equal to 
\begin{eqnarray*}
&&f_{d-1}\left(
\alpha_{d-1}\dif_\calC -\sum_{k+l=d-1\atop k, l \leq d-1}\alpha_{k}\ast \alpha_{l} - \Delta(\alpha_{d-1})
\right)\\
&=& \beta_{d-1} \dif_\calC
-\sum_{k+l=d-1\atop k, l \leq d-1}(f_k\alpha_{k})\ast (f_l\alpha_{l}) - \Delta(f_{d-1}\alpha_{d-1})\\
&=&\beta_{d-1}\dif_\calC -\sum_{k+l=d-1\atop k, l \leq d-1}\beta_{k}\ast \beta_{l} - \Delta(\beta_{d-1})\ .
\end{eqnarray*} 
 which proves the above claim. 
 
To conclude, we use the following lemma which asserts that a morphism $f : \P \to \calQ$ is a surjective quasi-isomorphism if and only if every map $(f_d, \dif_\P) : \P_d  \epi  \calQ_d \times_{Z\calQ_{d-1}}Z\P_{d-1} $ is surjective. 
This is proved by the following diagram chasing. Suppose that $f$ is a surjective quasi-isomorphism and let 
$(q, p)\in \calQ_d \times_{Z\calQ_{d-1}}Z\P_{d-1}$, that is $\dif_\calQ(q)=f_{d-1}(p)$. Since $f_d$ is surjective, there exists $x\in \P_d$ such that $f_d(x)=q$. The element $p-\dif_\P(x)\in Z\P_{d-1}$ is a cycle, so it represents a homology class in $\P$, which is sent to the trivial homology class in $\calQ$ under $f$ since 
$$f_{d-1}(p-\dif_\P(x))=f_{d-1}(p)-\dif_\calQ(f_d(x))=f_{d-1}(p)-\dif_\calQ(q)=0\ .$$ Since $f$ is a quasi-isomorphism, there exists an element $y\in\P_d$ such that $\dif_\P(y)=p-\dif_\P(x)$. The element $f_d(y)\in Z\calQ_d$ is a cycle of $\calQ$ since $\dif_\calQ(f_d(y))=f_{d-1}(\dif_\P(y))=0$. The map $f$ being a quasi-isomorphism, there exists an element $z\in \P_d$ and an element $\xi\in \calQ_{d+1}$ such that $f(z)=f(y)+\dif_\calQ(\xi)$. Since the map $f_{d+1}$ is surjective, there exists $t\in \P_{d+1}$ such that $f_{d+1}(t)=\xi$. In the end, we have 
$$(f_d, \dif_\P)(x+y-z-\dif_\P(t))=(q, p)\ . $$

Finally, the $\KK$-modules $\calC_d(n)$ are projective for any $n\in \NN$, there exists a map of $\NN$-modules $\alpha_d : \allowbreak \calC_d \to \P_d$, which fills the above diagram. 
By construction, this map satisfies both $\beta_d=f_d \alpha_d$ and 
$\dif_\P\alpha_d = \alpha\dif_\calC -\alpha\ast \alpha - \Delta(\alpha)$, which concludes the proof. 
\end{proof}

\begin{remark}
The high cobar construction $\widetilde{\Omega}\,  \calC$ of a higher cooperad is a quasi-free $\calO$-algebra. So, forgetting the differential, it is isomorphic to  the free operad on the free $\Sy$-module generated by the $\NN$-module $\calC$. However since this latter one is not a free $\KK$-module, but only a projective one, we cannot apply (easily) the usual results, like \cite[Proposition~$12.2.3$]{Fresse09}, which give the form of cofibrant dg operads: retracts of quasi-free dg operads generated by free $\Sy$-modules endowed with a filtration lowered by the differential. 
Instead, we prove directly the lifting property with respect to acyclic fibrations using the Maurer--Cartan equation. 
\end{remark}

\begin{theorem}\label{thm:BarCobarRes}
The augmentation of the higher bar-cobar counit 
$$\I \oplus \widetilde{\Omega} \widetilde{\B} \, \oP\xrightarrow{\sim} \P$$ is a functorial resolution of dg augmented operads, which is cofibrant when the underlying chain complex of $\P$ is non-negatively graded and made up of 
 projective $\KK$-modules.
\end{theorem}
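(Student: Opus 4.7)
The plan is to split the theorem into two independent assertions: the fact that the counit map is a quasi-isomorphism, and the cofibrancy statement under the stated hypotheses.

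For the first assertion, the strategy is to invoke the general machinery of curved Koszul duality applied to the colored operad $\calO$. Since Theorem~\ref{thm:OKoszul} shows that $\calO$ is an inhomogeneous Koszul colored operad, and since the canonical twisting morphism $\kappa \colon \calO^\ac \to \calO$ of Section~\ref{subset:HighBarConst} is Koszul, the general results of \cite[\S~5.2]{HirshMilles12} (extending the classical statement \cite[Theorem~$11.3.3$]{LodayVallette12} to the inhomogeneous/curved setting) imply that the counit of the $(\widetilde{\Omega}, \widetilde{\B})$-adjunction at any non-unital dg operad $\oP$, $\widetilde{\Omega} \widetilde{\B}\, \oP \xrightarrow{\sim} \oP$, is a quasi-isomorphism. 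Over a commutative ring one has to check, as in the proof of Theorem~\ref{thm:OKoszul}, that the projectivity assumptions of \cite{Fresse04} hold for the components of $\calO$, which was already observed there. Adding a unit gives the quasi-isomorphism $\I \oplus \widetilde{\Omega}\widetilde{\B}\, \oP \xrightarrow{\sim} \I \oplus \oP = \P$ in augmented dg operads, and functoriality is automatic since both $\widetilde{\B}$ and $\widetilde{\Omega}$ are functors and the counit is a natural transformation.

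For the cofibrancy assertion, the strategy is to verify that the higher cooperad $\calC := \widetilde{\B}\, \oP$ satisfies the hypotheses of Proposition~\ref{prop:CobarCofibrant}, so that $\I \oplus \widetilde{\Omega}\, \calC$ is cofibrant. Three points have to be checked. First, $\widetilde{\B}\, \oP$ is conilpotent: by construction its structure map factors through the direct sum over weight of $q\calO^\ac$, since it is the cofree conilpotent $q\calO^\ac$-coalgebra on $\oP$. Second, $\widetilde{\B}\, \oP$ is non-negatively graded: according to the explicit description in Section~\ref{subset:HighBarConst}, a basis element is an operadic tree with vertices labeled by elements of $\oP$ and by strings of non-identity permutations, with homological degree given by the sum of the degrees of the labeling elements of $\oP$ (non-negative by hypothesis), plus the number of internal edges, plus the total number of permutations appearing in the strings (both coming from the suspensions $s$ of the cogenerators of $q\calO^\ac$, hence non-negative). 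Third, projectivity over $\KK$: since $\widetilde{\B}\, \oP$ decomposes as a direct sum over labeled operadic trees of tensor products of components $\oP(n)$, and each $\oP(n)$ is projective by hypothesis, the whole bar construction is a direct sum of projective $\KK$-modules, hence projective.

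The bulk of the work, and the only place where a genuine obstacle might arise, is the extension of the curved bar-cobar resolution theorem of \cite{HirshMilles12} from the characteristic zero setting to the case of a commutative ground ring $\KK$; the point to verify is that the arguments of \cite{Fresse04} transport, and this reduces to checking that each colored component of $\calO$ is a free finitely generated $\KK$-module and a free $\KK[\Sy_n]$-module, which was already established in Proposition~\ref{prop:DistLawO} and used at the end of the proof of Theorem~\ref{thm:OKoszul}. Once this is in place, the quasi-isomorphism part is a direct application of the Koszul property and the cofibrancy part a direct application of Proposition~\ref{prop:CobarCofibrant}, so the theorem follows.
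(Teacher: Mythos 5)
Your decomposition and your cofibrancy argument coincide with the paper's: the paper also deduces cofibrancy from Proposition~\ref{prop:CobarCofibrant}, after observing exactly what you observe, namely that $\widetilde{\B}\,\oP=q\calO^{\ac}(\oP)$ is non-negatively graded (degrees being sums of degrees of the $\oP$-labels plus internal edges plus permutations) and has projective components because it is a direct sum of tensor products of the $\oP(m)$. Where you genuinely diverge is the quasi-isomorphism. You outsource it to the algebra-level bar--cobar resolution theorem of curved Koszul duality (\cite{HirshMilles12}, \S 5.2) applied to the Koszul twisting morphism $\kappa$, together with a Fresse-style remark about the ground ring. The paper instead proves it by hand: it writes out the six components of the differential of $\widetilde{\Omega}\widetilde{\B}\,\oP\cong\calO\circ q\calO^{\ac}\circ\oP$, filters by homological degree plus number of parts of the partition, identifies the first page as tensor powers of $0\to\KK\to\KK\to 0$ (one factor per internal edge), the next page as $\oP(n)\otimes T(s\bar\Sy_n)$ with the normalized bar differential of $\Sy_n$, and concludes by comparison of spectral sequences. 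The trade-off is real: your route is shorter and more conceptual, but the statement you invoke is only established in \cite{HirshMilles12} over a field of characteristic $0$, and for the \emph{algebra-level} counit the extension to an arbitrary commutative ring does not reduce quite as immediately to the freeness of the components of $\calO$ as it did for Theorem~\ref{thm:OKoszul}: the underlying $\NN$-module $\oP$ is now an arbitrary coefficient, so one must either assume its components projective (as the paper in fact does at the $E_2$-to-$E_3$ step, consistently with reading the hypotheses of the theorem as in force there) or argue that the relevant acyclic Koszul-type complexes consist of bounded-below free $\KK$-modules, hence are contractible and remain acyclic after evaluation on $\oP$. The paper's explicit filtration is precisely the verification that this works, and it has a side benefit your route forgoes: the explicit description of the differential of $\widetilde{\Omega}\widetilde{\B}\,\oP$ obtained in that proof is reused verbatim in the proof of Proposition~\ref{prop:BarCobarBE} and in the description of the $E_\infty$-operad. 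So your proposal is acceptable in outline, but to make it a complete proof over an arbitrary ring you should spell out the homotopy-equivalence (or projectivity) argument that replaces the missing K\"unneth step, rather than only citing \cite{Fresse04} for the components of $\calO$.
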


\begin{proof} 
As a graded $\N$-module, the higher bar-cobar construction is isomorphic to 
  $\widetilde{\Omega} \widetilde{\B} \, \oP \cong \opd{O} \circ q\opd{O}^\ac \circ \oP$. So its underyling space is made up   partitioned  planar rooted trees with vertices labeled by the elements of $\oP$ and by strings of (nontrivial) permutations and with the leaves globally labeled by a permutation.

 The differential $\dif$ on $\widetilde{\Omega} \widetilde{\B} \, \oP$ is a sum of several parts.  
Summing up all the components of the differential maps of the higher bar construction and the higher cobar construction from Sections~\ref{subset:HighBarConst} and  \ref{subset:HighCoBarConst}, we obtain $6$ different components for the differential $\dif$. 
First, there are four terms 
$\dif_1=\dif_1^\P+\dif_1^{sh} + \dif_1^{l} + \dif_1^{int}$, 
 coming from the higher bar construction, which apply only inside each partition. 
 The term $\dif_1^\P$ comes from the differential of the operad $\P$.
The term $\dif_1^{sh}$ contracts inner edges, composes the two elements of the operad $\P$ labeling the associated two vertices, and shuffles the two strings of permutations labeling the two associated vertices. 
The term $\dif_1^l$ removes the first element of each string of permutations and make it act on the element of of $\P$ labeling the vertex. 
The term $\dif_1^{int}$ is equal the sum of the products of two consecutive elements of a string labeling a vertex, provided that it is not the identity, in which case it gives  $0$. 
Then, there are two  terms 
$\dif_2=\dif_2^{split} + \dif_2^r$, coming from the higher cobar construction, which apply to each partition. 
The term $\dif_2^{split}$ splits each of the partition into two along an inner edge. 
The term $\dif_2^r$ extracts the right most external term of each string labeling the vertex of the tree inside the partition, pulls it out of the partition and make it act on the permutation labeling the leaves of the global tree. 
 The total differential is then the sum
  \[ \dif = \dif_1 - \dif_2 = (\dif_1^\P+\dif_1^{sh} + \dif_1^{l} + \dif_1^{int}) - (\dif_2^{split} + \dif_2^r) . \]

We consider the following bounded below and exhaustive filtration on the higher bar-cobar construction:
the component $\mathcal{F}_r \, \widetilde{\Omega} \widetilde{\B} \, \oP$ is made up of elements such that 
the sum of their degree  and the number of their partitions is at most equal to $r+1$. 
All the above terms of differential send $\mathcal{F}_r$ to $\mathcal{F}_{r-1}$, except for 
$\dif_2^{split} : \mathcal{F}_r \to \mathcal{F}_{r}$.
On the right-hand side, we consider the filtration on $\P$ defined by the homological degree. 
The morphism of dg nu operads $ \widetilde{\Omega} \widetilde{\B} \, \oP\to \oP$ is equal to the composite 
$$ \widetilde{\Omega} \widetilde{\B} \, \oP\cong \calO\circ q\calO^{\ac}\circ \oP \epi 
\calO \circ \oP
\to
 \oP\ , $$
where the first map comes from the augmentation projection $q\calO^{\ac}\epi \I$ and where the second map is given by the composition map in the operad $\P$. So this morphism preserves the respective filtrations, therefore it induces a morphism between the two associated spectral sequences. 

The underlying space of the first page of the spectral sequence associated to $\mathcal{F}_r \, \widetilde{\Omega} \widetilde{\B} \, \oP$ is isomorphic to the chain complex made up of the same partitioned labeled planar trees with differential equal only to 
$\dif_2^{split}$. This chain complex  decomposes as a direct sum of the sub-chain complexes made up of partitioned labeled planar trees with the same underlying planar tree. When the number of inner edges is greater than $1$, this chain complex is acyclic, since it is isomorphic to the $k^{\text{th}}$-tensor power of the acyclic chain complex 
$$ 0 \to \KK \to \KK \to 0\ ,$$
where $k$ is the number of internal edges. So the second page of this spectral sequence is isomorphic to 
$\oP(n)\otimes T(s\bar\Sy_n)$, with differential equal to the one of $\P$ and the differential of the normalized bar construction of the symmetric groups, see next paragraph and next proof for more details. Since $\P$ is made up of projective $\KK$-modules and since the normalized bar complex of the symmetric groups is a resolution of $\KK$, the third page of this spectral sequence is isomorphic to the underlying homology of $\oP$, that is $H_\bullet(\oP, \dif_\P)$. This is also the case for the other spectral sequence, the one associated to the operad $\oP$. In the end, the morphism $ \widetilde{\Omega} \widetilde{\B} \, \oP\to \oP$ induces an isomorphism between the third pages of the two spectral sequences. So the convergence theorem for spectral sequences proves that $ \widetilde{\Omega} \widetilde{\B} \, \oP\to \oP$ is a quasi-isomorphism. \\

Recall that the underlying graded colored $\Sy$-module of the cooperad $q\calO^{\ac}$ is non-negatively graded. So, when the underlying graded $\NN$-module of the operad $\P$ is non-negatively graded, it is also the case for the underlying graded $\NN$-module of its higher bar construction since 
$\widetilde{\B} \, \oP=q\calO^{\ac}(\oP)$.  By definition, one can see that the $\KK$-module components 
$q\calO^{\ac}(\oP)_d(n)$ of the higher bar construction are isomorphic to tensor products of $\KK$-modules of the form $\oP_e(m)$. Since these latter ones are all projective, the $\KK$-module components of the higher bar construction are projective. Finally,  Proposition~\ref{prop:CobarCofibrant} applies and proves 
the cofibrancy property.
\end{proof}

It turns out that this functorial resolution for augmented operads is actually isomorphic to the classical bar-cobar resolution of the Hadamard tensor product of $\P$ with the Barratt--Eccles operad $\calE$: 
$$(\P \otimes_{\rm H} \calE)(n):=\P(n) \otimes \calE(n)   \ .$$

Recall that the Barratt--Eccles operad $\calE$ is defined as follows, see \cite{BergerFresse04} for more details. 
The underlying graded $\Sy$-module is defined by 
$$\calE(n)_d := \KK\{  (\sigma_1, \ldots, \sigma_{d+1}); \sigma_i\in \Sy_n, \sigma_i\neq \sigma_{i+1}\} \ , $$ 
in arity $n$ and degree $d$. The  symmetric group acts diagonally on the right: 
$$(\sigma_1, \ldots, \sigma_{d+1})^\sigma:=(\sigma_1\sigma, \ldots, \sigma_{d+1}\sigma)\ . $$
(Notice that this convention differs from \cite{BergerFresse04}.)

The partial composition maps in the Barratt--Eccles operad are given as follows. 
First recall that the set-theoretical operad $Ass(n)=\Sy_n$, made up of the symmetric groups, carries partial composition maps. For $\sigma\in \Sy(n)$ and $\tau \in \Sy(k)$, the partial composite $\sigma \circ_{i} \tau$ is equal to
$$\sigma \circ_{i} \tau:=[f(\sigma(1)) \cdots f(\sigma(i-1)) \ (\tau(1)+\sigma(i)-1) \cdots (\tau(k)+\sigma(i)-1) \ f(\sigma(i+1)) \cdots f(\sigma(n))]\ , $$
where $f$ denotes the function defined by $f(x)=x$, for $x\leq \sigma(i)$,  and by $f(x)=x+k-1$, for $x> \sigma(i)$.
In plain words, this amounts to inserting accordingly the permutation $\tau$ at the $i^{\text{th}}$ place of the permutation $\sigma$.
So notice that, we have  
  \begin{align}\label{eqn:RelAss}
    \sigma \circ_i \tau &= (\sigma \circ_i \id_k)(\id_n \circ_i \,\tau) = \sigma''\tau' = (\id_n \circ_{\sigma(i)} \tau)(\sigma \circ_i \id_k) = \tau'\sigma'' ,
  \end{align}
with the notations introduced in Definition~\ref{def:Operad}.
Now the partial composition map in the Barratt--Eccles operad is equal to 
  \begin{align*}
    (\sigma_1,\dots,\sigma_{d+1}) \circ_i (\tau_1,\dots,\tau_{e+1})
    &:= \sum_{(x_\bullet,y_\bullet)}
      \pm (\sigma_{x_0+1} \circ_i \tau_{y_0+1},\dots,\sigma_{x_{d+e}+1} \circ_i \tau_{y_{d+e}+1}) \ ,
  \end{align*}
  where the sum is taken over the non-decreasing paths $(x_\bullet,y_\bullet)=\{(x_i,y_i)\}_{0\leq i\leq d+e}$ 
  from $(0,0)$ to $(d,e)$ on the grid $\N \times \N$.
    \begin{align*}
    \begin{aligned}\begin{tikzpicture}[thick]
      \draw[dotted] (0,0) grid (4,3);
      \draw[->] (0,3) -- (1,3);
      \draw[->] (1,3) -- (2,3);
      \draw[->] (2,3) -- (2,2);
      \draw[->] (2,2) -- (3,2);
      \draw[->] (3,2) -- (3,1);
      \draw[->] (3,1) -- (3,0);
      \draw[->] (3,0) -- (4,0);
      \foreach \x in {0, 1, 2, 3}
        \node at (\x,3.3) {$\x$};
      \node at (4,3.3) {$d=4$};
      \foreach \y/\l in {0/e=3, 1/2, 2/1, 3/0}
        \node[label=left:{$\l$}] at (0.1,\y) {};
    \end{tikzpicture}\end{aligned}
  \end{align*}
Such a path is made up of $d$ horizontal segments and $e$ vertical segments. Let us number the segments  from $1$ to $d+e$, 
beginning with $(x_0, y_0)\to (x_1, y_1)$ and following the path. 
We consider  the shuffle permutation reordering the segments, such that the $d$ horizontal segments appear first 
and the $e$ vertical segments appear after. 
In the above example, we get $[1247356]$. 
The sign in the formula defining the above partial composition map is the signature of this shuffle permutation.

Since the components of the Barratt--Eccles operad are defined by the normalized bar complex of the symmetric groups $\Sy_n$, the differential map is equal to 
$$\dif_\calE(\sigma_1,\dots,\sigma_{d+1}):=\sum_{i=1}^{d+1} (-1)^{i-1}  (\sigma_1, \dots, \widehat{\sigma_i}, \ldots, \sigma_{d+1})\ ,$$
where the right-hand term is zero when $\sigma_{i-1}=\sigma_{i+1}$.

\begin{proposition}\label{prop:BarCobarBE}
There exists a natural isomorphism of augmented dg operads
$$\I \oplus \widetilde{\Omega} \widetilde{\B}\,  \overline{\P}
 \cong \Omega \B \, (\P \otimes_{\rm H} \calE) $$
between the augmentation of the higher bar-cobar construction 
of $\oP$  
and the bar-cobar construction of the Hadamard tensor product of the operad $\P$ with the Barratt--Eccles operad.  
\end{proposition}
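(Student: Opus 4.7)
The plan is to construct an explicit isomorphism of augmented dg operads by matching generators on both sides and then verifying that the two differentials agree under this matching.

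First, one identifies the two sides as quasi-free augmented operads with explicit generating $\Sy$-modules. Using the distributive law $\calO(\calC) \cong \overline{\calT}(\calC \otimes_H \KK[\Sy])$ from Proposition~\ref{prop:DistLawO}, the left-hand side $\I \oplus \widetilde{\Omega}\widetilde{\B}\,\overline{\P}$ is, as a graded augmented operad, freely generated by the $\Sy$-module $\widetilde{\B}\,\overline{\P} \otimes_H \KK[\Sy]$, whose elements, via Proposition~\ref{prop:OperadicTreesBasisII}, are operadic trees with vertices labeled by pairs (an element of $\overline{\P}$, a string of non-identity permutations) together with a global leaf permutation. The right-hand side $\Omega\B(\P \otimes_H \calE)$ is freely generated by the $\Sy$-module $s^{-1}\,\overline{\calT^c(s\,\overline{\P \otimes_H \calE})}$, whose elements are reduced trees with vertices labeled by elements of $\overline{\P} \otimes_H \calE$, i.e.\ pairs (an element of $\overline{\P}$, a Barratt--Eccles sequence of consecutive distinct permutations).

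The key step is to identify the two generating $\Sy$-modules. At each inner vertex of arity $n$, apply the classical isomorphism between the inhomogeneous and homogeneous descriptions of the normalized bar resolution of $\Sy_n$: a Barratt--Eccles element $(\tilde\sigma_1, \ldots, \tilde\sigma_{d+1}) \in \calE(n)_d$ corresponds bijectively to the pair consisting of the string of non-identity permutations $(\tilde\sigma_1 \tilde\sigma_2^{-1}, \ldots, \tilde\sigma_d \tilde\sigma_{d+1}^{-1})$ together with the base permutation $\tilde\sigma_{d+1}$. The consecutive-distinctness condition on the right matches exactly the non-identity condition on the left, and degrees agree. The base permutations at the various inner vertices of a given outer cell assemble into the single global leaf permutation of that cell via the operadic composition in the degree-zero sub-operad $Ass \subset \calE$; this assembly is precisely the content of the distributive law $\calO \cong \KK[\Sy] \circ \calO_{ns}$ of Proposition~\ref{prop:DistLawO}, so that the resulting bijection is in fact an isomorphism of the two generating $\Sy$-modules, hence of the underlying graded operads.

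Next, one verifies that this identification of generators extends to an isomorphism commuting with the two differentials. Following the explicit description in the proof of Theorem~\ref{thm:BarCobarRes}, the left-hand differential splits as $d_1^{\P} + d_1^{sh} + d_1^{l} + d_1^{int} - d_2^{split} - d_2^{r}$. The right-hand bar-cobar differential of $\Omega\B(\P \otimes_H \calE)$ admits a natural decomposition into the internal differential of $\P$, the normalized bar differential of the Barratt--Eccles component at each vertex, the partial composition of the operad $\P \otimes_H \calE$ along a contracted inner edge (which simultaneously composes the $\P$-factors and the Barratt--Eccles factors via the Eilenberg--Zilber shuffle formula), and the cobar edge-splitting operation. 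Applying the bar complex bijection term by term, one matches these four components with the six pieces on the left: $d_1^{\P}$ and the $\P$-splitting match directly; the bar differential of $\calE$ at a vertex becomes $d_1^{int}$ combined with the extraction $d_1^{l}$; the partial composition in $\calE$ along a contracted edge breaks into the shuffle $d_1^{sh}$ of inhomogeneous strings and the propagation $d_2^{r}$ of the base permutation to the global leaf permutation; and the cobar edge-splitting matches $d_2^{split}$.

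The main obstacle is sign bookkeeping. The signs on the left are governed by the Koszul rule applied to the left-recursive and left-levelwise orderings of vertices in operadic and composite trees (Section~\ref{subsec:TOS}), whereas the signs on the right combine the signatures of the non-decreasing grid paths indexing the Barratt--Eccles partial composition with the standard bar-cobar suspension signs. The verification reduces essentially to the single-vertex case by the naturality of the free-operad construction: for a corolla, a direct calculation shows that the bar-complex bijection $(\tilde\sigma_1, \ldots, \tilde\sigma_{d+1}) \leftrightarrow ((\tilde\sigma_1\tilde\sigma_2^{-1}, \ldots, \tilde\sigma_d\tilde\sigma_{d+1}^{-1}),\, \tilde\sigma_{d+1})$ intertwines the simplicial differential of the Barratt--Eccles component with $d_1^{int} + d_1^{l}$, and the multi-vertex case then follows by compatibility with the operadic composition in $\calE$ and with the $\Sy$-action on the free operad.
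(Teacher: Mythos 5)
Your overall strategy is the paper's own (a vertexwise bijection between strings of non-identity permutations and homogeneous Barratt--Eccles cells, followed by a term-by-term matching of differentials), but two of the four matchings you assert are misassigned, and the correct grouping is forced by your own bijection. Under the correspondence $(\tilde\sigma_1,\ldots,\tilde\sigma_{d+1})\leftrightarrow\big((\tilde\sigma_1\tilde\sigma_2^{-1},\ldots,\tilde\sigma_d\tilde\sigma_{d+1}^{-1}),\,\tilde\sigma_{d+1}\big)$, the last face of the simplicial differential of $\calE$ (removal of $\tilde\sigma_{d+1}$) multiplies the last element of the inhomogeneous string into the base permutation; once the bases are assembled into the global leaf permutation, this is precisely $\dif_2^{r}$. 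So the internal Barratt--Eccles differential corresponds to $\dif_1^{l}+\dif_1^{int}-\dif_2^{r}$, not to $\dif_1^{l}+\dif_1^{int}$ alone. Correspondingly, the partial composition in $\calE$ along a contracted edge corresponds to $\dif_1^{sh}$ only: the two base permutations compose in the suboperad $Ass\subset\calE$, which is already absorbed by your static assembly of bases into the global leaf permutation, so no $\dif_2^{r}$ term arises there. This is exactly the paper's pairing, $\dif_1^{l}+\dif_1^{int}-\dif_2^{r}\mapsto\dif_{int}$, $\dif_1^{sh}\mapsto\dif_{bar}$, $-\dif_2^{split}\mapsto-\dif_{cobar}$; as written, your grouping would fail verification.

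Two further gaps. For a general augmented operad $\P$ you cannot carry the $\oP$-label across unchanged: on the left $\dif_1^{l}$ makes $\sigma_j^1$ act on the vertex label $\mu$, while on the right $\dif_{int}$ leaves the $\P$-factor untouched, so the identification of generators is a chain map only if the $\P$-label is twisted by the product of the string, as in the paper's formula $\psi(\mu;s\sigma_1,\dotsc,s\sigma_d)=(\mu^{\sigma_1\dotsm\sigma_d};\sigma_1\dotsm\sigma_d,\dotsc,\sigma_d,\id_n)$; this twist is invisible in the $Com$ case (trivial action), which is presumably why it was missed, but without it the general case breaks. Finally, the sign verification does not reduce to the single-vertex case: the delicate point is the two-vertex comparison of the signatures of the $(d,e)$-shuffles in $\dif_1^{sh}$ with the signs of the nondecreasing grid paths in the Barratt--Eccles composition, which requires the identity $\sigma\circ_i\tau=\sigma''\tau'=\tau'\sigma''$ of Relation~(\ref{eqn:RelAss}) and the observation $\alpha^{-1}([j,d+e])=[x_j+1,d]\cup[y_j+d+1,d+e]$; the corolla computation only covers the internal-differential part. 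With these corrections your argument coincides with the paper's proof.
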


\begin{proof}
We  consider first  the case when $\opd{P} = Com$ and we note that $Com \otimes_{\rm H} \calE = \calE$.  We define an isomorphism of dg operads 
  $\Psi\colon  \I \oplus \widetilde{\Omega} \widetilde{\B} \, \overline{Com} \to \Omega\B\,  \calE$.  Such a map is
  uniquely determined by its restriction to the space of  generators $\widetilde{\B}\,  \overline{Com}$.  As a graded $\N$-module,
  $\widetilde{\B}\,  \overline{Com} = q\opd{O}^\ac \circ \overline{Com} \cong \nsOpd^\ac \circ 
  T^c(s \bar\Sy)
   \circ \overline{Com}$, as shown in Lemma~\ref{lem:qO!}. 
  We first define a morphism of graded $\N$-modules $\psi\colon   T^c(s \bar\Sy) \circ \overline{Com} \to \calE$ by
  \[ \psi(s\sigma_1,\dotsc, s\sigma_d)
     := (\sigma_1\dotsm\sigma_d, \sigma_2\dotsm\sigma_d, \dotsc, \sigma_d, \id_n) \ , \]
     in arity $n$ and  degree
  $d$.
   Note that the image of $\psi$ is indeed non-degenerate, since $\sigma_i \neq \id$ for all
  $i$.
  This map $\psi$ extends to
  \[ \Psi|_{\widetilde{\B} \, \overline{Com}} \colon \widetilde{\B} \, \overline{Com} \to s^{-1}\B\, \calE \hookrightarrow \Omega\B\, \calE \]
  as follows.  Since $\nsOpd^\ac$ has a basis made up of planar rooted trees, there exists a basis 
  of $\widetilde{\B}\,  \overline{Com} \allowbreak \cong\allowbreak\nsOpd^\ac \circ 
  T^c(s \bar\Sy)
   \circ \overline{Com}$  made up of planar rooted trees such that each vertex has arity $n\ge 2$  and is labeled by a finite sequence of permutations of $\Sy_n$. 
The image of such an element under 
$\Psi|_{\widetilde{\B}\,  \overline{Com}}$ is the same underling tree, forgetting its planar structure, where we apply $\psi$ at each vertex. Each vertex of the resulting tree is suspended and the global tree is desuspended, resulting in an element of  $s^{-1}\B\, \calE= s^{-1}\mathcal{T} (s \overline{\calE})$.
Since the degree of a planar tree representing an element of   $\nsOpd^\ac$ is given by the number of internal edges, the map $\Psi|_{\widetilde{\B}\,  \overline{Com}}$ has degree $0$. \\

The map $\Psi\colon  \I \oplus \widetilde{\Omega} \widetilde{\B} \, \overline{Com} \to \Omega\B\,  \calE$ is by definition a morphism of operads.  The underlying space of the bar construction of an operad is made up of rooted trees (in space) with leaves labeled by $\{1,\ldots, n\}$ and with vertices labeled by the elements of the operad together with an identification of the inputs with the action of the symmetric group, see \cite[\S 5.6.1]{LodayVallette12}. Therefore, the underlying space of the bar construction $\B\, \calE$ of the Barratt--Eccles operad $\calE$ admits a basis made up of planar rooted trees where each vertex has  arity $n\ge 2$ and is labeled by a  sequence of permutations of the form 
$(\sigma_1,\dots,\sigma_{d}, \id_n)$. (One can use the last element of the strings to provide the rooted tree with a planar structure.) Since  the map $\psi$ is an isomorphism, this shows that the map $\Psi$ is an isomorphisms of graded operads. \\

  It remains to show, that the morphism of graded  operads  $\Psi$ respects the differentials. This again can be
  checked on the space of generators $\widetilde{\B}\,  \overline{Com} $.
Recall from the previous proof that the higher bar-cobar construction is made up of partioned planar rooted trees with vertices of arity $n\ge 2$ labeled by strings of nontrivial permutation of $\Sy_n$. 
 The differential $\dif$ on $\widetilde{\Omega} \widetilde{\B} \, \overline{Com}$ is a sum of several parts.  
First, there are three terms 
$\dif_1=\dif_1^{sh} + \dif_1^{l} + \dif_1^{int}$, 
 coming from the higher bar construction, which apply only inside each partition. 
The term $\dif_1^{sh}$ contracts inner edges and shuffles the two strings of permutations labeling the two associated vertices. 
The term $\dif_1^l$ drops the first element of each string of permutations. (This corresponds to the trivial action of this element on the corolla viewed as an element of $Com$). 
The term $\dif_1^{int}$ is equal to the sum of the products of two consecutive elements of a string labeling a vertex, provided that it is not the identity, which gives $0$ in this case. 
Then, there are two  terms 
$\dif_2=\dif_2^{split} + \dif_2^r$, coming from the higher cobar construction, which apply to each partition. 
The term $\dif_2^{split}$ splits each part of the partition into two along an inner edge. 
The term $\dif_2^r$ extracts the right most external term of each string labeling the vertex of the tree inside the partition, pulls it out of the partition and make it act on the permutation labeling the leaves of the global tree. 
 The total differential is then the sum
  \[ \dif = \dif_1 - \dif_2 = (\dif_1^{sh} + \dif_1^{l} + \dif_1^{int}) - (\dif_2^{split} + \dif_2^r) . \]

  On the bar-cobar construction of the Barratt--Eccles operad $\Omega\B\, \calE$,  the differential is equal to the sum of  the following three parts. 
  First, there is the internal differential $\dif_{int}$ induced by the differential $\dif_\calE$ on $\calE$.
Second, there is the differential $\dif_{bar}$ coming from the differential on the bar construction $\B\, \calE$ which is produced by the partial 
  composition maps in $\calE$. 
  And finally, there is the differential  $\dif_{cobar}$ coming the differential of the cobar construction which to split each partition into two by cutting along any internal edge inside each partition.

The sum $\dif_1^l + \dif_1^{int} -\dif_2^r$ of the three parts of the differential on $\widetilde{\Omega} \widetilde{\B} \, \overline{Com}$ involving the strings of symmetric group elements  is sent to the internal differential $\dif_{int}$ of the
  Barratt--Eccles operad under the morphism $\Psi$:
  \begin{eqnarray*}
&&    \psi\big( (\dif_1^l + \dif_1^{int} - \dif_2^r)(\sigma_1,\dots,\sigma_d) \big)\\
    &=& \psi\left( (\sigma_2, \dotsc, \sigma_d)
    + \sum_{i=2}^d (-1)^{i-1} (\sigma_1,\dots,\sigma_{i-1}\sigma_i,\dots,\sigma_d)
    - (-1)^{d-1} (\sigma_1,\dots,\sigma_{d-1}|{\sigma_d}) \right) \\
    &=& \sum_{i=1}^d (-1)^{i-1} (\sigma_1\dotsm\sigma_d, \dotsc, \widehat{\sigma_i\dotsm\sigma_d}, \dotsc, \sigma_d, \id_n) + (-1)^d (\sigma_1\dotsm\sigma_{d-1}, \dotsc, \sigma_{d-1}, \id_n)^{\sigma_d}
\end{eqnarray*}
    and
\begin{eqnarray*}
    \dif_{int}( \psi(\sigma_1, \dotsc, \sigma_d) )
    &=& \dif_{int}(\sigma_1\dotsm\sigma_d, \dotsc, \sigma_d, \id_n) \\
    &=& \sum_{i=1}^d (-1)^{i-1} (\sigma_1\dotsm\sigma_d, \dotsc, \widehat{\sigma_i\dotsm\sigma_d}, \dots, \sigma_d, \id_n) + (-1)^d (\sigma_1\dotsm\sigma_d, \dotsc, \sigma_d) .
  \end{eqnarray*}
  Note that in the first case, the last term amounts to splitting apart  $\sigma_d$ and making it act on the underlying tree. In the tree module underlying $\Omega \B \, \calE$, this action on rooted trees is identified with the symmetric group action on the strings labeling the vertices.

  The next term $-\dif_2^{split}$ of the differential on $\widetilde{\Omega} \widetilde{\B} \, \overline{Com}$
 amounts to  cutting each partition into two along any inner edge. It is sent to $-d_{cobar}$ in $\Omega \B \, \calE$, which does the same things on partitioned rooted trees and with the same minus sign, see \cite[\S 6.5.2]{LodayVallette12} where the sign is creating by the formula $\Delta_s(s^{-1})=\allowbreak-s^{-1}\otimes s^{-1}$.

  The last part $\dif_1^{sh}$ contracts inner edges inside partions and shuffles the two strings of permutations labeling the two associated vertices. Let $t((\sigma_1,\dotsc,\sigma_d), (\sigma_{d+1},\dotsc,\sigma_{d+e}))$ be a planar rooted subtree with two vertices of arity $n$ and $k$ and labeled respectively by two strings $(\sigma_1,\dotsc,\sigma_d)\in \Sy_n^d$ and $(\sigma_{d+1},\dotsc,\sigma_{d+e})\allowbreak\in \allowbreak\Sy_k^e$.  
Since each element of the symmetric group in the higher bar construction comes in degree $1$, for each shuffle permutation there is a sign, which is equal to the 
  signature of the permutation.
The image under the composite $\psi \circ \dif_2^{sh}$ of the above labeled tree is equal to 
  \begin{align*}
   &  \psi\big( \dif_2^{sh} ( t( (\sigma_1,\dotsc,\sigma_d), (\sigma_{d+1},\dotsc,\sigma_{d+e}) ) ) \big)\\
    &\quad = \psi\left( \sum_{\alpha \in \text{Sh}(d,e)}
      \mathrm{sgn}(\alpha)\, (\tilde\sigma_{\alpha^{-1}(1)},\dotsc,\tilde\sigma_{\alpha^{-1}(d+e)}) \right) \\
    &\quad = \sum_{\alpha \in \text{Sh}(d,e)} \mathrm{sgn}(\alpha)\,
      (\tilde\sigma_{\alpha^{-1}(1)}\dotsm\tilde\sigma_{\alpha^{-1}(d+e)},\dotsc,\tilde\sigma_{\alpha^{-1}(d+e)},
      \id_{n+m-1})
      \ ,
  \end{align*}
where  $\text{Sh}(d,e)$ denotes the set of $(d,e)$-shuffles, that is permutations $\alpha \in \Sy_{d+e}$
  such that \ $\alpha(1) < \dotsc < \alpha(d)$ and $\alpha(d+1) < \dotsc < \alpha(d+e)$, and where we use the overall notation
  $\tilde\sigma_j = \sigma_j''$ for $j \leq d$ and $\tilde\sigma_j = \sigma_j'$ for $j > d$.
Notice that the $(d,e)$-shuffles are in one-to-one correspondence with non-decreasing paths $(x_\bullet,y_\bullet)=\allowbreak\{(x_i,y_i)\}_{0\leq i\leq d+e}$ 
  from $(0,0)$ to $(d,e)$ on the grid $\N \times \N$.
On the other hand, we have 
  \begin{align*}
& \dif_{bar}\big(\Psi(t((\sigma_1,\dotsc,\sigma_d), (\sigma_{d+1},\dotsc,\sigma_{d+e})))\big) \\
    & \quad = \dif_{bar}\big(t(\psi(\sigma_1,\dots,\sigma_d), \psi(\sigma_{d+1},\dots,\sigma_{d+e}))\big) \\
        & \quad =\psi(\sigma_1,\dots,\sigma_d) \circ_i \psi(\sigma_{d+1},\dots,\sigma_{d+e}) \\
    &\quad = (\sigma_1\dotsm\sigma_d,\dots,\sigma_d,\id_n) \circ_i (\sigma_{d+1}\dotsm\sigma_{d+e},\dots,\sigma_{d+e}, \id_k) \\
    &\quad = \sum_{(x_\bullet,y_\bullet)}
      \pm ((\sigma_{x_0+1}\dotsm\sigma_d) \circ_i (\sigma_{y_0+d+1}\dotsm\sigma_{d+e}),\dots,
        (\sigma_{x_{d+e}+1}\dotsm\sigma_d) \circ_i (\sigma_{y_{d+e}+d+1}\dotsm\sigma_{d+e}), \\
        & \qquad \qquad\qquad\ \id_n \circ_i \id_k) \ ,
  \end{align*}
where $i$ is the leaf of the root corolla of the tree $t$ where the second corolla is attached to.  
It remains to show that
  \begin{align*}
    (\sigma_{x_j+1}\dotsm\sigma_d) \circ_i (\sigma_{y_j+d+1}\dotsm\sigma_{d+e})
    = \tilde\sigma_{\alpha^{-1}(j+1)}\dotsm\tilde\sigma_{\alpha^{-1}(d+e)} \ .
  \end{align*}
First, it is straightforward to prove  
$\alpha^{-1}([j,d+e]) = [x_j+1,d] \cup [y_j+d+1,d+e]$, for instance by induction on $j$. 
Then, using Relation~(\ref{eqn:RelAss}) with  $(\sigma\tau)'=\sigma'\tau'$ and $(\sigma\tau)''=\sigma''\tau''$, we get 
  \begin{eqnarray*}
     \tilde\sigma_{\alpha^{-1}(j+1)}\dotsm\tilde\sigma_{\alpha^{-1}(d+e)} 
     &=& \sigma''_{x_j+1}\dotsm\sigma''_d \sigma'_{y_j+d+1}\dotsm\sigma'_{d+e}\\
          &=& (\sigma_{x_j+1}\dotsm\sigma_d)'' (\sigma_{y_j+d+1}\dotsm\sigma_{d+e})'\\
          &=& (\sigma_{x_j+1}\dotsm\sigma_d) \circ_i (\sigma_{y_j+d+1}\dotsm\sigma_{d+e})
     \ .
  \end{eqnarray*}
  This completes the proof for $\P = Com$.  
  
  The general case of any augmented dg operad $\P$ is treated in the same way. 
We consider now the following morphism of graded $\N$-modules 
$\psi\colon   T^c(s \bar\Sy) \circ \oP \to \calE$ by
  \[ \psi(\mu; s\sigma_1,\dotsc, s\sigma_d)
     := (\mu^{\sigma_1\dotsm\sigma_d};\sigma_1\dotsm\sigma_d, \sigma_2\dotsm\sigma_d, \dotsc, \sigma_d, \id_n) \ . \]
and the induced morphism of operads 
$\Psi\colon  \I \oplus \widetilde{\Omega} \widetilde{\B} \, \oP \to \Omega\B\,  (\P\otimes_H \calE)$.

The vertices of the partitioned planar rooted trees of $\widetilde{\Omega} \widetilde{\B}\,  \overline{\P}$ are now labeled by elements of $\P$ and strings of permutation and the vertices of the partitioned rooted trees of $\Omega \B \, (\P \otimes_{\rm H} \calE)$ are labelled by elements of $\oP\otimes \overline{\calE}$. Therefore, the same arguments apply to prove that $\Psi$ is an isomorphism. 

  Then, one has to deal with the internal differential $\dif_\P$ of the  dg operad $\P$. It is easy to see that the extra   term induced by $\dif_\P$ on the differentials of $ \widetilde{\Omega} \widetilde{\B}\,  \overline{\P}$ and $\Omega \B \, (\P \otimes_{\rm H} \calE)$ respectively are sent to one another under the map $\Psi$. 
The term $\dif_1^l$ now incorporates the action of the symmetric group on $\P$: 
$$\dif_1^l(\mu; \sigma_1, \ldots, \sigma_d)=\allowbreak(\mu^{\sigma_1}; \sigma_2,\allowbreak \ldots, \sigma_d)\ .$$
And the terms $\dif_1^l + \dif_1^{int} - \dif_2^r$ are still sent to $\dif_{int}$ under the map $\Psi$ since
 \begin{eqnarray*}
&&    \psi\big( (\dif_1^l + \dif_1^{int} - \dif_2^r)(\mu; \sigma_1,\dots,\sigma_d) \big)=
    \dif_{int}( \psi(\mu; \sigma_1, \dotsc, \sigma_d) )\\
    &=& \sum_{i=1}^d (-1)^{i-1} (\mu^{\sigma_1\dotsm\sigma_d}; \sigma_1\dotsm\sigma_d, \dotsc, \widehat{\sigma_i\dotsm\sigma_d}, \dots, \sigma_d, \id_n) + (-1)^d (\mu^{\sigma_1\dotsm\sigma_d}; \sigma_1\dotsm\sigma_d, \dotsc, \sigma_d) .
  \end{eqnarray*}
Since they do not involve the operad $\P$, the two terms $-\dif_2^{split}$ and $-\dif_{bar}$ still correspond under $\Psi$. 
 
 The component $\dif_1^{sh}$ now also includes the partial composite $\circ_i$ of the two elements of $\oP$ labeling the two vertices located on both sides of internal edges. This is also the case for the component  $\dif_{bar}$. They actually satisfy 
  \begin{align*}
   &  \psi\big( \dif_2^{sh} ( t( (\mu; \sigma_1,\dotsc,\sigma_d), (\nu; \sigma_{d+1},\dotsc,\sigma_{d+e}) ) ) \big)\\
    &\quad = \sum_{\alpha \in \text{Sh}(d,e)} \mathrm{sgn}(\alpha)\,
      ((\mu\circ_i \nu)^{\tilde\sigma_{\alpha^{-1}(1)}\dotsc\tilde\sigma_{\alpha^{-1}(d+e)}}; \tilde\sigma_{\alpha^{-1}(1)}\dotsm\tilde\sigma_{\alpha^{-1}(d+e)},\dotsc,\tilde\sigma_{\alpha^{-1}(d+e)},
      \id_{n+m-1})
  \end{align*}
and 
  \begin{align*}
& \dif_{bar}\big(\Psi(t((\mu; \sigma_1,\dotsc,\sigma_d), (\nu; \sigma_{d+1},\dotsc,\sigma_{d+e})))\big) \\
    &\quad = \sum_{(x_\bullet,y_\bullet)}
      \pm (\mu^{\sigma_1\dotsm\sigma_d}\circ_i \nu^{\sigma_{d+1}\dotsm\sigma_{d+e}}; (\sigma_{x_0+1}\dotsm\sigma_d) \circ_i (\sigma_{y_0+d+1}\dotsm\sigma_{d+e}),\dots,\\
        & \qquad \qquad\qquad\         (\sigma_{x_{d+e}+1}\dotsm\sigma_d) \circ_i (\sigma_{y_{d+e}+d+1}\dotsm\sigma_{d+e}), \id_n \circ_i \id_k) \ .
  \end{align*}
It remains to notice that 
\begin{eqnarray*}
(\mu\circ_i \nu)^{\tilde\sigma_{\alpha^{-1}(1)}\dotsc\tilde\sigma_{\alpha^{-1}(d+e)}}
=
(\mu\circ_i \nu)^{(\sigma_{1}\dotsm\sigma_d)'' (\sigma_{d+1}\dotsm\sigma_{d+e})'}=
\mu^{\sigma_1\dotsm\sigma_d}\circ_i \nu^{\sigma_{d+1}\dotsm\sigma_{d+e}}
\end{eqnarray*}
to conclude the proof. 
\end{proof}

\subsection{$E_\infty$-operad}\label{subsec:EinftyOp} 
Recall that an \textit{$E_\infty$-operad} is
an operad quasi-isomorphic to the operad $Com$ and whose components are free (or projective) $\KK[\Sy_n]$-modules. By \cite[Proposition~$4.3$]{BergerMoerdijk03}, any cofibrant resolution of the operad $Com$ is an $E_\infty$-operad. 
However, in application, for instance in algebraic topology like in \cite{Fresse08, Fresse10bis}, it is sometimes necessary to work with a cofibrant $E_\infty$-operad and not only an $E_\infty$-operad. 

We made explicit the higher bar-cobar construction of the operad $\P=Com$ of commutative algebras in the core of the proof of Proposition~\ref{prop:BarCobarBE}. 
This construction provides us with a cofibrant $E_\infty$-operad over any ring. 

\begin{theorem}
The  augmentation of the higher bar-cobar counit 
$$\calE_\infty:=\I \oplus \widetilde{\Omega} \widetilde{\B} \, \overline{Com}\xrightarrow{\sim} Com$$
 is a cofibrant $E_\infty$-operad over any ring, which is moreover a Hopf operad resolution.
\end{theorem}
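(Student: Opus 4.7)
The plan is to deduce the theorem from Theorem~\ref{thm:BarCobarRes}, Proposition~\ref{prop:BarCobarBE}, and standard properties of the Barratt--Eccles operad.

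First I would apply Theorem~\ref{thm:BarCobarRes} to $\P = Com$. The commutative operad is concentrated in homological degree zero with $Com(n) = \KK$ for $n \geq 1$, and these are free, hence projective, $\KK$-modules. The hypotheses of the theorem are therefore satisfied, so the counit
\[ \calE_\infty = \I \oplus \widetilde{\Omega}\widetilde{\B}\,\overline{Com} \xrightarrow{\sim} Com \]
is a functorial cofibrant resolution in the category of augmented dg operads. It remains to establish (i) the projectivity of the $\Sy_n$-actions and (ii) the Hopf operad structure.

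For (i), I would pass to the right-hand side of the natural isomorphism $\calE_\infty \cong \Omega\B\,\calE$ of Proposition~\ref{prop:BarCobarBE}. Each Barratt--Eccles component $\calE(n)$ is a free $\KK[\Sy_n]$-module, because the diagonal right action $(\sigma_1,\ldots,\sigma_{d+1})^\sigma = (\sigma_1\sigma,\ldots,\sigma_{d+1}\sigma)$ already has trivial stabilizer on its last entry. This freeness is preserved by the classical bar and cobar functors: both are built from labeled trees whose $\Sy_n$-action permutes only the $n$ external leaves, and so each arity component of $\Omega\B\,\calE \cong \calE_\infty$ is a free $\KK[\Sy_n]$-module, which establishes the $E_\infty$-operad property.

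For (ii), I would combine two well-known facts about the Barratt--Eccles operad: it is a Hopf dg operad (arising as the normalized chain complex of the simplicial Hopf operad $\{W\Sy_n\}_{n\in\NN}$ via the Eilenberg--Zilber shuffle map), and the augmentation $\calE \to Com$ is a morphism of Hopf operads for the standard grouplike Hopf structure $\Delta(\mu_n) = \mu_n\otimes \mu_n$ on $Com$. Since the bar-cobar adjunction $\Omega\B$ lifts to an adjunction on dg Hopf (co)operads, once again using the Eilenberg--Zilber map, the operad $\Omega\B\,\calE$ inherits a Hopf operad structure and its counit $\Omega\B\,\calE \to \calE$ is a morphism of Hopf operads. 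Transporting this structure across the isomorphism of Proposition~\ref{prop:BarCobarBE} equips $\calE_\infty$ with a Hopf operad structure compatible with the quasi-isomorphism to $Com$. The main obstacle I anticipate is giving an explicit description of this coproduct in terms of the partitioned planar trees labeled by strings of permutations describing $\calE_\infty$; the cleanest route is to argue entirely via the natural isomorphism of Proposition~\ref{prop:BarCobarBE}, so that the intrinsic combinatorial formula for $\Delta$ never needs to be written down.
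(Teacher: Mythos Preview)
Your proposal is correct and follows essentially the same route as the paper: apply Theorem~\ref{thm:BarCobarRes} to $Com$ for cofibrancy, and use the isomorphism of Proposition~\ref{prop:BarCobarBE} together with the Hopf structure on the Barratt--Eccles operad (the paper cites \cite[Theorem~\S2.A(c)]{Fresse07} for the fact that $\Omega\B$ preserves Hopf operads). The only minor difference is in step~(i): you argue $\Sy_n$-freeness directly by transporting it from $\calE$ through $\Omega\B$, whereas the paper simply invokes the general fact, recalled just before the statement, that any cofibrant resolution of $Com$ is automatically an $E_\infty$-operad \cite[Proposition~4.3]{BergerMoerdijk03}.
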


\begin{proof}
The operad $Com$ is non-negatively graded. Its components are made of up the ground ring $Com(n)\allowbreak=\KK$, for any $n \ge 1$, so they are projective $\KK$-modules. Therefore Theorem~\ref{thm:BarCobarRes} applies and gives the result. 

Proposition~\ref{prop:BarCobarBE} shows that the operad $\calE_\infty\cong \Omega \B \, \calE$ is isomorphic to the bar-cobar construction of the Barratt--Eccles operad. Since this later one is a Hopf operad, it is also the case of its bar-cobar construction, by \cite[Theorem~$\S$2.A (c)]{Fresse07}. And the composition 
$\calE_\infty\cong \Omega \B \, \calE \qi \calE \qi Com$ is a quasi-isomorphism of Hopf operads. 
\end{proof}

\begin{remark}
Let us emphasize that, by Proposition~\ref{prop:BarCobarBE}, this theorem was already known under the following form: the present cofibrant $E_\infty$-operad is isomorphic to the classical bar-cobar construction of the Barratt--Eccles operad, which is a cofibrant $E_\infty$-operad over any ring by \cite[Theorem~$5.1$]{BergerMoerdijk07}. 
\end{remark}

The operad $\calE_\infty$ is by  given  partitioned planar rooted trees with vertices of valence at least $2$, labeled by  strings of nontrivial  permutations and with the leaves globally labeled by a permutation 
$$   \begin{aligned}\begin{tikzpicture}[optree,
     level distance=12mm,
     level 2/.style={sibling distance=16mm},
     level 3/.style={sibling distance=8mm}]
   \node{}
     child { node[comp,label=right:{\tiny $\sigma_1^1, \sigma_1^2, \sigma_1^3$}] (a) {}
       child { node[comp,label=183:{\tiny $\sigma_2^1$}] (b) {}
         child { node[label=above:$1$]{} }
         child { node[label=above:$4$]{} }
         child { node[label=above:$8$]{} } }
         child { node[label=above:$2$]{} }
         child { node[comp] (c) {}
         child { node[label=above:$3$]{} }
         child { node[label=above:$5$]{} }
         child { node[comp,label=-3:{\tiny $\sigma_4^1, \sigma_4^2$}] (d) {}
           child { node[label=above:$6$]{} }
           child { node[label=above:$7$]{} } } } };
   \node[draw,dotted,ellipse,rotate=-13,inner sep=-2mm,yscale=0.7,xscale=1.8,xshift=2mm,fit=(a) (b)] {};
   \node[draw,dotted,ellipse,rotate=15,inner sep=-2mm,yscale=0.8,xscale=2.3,yshift=-0.8mm,xshift=1.5mm,fit=(c) (d)] {};
 \end{tikzpicture}\end{aligned} \quad ;$$
 it is equipped with a differential given by the sum of three types of terms:
\begin{enumerate}
\item at each vertex, one deletes the left-most permutation $\sigma_j^1$,  one composes pairs of permutations when they are not inverse to each other, and one removes the last permutation $\sigma_j^{i_j}$ of the string, pulls it out of the underlying tree (action which may modify this tree) and rearranges the labels of the leaves accordingly, 

\item inside each subtree, one contracts every edge and shuffles the two strings of permutations in all possible ways, 

\item for each subtree, one splits it into two subtrees in all possible ways, the number of which is equal to the number of internal edges. 
\end{enumerate}

By definition, an \textit{$E_\infty$-algebra} is an algebra over an $E_\infty$-operad. This definition offers an interesting degree of freedom. But it is sometimes useful  to have at hand an explicit definition by means of generators and relations. The form of the present $E_\infty$-operad $\calE_\infty$ allows us to give such a definition. Moreover, this definition carries canonical properties since the resolution $\calE_\infty$ that we use is a cofibrant and Hopf operad. 

\begin{proposition}\label{prop:EinftyAlg}
Let $(A, \dif_A)$ be a chain complex equipped with operations 
\begin{align*}
\left\{  \mu_{t(\sigma)} \ : \ A^{\otimes n} \to A \right\}
\end{align*}
indexed by planar rooted trees with vertices of valence $m$ at least $2$   labeled by string of nontrivial permutations of $\Sy_m$. They are supposed to be of degree 
$$|\mu_{t(\sigma)}|= \text{number of internal edges}+ \text{number of permutations}$$
 and to satisfy the relations
\begin{eqnarray*}
\partial_A \mu_{t(\sigma)} &-&\sum_{\text{internal edges}\atop   j-e-l} 
\varepsilon_1\, \mu_{t/e(\bar \sigma_1, \ldots, \shuffle(\bar \sigma_j, \bar \sigma_l), \ldots, \bar \sigma_k)}
+\sum_{\text{internal edges}, \atop t=t_1 \circ_i t_2} 
\varepsilon_2 \, \mu_{t_1(\bar \sigma_1, \dots)} \circ_i \mu_{t_2(\bar \sigma_l, \ldots)}
\\
&-&\sum_{j=1}^k 
\varepsilon_3\, \mu_{t(\bar \sigma_1, \ldots, (\sigma_j^2, \ldots, \sigma_j^{i_j}) , \ldots, \bar \sigma_k)}
-\sum_{j=1}^k \sum_{l=1}^{i_j-1} 
\varepsilon_4\,  \mu_{t(\bar \sigma_1, \ldots, (\sigma_j^1, \ldots,  \underbrace{{\scriptstyle \sigma_j^l\sigma_j^{l+1}}}_{\ne \id}   , \ldots, \sigma_j^{i_j}) , \ldots, \bar \sigma_k)}\\
&+&\sum_{j=1}^k \varepsilon_5\, \left(\mu_{\tilde t(\bar \sigma_1, \ldots, (\sigma_j^1, \ldots, \sigma_j^{i_j-1}) , \ldots, \bar \sigma_k)}\right)^{\tilde \sigma_j^{i_j}}
=0\ ,
\end{eqnarray*}
where the sign $\varepsilon_1$ comes from the permutations of the $\sigma$'s, as described in Section~\ref{subsec:KDCoopQ}, the sign $\varepsilon_2$ comes from the reordering of the tree into the  two parts, the sign $\varepsilon_3$ is equal to $(-1)^{|t|+|\bar \sigma_1|+\cdots+|\bar \sigma_{j-1}|}$, the sign $\varepsilon_4$ is equal to $(-1)^{|t|+|\bar \sigma_1|+\cdots+|\bar \sigma_{j-1}|+l}$, and the sign 
$\varepsilon_5$ is equal to $(-1)^{|t|+|\bar \sigma_1|+\cdots+|\bar \sigma_{j-1}|+|\sigma_j^1|+ \cdots + |\sigma_j^{i_j-1}|}$.

Such a data $(A, \dif_A, \{ \mu_{t(\sigma)} \})$ is an $E_\infty$-algebra. 
\end{proposition}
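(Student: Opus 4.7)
\emph{Proof proposal.} The plan is to read off the definition of an $E_\infty$-algebra on $A$ directly from the explicit description of $\calE_\infty$ as a quasi-free dg operad, and observe that the displayed relation is exactly the compatibility of the resulting operad morphism with the differential.

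Forgetting the differential, the operad $\widetilde{\Omega}\widetilde{\B}\, \overline{Com} \cong \calO\bigl(q\calO^{\ac}(\overline{Com})\bigr)$ is free on the $\NN$-module $\widetilde{\B}\, \overline{Com}$. By Lemma~\ref{lem:qO!} and Proposition~\ref{prop:OperadicTreesBasisII}, the latter admits a basis given by planar rooted trees $t(\bar\sigma_1,\ldots,\bar\sigma_k)$ with vertices of arity $\geq 2$ labeled by (possibly empty) strings of nontrivial permutations, and its homological degree equals the number of internal edges of $t$ plus the total number of permutations, matching the prescribed degree of $\mu_{t(\bar\sigma)}$. Since a morphism of graded operads from a free operad is completely determined by its restriction to the generators, the data of a family of operations $\{\mu_{t(\bar\sigma)}\}$ of those degrees is equivalent to the data of a morphism of graded operads $\mu\colon \calE_\infty \to \End_A$. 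Making $A$ into an $\calE_\infty$-algebra amounts to demanding, in addition, that this $\mu$ commutes with differentials.

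I would then translate this differential compatibility into the displayed relation. The condition $\mu \circ \dif_{\calE_\infty} = \dif_{\End_A} \circ \mu$ needs only to be checked on a single generator $t(\bar\sigma)$, where it reads $\partial \mu_{t(\bar\sigma)} = \mu_{(\dif_1 - \dif_2)(t(\bar\sigma))}$. The five components $\dif_1^{sh}, \dif_1^{l}, \dif_1^{int}, \dif_2^{split}, \dif_2^{r}$ of the total differential were made explicit in the proof of Proposition~\ref{prop:BarCobarBE} and produce, respectively: shuffles of two adjacent strings after contracting an internal edge; deletion of the leftmost permutation of each string (whose action on the $Com$-labeled corolla is trivial); the product of two consecutive permutations in a string, provided the result is nontrivial; the splitting of $t$ into two planar subtrees along each internal edge; and the extraction of the rightmost permutation of each string, pulled out to act on the underlying tree structure. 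Applying $\mu$ to each term yields precisely the five sums of the proposition, with the partial composition $\mu_{t_1} \circ_i \mu_{t_2}$ coming solely from $\dif_2^{split}$, the only component producing a composite of two generators in the free operad.

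The one delicate point, and the main obstacle, is the exact matching of signs. The signs $\varepsilon_1, \ldots, \varepsilon_5$ arise from combining the Koszul sign rule on the left-levelwise ordered composite-tree basis of Section~\ref{subsec:TOS}, the explicit formula for the coderivation of $q\calO^{\ac}$ in Proposition~\ref{pro:Coder}, the infinitesimal decomposition of Proposition~\ref{prop:InfDecomp}, and the overall minus sign in front of $\dif_2$ in the cobar differential of Section~\ref{subset:HighCoBarConst}. Verifying term by term that the resulting signs agree with the $\varepsilon_i$ displayed in the statement is a bookkeeping exercise guided by the worked Example~\ref{Exam:InfDecCoop}. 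Once this is done, the equation $\partial \mu_{t(\bar\sigma)} = \mu_{(\dif_1 - \dif_2)(t(\bar\sigma))}$ rearranges into the stated relation, showing that its solutions are exactly the $\calE_\infty$-algebra structures on $(A,\dif_A)$.
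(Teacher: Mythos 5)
Your proposal is correct and follows essentially the same route as the paper: the paper simply packages your ``free on generators plus differential compatibility'' argument by invoking Proposition~\ref{prop:BarCobarTwKappa}, so that an $\calE_\infty$-algebra structure is a degree~$0$ map $\mu\colon \widetilde{\B}\,\overline{Com}\to\End_A$ satisfying the Maurer--Cartan equation~(\ref{align:MCeqKappa}), and then reads off the five sums and their signs from the explicit description of the higher bar construction in Section~\ref{subset:HighBarConst}. Your identification of the terms with $\dif_1^{sh},\dif_1^{l},\dif_1^{int},\dif_2^{split},\dif_2^{r}$ and your treatment of the signs match the paper's (equally brief) justification, so there is no gap.
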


\begin{proof}
An  $\calE_\infty$-algebra structure on the chain complex $(A, \dif_A)$ amounts to a morphism of dg operads $\calE_\infty \to \End_A$, data  equivalent to a morphism of $\NN$-modules $\mu : \widetilde{\B} \overline{Com} \to \End_A$ satisfying the Maurer--Cartan equation~(\ref{align:MCeqKappa}): 
$\partial \mu + \mu \ast\mu +\Delta \mu =0$, by Proposition~\ref{prop:BarCobarTwKappa}. Since $\mu$ is concentrated in degree $0$, all the signs are directly given by the ones of the higher bar construction $\widetilde{\B} \overline{Com}$ described in Section~\ref{subset:HighBarConst}. 
\end{proof}

\begin{remark} Again, an equivalent explicit definition can be obtained using the bar-cobar construction of the Barratt--Eccles operad. The existence of a cofibrant Hopf $E_\infty$-operad plays a crucial role, for instance in B. Fresse's works \cite{Fresse08, Fresse10bis}. But the actual description of the category of algebras over it  does not seem to be present in the existing literature. We believe that it could be useful to have such an explicit description at hand. Moreover, the operad which emerges from the higher bar-cobar construction seems to provide us with a ``simple'' model since it is made of planar trees and easy combinatorics of permutations. 
\end{remark}

In characteristic $0$, one can use the Koszul model of the operad $Com$ to get a suitable notion of an $E_\infty$-algebra. This only amounts to resolving the associative relation of commutative algebras. In the above definition, there are strings of permutations whose role is to resolve, in a coherent way, the trivial symmetric group action on the components of the operad $Com$ over any ring. Let us unravel a little bit the above definition. 

An $E_\infty$-algebra, in the above sense, is equipped with a degree $0$ operation 
$$\mu_{\Y} : A^{\otimes 2} \to A\ . $$
The above relation applied to the tree $\Y$ is equivalent to the fact that $\dif_A$ is a derivation with respect to 
$\mu_{\Y}$. It is also equipped with 
a degree $1$ operation 
$$\mu_{\Yb} : A^{\otimes 2} \to A\ , $$ 
which satisfies 
$$\partial_A\,  \mu_{\Yb}  =\mu_{\Y}-  (\mu_{\Y})^{(12)}\ .$$
So $\mu_{\Yb}$ is a homotopy for the commutativity relation of $\mu_{\Y}$. 
It also comes equipped with a degree $0$ operation 
$$\mu_{\W} : A^{\otimes 3} \to A\ . $$
and two degree $1$ operations 
$$\mu_{\YYl} : A^{\otimes 3} \to A \quad \text{and} \quad 
\mu_{\YYr} : A^{\otimes 3} \to A\ , $$
satisfying respectively
$$\partial_A\,  \mu_{\YYl}  =\mu_{\W}-  \mu_{\Y} \circ_1 \mu_{\Y}
\quad \text{and} \quad
\partial_A\,  \mu_{\YYr}  =\mu_{\W}-  \mu_{\Y} \circ_2 \mu_{\Y}
\ . $$
So $-\mu_{\YYl}+\mu_{\YYr}$ is a homotopy for the associativity relation of $\mu_{\Y}$. 
The higher operations coming from more elaborate labeled planar trees provide us with coherent higher homotopies for these relations.

\begin{further}
Recall that, in the case of the set-theoretical nonsymmetric operad $As$, there exists a topological operad, made up of the Stasheff polytopes, see \cite{Stasheff63}, whose chain complex is the minimal model $A_\infty$ of the algebraic operad spanned by $As$. In this way, one can get a definition for a notion of an $A_\infty$-space as an algebra over this topological operad. 

Since the  operad $Com$ which encodes commutative algebras is also set-theoretical, one can look for a topological  operad whose chain complex would give the resolution $\calE_\infty$ of the operad $Com$. In this case, one would be able to define a suitable notion of an $E_\infty$-space. Such a question is made sensible by the fact that the operad $\calE_\infty$ is a Hopf operad, like the chain operad of a topological operad. 
\end{further}

\subsection{Higher Koszul duality theory} 
The higher bar-cobar construction of Theorem~\ref{thm:BarCobarRes} shares  nice properties: functorial, cofibrant over any ring. However, the underlying $\NN$-module  is quite huge. Given a particular operad $\P$ with some description, like a presentation by means of generators and relations, one might want to describe a smaller cofibrant resolution of $\P$. This is precisely what the 
 Koszul duality theory for operads performs, see \cite[Chapter~$7$]{LodayVallette12}. However, the original Koszul duality theory for operads of \cite{GinzburgKapranov94, GetzlerJones94} works well over a field of characterstic $0$. 

\begin{further}
The general pattern of the Koszul duality theory of algebras over an (colored) operad $\calO$ was given by J. Mill\`es in \cite{Milles12}. One should be able to apply it to the present case and thus obtain minimal models for operads generated by free $\Sy$-modules. 
In this way, the present paper already settles the first step of this tentative higher Koszul duality theory with the definitions of the higher bar and the higher cobar constructions. It remains to define the Koszul dual higher cooperad of an operad. This issue is more subtle in the present case since the underlying space of the higher bar construction is not a differential graded module but a curved one. So one cannot define the Koszul dual directly by the homology groups of the bar construction.

The case of the operad $Com$ is particularly interesting since this would simplify the above $E_\infty$-operad. This should give an operad related the cofibrant $E_\infty$-operad given in \cite{Fresse09bis} or to the chain complex of the Fox--Neuwirth cells of the Fulton--Mac\-Pher\-son operad and solve the issues raised by  the presence of "bad cells" in \cite{GetzlerJones94}. 
All of this  will be studied in a sequel to this paper. 
\end{further}


\section{Higher homotopy operads} 

The main result of Section~\ref{sec:one} allows us to define a new notion of an operad up to homotopy. Since we take into account the action of the symmetric groups on the same footing as the partial composition maps, the upshot is a higher notion of a homotopy operad where all the defining relations of an nu operad are relaxed. 
Such a higher notion includes all the previously known cases like $A_\infty$-algebras, $A_\infty$-modules, and homotopy nonsymmetric operads, for instance. 

Since this new notion is conceptually produced by the curved Koszul duality theory, we can endow it with a suitable notion of $\infty$-morphisms, which are shown to satisfy the required interesting homotopy properties. For instance, this allows us to describe the homotopy theory of dg augmented operads, like the homotopy transfer theorem,  over any ring. 

\subsection{Definition}

The cofibrant resolution $\calO_\infty:=\Omega \calO^{\ac}$ of the colored operad $\calO$ allows us to define 
a notion of operads up to homotopy with the required homotopy properties over any ring. 

\begin{definition}[Higher homotopy operad] A \emph{higher homotopy operad}, also called  a $\textit{higher operad}_\infty$, is an
$\calO_\infty$-algebra. 
\end{definition}

Using the above computation of the Koszul dual curved cooperad 
$\calO^\ac=(q\calO^{\ac}, \dif_{\calO^\ac},\theta)$, we can make the 
definition of higher homotopy operads explicit as follows. 

Let $\{(\P(n), \dif_{\P(n)}\}_{n\in \NN}$ be a differential graded $\NN$-module. We consider the space of maps from the Koszul dual curved cooperad $\calO^{\ac}$ to the endomorphism operad $\End_\P$: 
$$\Hom_\Sy(\calO^{\ac}, \End_\P):=\prod_{k\in \NN} 
\left( \prod_{n_1, \ldots, n_k\in \NN} 
\Hom\left(
\overline{\calO}^{\ac}(n; n_1, \ldots, n_k), \Hom\big(\P(n_1)\otimes \cdots\otimes \P(n_k), \P(n)\big)
\right)\right)^{\Sy_{k}}\ , $$
where $n=n_1+\cdots +n_k-k+1$.
This mapping space is endowed with a convolution pre-Lie product $f\star g$ defined by the following composition 
$$f\star g\ : \ \calO^{\ac}  \xrightarrow{\Delta_{(1)}} \calO^{\ac} \otimes \calO^{\ac} 
\xrightarrow{f\otimes g} \End_\P \otimes \End_\P \xrightarrow{\gamma_{\End_\P} } \End_\P\ .$$
As usual, the antisymmetrized bracket gives rise to a convolution Lie algebra structure
$$[f,g]:=f\star g - (-1)^{|f||g|} g \star f\ .$$
Let us denote by $\partial_\P$ the differential of the endomorphism operad $\End_\P$. We consider the classical derivation 
$$\partial(f):=\dif_\P \circ f - (-1)^{|f|}f \circ \dif_{\calO^{\ac}} $$
on the mapping space $\Hom_\Sy(\calO^{\ac}, \End_\P)$. Finally, the curvature $\theta$ of the Koszul dual curved cooperad $\KDO$ induces the following  curvature 
$$\Theta\ :\ \KDO \xrightarrow{-\theta} \I \to \End_\P\ .  $$

\begin{lemma}[\cite{HirshMilles12} \S$3.2.3$]
The quadruple $\big(\Hom_\Sy(\calO^{\ac}, \End_\P), \star, \partial, \Theta\big)$ 
forms a curved pre-Lie algebra, that is 
$$ \partial(\Theta)=0 \quad \text{and} \quad \partial^2=(\text{-} \star \Theta) - (\Theta \star \text{-})\ . $$
The quadruple $\big(\Hom_\Sy(\calO^{\ac}, \End_\P), [\, , ], \partial, \Theta\big)$
forms a curved Lie algebra, that is 
$$ \partial(\Theta)=0 \quad \text{and} \quad \partial^2=[\text{-}, \Theta]\ . $$
\end{lemma}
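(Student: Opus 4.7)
The plan is to verify the required identities by unpacking the definitions of $\partial$, $\star$, and $\Theta$ and invoking the curved cooperad axioms for $\calO^{\ac}=(q\calO^{\ac},\dif_{\calO^{\ac}},\theta)$ established in Propositions~\ref{pro:Coder} and \ref{pro:Curvature}. As the citation to \cite{HirshMilles12} indicates, this is the specialization to our setting of a general statement about convolution algebras associated to a curved cooperad and a dg operad; I would therefore present it as a computation rather than as something requiring conceptual input.

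First, I would check that $\star$ is a pre-Lie product. This is a direct, sign-tracking computation: it uses only that $\Delta_{(1)}$ satisfies the cooperadic analogues of the sequential and parallel composition axioms of an operad (dual to Definition~\ref{def:Operad}~(iii)--(iv)), together with the analogous axioms for $\gamma_{\End_\P}$. The pre-Lie axiom then implies by standard antisymmetrization that $[f,g]:=f\star g-(-1)^{|f||g|}g\star f$ satisfies the graded Jacobi identity.

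Next, for $\partial(\Theta)=0$, I would write $\Theta$ as the composite $\calO^{\ac}\xrightarrow{-\theta}I\xrightarrow{\iota}\End_\P$. Since $|\Theta|=-2$,
\[ \partial(\Theta)=\dif_\P\circ\Theta-(-1)^{-2}\Theta\circ\dif_{\calO^{\ac}}=-\iota\circ\theta\circ\dif_{\calO^{\ac}}, \]
which vanishes because $\iota$ is a chain map and $\theta\circ\dif_{\calO^{\ac}}=0$ is part of the curved cooperad data for $\calO^{\ac}$.

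The main (but still routine) step is verifying $\partial^{2}=(\text{-}\star\Theta)-(\Theta\star\text{-})$. Using $\dif_\P^{2}=0$ and expanding $\partial(\partial f)$ with careful attention to the Koszul sign $(-1)^{|f|}$ on each factor, the cross terms in $d_\P f\,\dif_{\calO^{\ac}}$ cancel and one is left with
\[ \partial^{2}(f)=-\,f\circ(\dif_{\calO^{\ac}})^{2}. \]
Substituting the curvature identity $(\dif_{\calO^{\ac}})^{2}=(\id\otimes\theta-\theta\otimes\id)\circ\Delta_{(1)}$, this becomes $-f\circ(\id\otimes\theta-\theta\otimes\id)\circ\Delta_{(1)}$. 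On the other hand, since $\theta$ lands in $I$ and $\iota$ is the unit of $\End_\P$, the convolution products collapse to $f\star\Theta=-f\circ(\id\otimes\theta)\circ\Delta_{(1)}$ and $\Theta\star f=-f\circ(\theta\otimes\id)\circ\Delta_{(1)}$ after simplifying composition with the unit, so $f\star\Theta-\Theta\star f$ matches $\partial^{2}(f)$ on the nose. The Lie version is then immediate: $|\Theta|=-2$ is even, so $[f,\Theta]=f\star\Theta-\Theta\star f=\partial^{2}(f)$.

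The only genuine obstacle is bookkeeping of signs and correctly simplifying $\gamma_{\End_\P}\circ(f\otimes\iota\theta)$ and $\gamma_{\End_\P}\circ(\iota\theta\otimes f)$ using the unit axioms of the colored operad $\End_\P$; once this is done carefully in one direction, the computation in the other direction is symmetric.
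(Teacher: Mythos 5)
Your verification is correct and is essentially the argument the paper relies on: the paper gives no proof but cites the general convolution-algebra statement of Hirsh--Mill\`es, whose proof is exactly this direct unpacking of $\partial$, $\star$, $\Theta$ against the curved cooperad axioms ($\theta\circ\dif_{\calO^{\ac}}=0$ and $(\dif_{\calO^{\ac}})^2=(\id\otimes\theta-\theta\otimes\id)\circ\Delta_{(1)}$), and your sign bookkeeping (including the role of the minus sign in $\Theta=-\iota\theta$ and the evenness of $|\Theta|$) checks out. The only cosmetic omission is the routine check that $\partial$ is a derivation for $\star$, which is part of the usual definition of a curved (pre-)Lie algebra but not of the two displayed identities you were asked to prove.
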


\begin{proposition}
A higher homotopy operad structure on a dg $\NN$-module $\P$ is equivalent to a 
degree $-1$ solution to the 
curved Maurer--Cartan equation:
\begin{eqnarray}\label{eqn:MCHiHoOp}
\partial \gamma+ \gamma \star \gamma  = \Theta
 \end{eqnarray}
in the curved convolution algebra $\Hom_\Sy(\calO^\ac,\End_\P)$, which vanishes on $\I\subset \calO^{\ac}$.
\end{proposition}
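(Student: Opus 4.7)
The plan is to unpack the definition via the universal property of the cobar construction of a curved cooperad, which is exactly the content of the general curved Koszul duality framework of \cite{HirshMilles12}. By definition, a higher homotopy operad structure on $\P$ is an $\calO_\infty$-algebra structure, which in turn is a morphism of dg colored operads
\[ \Phi \colon \calO_\infty = \Omega \calO^{\ac} \longrightarrow \End_\P \ . \]
First I would observe that, forgetting differentials, $\Omega \calO^{\ac}$ is the free colored operad on the desuspension $s^{-1}\overline{\calO^{\ac}}$ of the coaugmentation coideal. Hence, as graded colored operads, morphisms $\Phi$ are in natural bijection with morphisms of graded $\Sy$-modules $s^{-1}\overline{\calO^{\ac}} \to \End_\P$, or equivalently with degree $-1$ maps $\gamma \colon \calO^{\ac} \to \End_\P$ vanishing on $\I \subset \calO^{\ac}$.

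Next, I would translate the fact that $\Phi$ commutes with the differentials into a condition on $\gamma$. The differential on $\Omega \calO^{\ac}$ decomposes as $d_1 - d_2$, where $d_1$ is induced by the coderivation $d_{\calO^\ac}$ of the curved cooperad and $d_2$ is the unique derivation extending the infinitesimal decomposition map $\Delta_{(1)}$. Since $\Phi$ is determined by $\gamma$ and $\End_\P$ carries its own internal differential $\partial_\P$, the compatibility $\Phi \circ d_{\Omega\calO^{\ac}} = \partial_\P \circ \Phi$ need only be checked on generators. On the generators $s^{-1}\overline{\calO^{\ac}}$, the $d_1$ contribution together with $\partial_\P$ yields precisely the derivation $\partial(\gamma) = \dif_\P \circ \gamma - (-1)^{|\gamma|}\gamma \circ d_{\calO^\ac}$, while the $d_2$ contribution, after dualising $\Delta_{(1)}$ and composing through $\gamma \otimes \gamma$ followed by the composition map of $\End_\P$, produces exactly the convolution product $\gamma \star \gamma$.

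At this point one obtains the equation $\partial(\gamma) + \gamma \star \gamma = 0$ up to the contribution of the curvature. Here I would invoke the fact that $\Omega \calO^{\ac}$ is defined as a \emph{curved} cobar construction, so its differential is designed to square to zero by absorbing the defect $(d_{\calO^{\ac}})^2 = (\id \otimes \theta - \theta \otimes \id)\Delta_{(1)}$ of the curved cooperad. Translated across the adjunction, this defect appears in the Maurer--Cartan equation on $\gamma$ as the curvature term $\Theta = -\theta$ composed with the unit of $\End_\P$, giving the curved equation $\partial \gamma + \gamma \star \gamma = \Theta$. The vanishing of $\gamma$ on $\I$ is automatic from the passage through the coaugmentation coideal $\overline{\calO^{\ac}}$.

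The bulk of the argument is thus a direct application of \cite[\S 3.3--4.3]{HirshMilles12} to the curved cooperad $\calO^{\ac}$ of Theorem~\ref{thm:OKoszul}; the main obstacle is purely bookkeeping, namely verifying that the signs coming from the desuspension $s^{-1}$ in the cobar construction conspire with the Koszul sign rule on operadic trees (Section~\ref{subsec:TOS}) to produce the stated equation with a plus sign on $\gamma \star \gamma$ and with the curvature on the right-hand side rather than with a sign.
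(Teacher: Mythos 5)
Your proposal is correct and follows the same route as the paper: the paper's proof is simply the one-line citation of \cite[Theorem~$3.4.1$]{HirshMilles12}, whose content (morphisms $\Omega\calO^{\ac}\to\End_\P$ of dg colored operads correspond to degree $-1$ curved twisting morphisms $\calO^{\ac}\to\End_\P$ vanishing on $\I$) is exactly what you unpack via freeness of the cobar construction and compatibility with the differential and curvature. You have merely spelled out the general curved Koszul duality argument that the paper invokes wholesale, so there is nothing essentially different in the two approaches.
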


\begin{proof}
This is a direct application of \cite[Theorem~$3.4.1$]{HirshMilles12}.
\end{proof}

\begin{theorem}
A higher homotopy operad structure on a dg $\NN$-module $\{\P(n), \dif_{\P(n)}\}_{n\in \NN}$ amounts to  a collection of maps 
\begin{align*}
\left\{ \gamma_{t(\sigma)} : \P(n_1)\otimes \cdots \otimes \P(n_k) \to \P(n) \ ; 
\begin{aligned}\begin{tikzpicture}[optree,
      level distance=12mm,
      level 2/.style={sibling distance=12mm},
      level 3/.style={sibling distance=12mm}]
    \node{}
      child { node[comp,label={-3:{\tiny$\sigma^1_1$}}]{}
      node[comp,label={[label distance=1mm]183:$\mu$}]{}
        child 
        child { edge from parent[draw=none] }
        child { node[comp,label={[label distance=2mm]1:{\tiny $\sigma^1_2,\sigma^2_2$}}]{}
        node[comp,label={[label distance=1mm]183:$\nu$}]{}
          child { node[comp,label={[label distance=1mm]3:{\tiny $\sigma_3^1$}}]{}
          node[comp,label={[label distance=1mm]left:$\chi$}]{}
            child
            child }
        child { edge from parent[draw=none] }
          child
          child
          child }
        child { edge from parent[draw=none] }
        child { edge from parent[draw=none] }
        child } ;
  \end{tikzpicture}\end{aligned}
  \mapsto 
  \gamma_{t(\sigma)}(\mu, \nu, \chi)\right\}
\end{align*}
labeled by planar rooted  trees with vertices labeled by strings of nontrivial permutations $t(\sigma)=t(\bar\sigma_1, \ldots, \bar\sigma_k)$ and with at least one internal edge or one permutation, of degree 
$$|\gamma_{t(\sigma)}|=|t|+|\bar\sigma_1|+\cdots+|\bar\sigma_k|-1$$ and 
satisfying 
\begin{align}\label{RelationHoOp}
\begin{split}
\partial_\P(\gamma_{t(\sigma)})
+ 
\sum_{j=1}^k \sum_{l=1}^{i_j-1}
\pm\  \gamma_{t\big(\bar \sigma_1, \ldots, \bar \sigma_{j-1},
(\sigma_j^1, \ldots, {\SMALL \underbrace{{\tiny \sigma_j^{l}\sigma_j^{l+1}}}_{\neq \id}}, \ldots, \sigma_j^{i_j})
\bar \sigma_{j+1}, \ldots, \bar \sigma_{k}\big)}+\sum_{s\subset t, \shuffle} \pm\, 
\gamma_{t/s(\vec{\sigma})}\circ_{s_1} \gamma_{\tilde{s}({\SMALL \cev{\sigma}})}
\\
=
\left\{
\begin{array}{lll}
-\id_{\P(n)} & \text{when} & t(\sigma)= 
\begin{aligned}\begin{tikzpicture}[optree]
    \node{}
      child { node[dot,label=right:$\sigma^{-1}$]{}
        child { node[dot,label=right:$\sigma$]{}
          child { edge from parent node[right,near end]{\tiny$n$} } } } ;
  \end{tikzpicture}\end{aligned}\ ,\\
 0&  \text{otherwise}\ ,&
 \end{array}
 \right. 
 \end{split}
\end{align}
where the first sign is equal to    
$(-1)^{|t|+|\bar \sigma_1|+\cdots+|\bar \sigma_{j-1}|+l}$
 and 
 the second sign is equal to $(-1)^{|t/s(\vec{\sigma})|} \varepsilon_{s,t,\shuffle}$.
\end{theorem}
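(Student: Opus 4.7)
The plan is to unpack the curved Maurer--Cartan equation~(\ref{eqn:MCHiHoOp}) on the basis of $\calO^\ac$ provided by Proposition~\ref{prop:OperadicTreesBasisII} (which, since $\calO^\ac$ and $q\calO^\ac$ share the same underlying colored $\Sy$-module, applies to $\calO^\ac$ as well). A degree $-1$ map $\gamma : \calO^\ac \to \End_\P$ vanishing on $\I \subset \calO^\ac$ is the same data as a collection of linear maps $\gamma_{t(\sigma)}$ indexed by operadic trees labeled by strings of nontrivial permutations having at least one internal edge or one permutation. Since the basis element $t(\bar\sigma_1,\ldots,\bar\sigma_k)$ has degree $|t|+|\bar\sigma_1|+\cdots+|\bar\sigma_k|$ in $\calO^\ac$ (see Section~\ref{subsec:KDCoopQ}), the degree constraint on $\gamma$ translates exactly into $|\gamma_{t(\sigma)}| = |t| + |\bar\sigma_1| + \cdots + |\bar\sigma_k| - 1$.

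Next I would evaluate each of the three terms of~(\ref{eqn:MCHiHoOp}) on a fixed basis element $t(\bar\sigma_1,\ldots,\bar\sigma_k)$. First, $\partial\gamma = \dif_\P\circ \gamma - (-1)^{|\gamma|}\gamma\circ \dif_{\calO^\ac}$ yields the internal differential term $\partial_\P(\gamma_{t(\sigma)})$ together with the contribution coming from Proposition~\ref{pro:Coder}, which is precisely the sum
\[
\sum_{j=1}^{k}\sum_{l=1}^{i_j-1} (-1)^{|t|+|\bar\sigma_1|+\cdots+|\bar\sigma_{j-1}|+l}\, \gamma_{t\big(\bar\sigma_1,\ldots,(\sigma_j^1,\ldots,\underbrace{\sigma_j^l\sigma_j^{l+1}}_{\neq\id},\ldots,\sigma_j^{i_j}),\ldots,\bar\sigma_k\big)}.
\]
Second, by definition of the convolution product $\star$, the composition $\gamma \star \gamma$ applied to $t(\bar\sigma_1,\ldots,\bar\sigma_k)$ is obtained by first applying the infinitesimal decomposition $\Delta_{(1)}$ of Proposition~\ref{prop:InfDecomp} and then composing the resulting pair of basis elements through $\End_\P$. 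The sum over subtrees $s \subseteq t$ together with all shuffled splittings $\shuffle$ of the permutation strings produces precisely the third term $\sum_{s\subseteq t,\,\shuffle}\pm\, \gamma_{t/s(\vec{\sigma})} \circ_{s_1}\gamma_{\tilde{s}(\cev{\sigma})}$, and the sign $(-1)^{|t/s(\vec\sigma)|}\varepsilon_{s,t,\shuffle}$ is exactly the Koszul sign coming from pulling the degree $-1$ map $\gamma$ past the first factor together with the intrinsic sign $\varepsilon_{s,t,\shuffle}$ appearing in $\Delta_{(1)}$. Third, by Proposition~\ref{pro:Curvature}, $\Theta$ is supported on the two-vertex operadic trees $\begin{aligned}\begin{tikzpicture}[optree,scale=.8]\node{}child{node[dot,label=right:{\tiny$\sigma^{-1}$}]{}child{node[dot,label=right:{\tiny$\sigma$}]{}child{edge from parent node[right,near end]{\tiny$n$}}}};\end{tikzpicture}\end{aligned}$, where it takes the value $-\id_{\P(n)}$ after composition with $\gamma$, matching the right-hand side of~(\ref{RelationHoOp}).

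Putting these three computations together, the curved Maurer--Cartan equation restricted to a fixed basis operadic tree becomes exactly the relation~(\ref{RelationHoOp}) stated in the theorem, which proves both implications.

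The main obstacle I anticipate is sign bookkeeping. There are four independent sources of signs: the left-recursive/left-levelwise Koszul sign rule on composite trees used in Section~\ref{subsec:KDCoopQ} to describe $\Delta_{(1)}$, the intrinsic sign $(-1)^{|t|+|\bar\sigma_1|+\cdots+|\bar\sigma_{j-1}|+l}$ of the coderivation $\dif_{\calO^\ac}$ from Proposition~\ref{pro:Coder}, the Koszul signs produced when commuting the degree $-1$ map $\gamma$ past the various tensor factors in the convolution product, and finally the sign contribution of $(-1)^{|\gamma|}$ in the derivation $\partial$. A careful recursive argument on the number of internal edges and the total length of the permutation strings, keeping track of the bijection of Section~\ref{subsec:TOS} between operadic and standard composite trees, should reconcile all four conventions into the signs $(-1)^{|t|+|\bar\sigma_1|+\cdots+|\bar\sigma_{j-1}|+l}$ and $(-1)^{|t/s(\vec\sigma)|}\varepsilon_{s,t,\shuffle}$ displayed in~(\ref{RelationHoOp}).
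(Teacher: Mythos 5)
Your proposal is correct and follows essentially the same route as the paper: evaluate the curved Maurer--Cartan equation~(\ref{eqn:MCHiHoOp}) on the operadic-tree basis, identifying $\gamma\star\gamma$ via the infinitesimal decomposition of Proposition~\ref{prop:InfDecomp}, the $\gamma\circ\dif_{\calO^{\ac}}$ part of $\partial\gamma$ via Proposition~\ref{pro:Coder}, and the right-hand side via the curvature of Proposition~\ref{pro:Curvature}. The only slip is the phrase ``after composition with $\gamma$'' for the curvature term: $\Theta$ is the composite $\calO^{\ac}\xrightarrow{-\theta}\I\to\End_\P$ and does not involve $\gamma$ at all, but this does not affect the argument.
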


\begin{proof}
The map $\gamma$ is completely characterized by its image on the operadic tree basis: 
$\gamma_{t(\sigma)}:=\gamma({t(\sigma)})$.
 These structure maps have to
satisfy  the curved Maurer--Cartan Equation~(\ref{eqn:MCHiHoOp}), which evaluated on the operadic tree $t(\sigma)$ gives Equation~(\ref{RelationHoOp}).
Its second term, equal to  $(\gamma\star \gamma) (t(\sigma))$ in $\End_\P$, is given by the infinitesimal decomposition map of the Koszul dual cooperad described in Proposition~\ref{prop:InfDecomp} as follows. One first considers the image of $t(\sigma)$ under $\Delta_{(1)}$, which produces pairs 
$(t/s(\vec{\sigma}),\tilde{s}({\cev{\sigma}}))$
of operadic trees. Then, we compose the associated  structure map 
$\gamma_{t/s(\vec{\sigma})}$
at the entry $s_1$ of the structure map 
$\gamma_{\tilde{s}({\SMALL \cev{\sigma}})}$.
\begin{align*}
  \begin{aligned}\begin{tikzpicture}[optree,
      level distance=12mm,
      level 2/.style={sibling distance=15mm},
      level 3/.style={sibling distance=15mm},
      level 4/.style={sibling distance=6mm}]
    \tiny
    \node{}
      child { node[comp,label={-3:{\tiny$\sigma_1^1$}}]{}     
            node[comp,label={[label distance=1mm]183:{\normalsize$\mu_1$}}]{}
        child
        child { edge from parent[draw=none] }
        child { node[comp] (a) {}
                  node[comp,label={[label distance=1mm]182:{\normalsize$\mu_2$}}]{}
          child { node[comp] (b) {}
                     node[comp,label={[label distance=0.5mm]230:{\normalsize$\mu_3$}}]{}
            child { node[comp,label={right:{\tiny$\sigma_4^1$}}]{}
                        node[comp,label={[label distance=1mm]183:{\normalsize$\mu_4$}}]{}
              child
              child }
            child { edge from parent[draw=none] }
            child
            child }
          child }
        child { edge from parent[draw=none] }
        child { edge from parent[draw=none] }
        child { edge from parent[draw=none] }
        child } ;
    \node[rectangle,right=.5mm of a] (c) {\tiny $\sigma_2^1, \sigma_2^2$};
    \node[draw,dotted,ellipse,rotate=-20,xshift=-1.5mm,yshift=-1mm,inner sep=-4.5mm,fit=(a) (b) (c),label={25:{\tiny $(\sigma_3^1)'', (\sigma_2^3)'$}}] {};
  \end{tikzpicture}\end{aligned}
&\quad  \xrightarrow{\gamma_{\tilde{s}({\tiny \cev{\sigma}})}} \quad 
    \begin{aligned}\begin{tikzpicture}[optree,
      level distance=12mm,
      level 2/.style={sibling distance=15mm},
      level 3/.style={sibling distance=6mm}]
    \tiny
    \node{}
      child { node[comp,label={-3:{\tiny$\sigma_1^1$}}]{}
        node[comp,label={[label distance=1mm]183:{\normalsize$\mu_1$}}]{}
        child
        child { edge from parent[draw=none] }  
        child { edge from parent[draw=none] }        
                child { edge from parent[draw=none] }        
        child { node[comp,label={[label distance=1mm]right:{\tiny $(\sigma_3^1)'', (\sigma_2^3)'$}}]{}
        node[comp,label={[label distance=1mm]left:{\normalsize$\gamma_{\tilde{s}({\SMALL \cev{\sigma}})}(\mu_2, \mu_3)$}}]{}        
          child { node[comp,label={[label distance=0.5mm]2:{\tiny $\sigma_4^1$}}]{}
                                  node[comp,label={[label distance=1mm]183:{\normalsize$\mu_4$}}]{}
            child
            child }
          child { edge from parent[draw=none] }
          child
          child
          child }
        child { edge from parent[draw=none] }
        child { edge from parent[draw=none] }
        child } ;
  \end{tikzpicture}\end{aligned}\\
  & \quad  \xrightarrow{\gamma_{t/s(\vec{\sigma})}}
\gamma_{t/s(\vec{\si})}(\mu_1,  \gamma_{\tilde{s}({\SMALL \cev{\sigma}})}(\mu_2, \mu_3), \mu_4)\ .
  \end{align*}
The second term of Equation~(\ref{RelationHoOp}), equal to $\gamma \circ d_{\calO^{\ac}}(t(\sigma))$,  is given by the formula for the coderivation of the Koszul dual cooperad described in Proposition~\ref{pro:Coder}. Finally, the right-hand side of Equation~(\ref{RelationHoOp}) is the curvature described in Proposition~\ref{pro:Curvature}.
\end{proof}

A morphism of higher homotopy operads is a morphism of dg $\NN$-modules which respects the structure maps. The associated category is denoted by $\mathsf{Op}_\infty$. \\

Since the Koszul dual colored cooperad $q\calO^{\ac}$ is homogenous quadratic, then it is weight graded. So 
the structure maps $\gamma_{t(\sigma)}$ are stratified by the weight of the operadic trees $t(\sigma)$, which is equal to the number of internal edges plus the total number of permutations. 

\begin{proposition}
There is a canonical embedding of categories 
$$\mathsf{nu\ Op} \mono \mathsf{Op}_\infty$$
given by the set of Maurer--Cartan elements which vanish on weight greater or equal than 2.
\end{proposition}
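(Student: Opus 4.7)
The plan is to exhibit the functor as a pullback along the Koszul morphism $\kappa \colon \calO^{\ac} \to \calO$ described in Section~\ref{subset:HighBarConst}. Given a non-unital operad, i.e., a morphism of colored operads $\phi \colon \calO \to \End_\P$, I will set $\gamma := \phi \circ \kappa$. Since $\kappa$ is nonzero only on the space of cogenerators $sE \subset \calO^{\ac}$, and the latter is precisely the weight $1$ component of the weight-graded Koszul dual cooperad, the resulting MC element $\gamma$ is by construction concentrated in weight $1$. Its two components pick out the symmetric group actions $\mu^\sigma$ (from the unary generators $s\sigma$) and the partial composition products $\mu \circ_i \nu$ (from the binary generators $\circ_i$) of the nu operad structure.

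Next I will check that $\gamma$ satisfies the curved Maurer--Cartan equation \eqref{eqn:MCHiHoOp}. Since $\kappa$ is the canonical Koszul morphism of the inhomogeneous curved Koszul colored operad $\calO$ (Theorem~\ref{thm:OKoszul}), it is itself a solution of a curved MC equation in $\Hom_\Sy(\calO^\ac, \calO)$ by the general theory of Hirsh--Mill\`es; composition with the morphism of operads $\phi$ then transports this solution to $\Hom_\Sy(\calO^\ac, \End_\P)$. I will however prefer to verify the equation directly, component by component, on operadic trees: on weight $1$ trees every term of \eqref{eqn:MCHiHoOp} automatically vanishes except $\partial_\P(\gamma)$, which is zero because the symmetric group actions and the partial compositions are morphisms of chain complexes; on weight $\geq 3$ trees, every summand in \eqref{RelationHoOp} involves either $\gamma$ applied to a weight $\geq 2$ tree, a decomposition with at least one factor in weight $\geq 2$, or the coderivation $\dif_{\calO^\ac}$ which lowers weight by one but from weight $\geq 3$ still lands outside weight $1$; so the equation is trivially satisfied.

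The crux of the argument is then the weight $2$ case, which I expect to be the main (although still purely bookkeeping) obstacle. The weight $2$ operadic trees fall into exactly four families: a single corolla carrying two permutations $(\sigma,\tau)$; two internal edges glued in parallel; two internal edges glued sequentially; and a corolla with one internal edge together with a permutation attached either above or below the edge. Using Proposition~\ref{pro:Coder} and Proposition~\ref{prop:InfDecomp} to expand $\dif_{\calO^\ac}$ and $\Delta_{(1)}$ on each such tree, and Proposition~\ref{pro:Curvature} for the curvature, I will read off that \eqref{RelationHoOp} specialises to exactly axioms (i)--(vi) of Definition~\ref{def:Operad}: the trees with two permutations give (i) together with the curvature case (ii); the two tree-only families yield (iii) and (iv); and the remaining two mixed families give (v) and (vi). Conversely, starting from an MC element $\gamma$ supported in weight~$1$ I will define $\mu^\sigma$ and $\mu\circ_i\nu$ from its two components, and the same verification shows that axioms (i)--(vi) hold, producing a nu operad structure. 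The two constructions are mutually inverse by inspection.

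Finally, for fully faithfulness on morphisms, a morphism of higher homotopy operads $f \colon \P \to \calQ$ is a morphism of dg $\NN$-modules satisfying $f \circ \gamma^{\P}_{t(\sigma)} = \gamma^{\calQ}_{t(\sigma)} \circ f^{\otimes k}$ for every labeled operadic tree $t(\sigma)$. When both structures are concentrated in weight $1$, these equations are trivial outside weight $1$ and, in weight $1$, reduce exactly to $f$ being $\Sy_n$-equivariant and compatible with partial compositions, i.e.\ to $f$ being a morphism of non-unital operads. Together with the bijection on objects just established, this produces the claimed embedding $\mathsf{nu\ Op} \mono \mathsf{Op}_\infty$.
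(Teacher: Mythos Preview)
Your proof is correct and follows essentially the same approach as the paper's own (much terser) argument: identify the weight-$1$ components of $\gamma$ with the partial compositions and the symmetric group actions, then read off from the weight-$2$ instances of Equation~\eqref{RelationHoOp} the defining axioms of a non-unital operad; you simply supply the extra detail that the paper leaves implicit (the trivial weight-$1$ and weight-$\geq 3$ verifications, and the fully-faithfulness on morphisms). One small clarification: axiom~(ii) is not recovered from the curvature --- since $\id_n$ is excluded from the generating set $E(n;n)$, one simply sets $\mu^{\id_n}:=\mu$ by convention, and the curvature term on the tree $(\sigma,\sigma^{-1})$ instead supplies the boundary case of axiom~(i) where $\sigma\tau=\id_n$ (the coderivation term being absent there precisely because the product equals the identity).
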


\begin{proof}
In the present case, the weight one structure operations $\gamma_t$ labeled by operadic trees with one internal edge and no permutation give partial compositions $\circ_i$ and the weight one operations $\gamma_{c(\sigma)}$ labeled by corollas with only one permutation give the symmetric groups action. Since $\gamma$ vanishes on operadic trees with 2 internal edges and no permutation, Equation~(\ref{RelationHoOp}) for these types of trees corresponds to the defining relations (iii) and (iv). In the same way, Equation~(\ref{RelationHoOp}) for corollas with 2 permutations corresponds to the defining relation (i). And for trees with 1 internal edge and 1 permutation, it corresponds to the defining relations (v) and (vi). 
\end{proof}

The structure maps $\gamma_{t(\sigma)}$ of weight 2 are the first homotopies for these relations.  And then, each higher stratum correspond a higher level of coherent homotopies for the defining relations of the notion of an nu operad. 

\begin{further}
In the very same way as explained above, 
 since the colored operad $\calO$ which encodes nu operads is set-theoretical, one can look for a topological colored operad whose chain complex would give the resolution $\calO_\infty$. In this case, one would be able to define a suitable notion of a  higher homotopy topological operad. 
\end{further}

\subsection{Comparison with other similar notions}

The Koszul dual colored cooperad $\calO^{\ac}$ actually admits several gradings: arity, number of internal edges, total number of permutations, for instance. Using them, one can get  many full subcategories of the category of higher homotopy operads by requiring that the Maurer--Cartan elements vanish outside some gradings. 
This shows that the above definition of homotopy operads  generalizes the following well-known notions, see Figure~\ref{Fig:VariousCat}. One can go from one category to another by forgetting some structure. In the other way round, one can consider some trivial structure, or, in the case of symmetric groups action, the regular representations. 
\newpage

\begin{figure}[!h]
\rotatebox{90}
{$$\xymatrix@R=10pt@C=10pt@M=4pt{
&&& & & & & & & & &&  \mathsf{Op}_\infty \\
& &&& & & & & & & & {\mathsf{Symmetric} \atop \mathsf{ns\ Op}_\infty }\ar@{^{(}->}[ur]& \\
\infty &&& \mathsf{Mod}_\infty &\ar@{_{(}->}[l] A_\infty\text{-}\mathsf{alg} \ar@{^{(}->}[rrrrrr]& & && & &\mathsf{ns\ Op}_\infty  \ar@{^{(}->}[ur]&  \\
& &&&&&&&&&&& {\mathsf{nu \ Operads} \ \text{in}\atop  \Sy\text{-}\mathsf{Mod}_\infty}\ar@{^{(}->}[uuu]\\
& & & & &&& & && &\mathsf{nu \ Op} \ar@{^{(}->}[ur]\ar@{^{(}->}[uuu]& \\
1\ar@{..}[uuu]&& & \mathsf{Mod}\ar@{_{(}->}[uuu] & \ar@{_{(}->}[l]Ass\text{-}\mathsf{alg}\ar@{_{(}->}[uuu]\ar@{^{(}->}[rrrrrr]& & & & & &
\mathsf{ns\ Op} \ar@{^{(}->}[ur]\ar@{^{(}->}[uuu]& & \\
& &\infty &&& & &\Sy_2\text{-}\mathsf{Mod}_\infty & \Sy_3\text{-}\mathsf{Mod}_\infty& \cdots&& &\Sy\text{-}\mathsf{Mod}_\infty\ar@{^{(}->}[uuu] \\
& 1 \ar@{..}[ur]& & &&& \Sy_2\text{-}\mathsf{Mod} \ar@{^{(}->}[ur]&\Sy_3\text{-}\mathsf{Mod} \ar@{^{(}->}[ur] &\cdots && & \Sy\text{-}\mathsf{Mod}\ar@{^{(}->}[uuu]\ar@{^{(}->}[ur] & \\
 0\ar@{..}[uuu]^{\text{i}}\ar@{..}[ur]^{\text{p}}&& & \mathsf{Vect}^2\ar@{_{(}->}[uuu]&\ar@{_{(}->}[l]\mathsf{Vect} \ar@{_{(}->}[uuu]&\mathsf{Vect} 
\ar@{^{(}->}[ur]&\mathsf{Vect} 
\ar@{^{(}->}[ur] & \cdots& && \NN\text{-}\mathsf{Mod} \ar@{^{(}->}[ur] \ar@{^{(}->}[uuu]& &\\
&&& 0+1 \ar@{..}[r]^{\text{a}}& 1\ar@{..}[r] &2\ar@{..}[r] & 3\ar@{..}[rrrr]  &&&&\infty && } $$}

\noindent
a: arity, i: number of internal edges, p: total number of permutations\caption{The various full subcategories of the category $\mathsf{Op}_\infty$}\label{Fig:VariousCat}
\end{figure}
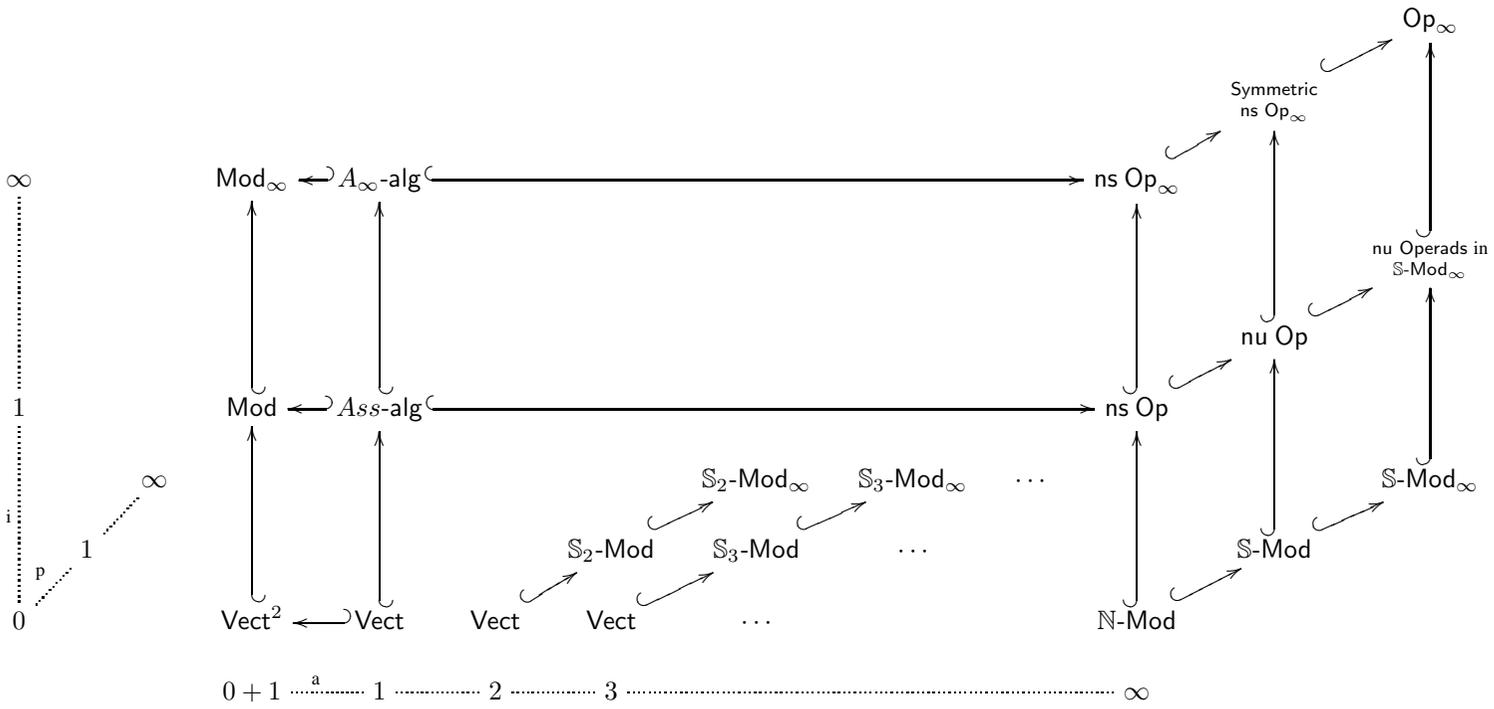

\newpage

\begin{proposition}\label{prop:FullsubCat}
The following categories are canonical full subcategories of the category $\mathsf{Op}_\infty$ of higher homotopy operads. 

  \begin{enumerate}
  \item When $\gamma$ vanishes on operadic trees of arity not equal to $1$, we get the notions of an \emph{$A_\infty$-algebra} of  \cite{Stasheff63} and, respectively, an \emph{associative algebra}, when moreover, it vanishes on operadic trees with more than 2 vertices. 

  \item When $\gamma$ vanishes on operadic trees of arity not equal to $0$ or $1$, we get the notion of a homotopy associative algebra together with a \emph{homotopy left module} over it. When, moreover, it vanishes on operadic trees with more than 2 vertices, we get the strict notion of an associative algebra together with a \emph{left module} over it. 

\item When $\gamma$ vanishes outside corollas of arity $n$, we get a new notion of a \emph{homotopy $\Sy_n$-module}, see Appendix~\ref{subsec:HoGMod} for more details. 
 
    \item When $\gamma$ vanishes 
   everywhere except on the $2$-vertex operadic trees with no permutation, we recover the notion of a \emph{nonsymmetric operad}. When $\gamma$ vanishes outside  operadic trees labeled with no permutation, we recover the notion of a \emph{homotopy nonsymmetric operad} of Van der Laan \cite[\S $4.4$]{VanderLaan03}. 
  
    \item 
   When $\gamma$ vanishes on  operadic trees labeled by at least 2 permutations, we recover the notion of a \emph{symmetric homotopy operad}  of Van der Laan \cite[\S $4.5$]{VanderLaan03}, where the partial operadic composition is laxed up to homotopy by not the action of the symmetric groups.  
 
    \item 
   When $\gamma$ vanishes on  operadic trees with at least 2 internal edges, we get a new notion of an \emph{operad in the category of homotopy $\Sy$-modules}. 
  \end{enumerate}
\end{proposition}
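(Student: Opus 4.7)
The plan is to verify each of the six items by the same two-step procedure: (a) show that the prescribed vanishing locus on $\gamma$ is stable under the Maurer--Cartan equation~(\ref{RelationHoOp}), so that the restricted data really does form a sub-object of $\mathsf{Op}_\infty$; (b) translate the surviving relations into the defining axioms of the named category. Fullness is then free: morphisms in $\mathsf{Op}_\infty$ are morphisms of dg $\NN$-modules commuting with every $\gamma_{t(\sigma)}$, which on the restricted data is the same as commuting with the surviving ones.

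Stability is an inspection of the three summands of~(\ref{RelationHoOp}). The term $\partial_\P(\gamma_{t(\sigma)})$ preserves the labelled tree $t(\sigma)$. The coderivation term coming from $\dif_{\calO^{\ac}}$ fixes the underlying planar tree (hence arity and number of internal edges) and merely fuses two consecutive permutations on a single vertex, so it strictly decreases the total number of permutations by one. The convolution term $\gamma_{t/s(\vec\sigma)}\circ_{s_1}\gamma_{\tilde s(\cev\sigma)}$ corresponds to a decomposition $t\mapsto (t/s,\tilde s)$ along a subtree $s$; arities, numbers of internal edges, and total numbers of permutations all split additively (up to the single contracted edge). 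From these additivity properties, each of the six grading restrictions in the statement is manifestly closed under~(\ref{RelationHoOp}).

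For identification with the classical notions: in the arity-$1$ case, the surviving operadic trees are linear sequences of unary vertices, and~(\ref{RelationHoOp}) becomes Stasheff's $A_\infty$-relations on $\P(1)$; forbidding trees with more than two vertices collapses this to ordinary associativity. The arity-$\le 1$ case adds the trees whose additional subtrees end on nullary vertices, producing a homotopy module over the $A_\infty$-algebra $\P(1)$. The single-corolla case of arity $n$ leaves exactly the data of strings of non-identity permutations acting on $\P(n)$, with~(\ref{RelationHoOp}) reducing to the relations defining a homotopy $\Sy_n$-module spelled out in Appendix~\ref{subsec:HoGMod}. The no-permutation case is Van der Laan's homotopy nonsymmetric operad \cite[\S$4.4$]{VanderLaan03}, and its sub-case with at most one internal edge is the strict nonsymmetric operad axioms.

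The two remaining items are the main point of care. For (5), I would check that with at most one permutation on every labelled tree, the non-permutation part recovers Van der Laan's homotopy ns operad, while the structure maps indexed by trees carrying exactly one permutation encode the compatibility of the partial compositions with the symmetric group action (relations (v)--(vi) of Definition~\ref{def:Operad}) laxed up to homotopy, which is exactly \cite[\S$4.5$]{VanderLaan03}. For (6), trees with no internal edge contribute homotopy $\Sy_n$-module structures on each $\P(n)$, while trees with exactly one internal edge and no permutation give a strict nonsymmetric operad structure; the remaining mixed terms encode the required compatibility between these two structures, giving an operad in the category of homotopy $\Sy$-modules. The main obstacle is the sign and indexing bookkeeping needed to match our conventions to those of \cite{VanderLaan03} in items (4)--(5) and to verify, in (6), that the mixed one-edge/one-permutation relations are precisely the compatibility axioms of an operad object in $\Sy\text{-}\mathsf{Mod}_\infty$.
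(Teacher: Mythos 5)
Your overall strategy (restrict the support of $\gamma$, check consistency, translate the relations) is the direct check the paper intends, but as written it has a genuine gap: you only ever impose the Maurer--Cartan equation~(\ref{RelationHoOp}) on trees in the support (your ``surviving relations''), whereas membership in $\mathsf{Op}_\infty$ requires it on \emph{all} operadic trees, and it is precisely the equations indexed by the killed trees --- curvature term included --- that produce the strict parts of these notions. This is exactly how the paper verifies the preceding embedding $\mathsf{nu}\ \mathsf{Op}\mono\mathsf{Op}_\infty$: relations (i), (iii)--(vi) of Definition~\ref{def:Operad} are read off from Equation~(\ref{RelationHoOp}) evaluated on weight-$2$ trees on which $\gamma$ \emph{vanishes}. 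Concretely, in items (5) and (6) the strict, invertible $\Sy_n$-action (resp.\ the strict nonsymmetric composition axioms) comes from the equations indexed by corollas carrying two permutations --- e.g.\ $(\sigma,\sigma^{-1})$, whose right-hand side is $-\id_{\P(n)}$ --- resp.\ by trees with two internal edges; none of these enter your step (b), and your reading of (5) as relaxing relations (v)--(vi) ``up to homotopy'' misses that the group action itself must come out strict, which is the defining feature of Van der Laan's symmetric homotopy operads.

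Relatedly, the claim that each restriction is ``manifestly closed'' under~(\ref{RelationHoOp}) fails because of the curvature: if $\gamma$ kills every permutation-labelled tree (item (4), and items (1)--(2) for a general $\P$), the equation indexed by the $(\sigma,\sigma^{-1})$-corolla of arity $n\ge 2$ reads $0=-\id_{\P(n)}$, forcing $\P(n)=0$. So the literal vanishing prescription has to be supplemented --- by restricting the arity support of the underlying $\NN$-module, or via the embeddings the paper alludes to in the caption of Figure~\ref{Fig:VariousCat} (trivial structure, regular representations of the symmetric groups) --- and a complete proof must say how. Your additivity observations for the three gradings (arity, internal edges, permutations) are correct and useful for the closure of the surviving equations, but the identification step needs to be redone using all of~(\ref{RelationHoOp}), curvature included, along the lines of the paper's proof of the strict embedding.
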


\begin{proof}
This is directly checked. 
\end{proof}

Brinkmeier defined in \cite{Brinkmeier00} a notion called \emph{lax operad} by applying the topological W-con\-stru\-ction of Boardman--Vogt \cite{BoardmanVogt73} to the set-theoretical colored operad which encodes operads (with unit). In this way, he gets a homotopy operad notion where everything, the partial composition, the symmetric group actions and the unit, are relaxed up to homotopy. Let us consider the same method applied to the set-theoretical colored operad $O$ encoding non-unital operads and satisfying $\KK O =\calO$. 
Applying the chain complex functor to $W O$, one gets the bar-cobar construction $\Omega \mathrm{B} \calO$ of the associated linear colored operad $\calO$, according to \cite{BergerMoerdijk06}. So such ``lax operads'' are made up of more structure maps, which are labeled by trees with vertices labeled by operadic trees with one permutation (the ones of Proposition~\ref{prop:OperadicTreesBasis}). 
The natural colored operad map $\Omega \calO^{\ac} \to \Omega \mathrm{B} \calO$ 
induces a functor from  lax operads to the present notion of homotopy operads, which amounts to defining  maps $\gamma_{t(\sigma)}$ by sums of the above structure maps over some set of trees of trees.  
So, finally, the notion of lax operad of \cite{Brinkmeier00} is more complex than the present one in two directions: the unit is encoded and relaxed up to homotopy and there are much more homotopies for the partial compositions and the symmetric group actions. 

\smallskip

There is also a notion of \emph{$\infty$-operad} \cite{MoerdijkWeiss09, Lurie12} due to  Moerdijk--Weiss and Lurie, which is a notion of colored set-theoretical operads in $\infty$-categories. This notion is different from the notion of homotopy operad developed here. However, Le Grignou build recently a homotopy coherent nerve 
between the category of homotopy operads \`a la Van der Laan, which relaxes only the partial composition products up to homotopy, to the category of $\infty$-operads, see \cite{LeGrignou14}. Le Grignou's construction is very likely to extend to the present full notion of higher homotopy operads. 

\subsection{Infinity-morphisms} As usual, the strict notion of morphisms of higher homotopy operads does not  share the suitable homotopy properties. Instead, we consider a more general version called $\infty$-morphism and defined as follows.\\ 

The curved Koszul duality theory provides us with a third equivalent definition of a higher homotopy operad structure on 
a dg $\NN$-module $\P$, see \cite[\S $10.1.8$]{LodayVallette12}. It amounts to  the data of a curved codifferential on the cofree $q\calO^{\ac}$-coalgebra $q\calO^{\ac}(\P)$:
\begin{align*}
\left(
\P, \dif_\P, \gamma
\right)
\stackrel{1\text{-}1}{\longleftrightarrow} 
\left( q\calO^{\ac}(\P), \dif_\gamma:=\dif_{\calO^{\ac}(\P)} + \dif^r_\gamma\right) \ .
\end{align*}
Recall that a morphism of $\calO^{\ac}$-coalgebras is a morphism of $q\calO^{\ac}$-coalgebras which preserves the curved codifferentials. 

\begin{definition}[$\infty$-morphism]
An \emph{$\infty$-morphism} $\P \rightsquigarrow \calQ$ between two higher homotopy operads $(\P, \dif_\P, \gamma)$ and 
$(\calQ, \dif_\calQ, \lambda)$ is a morphism of $\calO^{\ac}$-coalgebras between 
$\left( q\calO^{\ac}(\P),  \dif_\gamma\right)$ and 
$\left( q\calO^{\ac}(\calQ), \dif_\lambda\right)$.
\end{definition}

By definition, $\infty$-morphisms are composable; they form a new category denoted 
$\infty\text{-}\mathsf{Op}_\infty$ in which sits  the category $\mathsf{Op}_\infty$, but which is better behaved with respect to homotopy properties of higher homotopy operads. Notice that all the subcategories of 
$\mathsf{Op}_\infty$,
given in Proposition~\ref{prop:FullsubCat}, are also full subcategories of $\infty\text{-}\mathsf{Op}_\infty$ when they are considered with their respective notions of $\infty$-morphisms. 

\begin{definition}[$\infty$-isomorphism, $\infty$-quasi-isomorphism]
An {$\infty$-morphism} $\P \rightsquigarrow \calQ$ is called an \emph{$\infty$-iso\-mor\-phi\-sm} (resp. an \emph{$\infty$-quasi-isomorphism}) when its first component $\P \to \calQ$ is an isomorphism (resp. a quasi-isomorphism). 
\end{definition}

\begin{proposition}
The class of $\infty$-isomorphisms is the class of the isomorphisms of the category $\infty\text{-}\mathsf{Op}_\infty$. 
\end{proposition}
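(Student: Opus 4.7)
The plan is to establish the nontrivial direction: any $\infty$-morphism $f : \P \rightsquigarrow \calQ$ whose first (linear) component $f_{(1)} : \P \to \calQ$ is an isomorphism of dg $\NN$-modules admits a two-sided inverse in $\infty\text{-}\mathsf{Op}_\infty$. The forward direction is immediate: if $g$ inverts $f$, then taking first components of the compositions $gf$ and $fg$ yields $g_{(1)} \circ f_{(1)} = \id_\P$ and $f_{(1)} \circ g_{(1)} = \id_\calQ$, so $f_{(1)}$ is an isomorphism.

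First I would pass to the equivalent picture of morphisms of curved $\calO^{\ac}$-coalgebras
$$F : \left(q\calO^{\ac}(\P), \dif_\gamma\right) \longrightarrow \left(q\calO^{\ac}(\calQ), \dif_\lambda\right)\ .$$
Because the target is cofree as a $q\calO^{\ac}$-coalgebra, $F$ is entirely determined by its projection onto cogenerators, which decomposes as a family $\{f_{t(\sigma)}\}$ indexed by operadic trees $t(\sigma)$ and graded by weight $w$, namely the number of internal edges plus the total number of permutations. The trivial tree yields $f_{(1)}$, which is assumed invertible.

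Next I would construct the candidate inverse $G$ by defining its cogenerator components $\{g_{t(\sigma)}\}$ inductively on the weight, starting from $g_{(1)} := f_{(1)}^{-1}$. Writing out $GF = \id$ on a weight-$w$ tree $t(\sigma)$ with $k$ vertices, and using the formula for the infinitesimal decomposition of Proposition~\ref{prop:InfDecomp}, produces an equation of the shape
$$\sum_{s \subseteq t,\, \shuffle} \varepsilon_{s,t,\shuffle}\ g_{t/s(\bar\sigma,\,\shuffle(\vec{\sigma}_{s_\bullet}))} \circ_{s_1} f_{\tilde s(\cev{\sigma}_{s_\bullet})} = 0\ .$$
The two extreme terms $s = t$ and $s$ trivial contribute respectively $g_{(1)} \circ f_{t(\sigma)}$ and $g_{t(\sigma)} \circ f_{(1)}^{\otimes k}$, while every intermediate term only involves $g_{t'(\sigma')}$ at strictly smaller weight. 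Since $f_{(1)}$ is invertible, isolating $g_{t(\sigma)}$ determines it uniquely and completes the induction. The same argument performed on the other side produces $G'$ with $FG' = \id$, and the standard identity $G = GFG' = G'$ shows that $G$ is a two-sided inverse of $F$ as a $q\calO^{\ac}$-coalgebra morphism.

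The hard part will be to verify that the purely algebraic $G$ thus obtained actually respects the curved codifferentials, so that it genuinely lies in $\infty\text{-}\mathsf{Op}_\infty$. I would measure the defect by the map
$$H := G \dif_\lambda - \dif_\gamma G\ ,$$
which I expect to behave as a $G$-coderivation of the cofree curved coalgebras, and would then precompose with $F$. Using that $F$ commutes with the curved codifferentials and that $FG = \id$ yields
$$HF = G \dif_\lambda F - \dif_\gamma G F = G F \dif_\gamma - \dif_\gamma = 0\ ,$$
and since $F$ is invertible on the underlying $q\calO^{\ac}$-coalgebras, this forces $H = 0$. Hence $G$ is a morphism of curved $\calO^{\ac}$-coalgebras, i.e., $g$ is the desired inverse $\infty$-morphism, which proves the proposition.
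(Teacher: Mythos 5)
Your argument is correct and is essentially the argument the paper itself relies on: the paper's proof is only a pointer to \cite[\S 10.4]{LodayVallette12}, whose proof is exactly this weight-wise inversion of the cogenerator components of a morphism of cofree $q\calO^{\ac}$-coalgebras, followed by the formal observation that the inverse automatically intertwines the curved codifferentials. One small correction: the equation expressing $GF=\id$ on a tree $t(\sigma)$ is governed by the full decomposition map $q\calO^{\ac}\to q\calO^{\ac}\circ q\calO^{\ac}$ (a sum over cuttings of $t$ into several subtrees, with components of $f$ applied to each block and a component of $g$ applied to the quotient tree), not by the infinitesimal decomposition $\Delta_{(1)}$ as in your displayed formula; indeed your two extreme terms $g_{(1)}\circ f_{t(\sigma)}$ and $g_{t(\sigma)}\circ f_{(1)}^{\otimes k}$ only arise from that full decomposition. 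With this replacement the triangular structure you describe is exactly right, so the induction on weight goes through unchanged. Finally, note that the last step requires nothing about $H$ being a coderivation: $HF=0$ together with the invertibility of $F$ as a coalgebra morphism already forces $H=0$, i.e.\ the inverse of a coalgebra isomorphism commuting with the curved codifferentials commutes with them as well, by conjugation.
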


\begin{proof}
The arguments of \cite[\S $10.4$]{LodayVallette12} apply mutatis mutandis.
\end{proof}

\begin{theorem}[Homotopy Transfer Theorem]
Let $\{\calH(n), \dif_{\calH(n)}\}_{n\in \NN}$
be a homotopy retract of 
$\{\P(n),\allowbreak \dif_{\P(n)}\}_{n\in \NN}$ in the category of dg $\NN$-modules
\begin{align*}
&\xymatrix{     *{ \quad \ \  \quad (\P, \dif_\P)\ } \ar@(dl,ul)[]^{h}\ \ar@<0.5ex>[r]^{p} & *{\
(\calH, \dif_{\calH})\quad \ \  \ \quad }  \ar@<0.5ex>[l]^{i}\ , }\\
&i p- \id_\P =\dif_\P  h+ h  \dif_\P, \ i\  \text{quasi-isomorphism}\ .
\end{align*}
Any higher homotopy operad structure on $\P$ can be transferred into a higher homotopy operad structure on $\calH$ such that the quasi-isomorphism $i$ extends to an $\infty$-quasi-isomorphism.
\end{theorem}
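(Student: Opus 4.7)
My approach would follow the abstract strategy of \cite[Chapter~10]{LodayVallette12}, adapted to the curved setting of \cite{HirshMilles12}. First, I would encode the higher homotopy operad structure on $\P$ as a curved Maurer--Cartan element $\gamma \in \Hom_\Sy(\calO^{\ac}, \End_\P)$ satisfying $\partial \gamma + \gamma \star \gamma = \Theta$, vanishing on $\I \subset \calO^{\ac}$. The transfer then amounts to producing a new Maurer--Cartan element $\tilde\gamma \in \Hom_\Sy(\calO^{\ac}, \End_\calH)$ out of $\gamma$ and the homotopy retract data $(i,p,h)$, together with an $\infty$-quasi-isomorphism $\calH \rightsquigarrow \P$ whose first component is $i$.

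The core construction is a Van der Laan--Berger--Moerdijk style morphism of (curved) convolution Lie algebras $\Psi_{i,p,h}\colon \Hom_\Sy(\calO^{\ac}, \End_\P) \to \Hom_\Sy(\calO^{\ac}, \End_\calH)$ induced by the retract. Concretely, on a basis operadic tree $t(\bar\sigma_1,\ldots,\bar\sigma_k) \in \calO^{\ac}$, I would define $\tilde\gamma_{t(\sigma)}$ by an explicit sum over all ways to ``blow up'' each vertex of $t$ into a subtree of $\calO^{\ac}$ (dually to the operad structure of $\Omega\calO^{\ac}$): each new vertex is evaluated via the corresponding component of $\gamma$, each inserted internal edge carries the homotopy $h$, the $n$ global input leaves are pre-composed with $i$, the root output is post-composed with $p$, while the strings of permutations are preserved and distributed along the blow-up. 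The signs are dictated by the Koszul rule and the left-recursive order of Section~\ref{subsec:TOS}. The $\infty$-quasi-isomorphism extension is defined by an entirely parallel formula, except that the root decoration is $i$ instead of $p$; its arity--$0$ part on the corolla recovers $i$, ensuring that the first Taylor component is a quasi-isomorphism of $\NN$-modules.

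The main verification is that $\tilde\gamma$ satisfies the curved Maurer--Cartan equation $\partial \tilde\gamma + \tilde\gamma \star \tilde\gamma = \Theta_{|\End_\calH}$ and that the tree formula defining the $\infty$-morphism intertwines the curved codifferentials $\dif_{\tilde\gamma}$ on $q\calO^{\ac}(\calH)$ and $\dif_\gamma$ on $q\calO^{\ac}(\P)$. This is achieved by expanding $\partial\tilde\gamma$ using the side relation $ip - \id_\P = \dif_\P h + h\dif_\P$: the $ip$ terms reassemble into products $\tilde\gamma \star \tilde\gamma$ via the grafting of two decorated subtrees at the image of $\Delta_{(1)}$, while the $\id_\P$ terms create the internal homotopies that, combined with the Maurer--Cartan equation satisfied by $\gamma$ on each piece, recover $\partial\tilde\gamma$ itself. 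The curvature contribution is confirmed separately on the two-vertex trees $c(\sigma,\sigma^{-1})$: the sandwich $p(\cdots)i$ reduces via $pi = \id_\calH$ to $-\id_{\calH(n)}$, so $\Psi_{i,p,h}$ does send $\Theta_{|\End_\P}$ to $\Theta_{|\End_\calH}$.

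The principal obstacle will be the careful sign bookkeeping in these tree sums, since the permutation strings interact nontrivially both with the convolution product $\star$ (through Relations~(v)--(vi), which already produced the derivation identity of Proposition~\ref{prop:ConvRel}) and with the curvature operator $\Delta$. A cleaner conceptual route, which I would favour, is to first promote $\Psi_{i,p,h}$ to a morphism of curved $L_\infty$-algebras between the two convolution Lie algebras, by using the standard ``tensor power'' trick: the retract $(i,p,h)$ extends to a homotopy retract on every tensor power $\P^{\otimes n}$ with values in $\calH^{\otimes n}$, and by symmetrization induces an $\infty$-morphism of operads between the endomorphism operads; applying $\Hom_\Sy(\calO^{\ac}, -)$ then automatically produces an $L_\infty$-morphism of curved convolution algebras, which automatically sends Maurer--Cartan elements to Maurer--Cartan elements. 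In that case, the proof reduces to the explicit verification that $\Psi_{i,p,h}(\Theta_\P) = \Theta_\calH$, which is immediate on the generators, and to unpacking the resulting tree formula, which matches the one above.
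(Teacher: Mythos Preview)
Your proposal is correct and follows exactly the approach the paper intends: the paper's own proof is the one-line statement that ``the arguments of \cite[\S $10.3$]{LodayVallette12} apply mutatis mutandis,'' and what you have written is precisely a detailed unfolding of those arguments in the curved setting of \cite{HirshMilles12}. In fact you supply more detail than the paper does, including the explicit tree formulas with $h$ on internal edges and $i,p$ at leaves and root, and the check that the curvature is preserved via $pi=\id_\calH$.
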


\begin{proof}
The arguments of \cite[\S $10.3$]{LodayVallette12} apply mutatis mutandis. 
\end{proof}

\begin{corollary}
For any $\infty$-quasi-isomorphism $\P \stackrel{\sim}{\rightsquigarrow} \calQ$ of higher homotopy operads, there exists 
$\infty$-quasi-isomorphism $\calQ \stackrel{\sim}{\rightsquigarrow} \P$ which is the inverse of 
$H(\P) \cong H(\calQ)$ on the level of homology. 
\end{corollary}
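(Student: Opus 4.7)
The plan is to invoke the Homotopy Transfer Theorem together with the characterization (recorded in the proposition immediately before the HTT) that any $\infty$-morphism whose first component is an isomorphism admits an inverse in $\infty\text{-}\mathsf{Op}_\infty$.

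First, I would apply the HTT to both higher homotopy operads $\P$ and $\calQ$, transferring their structures to the underlying homology $\NN$-modules $H(\P)$ and $H(\calQ)$. This produces transferred higher homotopy operad structures on these homologies, together with $\infty$-quasi-isomorphisms $i_\P : H(\P) \stackrel{\sim}{\rightsquigarrow} \P$ and $i_\calQ : H(\calQ) \stackrel{\sim}{\rightsquigarrow} \calQ$ extending the chosen sections of harmonic representatives. Swapping the roles of $i$ and $p$ in the construction underlying the HTT (cf.\ \cite[\S 10.3]{LodayVallette12}) yields in parallel $\infty$-quasi-isomorphisms $p_\P : \P \stackrel{\sim}{\rightsquigarrow} H(\P)$ and $p_\calQ : \calQ \stackrel{\sim}{\rightsquigarrow} H(\calQ)$ extending the retractions.

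Second, I would form the composite $\infty$-quasi-isomorphism
\[ \Phi \ := \ p_\calQ \circ f \circ i_\P \ : \ H(\P) \stackrel{\sim}{\rightsquigarrow} H(\calQ)\ . \]
Its first component is the isomorphism of graded $\NN$-modules $H(f) : H(\P) \stackrel{\sim}{\to} H(\calQ)$ induced by $f$. By the aforementioned characterization, $\Phi$ is therefore an $\infty$-isomorphism and admits an inverse $\Phi^{-1}$ in the category $\infty\text{-}\mathsf{Op}_\infty$. The desired inverse is then given by the composite
\[ g \ := \ i_\P \circ \Phi^{-1} \circ p_\calQ \ :\  \calQ\ \stackrel{\sim}{\rightsquigarrow}\  \P\ , \]
which is an $\infty$-quasi-isomorphism as a composite of such, and whose first component induces $H(f)^{-1}$ on the level of homology.

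The main obstacle is to secure the existence of the homotopy retracts required by the HTT over an arbitrary commutative ground ring $\KK$. Under the paper's standing convention that all $\KK$-modules considered are free, hence projective, the short exact sequences relating cycles, boundaries and homology can be split aritywise, producing the contracting homotopies needed to realize $H(\P)$ and $H(\calQ)$ as honest homotopy retracts of $\P$ and $\calQ$ in the category of dg $\NN$-modules; once this setup is in place, the three steps above go through unchanged.
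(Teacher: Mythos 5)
Your argument is essentially the paper's own: the paper proves this corollary by simply citing \cite[\S 10.4]{LodayVallette12} ``mutatis mutandis'', and the three steps you write out (transfer to homology via the HTT in both directions, form the composite $p_\calQ \circ f \circ i_\P$ whose first component is the isomorphism $H(f)$, invert it using the fact that $\infty$-isomorphisms are the isomorphisms of $\infty\text{-}\mathsf{Op}_\infty$, then conjugate back) are precisely the Loday--Vallette argument being invoked.

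The one place where your write-up is genuinely off is the final paragraph. Freeness of the chain modules $\P(n)$ over $\KK$ does \emph{not} imply that the exact sequences of cycles, boundaries and homology split aritywise: over $\KK=\ZZ$ the complex $\ZZ \xrightarrow{2} \ZZ$ is degreewise free, yet its homology $\ZZ/2$ is not projective and there is no homotopy retract of the complex onto its homology (any candidate $i: H \to \P$ is forced to be zero, hence not a quasi-isomorphism). Splitting those sequences requires the boundaries and homology to be projective, which the paper's standing convention (freeness of the $\KK$-modules appearing in its constructions, such as $\calO$ and $\calO^{\ac}$) does not provide for an arbitrary higher homotopy operad $\P$. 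So the existence of the homotopy retracts $(H(\P),0)$ of $(\P,\dif_\P)$ needed to launch the HTT is an additional hypothesis over a general ring (automatic over a field, which is the setting of the cited Loday--Vallette argument); to be fair, the paper's one-line proof glosses over exactly the same point, but your attempted repair via ``all $\KK$-modules are free, hence projective'' does not close it.
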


\begin{proof}
The arguments of \cite[\S $10.4$]{LodayVallette12} apply mutatis mutandis. 
\end{proof}

\subsection{Symmetric homotopy theory of operads}

The general theory of \cite[\S $5.2$]{HirshMilles12} and  \cite[\S $11.4$]{LodayVallette12} allows us to use the Koszul duality for the colored operad $\calO$ to settle new tools to study the homotopy properties of operads over any ring. \\

We first consider the bar-cobar adjunction 
$$\xymatrix@C=30pt{{\B_\iota \ : \ {\mathsf{Op}_\infty}  \ }  \ar@_{->}@<-1ex>[r]  \ar@^{}[r]|(0.38){\perp}   & \ar@_{->}@<-1ex>[l]  {\ \mathsf{conil}\ \mathsf{higher}\ \mathsf{Coop}  \ : \ \Omega_\iota}}   $$
associated to the universal twisting morphism $\iota : \KDO \to \Omega \KDO$. The image $\B_\iota \P$ of a higher homotopy operad is the equivalent definition  mentioned at the end of Section~\ref{subsec:KDOcoalg} in terms of quasi-free $\KDO$-coalgebras. By definition of the $\infty$-morphisms, the functor $\B_\iota$ extends to an isomorphism of categories 
$$\widetilde{\B}_\iota \ : \ \infty\text{-}\mathsf{Op}_\infty \cong 
 \mathsf{quasi}\textsf{-}\mathsf{free}\  \mathsf{higher}\ \mathsf{Coop}\ .$$
All these constructions can be summed up in the following diagram. 
$$\xymatrix@R=30pt@C=50pt@M=7pt{
{\mathsf{nu}\  \mathsf{Op} \cong\mathsf{aug}\  \mathsf{Op}}\  \ar@<+0.5ex>@^{^{(}->}[d]  \ar@_{->}@<-0.5ex>[r]_{\widetilde{\B}}
&
 \ar@<-0.5ex>@_{->}[l]_{\widetilde{\Omega}} \ \mathsf{conil}\ \mathsf{higher}\ \mathsf{Coop} \ar@<-0.5ex>@_{->}[dl]_(0.49){\Omega_\iota} 
 \\
\mathsf{Op}_\infty \ar@_{->}@<-0.5ex>[ur]_(0.45){\B_\iota}  \ar@<+0.5ex>@^{^{(}->}[d]   
&
 \\
\infty\mathsf{-Op}_\infty \ar[r]^(0.43)\cong_(0.43){\widetilde{\B}_\iota} \ar@/^2pc/@{-^{>}}[uu]^{\widetilde{\Omega} \widetilde{\B}_\iota} 
&
  \mathsf{quasi}\textsf{-}\mathsf{free}\ \mathsf{higher}\ \mathsf{Coop} \ar@^{^{(}->}[uu] }$$
The left column is made up of a pair of adjoint functors 
$$\xymatrix@C=30pt{{i \ :\ \mathsf{nu \ Op}  \ }  \ar@_{->}@<-1ex>[r]  \ar@^{}[r]|(0.42){\perp}   & \ar@_{->}@<-1ex>[l]  {\ \infty\text{-}{\mathsf{Op}_\infty\ : \ \widetilde{\Omega} \widetilde{\B}_\iota}}} \ ,  $$
which satisfy the following property. 

\begin{theorem}[Rectification]
Any higher homotopy operad $(\P, \dif_\P, \gamma)$ is naturally $\infty$-quasi-isomorphic to the dg nu operad  $\widetilde{\Omega} {\B}_\iota\, \P$ 
$$\P \stackrel{\sim}{\rightsquigarrow}  \widetilde{\Omega} {\B}_\iota\,  \P\ .$$
\end{theorem}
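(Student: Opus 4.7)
The plan is to adapt the standard rectification argument from Koszul duality (Loday--Vallette, Chapter~11) to the curved Koszul duality setting for the colored operad $\calO$ developed here.

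First I would construct the $\infty$-morphism $\P \rightsquigarrow \widetilde{\Omega} \B_\iota \P$ as the one corresponding, via the isomorphism of categories $\widetilde{\B}_\iota : \infty\text{-}\mathsf{Op}_\infty \cong \mathsf{quasi}\text{-}\mathsf{free}\ \mathsf{higher}\ \mathsf{Coop}$ and the bar-cobar adjunction $\widetilde{\Omega} \dashv \widetilde{\B}$ of Proposition~\ref{prop:BarCobarTwKappa}, to the identity morphism on the dg nu operad $\widetilde{\Omega} \B_\iota \P$. Concretely, the adjunction identifies an $\infty$-morphism $\P \rightsquigarrow \widetilde{\Omega} \B_\iota \P$ with a morphism of $\calO^\ac$-coalgebras $\B_\iota \P \to \widetilde{\B}(\widetilde{\Omega} \B_\iota \P)$, hence with a morphism of dg nu operads $\widetilde{\Omega} \B_\iota \P \to \widetilde{\Omega} \B_\iota \P$; taking the identity yields the canonical $\infty$-morphism. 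Unravelling, its first component $\P \to \widetilde{\Omega} \B_\iota \P$ is the composite of the two inclusions of cogenerators, placing $p \in \P$ as $p \hookrightarrow q\calO^\ac \circ \P = \B_\iota \P \hookrightarrow \calO \circ q\calO^\ac \circ \P = \widetilde{\Omega} \B_\iota \P$ at the one-vertex-in-one-vertex tree.

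Next I would show this chain map is a quasi-isomorphism by a spectral sequence argument modeled on the proof of Theorem~\ref{thm:BarCobarRes}. On $\widetilde{\Omega} \B_\iota \P = \calO \circ q\calO^\ac \circ \P$ consider the filtration $\mathcal{F}_r$ collecting those elements whose sum of homological degree and number of $\calO$-vertices (partitions) is at most $r+1$. The total differential decomposes as $d_1 + d_2^{\mathrm{split}} + d_2^{\gamma}$, where $d_2^{\mathrm{split}}$ is the cobar piece splitting a partition along an internal edge via $\kappa$, $d_2^{\gamma}$ encodes the higher homotopy operad structure $\gamma$ acting through the universal twisting morphism $\iota$, and $d_1$ collects the remaining pieces arising from $\dif_\P$, from the coderivation $\dif_{\calO^\ac}$ via $\varphi_1$, and from the strict $\kappa$-action removing a single permutation at a vertex. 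All pieces except $d_2^{\mathrm{split}}$ strictly lower the filtration, so $E^0$ is equipped with $d_2^{\mathrm{split}}$. Exactly as in the proof of Theorem~\ref{thm:BarCobarRes}, summands indexed by underlying trees with more than one internal edge are acyclic (tensor powers of the acyclic complex $0 \to \KK \to \KK \to 0$), and the surviving $E^1$-page is isomorphic to $\P \otimes_H T^c(s\overline{\KK[\Sy]})$; the induced $E^1$-differential is $\dif_\P$ plus the normalized bar differential of the symmetric groups plus the weight-one component of $\iota$ on $\gamma$. Using that the normalized bar complex of each $\Sy_n$ resolves $\KK$ and that $\P$ is arity-wise degreewise free, the spectral sequence collapses to $E^\infty \cong H_\bullet(\P)$, and the edge map identifies with the first component of our $\infty$-morphism.

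The main obstacle will be carefully handling the curvature $\theta$ of Proposition~\ref{pro:Curvature} together with the higher components of $\gamma$: the curvature factors through $\I$ and therefore its contribution to the coherence of the differential strictly lowers the number of $\calO$-partitions, placing it in the filtration-lowering piece $d_1$, while the higher components of $\gamma$ (weight $\geq 2$ in $q\calO^\ac$) strictly decrease the cooperadic weight and thus also fall into $d_1$. A subtle technical point, as in the proof of Theorem~\ref{thm:BarCobarRes}, is convergence: this is ensured because, in any fixed arity and homological degree, the filtration $\mathcal{F}_r$ has only finitely many nonzero pieces, so the standard convergence theorem for bounded-below exhaustive filtrations applies. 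Finally, naturality of the construction in $\P$ is automatic from the adjunction description.
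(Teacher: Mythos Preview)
Your proposal is correct and follows the same approach the paper invokes: the paper's own proof is the single sentence ``The arguments of \cite{LodayVallette12, HirshMilles12} apply mutatis mutandis,'' and what you have written is precisely a careful unpacking of those referenced arguments, adapted to the curved colored setting and modeled on the spectral sequence computation in the proof of Theorem~\ref{thm:BarCobarRes}. One small point: your sketch tacitly invokes a K\"unneth-type splitting (``$\P$ is arity-wise degreewise free'') that is not part of the theorem's hypotheses; the paper sidesteps this by deferring entirely to the cited sources, so you may wish either to note this assumption explicitly or to argue, as in the standard treatment, via a filtration whose associated graded forgets the higher $\gamma$-components and reduces to the strict bar-cobar resolution already established.
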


\begin{proof}
The arguments of \cite{LodayVallette12, HirshMilles12} apply mutatis mutandis. 
\end{proof}

Such rectification is unique up to unique $\infty$-isomorphism, see \cite[Proposition~$11.4.6$]{LodayVallette12}.

\begin{proposition}[Homotopy properties]\leavevmode
\begin{itemize}
\item[$\diamond$] For any pair of dg augmented operads $\P$ and $\calQ$, there exists a zig-zag of quasi-isomorphisms 
$$\P  \stackrel{\sim}{\leftarrow} \bullet \stackrel{\sim}{\to} \bullet 
\stackrel{\sim}{\leftarrow} \bullet  \cdots
 \bullet \stackrel{\sim}{\to}
\calQ $$
if and only if there exists an $\infty$-quasi-isomorphism 
$$\P   \stackrel{\sim}{\rightsquigarrow} \calQ \ .$$
\item[$\diamond$]
The homotopy category of dg augmented operads and the homotopy category of homotopy operads are equivalent
$$ \mathsf{Ho}(\mathsf{dg \ aug \ Op}) \cong 
 \mathsf{Ho}(\infty\text{-}\mathsf{Op}_\infty)\ .$$
\end{itemize}
\end{proposition}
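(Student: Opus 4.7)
The plan is to address the two statements of the proposition separately, using as inputs the preceding Rectification theorem and the Corollary on invertibility of $\infty$-quasi-isomorphisms, together with Theorem~\ref{thm:BarCobarRes} (the higher bar-cobar resolution) and the isomorphism of categories $\widetilde{\B}_\iota : \infty\text{-}\mathsf{Op}_\infty \cong \mathsf{quasi}\text{-}\mathsf{free\ higher\ Coop}$.

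For the first statement, the backward direction proceeds as follows. Every strict quasi-isomorphism of dg augmented operads is, under the fully faithful embedding $i : \mathsf{nu\ Op} \hookrightarrow \infty\text{-}\mathsf{Op}_\infty$, an $\infty$-quasi-isomorphism; by the preceding Corollary, each wrong-way arrow in a zig-zag can be replaced by an $\infty$-quasi-isomorphism going in the opposite direction that inverts the first component on homology. Composing all arrows in the resulting string, using the composition of $\infty$-morphisms inherited from composition of morphisms of quasi-free higher cooperads via $\widetilde{\B}_\iota$, produces a single $\infty$-morphism $\P \rightsquigarrow \calQ$ whose first component is a composite of quasi-isomorphisms of chain complexes, hence itself a quasi-isomorphism. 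Conversely, given an $\infty$-quasi-isomorphism $f : \P \rightsquigarrow \calQ$ of dg augmented operads, the isomorphism $\widetilde{\B}_\iota$ identifies $f$ with a strict morphism of quasi-free higher cooperads, and applying $\widetilde{\Omega}$ yields a strict morphism of dg operads $F : \widetilde{\Omega}\widetilde{\B}\, \P \to \widetilde{\Omega}\widetilde{\B}\, \calQ$ (using that $\widetilde{\B}_\iota$ and $\widetilde{\B}$ agree on strict operads). Theorem~\ref{thm:BarCobarRes} supplies strict quasi-isomorphisms $\widetilde{\Omega}\widetilde{\B}\, \P \stackrel{\sim}{\to} \P$ and $\widetilde{\Omega}\widetilde{\B}\, \calQ \stackrel{\sim}{\to} \calQ$, and the naturality of the bar-cobar construction with respect to $\infty$-morphisms yields a commutative square on homology relating $F$ and $f$ through these counits. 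The two-out-of-three property then forces $F$ to be a quasi-isomorphism, giving the desired zig-zag
$$\P \stackrel{\sim}{\leftarrow} \widetilde{\Omega}\widetilde{\B}\, \P \stackrel{F}{\to} \widetilde{\Omega}\widetilde{\B}\, \calQ \stackrel{\sim}{\to} \calQ.$$

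For the second statement, both $i$ and $\widetilde{\Omega}\widetilde{\B}_\iota$ preserve the respective classes of weak equivalences by the first statement, so they descend to functors between the homotopy categories. The counit $\widetilde{\Omega}\widetilde{\B}_\iota\, i(\P) = \widetilde{\Omega}\widetilde{\B}\, \P \stackrel{\sim}{\to} \P$ is a quasi-isomorphism by Theorem~\ref{thm:BarCobarRes}, while the unit $\calR \stackrel{\sim}{\rightsquigarrow} i\,\widetilde{\Omega}\widetilde{\B}_\iota(\calR)$ is an $\infty$-quasi-isomorphism by the Rectification theorem. Both therefore become isomorphisms in the respective localizations and so establish the equivalence $\mathsf{Ho}(\mathsf{dg\ aug\ Op}) \cong \mathsf{Ho}(\infty\text{-}\mathsf{Op}_\infty)$. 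The main obstacle lies in the naturality claim used in the forward direction: one must verify that the strict morphism $F$ obtained by applying $\widetilde{\Omega}\widetilde{\B}_\iota$ to $f$ really does fit into the homology square with $f$ and the bar-cobar counits, which amounts to unpacking the compatibility between the bar-cobar adjunction on higher homotopy operads and the higher bar-cobar resolution of Theorem~\ref{thm:BarCobarRes} applied to strict operads.
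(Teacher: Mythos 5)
Your skeleton is the same as the one the paper invokes (its proof is literally a pointer to the arguments of Loday--Vallette and Hirsh--Mill\`es): strict quasi-isomorphisms embed as $\infty$-quasi-isomorphisms, wrong-way arrows are reversed with the Corollary on inverting $\infty$-quasi-isomorphisms, and the zig-zag produced in the other direction is $\P \stackrel{\sim}{\leftarrow} \widetilde{\Omega}\widetilde{\B}\,\oP \to \widetilde{\Omega}\widetilde{\B}\,\overline{\calQ} \stackrel{\sim}{\to} \calQ$ with middle map $F=\widetilde{\Omega}\B_\iota(f)$; the second bullet then follows from the counit (Theorem~\ref{thm:BarCobarRes}) and the rectification unit becoming isomorphisms in the localizations. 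That part is fine.

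The gap is exactly where you point to it, and your proposed fix does not work as stated. The square formed by $F$, the two counits and $f_1$ does \emph{not} commute, not even ``by naturality'': under the adjunction of Proposition~\ref{prop:BarCobarTwKappa}, the strict morphism $\varepsilon_\calQ\circ F$ corresponds to the twisting morphism $\widetilde{\B}\,\oP\to\overline{\calQ}$ recording \emph{all} components of the $\infty$-morphism $f$, whereas $f_1\circ\varepsilon_\P$ corresponds to the one that kills every generator of positive weight; the two composites already disagree on generators of weight $\geq 1$. Showing that they nevertheless agree up to homotopy (so that the square commutes on homology and two-out-of-three applies) is essentially equivalent to what you are trying to prove, so nothing has been gained. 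The argument the paper's ``mutatis mutandis'' refers to is different at this point: one proves directly that the composite $\widetilde{\Omega}\circ\B_\iota$ sends $\infty$-quasi-isomorphisms to quasi-isomorphisms, by filtering the (co)bar constructions by weight (number of vertices/permutations) so that on the associated graded the coalgebra morphism $\B_\iota f$ reduces to the free construction on its linear part $f_1$, and then concluding by a spectral-sequence comparison exactly as in the proof of Theorem~\ref{thm:BarCobarRes} (note that over a general ring this step uses the same projectivity and boundedness hypotheses on the underlying $\NN$-modules). The same lemma is what lets both $i$ and $\widetilde{\Omega}\widetilde{\B}_\iota$ descend to the homotopy categories in your second bullet, so the gap propagates there as well; once that lemma is supplied, the rest of your argument goes through.
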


\begin{proof}
The arguments of \cite{LodayVallette12, HirshMilles12} apply mutatis mutandis. 
\end{proof}

\begin{further}
At this stage, one would like to go one step further, that is to write these two localized categories as a ``honest'' category. Following \cite{LefevreHasegawa03, Vallette14}, one should be able to prove an equivalence like
$$\mathsf{Ho}(\mathsf{dg \ aug \ Op}) \cong \mathsf{\infty}\textsf{-}\mathsf{Op}_\infty/\sim_h\ ,$$
with a good notion of homotopy equivalence of $\infty$-morphisms. There are two  main conceptual difficulties here since one would have to endow the category of conilpotent $\KDO$-coalgebras with a suitable model category structure: the underlying objects are not chain complexes, the ``differential'' map does not square to $0$, and one will have to deal with the general homotopy properties of modules over a ring, not a field.  
\end{further}


\appendix 

\section{Homotopy group representation}\label{subsec:HoGMod}
The me\-th\-od used in this paper can actually treat an interesting question: how can one define a relaxed notion up to homotopy of a module over a group with good properties. Let $(\mathbb{G}, ., e)$ be a group; a representation of $\mathbb{G}$ is a $\KK[\GG]$-module. So one can use the Koszul duality theory to find resolutions of the group algebra $\KK[\GG]$. 
Any presentation of the group $\GG$ gives a presentation of the group algebra $\KK[\GG]$ that one can use to get ``small'' resolutions. Without any particular presentation, one can only consider the trivial presentation where the set of generators is the full group itself. At that point, there are two ways  to apply the Koszul duality theory; we show that they are equivalent. \\

First, one can consider the presentation where the set of generators $\oGG$ is the group without the unit, that is 
$$\KK[\GG]\cong T(\oGG)/(g \otimes g^{-1} -1, g\otimes h - gh)\ , \quad \text{when}\quad  gh\neq e\ .$$
This presentation is semi-augmented according to the terminology of \cite[\S $3.3.1$]{HirshMilles12}, this means that the augmentation map is not required to be an algebra morphism. Then, the curved Koszul duality theory of loc. cit. applies; that is what we did in the case of the symmetric groups $\GG=\Sy_n$ in the arity 1 part of the colored operad $\calO$. 
\begin{lemma}
The group algebra $\KK[\GG]$ with this quadratic-linear-constant presentation is curved Koszul. 
\end{lemma}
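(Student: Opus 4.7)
The plan is to follow exactly the same three-step strategy used in the proof of Theorem~\ref{thm:OKoszul} for the colored operad $\calO$: verify the two conditions (I) and (II) of Hirsh--Mill\`es, identify the quadratic analogue, and prove its Koszulness. Concretely, write $R\subset T(\oGG)^{(1)}\oplus T(\oGG)^{(2)}$ for the subspace spanned by the elements $g\otimes g^{-1}-1$ and $g\otimes h-gh$ (when $gh\neq e$); the quadratic part $qR$ is then the full product space $\oGG\otimes\oGG$, so the quadratic analogue is the nilpotent algebra
\[ q\KK[\GG] \;=\; T(\oGG)/(\oGG\otimes\oGG) \;\cong\; \KK\{\GG\}\,, \]
in complete analogy with $\KK\{\Sy\}$ introduced in Proposition~\ref{prop:DistLawqO}.

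First I would check condition (I): $R\cap(\KK\oplus\oGG)=0$. This is immediate from the fact that the presentation is the linearization of a set-theoretic one, so every element of $R$ is a linear combination of differences $x_1-x_2$ of set-theoretic expressions having the same image in $\KK[\GG]$, and no such difference sits in weight $\leq 1$ without being zero, since $\oGG\cap\KK=\varnothing$. For condition~(II), namely $(R)\cap(\KK\oplus\oGG\oplus T(\oGG)^{(2)})=R$, I would argue along the same set-theoretic lines as in Lemma~\ref{lem:Conditions12}: any element of $(R)$ of weight $\leq 2$ is a combination of differences $x_1-x_2$ with $x_1,x_2\in\KK\oplus\oGG\oplus T(\oGG)^{(2)}$ yielding the same class in $\KK[\GG]$, and the only possible such differences are precisely those governed by $g\otimes g^{-1}=1$ and $g\otimes h=gh$.

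The Koszulness of the quadratic analogue $\KK\{\GG\}$ is then straightforward: since the space of quadratic relations is the whole tensor square $\oGG\otimes\oGG$, its orthogonal is zero, so the Koszul dual algebra is the free (tensor) algebra $T(s^{-1}\oGG^*)$, whose Koszulness is classical (and directly verified by the Koszul complex being acyclic in each weight, as $\KK\{\GG\}$ is concentrated in weights $0$ and $1$). Equivalently, one can invoke the standard fact that any quadratic algebra whose space of relations equals $V\otimes V$ is Koszul and dual to the free algebra on $s^{-1}V^*$.

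Having (I), (II), and Koszulness of $q\KK[\GG]$, the inhomogeneous curved Koszul duality theorem of Hirsh--Mill\`es (\cite[Theorem~4.3.1]{HirshMilles12}, in its associative algebra version used as the toy model in loc.~cit.) yields that $\KK[\GG]$, with the given QLC presentation, is curved Koszul. The only mild subtlety is the semi-augmented nature of the presentation (the augmentation $\KK[\GG]\to\KK$ sending every group element to $1$ is not an algebra map on the trivial presentation), which is precisely the framework for which the curved theory was designed; no additional work is required beyond noting its applicability. The main obstacle, as in Lemma~\ref{lem:Conditions12}, is the verification of condition~(II), but the argument is a direct case analysis on set-theoretic representatives and presents no real difficulty.
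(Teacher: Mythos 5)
Your proposal is correct, but it takes a different route from the paper's own (one-line) argument: the paper simply observes that the quadratic-linear-constant relators form a Gr\"obner basis of $\KK[\GG]$, and lets that single rewriting fact deliver conditions (I)--(II) and the Koszulness of the quadratic analogue all at once (confluence here is just associativity of the group law). You instead verify everything by hand: condition (II) via the set-theoretic linearization argument in the style of Lemma~\ref{lem:Conditions12} (the ideal is the kernel of $T(\oGG)\to\KK[\GG]$, hence its weight $\le 2$ part is spanned by differences of words of length $\le 2$ with the same image in $\GG$, and all such differences visibly lie in $R$), and Koszulness of $q\KK[\GG]\cong\KK\{\GG\}$ by identifying it as the trivial quadratic algebra with relations all of $\oGG\otimes\oGG$, whose Koszul complex is acyclic weight by weight. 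This is more elementary and self-contained, and has the mild virtue of being manifestly valid over any commutative ground ring without invoking Gr\"obner/PBW machinery; the paper's route is shorter and mirrors the rewriting technique it already used for $q\calO$ in Proposition~\ref{prop:DistLawqO}. One small polish: for condition (I) the cleanest justification is not the set-theoretic one but simply that the weight-$2$ parts of the relators are pairwise distinct monomials $g\otimes h$, hence linearly independent, so no nonzero element of $R$ can lie in $\KK\oplus\oGG$; also, once (I), (II) and the Koszulness of $q\KK[\GG]$ are in place, the conclusion is the definition of curved Koszul in Hirsh--Mill\`es, so the appeal to their Theorem~4.3.1 is not needed for the lemma itself (only later, to get the cofibrant resolution).
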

\begin{proof}
Since the above quadratic-linear-constant presentation forms a Gr\"obner basis of the group algebra, Conditions   (I)-(II) hold and the algebra is Koszul. 
\end{proof}
In this case, the Koszul dual curved coalgebra is given by the cofree coalgebra 
$$\left(T^c(s\oGG), d, \theta
\right)\ ,$$
where 
$$d(g_1\otimes \cdots \otimes g_n)=\sum_{i=1}^{n-1} (-1)^{i-1}g_1\otimes \cdots  \otimes \underbrace{g_ig_{i+1}}_{\neq e}\otimes  \cdots  \otimes g_n$$ 
and $\theta(g \otimes g^{-1})=1$. 
\begin{proposition}
The cobar construction of the Koszul dual curved coalgebra forms a cofibrant resolution of the group algebra 
$$\Omega_1:=\left( T(s^{-1}\overline T^c (s\oGG), d_2+d_1-d_0
\right) \xrightarrow{\sim} \KK[\GG]\ .$$
\end{proposition}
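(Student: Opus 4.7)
The plan is to apply the general curved Koszul duality machinery of Hirsh--Mill\`es \cite{HirshMilles12} to the associative (1-colored operad) setting, exactly as was done for the colored operad $\calO$ in Theorem~\ref{thm:OKoszul}. The preceding lemma already established that the quadratic-linear-constant presentation of $\KK[\GG]$ satisfies the Koszul condition together with Conditions (I) and (II), so by \cite[Theorem~4.3.1]{HirshMilles12}, applied to algebras rather than operads, the cobar construction $\Omega_1$ of the Koszul dual curved coalgebra is automatically quasi-isomorphic to $\KK[\GG]$. It remains only to unpack this statement and verify the two claims (quasi-isomorphism and cofibrancy) concretely.

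For the quasi-isomorphism, I would mimic the proof of Theorem~\ref{thm:BarCobarRes}. Consider the canonical twisting morphism $\kappa\colon T^c(s\oGG)\to \KK[\GG]$ and the associated twisted composite $\Omega_1\otimes_\kappa \KK[\GG]$; equivalently, filter $\Omega_1$ by a bounded-below, exhaustive filtration that annihilates $d_0$ and $d_1$ on the associated graded, so that only the ``splitting'' part $d_2$ survives at the $E^0$-page. The $E^1$-page is then the cobar construction of the quadratic analogue $\KK\{\GG\}$, whose Koszul dual coalgebra is the cofree $T^c(s\oGG)$; since the quadratic analogue $\KK\{\GG\}$ is the nilpotent algebra on $\oGG$ (maximal relations), it is visibly Koszul by the Koszul property of free/nilpotent duality. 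Convergence of the spectral sequence, using that the $\KK$-modules involved are free, yields the desired quasi-isomorphism over any commutative ring $\KK$.

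For cofibrancy, one follows the exact same strategy as in Proposition~\ref{prop:CobarCofibrant}. The algebra $\Omega_1$ is quasi-free as a graded algebra, so a morphism $\Omega_1\to\calQ$ of dg algebras is equivalent to a twisting morphism $\alpha\colon T^c(s\oGG)\to \calQ$ solving the appropriate curved Maurer--Cartan equation. Given a lifting problem against an acyclic fibration $\P\twoheadrightarrow \calQ$, one constructs the lift $\alpha$ degree by degree by the same Maurer--Cartan induction: at each stage the obstruction lives in a cycle of $\P$, and surjectivity of the map to the fibre product $\calQ_d\times_{Z\calQ_{d-1}}Z\P_{d-1}$ produces the lift, using projectivity of the underlying $\KK$-modules. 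The curvature $\theta(g\otimes g^{-1})=1$ plays exactly the role of the operadic curvature in Proposition~\ref{prop:CobarCofibrant}, and the coderivation relations on $T^c(s\oGG)$ guarantee that the induction closes up.

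The main obstacle, as in Theorem~\ref{thm:BarCobarRes}, is the spectral sequence convergence and the careful bookkeeping of signs in the three-term differential $d_2+d_1-d_0$: one must verify that the filtration makes $d_2$ the only surviving piece at $E^0$, and that the resulting $E^1$-differential agrees with the bar differential of the nilpotent quadratic algebra $\KK\{\GG\}$. Once this identification is made, Koszulness of $\KK\{\GG\}$ closes the argument. Everything else is a direct transcription of the arguments already carried out in Sections~1 and~2 of the paper for the more involved colored operad $\calO$.
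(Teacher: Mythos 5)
Your proposal takes essentially the same route as the paper, which states this proposition without further proof as an immediate consequence of the preceding lemma (the quadratic-linear-constant presentation is curved Koszul) together with the curved Koszul duality theory of Hirsh--Mill\`es, exactly as invoked in Theorem~\ref{thm:OKoszul} and made cofibrant as in Proposition~\ref{prop:CobarCofibrant} and Theorem~\ref{thm:BarCobarRes}. Your additional unpacking (filtration killing $d_1$ and $d_0$ so that the associated graded is the cobar construction of the Koszul dual of the nilpotent quadratic analogue $\KK\{\GG\}$, plus the Maurer--Cartan lifting argument over free $\KK$-modules) is a correct sketch of the verification the paper leaves implicit.
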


On the other hand, one can consider the augmentation 
\begin{align*}
\left\{
\begin{array}{lcll}
\varepsilon \ : &\KK[\GG] &\to &\KK\\
& g & \mapsto & 1
\end{array}
\right.
\end{align*}
which induces the splitting  $\KK[\GG]\cong \KK \oplus \Ker\, \varepsilon$. The augmentation ideal  admits $\{Ê\tilde{g}:=1-g, \ g \in \oGG\}$ for basis;  we can use it to provide the group algebra $\KK[\GG]$ with a quadratic-linear presentation:
$$\KK[\GG]\cong T(\oGG)/\left(\tig \otimes \widetilde{g^{-1}} -\tig- \widetilde{g^{-1}}, 
\tig\otimes \tih - \tig - \tih + \widetilde{gh}\right)\ , \quad \text{when}\quad  gh\neq e\ .$$
Such formulas using $\tig$ instead of group elements $g$ can be interpreted as some kind of ``linearization'' of the group $\GG$. The first presentation of the group algebra is set-theoretical, this second one is not.
\begin{lemma}
The group algebra $\KK[\GG]$ with this quadratic-linear presentation is inhomogeneous Koszul. 
\end{lemma}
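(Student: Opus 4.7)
The strategy is to apply the inhomogeneous Koszul duality theory of Polishchuk--Positselski / Hirsh--Mill\`es: verify the minimality of generators and the maximality of relations, and show that the associated quadratic algebra is Koszul.

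First I would identify the quadratic analogue. Writing $V := \oGG$ and taking the top (quadratic) part of each relation, the defining relations become $\tig \otimes \widetilde{g^{-1}}$ (from the first family) and $\tig \otimes \tih$ for $gh \neq e$ (from the second family). These top parts together span all of $V \otimes V$, so the quadratic analogue is the trivial algebra
\begin{equation*}
qA \;\cong\; T(V)/(V\otimes V) \;\cong\; \KK \oplus V,
\end{equation*}
with zero product on $V$. This algebra is Koszul (its Koszul dual is the free algebra $T(s^{-1}V^*)$, which is concentrated in the diagonal weight/degree), a fact that is standard and plays exactly the role that $\KK\{\Sy\}$ played in Proposition~\ref{prop:DistLawqO}.

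Next I would verify the two conditions of the inhomogeneous quadratic setting. For minimality of generators, namely $R \cap V = 0$, I would use the observation that the quadratic top parts of the defining relations are pairwise distinct monomials in $V\otimes V$, hence linearly independent; any nontrivial combination of relations therefore has a nonzero quadratic component, and so cannot lie in $V$. For maximality of relations, namely $(R) \cap \{\KK \oplus V \oplus V^{\otimes 2}\} = R$, the key input is the explicit basis $\{1\} \cup \{\tig : g \in \oGG\}$ of $\KK[\GG]$ (obtained by the triangular change of variables $g \leftrightarrow 1 - \tig$). Any element of $T(V)$ of tensor-weight $\leq 2$ which vanishes in $\KK[\GG]$ can be rewritten modulo the given relations as a linear combination of $1$ and the $\tig$'s, and hence is forced to be a linear combination of the defining relations themselves (with no residual element of $V$, by the argument just given). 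This is precisely the group-algebra analogue of Lemma~\ref{lem:Conditions12}.

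With conditions (I) and (II) in place and the quadratic analogue Koszul, the curved/linear Koszul duality theory applies directly: $\KK[\GG]$ with this quadratic-linear presentation is inhomogeneous Koszul in the sense of Hirsh--Mill\`es. The main obstacle is verifying (II) cleanly; the conceptual content there is associativity of multiplication in $\KK[\GG]$, which ensures that the two ways of reducing a cubic monomial $\tig\otimes\tih\otimes\tik$ to weight $\leq 1$ agree, so that no new cubic (and hence, after truncation, no new quadratic) relation is forced. Since there are no constant terms in this presentation (the augmentation is a genuine algebra morphism), the associated Koszul dual coalgebra carries an honest differential with zero curvature, in contrast to the curved situation of the first presentation; comparing the two resulting cobar resolutions is the next step the paper takes.
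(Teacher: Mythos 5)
Your proposal is correct, and it reaches the conclusion by a slightly different packaging than the paper. The paper's proof is a one-liner: it observes that the quadratic-linear relations form a Gr\"obner basis of $\KK[\GG]$ (the leading terms are all the quadratic monomials $\tig\otimes\tih$, and the overlap ambiguities $\tig\otimes\tih\otimes\tilde k$ are confluent precisely by associativity of the group law), and a Gr\"obner/PBW basis simultaneously yields Conditions (I)--(II) and the Koszulness of the quadratic analogue. You instead split the work: you identify the quadratic analogue explicitly as the trivial algebra $qA \cong \KK\oplus V$ with zero products, quote its Koszulness (the same ingredient the paper uses for $\KK\{\Sy\}$ in Proposition~\ref{prop:DistLawqO}), and verify (I) and (II) by hand using that every quadratic monomial is the leading term of exactly one relation and that $\{1\}\cup\{\tig\}$ is a basis of $\KK[\GG]$; the reduction-to-weight-$\le 1$ plus linear-independence argument for (II) is exactly right, since any weight-$\le 2$ element of the ideal differs from its linear reduction by an element of $R$ itself. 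The two routes rest on the same two facts (all quadratic monomials reduce, and $\{1,\tig\}$ maps to a basis), but yours is more elementary and self-contained, while the Gr\"obner-basis formulation buys the confluence statement in a reusable form and matches the method used for the first (curved) presentation. Two cosmetic points: in the quadratic-linear (curvature-free) setting the maximality condition is $(R)\cap\{V\oplus V^{\otimes 2}\}=R$ -- adding $\KK$ as you did is harmless since the ideal has no weight-zero component -- and over a general ring the Koszulness of the trivial algebra needs $V=\KK\cdot\oGG$ to be free, which it is.
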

\begin{proof}
Again this presentation provides the group algebra with a Gr\"obner basis. Hence it  proves Conditions   (I)-(II) and that this presentation is inhomogeneous Koszul.
\end{proof}
In this case, the Koszul dual the a dg coalgebra
$$\left(T^c(s\oGG), d_\varphi \right)\ ,$$
where 
$$d_\varphi(\tig_1\otimes \cdots \otimes \tig_n)=\sum_{i=1}^{n-1} (-1)^{i-1}\tig_1\otimes \cdots  \otimes 
(\tig_i + \tig_{i+1}-\widetilde{g_ig_{i+1}})
\otimes  \cdots  \otimes \tig_n\ .$$ 
\begin{proposition}
The cobar construction of the Koszul dual dg coalgebra forms a cofibrant resolution of the group algebra 
$$\Omega_2:=\left( T(s^{-1}\overline T^c (s\oGG), d'_2+d'_1
\right) \xrightarrow{\sim} \KK[\GG]\ .$$
\end{proposition}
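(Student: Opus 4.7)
The plan is to obtain the result as a direct application of inhomogeneous Koszul duality for associative algebras, which is exactly the arity-$1$ specialization of the machinery invoked in Theorem~\ref{thm:OKoszul}. The preceding lemma already verifies the two required hypotheses for this quadratic-linear presentation of $\KK[\GG]$: Conditions (I) and (II) hold because the given rewriting rules form a (noncommutative) Gröbner basis, and the homogeneous quadratic analogue $q\KK[\GG]$ is Koszul for the same reason. Given these two inputs, the general cobar-Koszul-resolution theorem of \cite[Theorem~$4.3.1$]{HirshMilles12} applies (its proof goes through verbatim over an arbitrary commutative ring $\KK$ since the modules involved are free, as in Theorem~\ref{thm:BarCobarRes}), and delivers the quasi-isomorphism $\Omega_2\xrightarrow{\sim} \KK[\GG]$.

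For completeness, the explicit comparison map is the morphism of augmented dg algebras induced by the twisting morphism $\kappa\colon T^c(s\oGG)\to \KK[\GG]$ defined by $\kappa(s\tig):=\tig$ and zero on higher tensors. That $\kappa$ satisfies the Maurer--Cartan equation is precisely the statement that the $d_\varphi$-cocycles dualize the quadratic-linear relations $\tig\otimes \tih-\tig-\tih+\widetilde{gh}$ and $\tig\otimes \widetilde{g^{-1}}-\tig-\widetilde{g^{-1}}$; hence $\kappa$ extends to a unique dg algebra morphism $\Omega_2\to \KK[\GG]$. To prove it is a quasi-isomorphism, I would filter $\Omega_2$ by the total tensor-weight (the total number of factors $\tig$ appearing in generators of the outer free algebra). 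The component $d'_2$ preserves this filtration whereas $d'_1$ strictly decreases it, so the $E^0$-page of the associated spectral sequence is the cobar construction of the purely quadratic Koszul dual coalgebra of $q\KK[\GG]$. Koszulness of $q\KK[\GG]$, already obtained in the lemma, then identifies the $E^1$-page with $\KK[\GG]$ concentrated in weight zero, and the classical convergence theorem for bounded-below exhaustive filtrations gives the desired quasi-isomorphism.

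Cofibrancy is almost immediate: $\Omega_2$ is quasi-free on the non-negatively graded module $s^{-1}\overline T^c(s\oGG)$, which is arity-wise a free $\KK$-module. The weight filtration is exhaustive, bounded below, and strictly lowered by the differential on generators, exhibiting $\Omega_2$ as a cell object in the standard projective model structure on augmented dg $\KK$-algebras. The only mild subtlety, as was the case for Theorem~\ref{thm:BarCobarRes}, is that we work over a commutative ring rather than a field; but since the generating module is degree-wise free, the usual cell-filtration argument à la Fresse \cite{Fresse04} applies without change. The main obstacle in making the write-up airtight is checking that the signs and shifts in the two variants (the quadratic-linear-constant one giving $\Omega_1$ and the quadratic-linear one giving $\Omega_2$) are compatible, since the linearization $g\mapsto 1-\tig$ mixes the unit of $\KK[\GG]$ into the relations; this bookkeeping is the only nontrivial ingredient beyond the abstract Koszul machine.
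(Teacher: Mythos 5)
Your proposal is correct and takes essentially the same route as the paper, which offers no separate argument beyond the preceding Lemma: conditions (I)--(II) and Koszulness of the quadratic analogue feed directly into the inhomogeneous Koszul duality theorem of Hirsh--Mill\`es (extended over a ring since everything is degreewise free), and cofibrancy follows from quasi-freeness on a non-negatively graded free module. One small correction to your supplementary spectral-sequence sketch: the $E^1$-page of the weight filtration is the quadratic analogue $q\KK[\GG]\cong \mathrm{gr}\,\KK[\GG]$ (with $F_0=\KK$, $F_1=\KK[\GG]$ on the target), not $\KK[\GG]$ concentrated in weight zero; the comparison map induces an isomorphism of these $E^1$-pages, and convergence of the bounded-below exhaustive filtration then yields the quasi-isomorphism.
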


So both $\Omega_1$-algebras and $\Omega_2$-algebras define a notion of homotopy $\GG$-module with the required homotopy properties: $\infty$-morphisms, Homotopy Transfer Theorem, description of the homotopy category of $\GG$-modules, for instance. By abstract nonsense, the two cofibrant resolutions $\Omega_1$ and $\Omega_2$ are quasi-isomorphic so these two notions of homotopy $\GG$-modules are homotopy equivalent. But, in this case, one can prove the following stronger result. 

\begin{theorem}
The two  resolutions $\Omega_1$ and $\Omega_2$ of the group algebra $\KK[\GG]$ are isomorphic, which makes  the two associated notions of homotopy $\GG$-modules isomorphic. 
\end{theorem}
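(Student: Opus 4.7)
The plan is to exhibit an explicit isomorphism of dg algebras $\Phi \colon \Omega_2 \xrightarrow{\sim} \Omega_1$, reflecting the change of basis $\tig = 1-g$ between the two presentations of $\KK[\GG]$.

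\textbf{Step 1 (Common graded structure).} First I would observe that both resolutions share the same underlying graded algebra $T(s^{-1}\overline{T}^c(s\oGG))$. Indeed, both presentations have the same space of generators $V = \KK\oGG$ and the same quadratic analogue (the nilpotent algebra $T(V)/(V^{\otimes 2})$), hence the same Koszul dual quadratic coalgebra $T^c(sV)$, and therefore the same underlying graded cobar algebra. Moreover, the components $d_2 = d'_2$ coming from the deconcatenation coproduct on $T^c(s\oGG)$ coincide. Thus constructing a dg isomorphism reduces to finding a graded algebra automorphism intertwining the \emph{internal} parts: $d_1 - d_0$ on $\Omega_1$ versus $d'_1$ on $\Omega_2$.

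\textbf{Step 2 (Define $\Phi$ from $\tig = 1-g$).} I would define $\Phi$ as a weight-filtered graded algebra morphism, setting on weight-$1$ cogenerators
\[ \Phi([g]_2) := 1 - [g]_1 \in \Omega_1, \]
where $1$ is the unit of $\Omega_1$. The key virtue of this choice is compatibility with the augmentations $\Omega_i \to \KK[\GG]$: one checks $\epsilon_1(1 - [g]_1) = 1 - g = \tig = \epsilon_2([g]_2)$. Extend $\Phi$ to weight-$n$ cogenerators inductively by choosing $\Phi([g_1|\cdots|g_n]_2)$ so that the chain-map equation $d_{\Omega_1}\Phi = \Phi\, d_{\Omega_2}$ holds, the right-hand side being already known by the inductive hypothesis.

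\textbf{Step 3 (Existence of the inductive extension).} At each step, the obstruction $\Phi(d_{\Omega_2}[g_1|\cdots|g_n]_2)$ is automatically a cycle in $\Omega_1$ of homological degree $n-2$. For $n \geq 3$ this degree is strictly positive, where $\Omega_1$ is acyclic as a cofibrant resolution of $\KK[\GG]$, so the cycle is a boundary. For $n = 2$ the obstruction lies in degree~$0$, whose homology is $\KK[\GG]$; a direct computation using the relation $\tig\tih = \tig + \tih - [gh\neq e]\widetilde{gh}$ (and $\tig\,\widetilde{g^{-1}} = \tig + \widetilde{g^{-1}}$ when $gh = e$) shows the obstruction projects to $0$ in $\KK[\GG]$ thanks to the augmentation compatibility of Step~2, so it is again a boundary.

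\textbf{Step 4 (Isomorphism and module equivalence).} The resulting $\Phi$ is a dg algebra morphism. Since $\Phi$ is weight-filtered with invertible leading term on each weight, it is a graded algebra isomorphism, hence a dg algebra isomorphism. Restriction of scalars along $\Phi$ then yields an equivalence between the categories of $\Omega_1$-modules and $\Omega_2$-modules, which are by definition the two notions of homotopy $\GG$-modules, completing the proof.

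\textbf{Main obstacle.} The delicate point is Step 3 at weight $2$: although the obstruction is guaranteed to be a boundary by the homology argument, producing an explicit primitive requires combining the weight-$2$ cogenerators $[a|b]_1$ with higher-weight generators in order to balance the contributions of the curvature-valued piece $d_0$, which appears in $\Omega_1$ but not in $\Omega_2$. Once this weight-$2$ step is handled, all higher weights are uniform by the acyclicity argument.
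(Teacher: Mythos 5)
There is a genuine gap, and it sits exactly where the content of the theorem lies: your construction produces a dg algebra morphism $\Phi\colon \Omega_2 \to \Omega_1$, but not an \emph{isomorphism}. In Step~3 you extend $\Phi$ weight by weight by choosing, at each weight-$n$ cogenerator, an arbitrary primitive of the obstruction cycle; acyclicity in positive degrees (and the vanishing of the class in $H_0(\Omega_1)\cong\KK[\GG]$ at weight $2$) guarantees such a primitive exists, so a chain map exists. But Step~4 then asserts that $\Phi$ is ``weight-filtered with invertible leading term on each weight,'' and nothing in the construction secures this: two primitives of the same obstruction differ by a cycle of degree $n-1$, i.e.\ by a boundary, and boundaries of degree $n-1$ do have components on the weight-$n$ cogenerators (e.g.\ via the merging part of the differential applied to weight-$(n+1)$ generators), so an arbitrary choice can perturb or even destroy the leading term. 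In general, a map between two cofibrant resolutions built by such obstruction theory is only a quasi-isomorphism/homotopy equivalence — which is automatic and strictly weaker than the statement being proved. To close the gap you must either exhibit the inverse or verify that the weight-$n$ component of your chosen primitive is (up to sign) the corresponding cogenerator, which amounts to the explicit computation the paper performs: it simply writes down $\tig \mapsto 1-g$ on weight $1$ and $s^{-1}(s\tig_1\otimes\cdots\otimes s\tig_n)\mapsto (-1)^n\, s^{-1}(sg_1\otimes\cdots\otimes sg_n)$ for $n\ge 2$, together with the evident inverse, and checks compatibility with the differentials directly.

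A secondary point: your ``Main obstacle'' paragraph is misdirected. A primitive at weight $2$ has homological degree $1$, and for degree reasons it can only involve monomials containing exactly one weight-$2$ cogenerator and weight-$1$ factors — higher-weight generators cannot occur, so no balancing against them is needed. The curvature term $d_0$ of $\Omega_1$ is exactly what absorbs the constant produced when $h=g^{-1}$, and the explicit choice $[g|h]_1$ (with the sign $(-1)^n$ in general) already does the job; this is why the paper's one-line formula suffices.
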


\begin{proof}
One can consider the isomorphism $\Omega_2 \to \Omega_1$ defined on the generators by 
$$ 
\left\{
\begin{array}{llll}
\tig &\mapsto& 1-g \ , & \\
s^{-1}(s\tig_1\otimes \cdots \otimes s\tig_n)& \mapsto & (-1)^n\ s^{-1}(s g_1\otimes \cdots \otimes s g_n) 
& \text{for} \quad n\ge 2\ ,
\end{array}
\right.
$$
with the reverse isomorphism  $\Omega_1 \to \Omega_2$ given by 
$$ 
\left\{
\begin{array}{llll}
g &\mapsto& 1-\tig \ , & \\
s^{-1}(s g_1\otimes \cdots \otimes s g_n)& \mapsto & (-1)^n\ s^{-1}(s \tig_1\otimes \cdots \otimes s \tig_n) 
& \text{for} \quad n\ge 2\ .
\end{array}
\right.
$$
\end{proof}

We choose the first resolution to define the homotopy notion of $\GG$-module 
\begin{definition}[$\GG_\infty$-module]
A \emph{homotopy $\GG$-module}, or \emph{$\GG_\infty$-module}, is chain complex $(V,d)$ equipped with 
degree $n-1$ maps 
$$\gamma(g_1, \ldots, g_n) : V \to V\ ,$$ 
for any $n$-tuples $(g_1, \ldots, g_n)\in \oGG^n$, satisfying 
\begin{align*}
&\sum_{i=1}^{n-1}(-1)^{i-1}\gamma(g_1, \ldots, g_i) \circ \gamma(g_{i+1}, \ldots, g_n)
- \sum_{i=1}^{n-1} (-1)^{i-1} \gamma(g_1, \ldots, \underbrace{g_ig_{i+1}}_{\neq e}, g_n)
-\partial(\gamma(g_1, \ldots, g_n))=\\
&
\left\{
\begin{array}{lll}
\id_V & \text{when} & (g_1, \ldots, g_n)= (g, g^{-1})\ ,\\
 0&  \text{otherwise}\ .&
 \end{array}
 \right. 
\end{align*}
\end{definition}

A notion of ``representation up to homotopy'' was introduced in \cite{AriasAbadCrainicDherin11} as follows. Instead of considering algebra morphisms $\KK[\GG] \to \Hom(V,V)$, one can consider the wider class of $A_\infty$-morphisms. Such a data is actually equivalent to an algebra morphism from the bar-cobar construction of $\KK[\GG]$, as explained in \cite[\S $10.5.5$]{LodayVallette12}. Notice that the Koszul resolution obtained from the trivial presentation with all the elements for generators is the bar-cobar construction. 
Hence the formulas in loc. cit. are similar to the one given here, except that the specific role of the group unit is not taken into account there. One obtains the definition of \cite{AriasAbadCrainicDherin11} by applying the bar-cobar construction to the group algebra  after forgetting its unit. The interpretation in terms of Maurer--Cartan elements of \cite[\S 3]{AriasAbadCrainicDherin11} follows then directly from the general theory \cite[Chapter~$2$]{LodayVallette12}.

\begin{remark}
In the present paper, we could have chosen to work with the linearized presentation of the symmetric group algebras. Only Relations (i)-(ii) would change, not Relations (v)-(vi), which would remain exactly the same with $\tilde \sigma$ instead of $\sigma$. So all the results would still hold with nearly the same formulas and we would get a cofibrant resolution isomorphic to the one given in Theorem~\ref{thm:OKoszul}.
With that approach, we would get a Koszul dual dg colored cooperad instead of a curved colored cooperad but the underlying colored cooperad would be the same, the Maurer--Cartan equation encoding homotopy operads would have no curvature terms and only the third term on the left-hand side of Relation~(\ref{RelationHoOp}) would be modified using the above formula for $d_\varphi$. However, the (homotopy) action of the symmetric groups would be less transparent, less set-theoretical: it is the linearized action that would be relaxed up to homotopy.
\end{remark}

\bibliographystyle{alpha}
\bibliography{bib}

\end{document}